\numberwithin{equation}{section}
\newtheorem{theorem}{Theorem}[section]
\newtheorem{proposition}[equation]{Proposition}
\newtheorem{corollary}[equation]{Corollary}
\newtheorem{remark}[equation]{Remark}
\newtheorem{lemma}[equation]{Lemma}
\newtheorem{assumption}[equation]{Assumption}
\providecommand{\Vo}{{\mathaccent23 V}}
\providecommand{\Qo}{{\mathaccent23 Q}}
\DeclareMathOperator*{\esssup}{ess\,sup}
\DeclareMathOperator*{\essinf}{ess\,inf}
\DeclareMathAlphabet\mathbfcal{OMS}{cmsy}{b}{n}
\titleformat{\section}{\normalfont\scshape\centering}{\thesection.}{0.5em}{}
\titleformat*{\subsection}{\itshape}
\titleformat*{\subsubsection}{\itshape}
\providecommand{\keywords}[1]
{
	{\small\textit{Keywords:~~} #1}
}
\providecommand{\MSC}[1]
{
	{\small\textit{AMS MSC (2020): ~~} #1}
}
\begin{document}
	\setlength{\abovedisplayskip}{5.5pt}
	\setlength{\belowdisplayskip}{5.5pt}
	\setlength{\abovedisplayshortskip}{5.5pt}
	\setlength{\belowdisplayshortskip}{5.5pt}

	\title{\vspace{-2cm}Error analysis for a fully-discrete  finite element approximation of the unsteady $p(\cdot,\cdot)$-Stokes equations}
	\author[1]{Luigi C. Berselli\thanks{Email: \texttt{luigi.carlo.berselli@unipi.it}}\thanks{funded by  INdAM GNAMPA and Ministero dell'istruzione, dell'università e della ricerca (MIUR, Italian Ministry of Education, University and Research)
			within PRIN20204NT8W4: Nonlinear evolution PDEs, fluid dynamics and transport equations: theoretical foundations and applications.}}
	\author[2]{Alex Kaltenbach\thanks{Email: \texttt{{kaltenbach@math.tu-berlin.de}}}\thanks{funded by the Deutsche Forschungsgemeinschaft (DFG, German Research
			Foundation) - 525389262.}}
	\author[3]{Seungchan Ko\thanks{Email: \texttt{scko@inha.ac.kr}}\thanks{funded by National Research Foundation of Korea Grant funded by the Korean Government (RS-2023-00212227)}}
	\date{\today\vspace{-2.5mm}}
	\affil[1]{\small{Department of Mathematics, University of Pisa, Largo Bruno Pontecorvo 5, 56127~Pisa,~{Italy}}}
	\affil[2]{\small{Institute of Mathematics, Technical University of Berlin, Straße des 17. Juni 136, 10623~Berlin,~{Germany}}}
	\affil[3]{\small{Department of Mathematics,  Inha University, 100 Inha-ro, Michuhol-gu, 22201 Incheon,~{Republic~of~Korea}}}
	\maketitle
	 
	\pagestyle{fancy}
	\fancyhf{}
	\fancyheadoffset{0cm}
	\addtolength{\headheight}{-0.25cm}
	\renewcommand{\headrulewidth}{0pt} 
	\renewcommand{\footrulewidth}{0pt}
	\fancyhead[CO]{\textsc{Error analysis for a FE approximation of the unsteady $p(\cdot,\cdot)$-Stokes equations}}
	\fancyhead[CE]{\textsc{L. C. Berselli, A. Kaltenbach, and S. Ko}}
	\fancyhead[R]{\thepage}
	\fancyfoot[R]{}
	
	\begin{abstract}
		In this paper, we examine a fully-discrete finite element approximation of the unsteady $p(\cdot,\cdot)$-Stokes~\mbox{equations} (\textit{i.e.}, $p(\cdot,\cdot)$ is time- and space-dependent), employing a backward Euler step in time and conforming, discretely inf-sup stable finite elements in space. More precisely,
		we derive 
		error decay rates for the vector-valued velocity field 
        imposing 
        fractional regularity assumptions on the velocity  and~the~kinematic~pressure. In addition, we carry out numerical experiments that confirm the optimality of the derived error decay rates in the case $p(\cdot,\cdot)\ge 2$.
	\end{abstract}
	
	\keywords{Variable exponents; \textit{a priori} error analysis; velocity; pressure; finite elements; smart fluids.}
	
	\MSC{35J60; 35Q35;  65N15; 65N30; 76A05.}
	
	\section{Introduction}
	\thispagestyle{empty}

	\hspace{5mm} In the present paper, we examine a fully-discrete finite element (FE) approximation of the unsteady \textit{$p(\cdot,\cdot)$-Stokes~equations}, \textit{i.e.},\enlargethispage{10mm}
	\begin{equation}
		\label{eq:ptxStokes}
		\begin{aligned}
			\partial_{\mathrm{t}}\bfv-\mathrm{div}_{\mathrm{x}}\mathbf{S}(\cdot,\cdot,\bfD_{\mathrm{x}}\bfv)+\nabla_{\mathrm{x}} q&=\bfg+\textup{div}_{\mathrm{x}}\, \mathbf{G}  &&\quad\text{ in }Q_T\,,\\
			\mathrm{div}_{\mathrm{x}}\bfv&=0 &&\quad\text{ in }Q_T\,,
			\\
			\bfv &= \mathbf{0} &&\quad\text{ on } \Gamma_T\,,\\
			\bfv(0) &= \mathbf{v}_0 &&\quad\text{ in } \Omega\,,
		\end{aligned}
	\end{equation} 
	employing a backward Euler step in time and conforming, discretely inf-sup stable finite elements~in~space, for error decay rates.
In the system \eqref{eq:ptxStokes}, for a given external force $\bfg+\mathrm{div}_{\mathrm{x}}\bfG\colon Q_T\to \mathbb{R}^d$, the incompressi-bility constraint \eqref{eq:ptxStokes}$_2$, a no-slip boundary condition~\eqref{eq:ptxStokes}$_3$, and an initial velocity  $\mathbf{v}_0\colon \Omega\to \mathbb{R}^d$,  one seeks for a \textit{velocity vector field} $\bfv\coloneqq (v_1,\ldots,v_d)^\top\colon \overline{Q_T}\to
\mathbb{R}^d$ and a \textit{kinematic~\mbox{pressure}}~${q\colon Q_T\to \mathbb{R}}$~\mbox{solving}~\eqref{eq:ptxStokes}.
Here, $\Omega\subseteq \mathbb{R}^d$, $d\in \{2,3\}$, is a bounded~polyhedral~Lipschitz~domain, $I\coloneqq (0,T)$, $T<\infty$ is the time interval of interest, $Q_T\coloneqq I\times\Omega$ is the corresponding space-time cylinder, and $\Gamma_T\coloneqq I\times\partial\Omega$. The \textit{extra-stress tensor}\footnote{Here, $\mathbb{R}^{d\times d}_{\textup{sym}}\coloneqq \{\mathbf{A}\in \mathbb{R}^{d\times d}\mid \mathbf{A}=\mathbf{A}^\top\}$ and $\mathbf{A}^{\textup{sym}}\coloneqq[\mathbf{A}]^{\textup{sym}}\coloneqq \frac{1}{2}(\mathbf{A}+\mathbf{A}^\top)\in \mathbb{R}^{d\times d}_{\textup{sym}}$ for all $\mathbf{A}\in\mathbb{R}^{d\times d}$.} $\mathbf{S}(\cdot,\cdot,\bfD_{\mathrm{x}}\bfv)\colon  Q_T\to \mathbb{R}^{d\times d}_{\textup{sym}}$  depends on the \textit{strain-rate} \mbox{tensor} $\smash{\bfD_{\mathrm{x}}\bfv \coloneqq  [\nabla_{\mathrm{x}}  \bfv]^{\textup{sym}}
		\colon  Q_T\to  \mathbb{R}^{d\times d}_{\textup{sym}}}$, \textit{i.e.}, the symmetric part of the velocity gradient  $\nabla_{\mathrm{x}}  \bfv \coloneqq(\partial_{x_j} v_i)_{i,j=1,\ldots,d}
	\colon Q_T\to \mathbb{R}^{d\times d}$, and assumes the form
    \begin{equation}\label{S_form}
        \mathbf{S}(\cdot,\cdot,\bfD_{\mathrm{x}}\bfv)\coloneqq \nu_0\,(\delta+|\bfD_{\mathrm{x}}\bfv|)^{p(\cdot,\cdot)-2}\bfD_{\mathrm{x}}\bfv\quad \text{ in }Q_T\,,
    \end{equation}
     where $\nu_0>0$, $\delta\geq0$, and  the  \textit{power-law index} $p\colon Q_T\rightarrow[0,\infty)$ is a (Lebesgue) measurable~function~with
    \begin{equation}
        \label{p_bound}
            1\leq p^-\coloneqq {\essinf}_{(t,x)^\top\in Q_T}p(t,x)\leq {\esssup}_{(t,x)^\top\in Q_T}p(t,x)\eqqcolon p^+<\infty\,.
    \end{equation}
    
    The unsteady $p(\cdot,\cdot)$-Stokes equations \eqref{eq:ptxStokes} is a prototype example of a non-linear system with non-standard growth conditions and is the simplified version of the unsteady $p(\cdot,\cdot)$-Navier--Stokes equations by considering the case of a slow (\textit{i.e.}, laminar) flow, for which the convective~term~$[\nabla_{\mathrm{x}}\mathbf{v}]\mathbf{v}$~can~be~neglected. The unsteady $p(\cdot,\cdot)$-Navier--Stokes~equations naturally appear in mathematical models~for~\mbox{\textit{smart~fluids}}, \textit{e.g.}, electro-rheological fluids (\textit{cf}.\ \cite{rubo}),~micro-polar electro-rheological fluids (\textit{cf}.\ \cite{win-r}), magneto-rheological fluids (\textit{cf}.\ \cite{bia_2005}),~\mbox{thermo-rheological}~\mbox{fluids}~(\textit{cf}.~\cite{AR06}), and chemically-reacting fluids (\textit{cf}.\ \cite{HMPR10}). For all these mathematical models, the variable power-law index $p(\cdot,\cdot)$ \hspace{-0.1mm}depends \hspace{-0.1mm}on \hspace{-0.1mm}certain~\hspace{-0.1mm}physical~\hspace{-0.1mm}quantities \hspace{-0.1mm}including \hspace{-0.1mm}an \hspace{-0.1mm}electric \hspace{-0.1mm}field, \hspace{-0.1mm}a \hspace{-0.1mm}magnetic \hspace{-0.1mm}field,~\hspace{-0.1mm}a~\hspace{-0.1mm}\mbox{temperature}~\hspace{-0.1mm}field,  a concentration of a specific molecule and, in this way, implicitly~the~time-space~variable $(t, x)^\top\in Q_T$. Smart fluids have a wide range of potential applications in various areas of science and engineering such as automotive,~heavy~machinery, electronics, aerospace, and biomedical industries (\textit{cf}.\ \cite[Chap.\ 6]{smart_fluids} and references therein).\vspace{-2mm}

\subsection{Related contributions}\vspace{-1mm}
    \hspace{5mm}Let us recall some known results in the numerical analysis of models related~to~the~unsteady $p(\cdot, \cdot)$-Stokes equations \eqref{eq:ptxStokes}. The numerical analysis for fluids with shear-dependent viscosity~started~in~\cite{sandri}. 
    In \cite{bdr-phi-stokes-preprint}, for  conforming, discretely inf-sup stable FE approximations~of~the~steady $p$-Stokes~equations,~(\textit{i.e.}, ${p=\textrm{const}}$), 
    \textit{a priori}  error estimates measured in the `natural distance'~(or~`\mbox{quasi-norms}',~\mbox{respectively}) were derived. In \cite{sarah}, for a fully-discrete FE approximation of the unsteady $p$-Stokes equations \eqref{eq:ptxStokes} (\textit{i.e.}, ${p=\textrm{const}}$), employing a backward Euler step in time and conforming, discretely inf-sup stable FEs in space, \textit{a priori} error \mbox{estimates}~were~\mbox{derived}. However, the numerical analysis of problems~with~a variable power-law~index 
    is less developed. In fact, we are merely aware of the following contributions:\vspace{-1mm}\enlargethispage{11mm}

    \begin{itemize}[noitemsep,topsep=2pt,leftmargin=!,labelwidth=\widthof{$\bullet$},font=\itshape] \item[$\bullet$] \textit{The steady case:}
    \begin{itemize}[noitemsep,topsep=2pt,leftmargin=!,labelwidth=\widthof{(a)},font=\itshape]
        \item[(a)] \textit{Convergence analyses:} 
        \begin{itemize}[noitemsep,topsep=2pt,leftmargin=!,labelwidth=\widthof{\itshape(ii)},font=\itshape] 
        \item[(i)] In \cite{DPLM12} and \cite{BalciOrtnerStorn2022}, $\Gamma$-convergence analyses for an 
        Interior Penalty Discontinuous~Galerkin~(IPDG)  approximation and a Crouzeix--Raviart approximation of the   steady $p(\cdot)$-Laplacian equation, respectively, were conducted;
        \item[(ii)] In \cite{ko_1} and \cite{ko_2}, for a conforming, discretely inf-sup stable FE approximation of a model describing the steady motion of a chemically reacting fluid, where the power-law index depends on the concentration~of~a~specific~molecule, weak convergence analyses were~carried~out.  
        \end{itemize}
        \item[(b)] \textit{\textit{A priori} error analyses:}
        \begin{itemize}[noitemsep,topsep=2pt,leftmargin=!,labelwidth=\widthof{\itshape(iii)},font=\itshape] 
        \item[(i)] In \cite{BDS15} and \cite{BK23_pxDirichlet},  for a conforming FE approximation and a Crouzeix--Raviart approximation of the steady $p(\cdot)$-Laplace equation, respectively, \textit{a priori}  error estimates were~derived;
        \item[(ii)] In \hspace{-0.1mm}\cite{BBD15}, \hspace{-0.1mm}for \hspace{-0.1mm}a \hspace{-0.1mm}conforming, \hspace{-0.1mm}discretely \hspace{-0.1mm}inf-sup \hspace{-0.1mm}stable \hspace{-0.1mm}FE \hspace{-0.1mm}approximation \hspace{-0.1mm}of
\hspace{-0.1mm}the \hspace{-0.1mm}steady \hspace{-0.1mm}$p(\cdot)$-Stokes~\hspace{-0.1mm}\mbox{equations}, \textit{a priori}  error estimates were~derived;
\item[(iii)]  In \cite{berselli2023error}, for a conforming, discretely inf-sup stable FE approximation of  
the steady $p(\cdot)$-Navier--Stokes equations, \textit{a priori}  error estimates~were~derived.  
\end{itemize}
    \end{itemize}
    
    \item[$\bullet$] \textit{The unsteady case:}
    \begin{itemize}[noitemsep,topsep=2pt,leftmargin=!,labelwidth=\widthof{(a)},font=\itshape]
        \item[(a)] \textit{Convergence analyses:}

        \begin{itemize}[noitemsep,topsep=2pt,leftmargin=!,labelwidth=\widthof{\itshape(ii)},font=\itshape]

\item[(i)] In \cite{CHP10}, for a fully-discrete FE approximation of the  unsteady $p(\cdot)$-Navier--Stokes equations (\textit{i.e.}, the power-law index is time-independent), employing a backward Euler step in time and \hspace{-0.1mm}conforming, \hspace{-0.1mm}discretely \hspace{-0.1mm}inf-sup \hspace{-0.1mm}stable \hspace{-0.1mm}FEs \hspace{-0.1mm}in \hspace{-0.1mm}space,
\hspace{-0.1mm}a \hspace{-0.1mm}weak \hspace{-0.1mm}convergence~\hspace{-0.1mm}\mbox{analysis}~\hspace{-0.1mm}was~\hspace{-0.1mm}\mbox{carried}~\hspace{-0.1mm}out;

\item[(ii)] In \cite{berselli2024convergence}, 
 for a fully-discrete FE approximation of the unsteady $p(\cdot,\cdot)$-Navier--Stokes equations (\textit{i.e.}, the power-law index is both time- and space-dependent), employing a backward Euler step in time and conforming, discretely inf-sup stable FEs in space, a
(weak) convergence analysis was carried out. 
\end{itemize}

        \item[(b)] \textit{\textit{A priori} error analyses:}
         \begin{itemize}[noitemsep,topsep=2pt,leftmargin=!,labelwidth=\widthof{(ii)},font=\itshape]
        \item[(i)] In~\cite{din_diss}, for semi-discrete  approximations of  the unsteady $p(\cdot,\cdot)$-Navier--Stokes~\mbox{equations}, employing  a backward or a forward Euler step in time, \textit{a priori} error~estimates~were~\mbox{derived};
\item[(ii)] In \cite{ptx_lap}, for a fully-discrete FE approximation of the unsteady $p(\cdot,\cdot)$-Laplace equation, employing a backward Euler step in time and conforming, element-wise affine FEs in space, \textit{a priori} error estimates  were derived.
 \end{itemize}
    \end{itemize}
    
    \end{itemize}

\newpage
\subsection{Novel contributions of the paper}

\begin{itemize}
[noitemsep,topsep=2pt,leftmargin=!,labelwidth=\widthof{\itshape(iii)},font=\itshape] 
    \item[(i)] \textit{Unsteady case:} Compared to the contributions \cite{BBD15,berselli2023error}, 
    which derived \textit{a priori} error estimates for conforming, discretely inf-sup stable FE approximations of the 
    $p(\cdot)$-(Navier--)Stokes~equations, one novelty of the present paper is that it considers~the~unsteady~case. In fact, the present paper provides the first \textit{a priori} error analysis for a fully-discrete FE approximation of the unsteady $p(\cdot,\cdot)$-Stokes equations \eqref{eq:ptxStokes} as only weak convergence was established in the contributions  \cite{CHP10,berselli2024convergence} and only semi-discretizations were considered in the contribution \cite{din_diss};

    \item[(ii)] \textit{Incompressibility and pressure term:} Compared to the contribution \cite{ptx_lap}, which derived \textit{a priori} error estimates for a fully-discrete FE approximation of the $p(\cdot,\cdot)$-Laplace equation, one novelty of the present paper is that it also includes the incompressibility constraint~\eqref{eq:ptxStokes}$_2$~and~the~pressure~term; 
    

    \item[(iii)] \textit{Quasi-optimality:} In the case $p^-\ge 2$, we confirm the optimality of the derived error decay rates (with
   \hspace{-0.1mm}respect \hspace{-0.1mm}to \hspace{-0.1mm}natural \hspace{-0.1mm}fractional \hspace{-0.1mm}regularity \hspace{-0.1mm}assumptions \hspace{-0.1mm}on \hspace{-0.1mm}solutions)~\hspace{-0.1mm}via~\hspace{-0.1mm}\mbox{numerical}~\hspace{-0.1mm}\mbox{experiments}.
    In addition, imposing an alternative  natural fractional regularity assumption on the pressure, \textit{i.e.}, $(\delta+ \vert \mathbf{D}_{\mathrm{x}}\mathbf{v}\vert)^{2-p(\cdot)}\vert \nabla_{\mathrm{x}}^{\gamma_\mathrm{x}} q\vert^2 \in L^1(Q_T)$, $\gamma_{\mathrm{x}} \in (0,1]$, we derive an \textit{a priori} error estimate with an error decay rate that does not depend critically on the maximal (\textit{i.e.}, $p^+$) and minimal (\textit{i.e.}, $p^-$) value of the power-law index $p\in C^{0,\alpha_{\mathrm{t}},\alpha_{\mathrm{x}}}(Q_T)$, $\alpha\in (0,1]$, but is constant and also optimal.
\end{itemize}

\emph{The 
\hspace{-0.1mm}paper \hspace{-0.1mm}is \hspace{-0.1mm}organized \hspace{-0.1mm}as \hspace{-0.1mm}follows:} In \hspace{-0.1mm}Section \hspace{-0.1mm}\ref{sec:preliminaries}, \hspace{-0.1mm}we \hspace{-0.1mm}introduce \hspace{-0.1mm}the \hspace{-0.1mm}relevant~\hspace{-0.1mm}function~\hspace{-0.1mm}spaces~\hspace{-0.1mm}and~\hspace{-0.1mm}\mbox{notations}, and recall the related definitions with the results concerning the extra-stress tensor and  (generalized) $N$-functions. In Section \ref{sec:p-navier-stokes}, we introduce equivalent weak formulations of the unsteady $p(\cdot,\cdot)$-Stokes~equations \eqref{eq:ptxStokes}
and discuss natural fractional regularity assumptions~on~weak~solutions. In Section \ref{sec:discrete_p-navier-stokes}, we
introduce a fully-discrete FE approximation of the unsteady $p(\cdot, \cdot)$-Stokes equations \eqref{eq:ptxStokes} and  
 investigate certain FE projection operators for their~stability~properties. In Section \ref{sec:fractional_interpolation_estimates}, we~derive~several~fractional~interpola-tion error estimates for these FE projection operators. In Section \ref{sec:a_priori}, we state and prove~the~main result of the present paper concerning~\textit{a~priori}~error estimates for the numerical approximation of the unsteady $p(\cdot,\cdot)$-Stokes equations \eqref{eq:ptxStokes}. In Section~\ref{sec:exp}, we review the derived error decay rates obtained~in~Section~\ref{sec:a_priori}~for~their~optimality.

\section{Preliminaries}\label{sec:preliminaries}\enlargethispage{13mm}
	
	\hspace*{5mm}Throughout the entire paper, let $\Omega\subseteq \mathbb{R}^d$, $d\in \{2,3\}$, denote a bounded polyhedral Lipschitz~domain. The average of a function $f\colon \omega\to \mathbb{R}$ over a (Lebesgue) measurable set $\omega\subseteq \mathbb{R}^n$, $n\in \mathbb{N}$, with $\vert \omega\vert>0$\footnote{For a (Lebesgue) measurable set $\omega\subseteq \mathbb{R}^n$, $n\in \mathbb{N}$, its $n$-dimensional Lebesgue measure is denote by $\vert \omega\vert$.} is denoted by $\langle f\rangle_\omega\coloneqq {\frac 1 {|\omega|}\int_\omega f \,\textup{d}x}$. Moreover, for a (Lebesgue) measurable set $\omega\subseteq \mathbb{R}^n$,~$n\in \mathbb{N}$,~and~for (Lebesgue) measurable functions~${f,g\colon \omega\to \mathbb{R}}$, we write   ${(f,g)_\omega\coloneqq \int_\omega f g\,\textup{d}x}$~if~the~integral~is~\mbox{well-defined}.
 
	\subsection{\!Variable Lebesgue and Sobolev spaces, Nikolski\u{\i} spaces, and variable Calder\'on~spaces}
	
	\hspace*{5mm}Let $\omega\subseteq \mathbb{R}^n$, $n\in \mathbb{N}$, be a (Lebesgue) measurable set
	and $p\colon \omega\to [1,+\infty]$ a (Lebesgue) measurable function, a so-called  \textit{variable
		exponent}. By $\mathcal{P}(\omega)$, we denote the \textit{set of variable exponents}. Then, for $p\in \mathcal{P}(\omega)$, we denote by
	${p^+\coloneqq \textup{ess\,sup}_{x\in
			\omega}{p(x)}}$~and~${p^-\coloneqq \textup{ess\,inf}_{x\in
			\omega}{p(x)}}$ its constant~\textit{limit~exponents}. Moreover,~by
	$\mathcal{P}^{\infty}(\omega)\coloneqq \{p\in\mathcal{P}(\omega)\mid
	p^+<\infty\}$, we denote the \textit{set of bounded variable exponents}. For ${p\in\mathcal{P}^\infty(\omega)}$ and a (Lebesgue) measurable function $f\in L^0(\omega)$, we define the \textit{modular (with~respect~to~$p$)}~by 
	\begin{align*}
		\rho_{p(\cdot),\omega}(f)\coloneqq \int_{\omega}{\vert f\vert^{p(\cdot)}\,\mathrm{d}x}\,.
	\end{align*}
	Then, for  $p\in \mathcal{P}^\infty(\omega)$,
	the \textit{variable Lebesgue space} is defined by
	\begin{align*}
	\smash{L^{p(\cdot)}(\omega)\coloneqq \big\{ f\in L^0(\omega)\mid \rho_{p(\cdot),\omega}(f)<\infty\big\}\,.}
	\end{align*}
	The corresponding \textit{Luxembourg norm}, for every $f\in L^{p(\cdot)}(\omega)$ defined by 
	\[\| f\|_{p(\cdot),\omega}\coloneqq \inf\left\{\lambda> 0\;\bigg|\;  \rho_{p(\cdot),\omega}\left(\frac{f}{\lambda}\right)\leq 1\right\},
	\]
	turns $L^{p(\cdot)}(\omega)$~into~a~\mbox{Banach}~space (\textit{cf}.\ \cite[Thm.\  3.2.7]{dhhr}).\newpage
     
     For an open set $\omega\subseteq \mathbb{R}^n$, $n\in \mathbb{N}$, and  $p\in\mathcal{P}^\infty(\omega)$, the \textit{variable Sobolev space} is defined by
	\begin{align*}
		\smash{W^{1,p(\cdot)}(\omega)\coloneqq \big\{ f\in L^{p(\cdot)}(\omega)\mid\nabla f\in (L^{p(\cdot)}(\omega))^n\big\}\,.}
	\end{align*} 
	The \textit{variable Sobolev norm}  $\| \cdot\|_{1,p(\cdot),\omega}\coloneqq \| \cdot\|_{p(\cdot),\omega}+\| \nabla (\cdot)\|_{p(\cdot),\omega}$ 
	 turns $W^{1,p(\cdot)}(\omega)$ into a Banach space (\textit{cf}.\  \cite[Thm.\  8.1.6]{dhhr}). The closure of $C^\infty_c(\omega)$ in $W^{1,p(\cdot)}(\omega)$~is~denoted~by~$W^{1,p(\cdot)}_0(\omega)$.
	
	To describe the fractional regularity in space of the velocity vector field, we~resort~to~Nikolski\u{\i} spaces. 
    For an open set $\omega\hspace{-0.1em}\subseteq \hspace{-0.1em}\mathbb{R}^n$, $n\hspace{-0.1em}\in\hspace{-0.1em}\mathbb{N}$, $p \hspace{-0.1em}\in\hspace{-0.1em}  [1, \infty)$, $\beta\hspace{-0.1em}\in\hspace{-0.1em} (0,1]$, and  $f\hspace{-0.1em}\in \hspace{-0.1em}L^p(\omega)$, the  \textit{Nikolski\u{\i} semi-norm}~is~defined~by
	\begin{align}\label{eq:nikolski_semi-norm}
		[f]_{N^{\beta,p}(\omega)}\coloneqq {\sup}_{ \tau\in \mathbb{R}^n\setminus\{0\}}{\big\{\tfrac{1}{\vert \tau\vert^{\beta}}
        \|f(\cdot+  \tau)-f\|_{p,\omega\cap (\omega- \tau)}\big\}}<\infty\,.
	\end{align}
	The \textit{Nikolski\u{\i} norm} $\|\cdot\|_{N^{\beta,p}(\omega)}\coloneqq \| \cdot\|_{p,\omega}+[\cdot]_{N^{\beta,p}(\omega)}$ turns $N^{\beta,p}(\omega)$  into a Banach~space. 
	
	To describe fractional regularity in space of the kinematic pressure, we resort~to~variable~Calder\'on spaces (\textit{cf}.\  \cite{DeVoreSharpley1984}). For  an open set $\omega\subseteq \mathbb{R}^n$, $n\in \mathbb{N}$, $p\in \mathcal{P}^\infty(\omega)$, and $\gamma\in (0,1]$, a function  ${f\in L^{p(\cdot)}(\omega)}$ has a \textit{($\gamma$-order) upper Calder\'on gradient} if there exists a non-negative~\mbox{function}~${g\in L^{p(\cdot)}(\omega)}$~such~that 
	\begin{align}\label{eq:hajlasz_gradient}
		\vert f(x)-f(y)\vert \leq (g(x)+g(y))\,\vert x-y\vert^{\gamma}\quad\text{ for a.e. }x,y\in \omega\,.
	\end{align}
	For each function $f\in L^{p(\cdot)}(\omega)$,  the \textit{set of all ($\gamma$-order) upper Calder\'on gradients} (of $f$) is defined by $\mathrm{G}_{\gamma}(f)\coloneqq \{g\in L^{p(\cdot)}(\omega;\mathbb{R}_{\ge 0})\mid \eqref{eq:hajlasz_gradient}\text{ holds}\}$.
	Then, for $\gamma\in (0,1]$, the \textit{Calder\'on  space} is defined by
	\begin{align*}
		\smash{C^{\gamma,p(\cdot)}(\omega)\coloneqq \big\{f\in L^{p(\cdot)}(\omega)\mid  \mathrm{G}_{\gamma}(f)\neq \emptyset\big\}\,.}
	\end{align*}
	The  \textit{Calder\'on norm} 
	$\|\cdot\|_{\gamma,p(\cdot),\omega}\coloneqq \|\cdot
 \|_{p(\cdot),\omega}+\inf_{g\in \mathrm{G}_{\gamma}(\cdot)}{\big\{\|g\|_{p(\cdot),\omega}\big\}}$ turns $C^{\gamma,p(\cdot)}(\omega)$~into~a~\mbox{Banach}~space. 
 If $p^->1$, for every $f\in C^{\gamma,p(\cdot)}(\omega)$, we  denote by 
 \begin{align*}
     \vert \nabla^{\gamma} f\vert \coloneqq\textup{arg\,min}_{g\in \mathrm{G}_{\gamma}(f)}\big\{\|g\|_{p(\cdot),\omega}\big\}\in L^{p(\cdot)}(\omega)\,,
 \end{align*}
 the \textit{minimal $\gamma$-order Calder\'on gradient}.\enlargethispage{14mm}

	\subsection{(Generalized) $N$-functions}
	
	\hspace{5mm}We call a convex function $\psi\colon\mathbb{R}_{\geq 0} \to \mathbb{R}_{\geq 0}$ an
	\textit{$N$-function} if it satisfies ${\psi(0)=0}$,~${\psi(r)>0}$~for~all~${r>0}$,
	$\frac{\psi(r)}{r}\hspace{-0.1em}\to\hspace{-0.1em} 0$ $(r\hspace{-0.1em}\rightarrow\hspace{-0.1em} 0)$, and
	$\frac{\psi(r)}{r}\hspace{-0.1em}\to\hspace{-0.1em}+\infty$ $(r\hspace{-0.1em}\rightarrow\hspace{-0.1em}+\infty)$.  
    For~each \mbox{$N$-function} ${\psi \colon\hspace{-0.1em}\mathbb{R}_{\geq 0} \hspace{-0.1em}\to \hspace{-0.1em}\mathbb{R}_{\geq 0}}$, we define the corres-ponding 
    \textit{(Fenchel) conjugate \mbox{$N$-function}} $\psi^*\colon\hspace{-0.1em}\mathbb{R}_{\geq 0} \hspace{-0.1em}\to \hspace{-0.1em}\mathbb{R}_{\geq 0}$,~for~every~$r\hspace{-0.1em}\ge\hspace{-0.1em} 0$,~by
	${\psi^*(r)\hspace{-0.1em}\coloneqq\hspace{-0.1em} \sup_{s \geq 0} \{r\,s
		-\psi(s)\}}$. An $N$-function $\psi$ satisfies~the~\textit{$\Delta_2$-condition} if there exists a constant $c> 2$ such that  $\psi(2\,r) \leq c\, \psi(r)$ for~all~${r \ge 0}$. We shall denote the smallest such~constant~by~$\Delta_2(\psi)>0$.

   If $\psi,\psi^*\colon\mathbb{R}_{\ge 0}\to \mathbb{R}_{\ge 0}$ satisfy the $\Delta_2$-condition, then there holds 
	the following \textit{$\varepsilon$-Young inequality}: for every
	$\varepsilon> 0$, there exists a constant $c_\varepsilon>0 $, depending only on $\varepsilon>0$ and 
	$\Delta_2(\psi),\Delta_2( \psi ^*)<\infty$,~such~that for every $ r,s\geq0 $, there holds 
	\begin{align}
		\label{ineq:young}
		s\,r\leq  c_\varepsilon \,\psi^*(s)+\varepsilon \, \psi(r)\,.
	\end{align}
	For a (Lebesgue) measurable set $\omega\subseteq \mathbb{R}^n$, $n\in \mathbb{N}$, we call a function $\psi \colon \omega \times \mathbb{R}_{\ge 0} \to \mathbb{R}_{\ge 0}$  \textit{generalized $N$-function} if it is a Carath\'eodory mapping\footnote{For a (Lebesgue) measurable set $\omega\subseteq \mathbb{R}^n$, $n\in \mathbb{N}$, a function $\psi\colon \omega\times\mathbb{R}_{\ge 0}\to \mathbb{R}_{\ge 0}$ is called \textit{Carath\'eodory mapping} if $\psi(x,\cdot)\colon \mathbb{R}_{\ge 0}\to \mathbb{R}_{\ge 0}$ for a.e.\ $x\in \omega$ is continuous and $\psi(\cdot,t)\colon\omega\to \mathbb{R}_{\ge 0}$ for all $t\in   \mathbb{R}_{\ge 0}$ is (Lebesgue) measurable.} and $\psi(x,\cdot)\colon \mathbb{R}_{\ge 0}\to \mathbb{R}_{\ge 0}$ is an
	$N$-function for a.e.\ $x \in \omega$. For a (Lebesgue) measurable function $f\in L^0(\omega)$ and a generalized $N$-function $\psi \colon  \omega\times\mathbb{R}_{\ge 0} \to\mathbb{R}_{\ge 0}$,   the \textit{modular (with respect to $\psi$)} is defined by
	\begin{align*}
		\rho_{\psi,\omega}(f)\coloneqq \int_\omega \psi(\cdot,\vert f\vert )\,\textup{d}x\,.
	\end{align*} For a generalized 
	$N$-function  $\psi \colon  \omega\times\mathbb{R}_{\ge 0} \to\mathbb{R}_{\ge 0}$,  the \textit{generalized Orlicz space} is defined by 
	\begin{align*}
		L^{\psi}(\omega)\coloneqq \big\{f\in L^0(\omega)\mid
		\rho_{\psi,\omega}(f)<\infty\big\}\,.
	\end{align*}
	The \textit{Luxembourg norm}, for every $f\in L^{\psi}(\omega) $ defined by 
	\[\smash{\norm {f}_{\psi,\omega}}\coloneqq  \inf \bigg\{\lambda >0\;\bigg|\;
			\rho_{\psi,\omega}\bigg(\frac{f}{\lambda}\bigg) \le 1\bigg\}\,,\] turns
	$L^\psi(\omega)$ 
    into a Banach space (\textit{cf}.\ \cite[Thm.\  2.3.13]{dhhr}).\vspace{-1mm} 

    \subsection{Bochner--Nikoski\u{\i} spaces}

    \hspace{5mm}To \hspace{-0.1mm}describe \hspace{-0.1mm}fractional \hspace{-0.1mm}regularity \hspace{-0.1mm}in \hspace{-0.1mm}time \hspace{-0.1mm}of \hspace{-0.1mm}the \hspace{-0.1mm}velocity \hspace{-0.1mm}vector \hspace{-0.1mm}field,  \hspace{-0.1mm}we \hspace{-0.1mm}resort \hspace{-0.1mm}to \hspace{-0.1mm}\mbox{Bochner--Nikolski\u{\i}} spaces. 
    For a (real) Banach space $(X,\|\cdot\|_X)$,  an open interval $J\subseteq \mathbb{R}$, $p\in [1,\infty)$, and $f\in L^p(J;X)$, the \emph{Bochner--Nikolski\u{\i} semi-norm} is defined by
    \begin{align*}
        [f]_{N^{\beta,p}(J;X)}\coloneqq {\sup}_{ \tau\in \mathbb{R}\setminus\{0\}}{\big\{\tfrac{1}{\vert \tau\vert^\beta}\|f(\cdot+  \tau)-f\|_{L^p(J\cap (J-\tau);X)}\big\}}<\infty\,.
    \end{align*} 
    Then, the \textit{Bochner--Nikolski\u{\i} space} is defined by
	\begin{align*}
		\smash{N^{\beta,p}(J;X)\coloneqq \big\{ f\in L^p(J;X)\mid [f]_{N^{\beta,p}(J;X)}<\infty\big\}\,.}
	\end{align*}
	The \hspace{-0.1mm}\textit{Bochner--Nikolski\u{\i} \hspace{-0.1mm}norm} \hspace{-0.1mm}$\|\cdot\|_{N^{\beta,p}(J;X)}\hspace{-0.15em}\coloneqq\hspace{-0.15em} \| \cdot\|_{L^p(J;X)}+[\cdot]_{N^{\beta,p}(J;X)}$ \hspace{-0.1mm}turns  $\smash{N^{\beta,p}(J;X)}$~\hspace{-0.1mm}into~\hspace{-0.1mm}a~\hspace{-0.1mm}\mbox{Banach}~\hspace{-0.1mm}space.

	\subsection{Basic properties of the non-linear operators}\label{sec:basic} 
	
	\hspace*{5mm} For a (Lebesgue) measurable set $\omega\hspace{-0.15em}\subseteq\hspace{-0.15em} \mathbb{R}^n$, $n\hspace{-0.15em}\in\hspace{-0.15em} \mathbb{N}$, and  (Lebesgue) measurable~functions~${f,g\colon\hspace{-0.15em} \omega \hspace{-0.15em}\to\hspace{-0.15em} \mathbb{R}_{\ge 0}}$, we write
	$f\sim g$ (or $f\lesssim g$) if~there~exists a constant $c>0$ such that $c^{-1}\,g\leq f\leq c\,g$ a.e.\ in $\omega$ (or $f\leq c\,g$ a.e.\ in $\omega$). In~particular, if not otherwise stated, we always assume that the implicit constants in `$\sim$' and `$\lesssim$'   depend only on $p^-,p^+>1$, $\delta\ge 0$, and $\nu_0 >0$.
	
	Throughout the entire paper, for the extra-stress tensor $\bfS\colon Q_T\times \mathbb{R}^{d\times d}\to \mathbb{R}^{d\times d}_{\textup{sym}}$, 
    we~will~assume~that there exist $p\hspace{-0.1em}\in\hspace{-0.1em} \mathcal{P}^{\infty}(Q_T)$ with $p^-\hspace{-0.1em}>\hspace{-0.1em}1$,  $\nu_0\hspace{-0.1em}>\hspace{-0.1em}0$, and $\delta\hspace{-0.1em}\ge\hspace{-0.1em} 0$ such that  for a.e.\ $(t,x)^\top\hspace{-0.1em}\in\hspace{-0.1em} Q_T$~and~every~${\bfA\hspace{-0.1em}\in \hspace{-0.1em}\mathbb{R}^{d\times d}}$, we have that
	\begin{align}\label{def:A}
		\bfS(t,x,\bfA)\coloneqq \nu_0\,(\delta+\vert \bfA^{\textup{sym}}\vert )^{p(t,x)-2}\bfA^{\textup{sym}}\,.
	\end{align} 
	For the same $p\in \mathcal{P}^{\infty}(Q_T)$,  $\nu_0>0$, and $\delta\ge 0$ as in the definition \eqref{def:A}, we introduce the \textit{special generalized $N$-function}~$\varphi\colon Q_T\times\mathbb{R}_{\ge 0}\to \mathbb{R}_{\ge 0}$, for a.e.\ $(t,x)^\top\in Q_T$ and every $r\ge 0$, defined by
	\begin{align} 
		\label{eq:def_phi} 
		\varphi(t,x,r)\coloneqq \int _0^r \varphi'(t,x,s)\, \mathrm ds\,,\quad\text{where}\quad
		\varphi'(t,x,r) \coloneqq (\delta +r)^{p(t,x)-2} r\,.
	\end{align}
	For a given generalized $N$-function $\psi\colon Q_T\times \mathbb{R}_{\ge 0}\to \mathbb{R}_{\ge 0}$, let us introduce \textit{shifted generalized $N$-functions} $\psi_a\colon Q_T\times \mathbb{R}_{\ge 0}\to \mathbb{R}_{\ge 0}$, ${a\ge 0}$, for a.e.\ $(t,x)^\top\in Q_T$ and every $a,r\ge 0$, defined by
	\begin{align}
		\label{eq:phi_shifted}
		\psi_a(t,x,r)\coloneqq \int _0^t \psi_a'(t,x,s)\, \mathrm ds\,,\quad\text{where}\quad
		\psi'_a(t,x,r)\coloneqq \psi'(t,x,a+r)\frac {r}{a+r}\,.
	\end{align}
	
	\begin{remark} \label{rem:phi_a}
		Based on the description above, for the special $N$-function $\varphi\colon Q_T\times \mathbb{R}_{\ge 0}\to \mathbb{R}_{\ge 0}$ defined in \eqref{eq:def_phi}, uniformly with respect to ${a,r\ge 0}$ and  a.e.\ $(t,x)^\top\in Q_T$, we have that
		\begin{align}
			\varphi_a(t,x,r)& \sim (\delta+a+r)^{p(t,x)-2} r^2\,,\label{rem:phi_a.1}\\
			(\varphi_a)^*(t,x,r)
			&	\sim ((\delta+a)^{p(t,x)-1} + r)^{\smash{p'(t,x)-2}} r^2\,.\label{rem:phi_a.2}
		\end{align}
    Note that the families $\{\varphi_a\}_{\smash{a \ge 0}},\{(\varphi_a)^*\}_{\smash{a \ge 0}}\colon Q_T\times  \mathbb{R}_{\ge 0}\to \mathbb{R}_{\ge 0}$, uniformly with respect to ${a\ge 0}$ and  a.e.\ $(t,x)^\top\in Q_T$, satisfy the $\Delta_2$-condition 
    with 
		\begin{align}
			\textup{ess\,sup}_{(t,x)^\top\in Q_T}{\big\{{\sup}_{a \ge  0}{\{\Delta_2(\varphi_a(t,x,\cdot))\}}\big\}}& \lesssim 2^{\max \{2,p^+\}}\,,\label{rem:phi_a.3}\\
			\textup{ess\,sup}_{(t,x)^\top\in Q_T}{\big\{{\sup}_{a \ge  0}{\big\{\Delta_2((\varphi_a)^*(t,x,\cdot))\}}\big\}} &\lesssim 2^{\max \{2,(p^-)'\}}\,.\label{rem:phi_a.4}
		\end{align}
\end{remark}
	
	Now, motivated by the definition \eqref{def:A} of the extra-stress tensor $\bfS\colon Q_T\times\mathbb{R}^{d\times d}\to \mathbb{R}^{d\times d}_{\textup{sym}}$ , we consider the  mappings $\bfF,\bfF^*\colon Q_T\times\mathbb{R}^{d\times d}\to \mathbb{R}^{d\times d}_{\textup{sym}}$, for a.e.\ $(t,x)^\top\in Q_T$~and~every~${\bfA\in \mathbb{R}^{d\times d}}$,~defined~by
	\begin{align}
		\begin{aligned}
			\bfF(t,x,\bfA)&\coloneqq (\delta+\vert \bfA^{\textup{sym}}\vert)^{\smash{\frac{p(t,x)-2}{2}}}\bfA^{\textup{sym}}\,,\\ \bfF^*(t,x,\bfA)&\coloneqq (\delta^{p(t.x)-1}+\vert \bfA^{\textup{sym}}\vert)^{\smash{\frac{p'(t,x)-2}{2}}}\bfA^{\textup{sym}}
			\,.\label{eq:def_F}
		\end{aligned}
	\end{align}
    
    The relations between 
	$\bfS,\bfF,\bfF^*\colon Q_T\times\mathbb{R}^{d\times d}
	\to \mathbb{R}^{d\times d}_{\textup{sym}}$ and
	$\varphi_a,(\varphi^*)_a,(\varphi_a)^*\colon Q_T\times\mathbb{R}_{\ge
		0}\to \mathbb{R}_{\ge
		0}$,~${a\ge 0}$, are summarized in
	the following proposition.
	
	\begin{proposition}
		\label{lem:hammer}
		For every  $\bfA, \bfB \in \mathbb{R}^{d\times d}$, $r\ge 0$, and  a.e.\ $ (t,x)^\top, (t',x')^\top\in Q_T$, we have that
		\begin{align}
			(\bfS(t,x,\bfA) - \bfS(t,x,\bfB))
			\cdot(\bfA-\bfB ) &\sim \smash{\vert \bfF(t,x,\bfA) - \bfF(t,x,\bfB)\vert^2}\notag
			\\&\sim \varphi_{\vert \bfA^{\textup{sym}} \vert }(t,x,\vert \bfA^{\textup{sym}} - \bfB^{\textup{sym}} \vert )\label{eq:hammera}
			\\&\sim (\varphi_{\vert \bfA^{\textup{sym}} \vert })^*(t,x,\vert \bfS(t,x,\bfA)-\bfS(t,x,\bfB)\vert )
			\,,\notag\\
			\smash{\vert \bfF^*(t,x,\bfA) - \bfF^*(t,x,\bfB)\vert^2}
			\label{eq:hammerf}
			&\sim \smash{\smash{(\varphi^*)}_{\smash{\vert \bfA^{\textup{sym}} \vert }}(t,x,\vert \bfA^{\textup{sym}}  - \bfB^{\textup{sym}} \vert )}\,,\\
			\label{eq:hammerg}
			\smash{\smash{(\varphi^*)}_{\smash{\vert \bfS(t,x,\bfA)\vert }}(t,x,r)}
			&\sim \smash{\smash{(\varphi}_{\smash{\vert \bfA^{\textup{sym}} \vert }})^*(t,x,r)}\,,\\
			\label{eq:hammerh}
			\smash{\vert \bfF^*(t,x,\bfS(t,x,\bfA)) - \bfF^*(t,x,\bfS(t',x',\bfB))\vert^2}
			&\sim \smash{\smash{(\varphi}_{\smash{\vert \bfA^{\textup{sym}} \vert }})^*(t,x,\vert \bfS(t,x,\bfA)-\bfS(t',x',\bfB)\vert)}\,.
		\end{align}
	\end{proposition}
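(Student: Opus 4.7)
The crucial observation is that all four relations are \emph{pointwise} in $(t,x)^\top \in Q_T$: fixing a.e.\ $(t,x)^\top \in Q_T$ and writing $p \coloneqq p(t,x) \in [p^-,p^+]\subset (1,\infty)$, the maps $\bfS(t,x,\cdot)$, $\bfF(t,x,\cdot)$, $\bfF^*(t,x,\cdot)$ and the $N$-functions $\varphi(t,x,\cdot)$, $\varphi^*(t,x,\cdot)$ coincide with the constant-exponent objects from the classical Orlicz/$p$-Laplace theory (à la Diening--Ettwein and Diening--R\r{u}žička). Hence the argument reduces to proving each equivalence for a frozen $p\in(1,\infty)$ with implicit constants depending continuously on $p$; uniformity in $(t,x)^\top$ then follows automatically from $p^->1$ and $p^+<\infty$.

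For the chain \eqref{eq:hammera}, I would first establish $(\bfS(t,x,\bfA)-\bfS(t,x,\bfB))\cdot(\bfA-\bfB) \sim |\bfF(t,x,\bfA)-\bfF(t,x,\bfB)|^2$ by computing the Jacobians of $\bfA\mapsto\bfS(t,x,\bfA)$ and $\bfA\mapsto\bfF(t,x,\bfA)$ and verifying that, as quadratic forms on $\mathbb{R}^{d\times d}_{\textup{sym}}$, both are comparable to $(\delta+|\bfA^{\textup{sym}}|)^{p-2}$ times the orthogonal projection onto $\mathbb{R}^{d\times d}_{\textup{sym}}$; integrating along the segment from $\bfB$ to $\bfA$ yields the claim and, simultaneously, comparability with $(\delta+|\bfA^{\textup{sym}}|+|\bfB^{\textup{sym}}|)^{p-2}|\bfA^{\textup{sym}}-\bfB^{\textup{sym}}|^2$. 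By \eqref{rem:phi_a.1} the latter is equivalent to $\varphi_{|\bfA^{\textup{sym}}|}(t,x,|\bfA^{\textup{sym}}-\bfB^{\textup{sym}}|)$ once one absorbs $|\bfB^{\textup{sym}}|$ into the shift, which is standard for shifted $N$-functions with $\Delta_2$-controlled shift (using \eqref{rem:phi_a.3}). The last member of the chain follows from Young's equality for the $\Delta_2$-compatible pair $\varphi_a/(\varphi_a)^*$: since $|\bfS(t,x,\bfA)-\bfS(t,x,\bfB)| \sim \varphi'_{|\bfA^{\textup{sym}}|}(t,x,|\bfA^{\textup{sym}}-\bfB^{\textup{sym}}|)$, the elementary identity $(\varphi_a)^*(t,x,\varphi'_a(t,x,r))\sim \varphi_a(t,x,r)$ closes the loop.

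Relation \eqref{eq:hammerf} is handled by exactly the same strategy applied to $\bfF^*$ and $\varphi^*$, using \eqref{rem:phi_a.2} in place of \eqref{rem:phi_a.1}. For \eqref{eq:hammerg}, the key input is $|\bfS(t,x,\bfA)| \sim \varphi'(t,x,|\bfA^{\textup{sym}}|)$ together with the shift-exchange identity $(\varphi^*)_{\varphi'(t,x,a)}(t,x,\cdot) \sim (\varphi_a)^*(t,x,\cdot)$, which follows by direct comparison of the asymptotic formulas \eqref{rem:phi_a.1} and \eqref{rem:phi_a.2}. Finally, \eqref{eq:hammerh} is obtained by applying \eqref{eq:hammerf}, at the base point $(t,x)$, with $\bfS(t,x,\bfA)$ and $\bfS(t',x',\bfB)$ (both already symmetric) in place of $\bfA$ and $\bfB$, and then converting the resulting shift $|\bfS(t,x,\bfA)|$ into $|\bfA^{\textup{sym}}|$ via \eqref{eq:hammerg}.

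The main obstacle I expect is not any single analytical step but the bookkeeping: tracking that all implicit constants depend only on $p^-,p^+,\delta,\nu_0$, uniformly in $(t,x)^\top$, despite the continuous dependence of the classical constants on the exponent. A secondary subtlety concerns \eqref{eq:hammerh}, where the shift in $(\varphi_{|\bfA^{\textup{sym}}|})^*$ is taken at $(t,x)$ while the difference $\bfS(t,x,\bfA)-\bfS(t',x',\bfB)$ mixes two base points; here it is crucial that \eqref{eq:hammerg} is a purely pointwise identity at $(t,x)$, so the mismatch of base points enters only through the \emph{argument} of $(\varphi_{|\bfA^{\textup{sym}}|})^*(t,x,\cdot)$ and never through the shift itself.
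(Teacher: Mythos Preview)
Your proposal is correct and matches the paper's approach: the paper simply cites \cite[Rem.~A.9]{BDS15} for \eqref{eq:hammera}--\eqref{eq:hammerg} (your sketch is precisely the classical constant-exponent argument that reference contains, made uniform via $p^-\le p(t,x)\le p^+$), and derives \eqref{eq:hammerh} from \eqref{eq:hammerf} combined with \eqref{eq:hammerg} exactly as you do. Your identification of the subtlety in \eqref{eq:hammerh}---that the mixed base points enter only through the argument, not the shift---is the right way to see why the composition works cleanly.
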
 
	
	\begin{proof}
        For the equivalences \eqref{eq:hammera}--\eqref{eq:hammerg}, see  \cite[Rem.\ A.9]{BDS15}. 
         The equivalence \eqref{eq:hammerh} is a consequence of the equivalence \eqref{eq:hammerf} together with equivalence \eqref{eq:hammerg}.
	\end{proof}
	
	Furthermore, throughout the entire paper, we will frequently utilize the following $\varepsilon$-Young~type~result on a change of shift in generalized $N$-functions.
	
	\begin{lemma}\label{lem:shift-change}
		For each $\varepsilon>0$, there exists a constant $c_\varepsilon \geq 1$, depending on $\varepsilon>0$, $p^-,p^+>1$, and $\delta\ge 0$, such that for every $\bfA, \bfB \in \mathbb{R}^{d\times d}$, $r\ge 0$, 
		 and a.e.\ $  (t,x)^\top\in Q_T$, we have that
		\begin{align}
			\varphi_{\vert \bfA^{\textup{sym}} \vert}(t,x,r)&\leq c_\varepsilon\, \varphi_{\vert \bfB^{\textup{sym}} \vert }(t,x,r)
			+\varepsilon\, \vert \bfF(t,x,\bfA) - \bfF(t,x,\bfB)\vert^2\,,\label{lem:shift-change.1}
			\\
			(\varphi_{\vert \bfA^{\textup{sym}} \vert})^*(t,x,r)&\leq c_\varepsilon\, (\varphi_{\vert \bfB^{\textup{sym}} \vert })^*(t,x,r)
			+\varepsilon\, \vert \bfF(t,x,\bfA) - \bfF(t,x,\bfB)\vert^2\,.\label{lem:shift-change.3}
		\end{align}
	\end{lemma}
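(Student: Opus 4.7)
My plan is to argue pointwise at each $(t,x)^\top\in Q_T$: since $p(t,x)\in[p^-,p^+]$ a.e., the families $\{\varphi_a(t,x,\cdot)\}_{a\ge 0}$ and $\{(\varphi_a)^*(t,x,\cdot)\}_{a\ge 0}$ satisfy the $\Delta_2$-condition uniformly in $(t,x)$ and $a\ge 0$ by \eqref{rem:phi_a.3}--\eqref{rem:phi_a.4}, so it suffices to establish the claimed bounds at fixed $(t,x)^\top\in Q_T$ with constants depending only on $p^-,p^+,\delta$. This is the variable-exponent incarnation of the classical shift-change inequality for shifted $N$-functions, in the spirit of the constant-exponent results underlying \cite{BDS15}.

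For \eqref{lem:shift-change.1}, I would split into two cases according to the relative size of $\vert\bfA^{\textup{sym}}-\bfB^{\textup{sym}}\vert$ with respect to $\delta+\vert\bfA^{\textup{sym}}\vert$. In the regime $\vert\bfA^{\textup{sym}}-\bfB^{\textup{sym}}\vert\le\tfrac12(\delta+\vert\bfA^{\textup{sym}}\vert)$, the triangle inequality yields $\delta+\vert\bfA^{\textup{sym}}\vert+r\sim\delta+\vert\bfB^{\textup{sym}}\vert+r$ for every $r\ge 0$, so \eqref{rem:phi_a.1} immediately gives $\varphi_{\vert\bfA^{\textup{sym}}\vert}(t,x,r)\sim\varphi_{\vert\bfB^{\textup{sym}}\vert}(t,x,r)$ with a constant depending only on $p^\pm$, and the inequality is obtained by absorbing into $c_\varepsilon$. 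In the complementary regime one has $\delta+\vert\bfA^{\textup{sym}}\vert\lesssim\vert\bfA^{\textup{sym}}-\bfB^{\textup{sym}}\vert$, and \eqref{eq:hammera} gives $\varphi_{\vert\bfA^{\textup{sym}}\vert}(t,x,\vert\bfA^{\textup{sym}}-\bfB^{\textup{sym}}\vert)\sim\vert\bfF(t,x,\bfA)-\bfF(t,x,\bfB)\vert^2$. Using the representation \eqref{rem:phi_a.1}, one then estimates $\varphi_{\vert\bfA^{\textup{sym}}\vert}(t,x,r)$ from above by a multiple of $\varphi_{\vert\bfB^{\textup{sym}}\vert}(t,x,r)+\varphi_{\vert\bfA^{\textup{sym}}\vert}(t,x,\vert\bfA^{\textup{sym}}-\bfB^{\textup{sym}}\vert)$, and the $\varepsilon$-Young inequality \eqref{ineq:young} applied to the complementary pair $(\varphi_{\vert\bfB^{\textup{sym}}\vert},(\varphi_{\vert\bfB^{\textup{sym}}\vert})^*)$ allows the shifted $N$-function term to be absorbed into $\varepsilon\,\vert\bfF(t,x,\bfA)-\bfF(t,x,\bfB)\vert^2$, at the cost of a constant $c_\varepsilon$ whose uniformity in $(t,x)$ is guaranteed by \eqref{rem:phi_a.3}--\eqref{rem:phi_a.4}.

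The inequality \eqref{lem:shift-change.3} follows by an identical strategy, now invoking the representation \eqref{rem:phi_a.2} for the conjugate shifted $N$-function, together with \eqref{eq:hammerf}--\eqref{eq:hammerg} to relate $\vert\bfF(t,x,\bfA)-\bfF(t,x,\bfB)\vert^2$ to the relevant conjugate shifted $N$-function. The main obstacle is bookkeeping: the sign of $p(t,x)-2$ varies over $Q_T$, so the comparison between $(\delta+\vert\bfA^{\textup{sym}}\vert+r)^{p(t,x)-2}$ and $(\delta+\vert\bfB^{\textup{sym}}\vert+r)^{p(t,x)-2}$ in the large-difference regime must be handled separately in the subquadratic ($p(t,x)<2$) and superquadratic ($p(t,x)\ge 2$) cases. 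Verifying that all resulting constants depend only on $p^-,p^+,\delta,\varepsilon$, and not on $(t,x)$, is routine once the uniform $\Delta_2$-bounds \eqref{rem:phi_a.3}--\eqref{rem:phi_a.4} are at hand.
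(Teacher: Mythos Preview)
The paper does not give its own proof; it simply cites \cite[Rem.~A.9]{BDS15}, so your sketch is in fact more detailed than what the paper provides. Your pointwise reduction and the case split on the size of $\vert\bfA^{\textup{sym}}-\bfB^{\textup{sym}}\vert$ relative to $\delta+\vert\bfA^{\textup{sym}}\vert$ are exactly the standard route behind that reference, and the uniformity in $(t,x)$ via \eqref{rem:phi_a.3}--\eqref{rem:phi_a.4} is the right observation.

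There is one technical slip. In the large-difference regime you correctly arrive at an estimate of the form $\varphi_{\vert\bfA^{\textup{sym}}\vert}(t,x,r)\lesssim \varphi_{\vert\bfB^{\textup{sym}}\vert}(t,x,r)+\varphi_{\vert\bfA^{\textup{sym}}\vert}(t,x,\vert\bfA^{\textup{sym}}-\bfB^{\textup{sym}}\vert)$, but the $\varepsilon$-Young inequality \eqref{ineq:young} is not the tool that converts this into the $\varepsilon$-version: Young bounds a \emph{product} $sr$, and there is no product here. The $\varepsilon$ in the shift-change lemma instead comes from introducing a free parameter $M=M(\varepsilon)$ in the case split itself. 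Concretely, one splits on whether $\vert\bfA^{\textup{sym}}-\bfB^{\textup{sym}}\vert\le M(\delta+\vert\bfB^{\textup{sym}}\vert+r)$ (in which case $\varphi_{\vert\bfA^{\textup{sym}}\vert}(t,x,r)\le c(M)\,\varphi_{\vert\bfB^{\textup{sym}}\vert}(t,x,r)$ via \eqref{rem:phi_a.1}) or not (in which case $r\le M^{-1}\vert\bfA^{\textup{sym}}-\bfB^{\textup{sym}}\vert$, and the homogeneity-type bound $\varphi_a(\lambda t)\lesssim \lambda^{\min\{2,p^-\}}\varphi_a(t)$ for $\lambda\le 1$ yields $\varphi_{\vert\bfA^{\textup{sym}}\vert}(t,x,r)\le c\,M^{-\min\{2,p^-\}}\varphi_{\vert\bfA^{\textup{sym}}\vert}(t,x,\vert\bfA^{\textup{sym}}-\bfB^{\textup{sym}}\vert)$). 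Choosing $M$ large then gives the $\varepsilon$. The same remark applies to your treatment of \eqref{lem:shift-change.3}.
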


    \begin{proof}
        See \cite[Rem.\ A.9]{BDS15}.
    \end{proof}

	\subsection{Log-Hölder continuity and important related results}\enlargethispage{5mm}
	
	\hspace*{5mm}In this section, we discuss the minimum regularity requirement on a variable power-law index for some relevant function space theory, which is known as  $\log$-H\"older 
continuity, and collect some related results that will be used in the later analysis. 

    A bounded exponent $p\in \mathcal P^\infty (Q_T)$ is 
	\textit{$\log$-Hölder continuous}, written $p\in \mathcal P^\textup{log}(Q_T)$, if there exists a  constant $c>0$ such that for every $(t,x)^\top,(t',x')^\top\in Q_T$ with $0<
    \vert t-t'\vert+\vert x-x'\vert
     <\frac{1}{2}$, there holds
	\begin{align}\label{def:parabolic_log_hoelder}
		\vert p(t,x)-p(t',x')\vert \leq \frac{c}{-\log( \vert t-t'\vert+\vert x-x'\vert)}\,.
	\end{align}
	The smallest  constant $c>0$ such that \eqref{def:parabolic_log_hoelder} holds is called  \textit{$\log$-Hölder constant} and is~denoted~by~$[p]_{\log,Q_T}$.
		
Let us recall the following  substitute of Jensen's inequality for shifted generalized $N$-functions, where the shift is constant, the so-called \textit{key estimate}.	

\begin{lemma}[key estimate]\label{lem:key-estimate}
Assume that $p\in \mathcal{P}^{\log}(Q_T)$. Then, for each $n>0$, there exists a constant $c>0$, depending on $n$, $[p]_{\log,Q_T}$, and $p^-$, such that for every cube (or ball) $Q\subseteq Q_T$ with side-length $\ell(Q)\leq 1$, $a\ge 0$, $f\in L^{p'(\cdot,\cdot)}(Q)$ with $a+\langle \vert f\vert \rangle_Q\leq \vert Q\vert^{-n}$, and for a.e.\ $(t,x)^\top\in Q$, there holds
			\begin{align*}
				(\varphi_a)^*(t,x,\langle  \vert f\vert \rangle_Q)\leq c\,\big( \langle (\varphi_a)^*(t,\cdot,\vert f\vert )\rangle_Q+\vert Q\vert^n\big)\,.
			\end{align*}
		\end{lemma}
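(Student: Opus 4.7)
The plan is to reduce the estimate to a pointwise comparison of $(\varphi_a)^*$ at different space-time points, and then to control that comparison via the $\log$-Hölder continuity of $p$ (which passes to $p'$ because $p^->1$). First, since $(\varphi_a)^*(t,x,\cdot)$ is an $N$-function and hence convex, Jensen's inequality applied to the probability measure $|Q|^{-1}\mathrm{d}z$ on $Q$ yields
\[
(\varphi_a)^*(t,x,\langle |f|\rangle_Q)\leq \big\langle (\varphi_a)^*(t,x,|f|)\big\rangle_Q.
\]
The task then becomes to replace $(\varphi_a)^*(t,x,|f(s,y)|)$ inside the average by $(\varphi_a)^*(s,y,|f(s,y)|)$, at the cost of an additive $|Q|^n$ on the right-hand side.

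For the pointwise comparison I would use the explicit form from \eqref{rem:phi_a.2}, namely $(\varphi_a)^*(t,x,r)\sim((\delta+a)^{p(t,x)-1}+r)^{p'(t,x)-2}\,r^2$, and split the integrand according to the size of $r=|f(s,y)|$. In the small regime, $r\leq |Q|^{-m}$ for an $m$ to be fixed in terms of $n,p^-,p^+$, the $\log$-Hölder bound
\[
|p'(t,x)-p'(s,y)|\leq \frac{c}{-\log(|t-s|+|x-y|)}\leq \frac{c}{-\log \ell(Q)},
\]
combined with $|Q|\sim \ell(Q)^{d+1}$ gives $r^{|p'(t,x)-p'(s,y)|}\leq e^{mc(d+1)}$, uniformly, and a similar control applies to the $(\delta+a)$-prefactor via $\delta+a\leq 1+|Q|^{-n}$; this delivers $(\varphi_a)^*(t,x,r)\lesssim (\varphi_a)^*(s,y,r)$ pointwise. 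In the large regime $r>|Q|^{-m}$, Chebyshev's inequality combined with the hypothesis $\langle |f|\rangle_Q\leq |Q|^{-n}$ controls the measure of $\{|f|>|Q|^{-m}\}\cap Q$; choosing $m>n$ appropriately in terms of $p^+$, the contribution of this set to $\langle(\varphi_a)^*(t,x,|f|)\rangle_Q$ is bounded by a constant times $|Q|^n$.

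The main obstacle will be the orchestration of the case split: $m$ must be large enough for the small-$r$ log-Hölder estimate to absorb all prefactors depending on $p^-,p^+$, yet small enough so that the large-$r$ tail, after invoking the $p'(t,x)$-growth of $(\varphi_a)^*$ and Chebyshev's inequality, is dominated by $|Q|^n$. Careful bookkeeping of the dependence of the constants on $p^-,p^+,\delta,[p]_{\log,Q_T}$ and $n$ will be required; much of this technical apparatus is already developed in variable-exponent theory (see, e.g., \cite{dhhr}), so the cleanest write-up is to invoke those tools and verify that the shifted family $\{(\varphi_a)^*\}_{a\geq 0}$ satisfies the uniform $\Delta_2$-bound from Remark~\ref{rem:phi_a} which is what makes the classical key estimate applicable uniformly in the shift~$a$.
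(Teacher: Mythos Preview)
The paper does not give its own proof of this lemma; it simply cites \cite[Lem.~2.24]{berselli2023error}. So there is no in-paper argument to compare against, and your closing remark---that the cleanest route is to invoke the established variable-exponent machinery of \cite{dhhr} together with the uniform $\Delta_2$-bounds of Remark~\ref{rem:phi_a}---is precisely what the paper does.

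That said, the sketch you give has a genuine gap in the large-regime step. After Jensen with the $N$-function $(\varphi_a)^*(t,x,\cdot)$ at the \emph{fixed} point $(t,x)$, you must compare $(\varphi_a)^*(t,x,|f(s,y)|)$ with $(\varphi_a)^*(s,y,|f(s,y)|)$ inside the average. On $\{|f|>|Q|^{-m}\}$ Chebyshev indeed gives relative measure $\lesssim|Q|^{m-n}$, but the integrand $(\varphi_a)^*(t,x,|f(s,y)|)\sim|f(s,y)|^{p'(t,x)}$ carries no pointwise upper bound there; whenever $p'(t,x)>p'(s,y)$ the factor $|f(s,y)|^{p'(t,x)-p'(s,y)}$ is unbounded as $|f(s,y)|\to\infty$, and the only $L^1$ information $\langle|f|\rangle_Q\leq|Q|^{-n}$ cannot upgrade to the needed $L^{p'}$ control. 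Small measure alone therefore does not force the contribution to be $\lesssim|Q|^n$, no matter how $m$ is chosen.

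The standard repair reverses the order of operations: first use log-H\"older and the bound $a+\langle|f|\rangle_Q\leq|Q|^{-n}$ on the \emph{single number} $r=\langle|f|\rangle_Q$ to pass from $(\varphi_a)^*(t,x,r)$ to the constant-exponent $N$-function at $p_Q^+\coloneqq\sup_Q p$; then apply Jensen with this fixed $N$-function; finally compare inside the average, where now $(p_Q^+)'\leq p'(s,y)$ guarantees $|f|^{(p_Q^+)'}\leq|f|^{p'(s,y)}$ for all $|f|\geq1$ automatically. The additive $|Q|^n$ enters only from the sub-case $|f(s,y)|<|Q|^{n'}$ for a suitable \emph{small} threshold $n'>0$ (where $(\varphi_a)^*$ is tiny outright), not from a Chebyshev tail at a large threshold.
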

		
		\begin{proof}
            See \cite[Lem.\ 2.24]{berselli2023error}.
\end{proof}

To derive \textit{a priori} error estimates for a fully-discrete FE approximation~of~the unsteady $p(\cdot,\cdot)$-Stokes \hspace{-0.1mm}equations \hspace{-0.1mm}\eqref{eq:ptxStokes}, \hspace{-0.1mm}however, \hspace{-0.1mm}we \hspace{-0.1mm}need \hspace{-0.1mm}more \hspace{-0.1mm}regularity \hspace{-0.1mm}than \hspace{-0.1mm}just \hspace{-0.1mm}$\log$-Hölder~\hspace{-0.1mm}\mbox{continuity}.~\hspace{-0.1mm}More~\hspace{-0.1mm}\mbox{precisely}, we will need that power-law index is \textit{parabolically Hölder continuous}, \textit{i.e.}, $p\hspace{-0.1em}\in \hspace{-0.1em}C^{0,\alpha_{\mathrm{t}},\alpha_{\mathrm{x}}}(\overline{Q_T})$,~${\alpha_{\mathrm{t}},\alpha_{\mathrm{x}}\hspace{-0.1em}\in\hspace{-0.1em} (0,1]}$, which means that there exists a constant $c>0$ such that for every $(t,x)^\top,(\tilde{t},\tilde{x})^\top\in \overline{Q_T}$, there holds
\begin{align}\label{def:parabolic_hoelder}
    \vert p(t,x)-p(\tilde{t},\tilde{x})\vert \leq c\,\big(\vert t-\tilde{t}\vert^{\alpha_{\mathrm{t}}}+\vert x-\tilde{x}\vert^{\alpha_{\mathrm{x}}}\big)\,.
\end{align}
The smallest constant $c>0$ such that \eqref{def:parabolic_hoelder} holds is called \textit{$(\alpha_{\mathrm{t}},\alpha_{\mathrm{x}})$-Hölder semi-norm} and is denoted by $[p]_{\alpha_{\mathrm{t}},\alpha_{\mathrm{x}},Q_T}$.
Note that each parabolically Hölder continuous power-law index is also $\log$-Hölder~continuous and the $\log$-Hölder constant can be estimated by the $(\alpha_{\mathrm{t}},\alpha_{\mathrm{x}})$-Hölder semi-norm.
 
	\section{The  unsteady $p(\cdot,\cdot)$-Stokes equations}\label{sec:p-navier-stokes}\vspace*{-1mm}\enlargethispage{7.5mm}
	
	\hspace{5mm}In this section, we discuss different weak formulations of the unsteady $p(\cdot,\cdot)$-Stokes equations~\eqref{eq:ptxStokes}. For a relevant analytical study, we refer the reader to the contributions \cite{seungchan-thesis-article,alex-book}.\vspace*{-1mm}

\subsection{Weak formulations}\label{sec:wf}\vspace{-1mm}
	
	\hspace*{5mm}
	Let us first introduce the following function spaces:
	\begin{align*} 
			\mathbfcal{V}
   &\coloneqq \big\{\bfv \in (L^{p(\cdot,\cdot)}(Q_T))^d\mid \bfD_{\mathrm{x}}\bfv \in (L^{p(\cdot,\cdot)}(Q_T))^{d\times d}\,,\; \bfv(t)\in (W^{1,p(t,\cdot)}_0(\Omega))^d\text{ for a.e.\ }t\in I\big\}\,,\\
   \mathbfcal{Q}&\coloneqq \smash{W^{-1,\infty}(I;L^{(p^+)'}(\Omega))}\,.  
	\end{align*}
    \hspace{5mm}The corresponding weak formulation of the unsteady $p(\cdot,\cdot)$-Stokes equations \eqref{eq:ptxStokes}~as~a~parabolic non-linear saddle point like problem is the following:
	
	\textit{Problem (Q).}\hypertarget{Q}{} For  given $\bfg\hspace{-0.1em}\in\hspace{-0.1em} (L^{(p^-)'}(Q_T))^d$, $\bfG\hspace{-0.1em}\in\hspace{-0.1em} (L^{p'(\cdot,\cdot)}(Q_T))^{d\times d}_{\textrm{sym}}$, and $\bfv_0\hspace{-0.1em}\in \hspace{-0.1em}H$, find ${(\bfv,q)^\top\hspace{-0.1em}\in\hspace{-0.1em} \mathbfcal{V}\hspace{-0.1em}\times\hspace{-0.1em} \mathbfcal{Q}(0)}$ \hphantom{.}\hspace{5mm}with $\bfv(0)=\bfv_0$ in $H$ such that for every $(\boldsymbol{\varphi},\eta)^\top\in (C^\infty_c(Q_T))^d\times C^\infty_c(Q_T) $, there holds
	\begin{align*}
		\langle \partial_{\mathrm{t}}\bfv,\boldsymbol{\varphi}\rangle_{Q_T}+(\bfS(\cdot,\cdot,\bfD_{\mathrm{x}}\bfv),\bfD_{\mathrm{x}}\boldsymbol{\varphi})_{Q_T}-\langle q,\mathrm{div}_{\mathrm{x}} \boldsymbol{\varphi}\rangle_{Q_T}&=(\bfg,\boldsymbol{\varphi})_{Q_T}+(\bfG,\bfD_{\mathrm{x}}\boldsymbol{\varphi})_{Q_T}\,,\\
		( \eta,\mathrm{div}_{\mathrm{x}} \bfv)_{Q_T}&=0\,, 
	\end{align*}
    \hspace{5mm}where $\mathbfcal{Q}(0)\coloneqq \smash{W^{-1,\infty}(I;L^{(p^+)'}_0(\Omega))}$ and $H\coloneqq \smash{\overline{\{\boldsymbol{\varphi}\in (C_c^\infty(\Omega))^d\mid \textup{div}_{\mathrm{x}}\boldsymbol{\varphi}=0\textup{ in }\Omega\}}^{\smash{\|\cdot\|_{2,\Omega}}}}$. 
    
	\hspace{-5mm}Equivalently, one can reformulate Problem (\hyperlink{P}{P}) in a hydro-mechanical sense (\textit{i.e.},  hiding the pressure):
	
	\textit{Problem (P).}\hypertarget{P}{} For  given $\bfg\hspace{-0.1em}\in\hspace{-0.1em} (L^{(p^-)'}(Q_T))^d$, $\bfG\hspace{-0.1em}\in\hspace{-0.1em} (L^{p'(\cdot,\cdot)}(Q_T))^{d\times d}_{\textrm{sym}}$, and $\bfv_0\hspace{-0.1em}\in \hspace{-0.1em}H$, find $\bfv\in \mathbfcal{V}(0)$ with\linebreak \hphantom{.}\hspace{5mm}$\bfv(0)=\bfv_0$~in~$H$~such~that for every $\boldsymbol{\varphi}\in (C^\infty_c(Q_T))^d$ with $\textup{div}_{\mathrm{x}}\boldsymbol{\varphi}=0$ in $Q_T$, there holds
	\begin{align*}
		\langle \partial_{\mathrm{t}}\bfv,\boldsymbol{\varphi}\rangle_{Q_T}+(\bfS(\cdot,\cdot,\bfD_{\mathrm{x}}\bfv),\bfD_{\mathrm{x}}\boldsymbol{\varphi})_{Q_T}&=(\bfg,\boldsymbol{\varphi})_{Q_T}+(\bfG,\bfD_{\mathrm{x}}\boldsymbol{\varphi})_{Q_T}\,,
	\end{align*}
	\hspace{5mm}where $\mathbfcal{V}(0)\coloneqq \big\{\boldsymbol{\varphi}\in \mathbfcal{V}\mid \mathrm{div}_{\mathrm{x}}\boldsymbol{\varphi}=0\text{ a.e.\ in }Q_T\big\}$.\enlargethispage{1mm}
	
	In the case $p^->\frac{2d}{d+2}$, the well-posedness of Problem (\hyperlink{Q}{Q}) and Problem (\hyperlink{P}{P}) is proved~in~two~steps:~first, using~pseudo-monotone operator theory (\textit{cf}.\ \cite[Thm.\ 8.2]{alex-book}), the well-posedness of Problem~(\hyperlink{P}{P})~is~shown; given the well-posedness of Problem (\hyperlink{P}{P}), the well-posedness of Problem (\hyperlink{Q}{Q})~follows~as~in~\mbox{\cite[Prop.~6.1]{alex-book}}.

    \begin{remark}[equivalent weak formulation]\label{rem:equiv_form}
        Problem (\hyperlink{Q}{Q}) can equivalently be reformulated as follows:\\[1mm]
        For given $\bfg\in (L^{(p^-)'}(Q_T))^d$, $\bfG\in (L^{p'(\cdot,\cdot)}(Q_T))^{d\times d}_{\textup{sym}}$, and $\bfv_0\in H$, find $(\bfv,q)^\top\in \mathbfcal{V}\times \mathbfcal{Q}(0)$ such that for every interval $J\subseteq I$ and $(\boldsymbol{\varphi}_J,\eta_J)^\top\in (W^{1,\smash{p_J^+}(\cdot)}_0(\Omega))^d\times L^{\smash{(p_J^-)'}(\cdot)}_0(\Omega)$, where $p_J^+\coloneqq \sup_{t\in J}{p(t,\cdot)}$ and $p_J^-\coloneqq \inf_{t\in J}{p(t,\cdot)}$~in~$\Omega$, setting $Q_J\coloneqq J\times \Omega$,
        there holds
        \begin{align*}
            (\bfv(\textup{sup}\,J),\boldsymbol{\varphi}_J)_{\Omega}\hspace{-0.1em}+\hspace{-0.1em}(\bfS(\cdot,\cdot,\bfD_{\mathrm{x}}\bfv),\bfD_{\mathrm{x}}\boldsymbol{\varphi}_J)_{Q_J}\hspace{-0.1em}+\hspace{-0.1em}(q,\textup{div}_{\mathrm{x}}\boldsymbol{\varphi}_J)_{Q_J}&\hspace{-0.1em}=\hspace{-0.1em} (\bfv(\textup{inf}\,J),\boldsymbol{\varphi})_{\Omega}\hspace{-0.1em}+\hspace{-0.1em}(\bfg,\boldsymbol{\varphi}_J)_{Q_J}\hspace{-0.1em}+\hspace{-0.1em}(\bfG,\bfD_{\mathrm{x}}\boldsymbol{\varphi}_J)_{Q_J}\,,\\
            (\eta_J,\textup{div}_{\mathrm{x}}\bfv)_{Q_J}&\hspace{-0.1em}=\hspace{-0.1em}0\,.
        \end{align*}\vspace{-7.5mm} 
    \end{remark}

    	\subsection{Regularity assumptions}\vspace{-1mm}

	\hspace*{5mm}In accordance with  \cite[Thm.\ 4.1]{ptx_lap}, for the velocity vector field, 
 it is reasonable~to~expect~the~fractional regularity 
\begin{align}\label{eq:natural_regularity.velocity}
    \hspace{-1.5mm}\left.\begin{aligned} 
        \bfF(\cdot,\cdot,\bfD_{\mathrm{x}}\bfv)&\in N^{\beta_{\mathrm{t}},2}(I;(L^2(\Omega))^{d\times d})\cap L^2(I;(N^{\beta_{\mathrm{x}},2}(\Omega))^{d\times d}) \,,\\
	\bfv&\in L^\infty(I;(N^{\beta_{\mathrm{x}},2}             (\Omega))^d)
    \end{aligned}\;\right\}\;\text{ for some }\beta_{\mathrm{t}}\in (\tfrac{1}{2},1]\,,\;\beta_{\mathrm{x}}\in (0,1] \,.
 \end{align} 
For the kinematic pressure, we propose the fractional regularity
 $q(t)\in C^{\gamma_{\mathrm{x}},p'(t,\cdot)}(\Omega)$~for~a.e.~${t\in I}$~with
 \begin{align}\label{eq:natural_regularity.pressure}
  \vert \nabla_{\mathrm{x}}^{\gamma_{\mathrm{x}}} q\vert \in L^{p'(\cdot,\cdot)}(Q_T)\quad\text{ for some }\gamma_{\mathrm{x}}\in (0,1]\,. 
 \end{align}

An important consequence of the regularity assumption \eqref{eq:natural_regularity.velocity}$_1$ is an improved integrability result.
	
	\begin{lemma}\label{lem:improved_integrability}
		Let $p\in C^0(\overline{Q_T})$ with $p^->1$ and let $\boldsymbol{\varphi} \in \mathbfcal{V}$ with $\bfF(\cdot
        ,\cdot,\bfD_{\mathrm{x}}\boldsymbol{\varphi}) \in N^{\beta_{\mathrm{t}},2}(I;(L^2(\Omega))^{d\times d})\cap L^2(I;(N^{\beta_{\mathrm{x}},2}(\Omega))^{d\times d})$, $\beta_{\mathrm{t}}\in (\frac{1}{2},1]$, $\beta_{\mathrm{x}}\in (0,1]$. Then, there exists a constant $s>1$~such~that~$\bfF(\cdot,\cdot,\bfD_{\mathrm{x}}\boldsymbol{\varphi})\in C^0(\overline{I};(L^{2s}(\Omega))^{d\times d})$.
        In particular, we have that $\smash{\sup_{t\in I}{\big\{\|\bfD_{\mathrm{x}}\boldsymbol{\varphi}\|_{p(t,\cdot)s,\Omega}\big\}}<\infty}$.
	\end{lemma}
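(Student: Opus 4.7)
The plan is to upgrade the time regularity via the Bochner--Nikolski\u{\i} Sobolev--Morrey embedding (available since $\beta_{\mathrm{t}}>\tfrac{1}{2}$), upgrade the spatial regularity via the classical Nikolski\u{\i}--Sobolev embedding, and then interpolate to obtain a continuous-in-time representative valued in a strictly higher Lebesgue space than $L^2(\Omega)$; the bound on $\|\bfD_{\mathrm{x}}\boldsymbol{\varphi}\|_{p(t,\cdot)s,\Omega}$ will then follow from the pointwise relation defining $\bfF$ in \eqref{eq:def_F}.

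Concretely, I would first invoke $N^{\beta_{\mathrm{t}},2}(I;L^2(\Omega))\hookrightarrow C^{0,\beta_{\mathrm{t}}-\frac{1}{2}}(\overline{I};L^2(\Omega))$ to obtain a H\"older continuous representative $\widetilde{\bfF}\colon\overline{I}\to(L^2(\Omega))^{d\times d}$ of $\bfF(\cdot,\cdot,\bfD_{\mathrm{x}}\boldsymbol{\varphi})$. In parallel, the standard spatial Nikolski\u{\i}--Sobolev embedding $N^{\beta_{\mathrm{x}},2}(\Omega)\hookrightarrow L^{q_0}(\Omega)$ for some $q_0>2$ (e.g.\ $q_0=\tfrac{2d}{d-2\beta_{\mathrm{x}}}$ if $2\beta_{\mathrm{x}}<d$, any finite $q_0$ otherwise) promotes the spatial assumption to $\bfF\in L^2(I;L^{q_0}(\Omega))$.

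Next, I would combine these. Choose $s\in(1,q_0/2)$ and $\theta\in(0,1)$ with $\tfrac{1}{2s}=\tfrac{\theta}{2}+\tfrac{1-\theta}{q_0}$, and write, for $t_1,t_2\in\overline{I}$,
\begin{align*}
\|\widetilde{\bfF}(t_1)-\widetilde{\bfF}(t_2)\|_{2s,\Omega}\leq \|\widetilde{\bfF}(t_1)-\widetilde{\bfF}(t_2)\|_{2,\Omega}^{\theta}\bigl(\|\widetilde{\bfF}(t_1)\|_{q_0,\Omega}+\|\widetilde{\bfF}(t_2)\|_{q_0,\Omega}\bigr)^{1-\theta}.
\end{align*}
The first factor decays by the time H\"older continuity from the previous step, so it remains to establish a uniform bound $\widetilde{\bfF}\in L^\infty(I;L^{q_0}(\Omega))$. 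For this, I would estimate $\widetilde{\bfF}(t)$ via its time average $\langle\bfF\rangle_{J_t}$ over a small interval $J_t\ni t$: the average is controlled in $L^{q_0}(\Omega)$ through Jensen's inequality and the membership $\bfF\in L^2(I;L^{q_0}(\Omega))$, while the correction $\widetilde{\bfF}(t)-\langle\bfF\rangle_{J_t}$ is quantitatively small in $L^2(\Omega)$ by Step~1 and can be absorbed after another interpolation against the $L^{q_0}$-bound. Equivalently, one can appeal to a parabolic anisotropic embedding of the form
\begin{align*}
N^{\beta_{\mathrm{t}},2}(I;L^2(\Omega))\cap L^2(I;N^{\beta_{\mathrm{x}},2}(\Omega))\hookrightarrow L^\infty(I;L^{q_0}(\Omega)).
\end{align*}
Feeding the uniform bound back into the interpolation inequality yields $\widetilde{\bfF}\in C^0(\overline{I};L^{2s}(\Omega))$.

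Finally, from the pointwise equivalence $|\bfF(t,x,\bfD_{\mathrm{x}}\boldsymbol{\varphi})|^2\sim(\delta+|\bfD_{\mathrm{x}}\boldsymbol{\varphi}|)^{p(t,x)-2}|\bfD_{\mathrm{x}}\boldsymbol{\varphi}|^2$ from \eqref{eq:def_F}, in either regime $p(t,x)\geq 2$ or $p(t,x)<2$ one deduces $|\bfD_{\mathrm{x}}\boldsymbol{\varphi}|^{p(t,x)}\lesssim |\bfF(t,x,\bfD_{\mathrm{x}}\boldsymbol{\varphi})|^2+1$ (up to a $\delta$-dependent constant); integrating to the power $s$ over $\Omega$ and taking the supremum in $t\in I$ gives the asserted uniform bound on $\|\bfD_{\mathrm{x}}\boldsymbol{\varphi}\|_{p(t,\cdot)s,\Omega}$. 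The main obstacle is the uniform-in-time $L^{q_0}$-bound in the third step: the $L^2$-in-time information alone does not suffice, and the H\"older time regularity from $\beta_{\mathrm{t}}>\tfrac{1}{2}$ is decisive in passing from $L^2$ to $L^\infty$ in time via averaging.
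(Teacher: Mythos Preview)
Your ingredients are the right ones --- the time Nikolski\u{\i}--Sobolev--Morrey embedding (valid since $\beta_{\mathrm{t}}>\tfrac12$) and the spatial Nikolski\u{\i} embedding $N^{\beta_{\mathrm{x}},2}(\Omega)\hookrightarrow L^{q_0}(\Omega)$ --- but the way you assemble them in Steps~3--4 has a genuine gap. The pointwise interpolation inequality in Step~3 requires the uniform bound $\widetilde{\bfF}\in L^\infty(I;L^{q_0}(\Omega))$, and neither of your two proposed routes to it works as stated. The averaging argument is circular: you control $\widetilde{\bfF}(t)-\langle\bfF\rangle_{J_t}$ only in $L^2(\Omega)$, so to estimate it in $L^{2s}(\Omega)$ (or $L^{q_0}(\Omega)$) you would again need an $L^{q_0}$-bound on $\widetilde{\bfF}(t)$ itself. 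The anisotropic embedding you write down, $N^{\beta_{\mathrm{t}},2}(I;L^2(\Omega))\cap L^2(I;N^{\beta_{\mathrm{x}},2}(\Omega))\hookrightarrow L^\infty(I;L^{q_0}(\Omega))$, is in fact too strong: one only lands in $L^\infty(I;L^{2s}(\Omega))$ for $2s$ strictly between $2$ and $q_0$, never at the endpoint $q_0$.

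The fix --- and this is precisely what the paper does via \cite[Lem.~2.9]{ptx_lap} with the choice $\theta\in(0,1)$ such that $\theta\beta_{\mathrm{t}}>\tfrac12$ --- is to interpolate \emph{before} applying the time embedding, at the level of the Bochner--Nikolski\u{\i} scale. From $\bfF\in N^{\beta_{\mathrm{t}},2}(I;L^2(\Omega))$ and $\bfF\in L^2(I;L^{q_0}(\Omega))=N^{0,2}(I;L^{q_0}(\Omega))$ one obtains, by interpolation of the difference quotients,
\[
\bfF\in N^{\theta\beta_{\mathrm{t}},2}(I;L^{2s}(\Omega))\,,\qquad \tfrac{1}{2s}=\tfrac{\theta}{2}+\tfrac{1-\theta}{q_0}\,.
\]
Since $\beta_{\mathrm{t}}>\tfrac12$, one can pick $\theta\in\bigl(\tfrac{1}{2\beta_{\mathrm{t}}},1\bigr)$, which simultaneously gives $\theta\beta_{\mathrm{t}}>\tfrac12$ (so the Sobolev--Morrey embedding $N^{\theta\beta_{\mathrm{t}},2}(I;X)\hookrightarrow C^0(\overline{I};X)$ applies) and $\theta<1$ (so $2s>2$). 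This yields $\bfF\in C^0(\overline{I};L^{2s}(\Omega))$ directly, making your Step~3 superfluous. Your final step deducing $\sup_{t\in I}\|\bfD_{\mathrm{x}}\boldsymbol{\varphi}(t)\|_{p(t,\cdot)s,\Omega}<\infty$ from the pointwise equivalence is correct.
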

	
	\begin{proof}   
        We proceed analogously to \cite{ptx_lap}, \textit{i.e.}, resorting to  \cite[Lem.\ 2.9]{ptx_lap} (with $\theta\in (0,1)$~such~that~$\theta \beta_{\mathrm{t}}> \frac{1}{2}$) together with \cite[Lem.\ 2.5]{ptx_lap}. 
	\end{proof}

    \begin{remark}\label{rem:reg_initial}
        By Lemma \ref{lem:improved_integrability}, if $p\hspace{-0.15em}\in\hspace{-0.15em} C^0(\overline{Q_T})$ with $p^-\hspace{-0.15em}>\hspace{-0.15em}1$, from \eqref{eq:natural_regularity.velocity}, it follows that $\bfv\hspace{-0.15em}\in \hspace{-0.15em}C_{w}^0(\overline{I};(N^{\beta_{\mathrm{x}},2}             (\Omega))^d)$ and, thus, automatically $\bfv_0=\bfv(0)\in (N^{\beta_{\mathrm{x}},2}             (\Omega))^d$.
    \end{remark}

	The following lemma shows that in the case $\alpha_{\mathrm{t}}=\alpha_{\mathrm{t}}=\beta_{\mathrm{t}}=\beta_{\mathrm{x}}=1$ in \eqref{eq:natural_regularity.velocity}, 
   and~if~${p^-\ge  2}$~and~${\delta>0}$, we have 
    that $\gamma_{\mathrm{x}}=1$ in \eqref{eq:natural_regularity.pressure}.\enlargethispage{13.5mm}

	\begin{lemma}\label{lem:pres}
		Let $p\in C^{0,1,1}(\overline{Q_T})$ with $p^-\ge 2$ and $\delta>0$. Moreover, let $(\bfv,q)^\top \in \mathbfcal{V}\times \mathbfcal{Q}(0)$~be~a~solution of Problem~(\hyperlink{Q}{Q}) with
		\begin{align*} 
        \bfF(\cdot,\cdot,\bfD_{\mathrm{x}}\bfv)&\in W^{1,2}(I;(L^2(\Omega))^{d\times d})\cap L^2(I;(W^{1,2}(\Omega))^{d\times d}) \,,\\
	\bfv&\in L^\infty(I;(W^{1,2}             (\Omega))^d)\,.
        \end{align*} 
 Then, the following~statements~apply:
		\begin{itemize}[noitemsep,topsep=2pt,leftmargin=!,labelwidth=\widthof{(ii)}]
			\item[(i)]  If $\bff\in (L^{p'(\cdot,\cdot)}(Q_T))^d$, then we have that $q(t,\cdot)\in W^{1,p'(t,\cdot)}(\Omega)$ for a.e.\ $t\in I$ with 
   $\vert \nabla_{\mathrm{x}}q\vert \in  L^{p'(\cdot,\cdot)}(Q_T)$. 
			\item[(ii)] If $(\delta+\vert \bfD_{\mathrm{x}}\bfv\vert)^{2-p(\cdot,\cdot)}\vert\bff\vert^2\in L^1(Q_T)$, then  we have that $q(t,\cdot)\in W^{1,p'(t,\cdot)}(\Omega)$ for a.e.\ $t\in I$ with 
   $(\delta    +\vert\bfD_{\mathrm{x}}\bfv\vert)^{2-p(\cdot,\cdot)}\vert\nabla_{\mathrm{x}} q\vert ^2 \in L^1(Q_T)$.
		\end{itemize}
	\end{lemma}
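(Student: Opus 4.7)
The plan is to work time slice by time slice and identify the pressure's spatial gradient via a de Rham / Nečas-type argument. From the weak formulation of Problem~(\hyperlink{Q}{Q}), for a.e.\ $t\in I$ and in the sense of distributions on $\Omega$, one has
\begin{align*}
    \nabla_{\mathrm{x}} q(t,\cdot)=\bff(t,\cdot)-\partial_{\mathrm{t}}\bfv(t,\cdot)+\mathrm{div}_{\mathrm{x}}\bfS(t,\cdot,\bfD_{\mathrm{x}}\bfv(t,\cdot))\,,
\end{align*}
where $\bff$ denotes the combined forcing. It therefore suffices to estimate the three terms on the right-hand side in the target space, which is $L^{p'(\cdot,\cdot)}(Q_T)$ for~(i) and the $(\delta+\vert\bfD_{\mathrm{x}}\bfv\vert)^{2-p(\cdot,\cdot)}$-weighted $L^2(Q_T)$ space for~(ii); the contribution of $\bff$ is given by hypothesis in each case.

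For the time derivative, since $\delta>0$ and $p^-\ge 2$, the factor $(\delta+\vert\bfD_{\mathrm{x}}\bfv\vert)^{(p(\cdot,\cdot)-2)/2}$ hidden in $\bfF$ is uniformly bounded below by a positive constant, so the assumption $\bfF(\cdot,\cdot,\bfD_{\mathrm{x}}\bfv)\in W^{1,2}(I;(L^2(\Omega))^{d\times d})$, combined with the boundedness of $\partial_{\mathrm{t}} p$ stemming from $p\in C^{0,1,1}(\overline{Q_T})$, yields $\partial_{\mathrm{t}}\bfD_{\mathrm{x}}\bfv\in L^2(Q_T)$; Korn's and Poincaré's inequalities (using the no-slip boundary condition) then give $\partial_{\mathrm{t}}\bfv\in L^2(I;(W^{1,2}_0(\Omega))^d)$. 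Because $p^-\ge 2$ forces both $L^2(Q_T)\hookrightarrow L^{p'(\cdot,\cdot)}(Q_T)$ and uniform boundedness of $(\delta+\vert\bfD_{\mathrm{x}}\bfv\vert)^{2-p(\cdot,\cdot)}$, the time derivative is then controlled in both target spaces.

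The nonlinear term is handled via the chain-rule bound
\begin{align*}
    \vert\mathrm{div}_{\mathrm{x}}\bfS(\cdot,\cdot,\bfD_{\mathrm{x}}\bfv)\vert\lesssim (\delta+\vert\bfD_{\mathrm{x}}\bfv\vert)^{p(\cdot,\cdot)-2}\,\vert\nabla_{\mathrm{x}}\bfD_{\mathrm{x}}\bfv\vert+(\delta+\vert\bfD_{\mathrm{x}}\bfv\vert)^{p(\cdot,\cdot)-1}\,\vert\log(\delta+\vert\bfD_{\mathrm{x}}\bfv\vert)\vert\,\vert\nabla_{\mathrm{x}} p\vert\,.
\end{align*}
For the principal summand, the spatial analogue of \eqref{eq:hammera}, namely $\vert\nabla_{\mathrm{x}}\bfF\vert^2\sim (\delta+\vert\bfD_{\mathrm{x}}\bfv\vert)^{p(\cdot,\cdot)-2}\vert\nabla_{\mathrm{x}}\bfD_{\mathrm{x}}\bfv\vert^2$, does the job: in~(ii) the weight cancels exactly and leaves $\vert\nabla_{\mathrm{x}}\bfF\vert^2\in L^1(Q_T)$, while in~(i) a Hölder inequality with exponents $2/p'(\cdot,\cdot)$ and $2/(2-p'(\cdot,\cdot))$ (admissible because $p^-\ge 2$) reduces the claim to $\nabla_{\mathrm{x}}\bfF\in L^2(Q_T)$ and $\bfD_{\mathrm{x}}\bfv\in L^{p(\cdot,\cdot)}(Q_T)$, both at hand.

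The main obstacle, and the reason why $p\in C^{0,1,1}$ is needed, is the logarithmic term from the derivative of the variable exponent. This regularity ensures $\vert\nabla_{\mathrm{x}} p\vert\in L^\infty(Q_T)$, and after cancelling the weight in~(ii) or applying Hölder in~(i), the residual estimate reduces to the finiteness of $(\delta+\vert\bfD_{\mathrm{x}}\bfv\vert)^{p(\cdot,\cdot)}\vert\log(\delta+\vert\bfD_{\mathrm{x}}\bfv\vert)\vert^2$, which would be borderline at the exponent $p$. The resolution is the slight super-integrability $\bfD_{\mathrm{x}}\bfv\in L^{p(\cdot,\cdot)s}(Q_T)$ for some $s>1$ granted by Lemma~\ref{lem:improved_integrability} (since the assumed Sobolev-type regularity on $\bfF$ embeds into the required Nikolski\u{\i} class), together with the elementary bound $\vert\log(\delta+t)\vert\lesssim_\varepsilon (\delta+t)^\varepsilon+1$ for small $\varepsilon>0$: this absorbs the log and reduces the estimate to the finiteness of $(\delta+\vert\bfD_{\mathrm{x}}\bfv\vert)^{p(\cdot,\cdot)+\varepsilon}$ in $L^1(Q_T)$, which then holds by the improved integrability.
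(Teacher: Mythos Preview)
Your approach is correct and mirrors the paper's proof: both identify $\nabla_{\mathrm{x}} q$ pointwise from the PDE, control $\partial_{\mathrm{t}}\bfv$ via $\partial_{\mathrm{t}}\bfF\in L^2$ using $\delta>0$ and $p^-\ge 2$ (plus Korn/Poincar\'e), control $\nabla_{\mathrm{x}}\widehat{\bfS}$ via a chain-rule expansion that splits into a principal part governed by $\nabla_{\mathrm{x}}\bfF$ and a logarithmic part from the variable exponent, and absorb the log term using the higher integrability of Lemma~\ref{lem:improved_integrability}. The only imprecision is your claimed equivalence $\vert\nabla_{\mathrm{x}}\bfF\vert^2\sim(\delta+\vert\bfD_{\mathrm{x}}\bfv\vert)^{p(\cdot,\cdot)-2}\vert\nabla_{\mathrm{x}}\bfD_{\mathrm{x}}\bfv\vert^2$: for \emph{variable} $p$ this holds only up to the correction $\mu_{\mathrm{x}}(\bfv)\coloneqq\vert\log(\delta+\vert\bfD_{\mathrm{x}}\bfv\vert)\vert^2(\delta+\vert\bfD_{\mathrm{x}}\bfv\vert)^{p(\cdot,\cdot)-2}\vert\bfD_{\mathrm{x}}\bfv\vert^2\vert\nabla_{\mathrm{x}} p\vert^2$ (cf.\ the paper's \eqref{lem:pres.2}), but this extra term is of precisely the same logarithmic type you already handle, so the fix is immediate.
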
 

    \begin{remark}
        If $p^-\ge 2$ and $ \delta>0 $, from $\bff\in  (L^2(Q_T))^d$, it follows that ${(\delta+\vert \bfD_{\mathrm{x}}\bfv\vert)^{2-p(\cdot,\cdot)}\vert\bff\vert^2\in L^1(Q_T)}$.
    \end{remark} 
	\begin{proof}
       
		\textit{ad (i).} Analogously to \cite[Lems.\ 2.45-2.47]{BK23_pxDirichlet}, abbreviating  ${\smash{\widehat{\bfS}}\coloneqq \bfS(\cdot,\cdot,\bfD_{\mathrm{x}}\bfv)\in (L^{p'(\cdot,\cdot)}(Q_T))^{d\times d}}$, 
        we deduce that $\bfF^*(\cdot,\cdot,\bfD_{\mathrm{x}}\bfv)\in W^{1,2}(I;(L^2(\Omega))^{d\times d})\cap  L^2(I;(W^{1,2}(\Omega))^{d\times d}) $ with\vspace{-1mm}
		\begin{alignat}{2}
			\vert\nabla_{\mathrm{x}} \bfF(\cdot,\cdot,\bfD_{\mathrm{x}}\bfv)\vert+(1+\vert \bfD_{\mathrm{x}}\bfv\vert^{p(\cdot,\cdot)s}) &\sim \vert\nabla_{\mathrm{x}}  \bfF^*(\cdot,\cdot,\smash{\widehat{\bfS}})\vert
			+(1+\vert\smash{\widehat{\bfS}}\vert^{p'(\cdot,\cdot)s})&&\quad \text{ a.e.\ in }Q_T\,,\label{lem:pres.1}\\
			\vert \nabla_{\mathrm{x}}\bfF(\cdot,\cdot,\bfD_{\mathrm{x}}\bfv)\vert^2+\mu_{\mathrm{x}}(\bfv)&\sim (\delta+\vert \bfD_{\mathrm{x}}\bfv\vert)^{p(\cdot,\cdot)-2}\vert \nabla_{\mathrm{x}} \bfD_{\mathrm{x}}\bfv\vert^2+\mu_{\mathrm{x}}(\bfv)&&\quad \text{ a.e.\ in }Q_T\,,\label{lem:pres.2}\\
			\vert\nabla_{\mathrm{x}}  \bfF^*(\cdot,\cdot,\smash{\widehat{\bfS}})\vert^2+\mu^*_{\mathrm{x}}(\smash{\widehat{\bfS}})&\sim (\delta^{p(\cdot,\cdot)-1}+\vert \smash{\widehat{\bfS}}\vert)^{p'(\cdot,\cdot)-2}\vert \nabla_{\mathrm{x}} \smash{\widehat{\bfS}}\vert^2+\mu^*_{\mathrm{x}}(\smash{\widehat{\bfS}})&&\quad \text{ a.e.\ in }Q_T\,,\label{lem:pres.3}\\
   \vert\partial_{\mathrm{t}} \bfF(\cdot,\cdot,\bfD_{\mathrm{x}}\bfv)\vert+(1+\vert \bfD_{\mathrm{x}}\bfv\vert^{p(\cdot,\cdot)s}) &\sim \vert\partial_{\mathrm{t}}  \bfF^*(\cdot,\cdot,\smash{\widehat{\bfS}})\vert
			+(1+\vert\smash{\widehat{\bfS}}\vert^{p'(\cdot,\cdot)s})&&\quad \text{ a.e.\ in }Q_T\,,\label{lem:pres.4}\\
			\vert \partial_{\mathrm{t}}\bfF(\cdot,\cdot,\bfD_{\mathrm{x}}\bfv)\vert^2+\mu_{\mathrm{t}}(\bfv)&\sim (\delta+\vert \bfD_{\mathrm{x}}\bfv\vert)^{p(\cdot,\cdot)-2}\vert \partial_{\mathrm{t}}\bfD_{\mathrm{x}}\bfv\vert^2+\mu_{\mathrm{t}}(\bfv)&&\quad \text{ a.e.\ in }Q_T\,,\label{lem:pres.5}\\
			\vert\partial_{\mathrm{t}}\bfF^*(\cdot,\cdot,\smash{\widehat{\bfS}})\vert^2+\mu^*_{\mathrm{t}}(\smash{\widehat{\bfS}})&\sim (\delta^{p(\cdot,\cdot)-1}+\vert \smash{\widehat{\bfS}}\vert)^{p'(\cdot,\cdot)-2}\vert \partial_{\mathrm{t}} \smash{\widehat{\bfS}}\vert^2+\mu^*_{\mathrm{t}}(\smash{\widehat{\bfS}})&&\quad \text{ a.e.\ in }Q_T\,,\label{lem:pres.6}
		\end{alignat}
		for some $s>1$, so that, by Lemma \ref{lem:improved_integrability}, we have that $(1+\vert\smash{\widehat{\bfS}}\vert^{p'(\cdot,\cdot)s})\hspace{-0.1em}\sim\hspace{-0.1em} (1+\vert \bfD_{\mathrm{x}}\bfv\vert^{p(\cdot,\cdot)s})\hspace{-0.1em}\in\hspace{-0.1em} L^1(Q_T)$,~and~where\vspace{-1mm}
		\begin{align*}
			\mu_{\mathrm{x}}(\bfv)&\coloneqq \vert \ln(\delta+\vert \bfD_{\mathrm{x}}\bfv\vert)\vert^2(\delta+\vert \bfD_{\mathrm{x}}\bfv\vert)^{p(\cdot,\cdot)-2}\vert \bfD_{\mathrm{x}}\bfv\vert^2  \vert \nabla_{\mathrm{x}} p\vert^2 
			\in L^1(Q_T)\,,\\
			\mu^*_{\mathrm{x}}(\smash{\widehat{\bfS}})&\coloneqq \vert \ln(\delta^{p(\cdot,\cdot)-1}+\vert \smash{\widehat{\bfS}}\vert)\vert^2(\delta^{p(\cdot,\cdot)-1}+\vert \smash{\widehat{\bfS}}\vert)^{p'(\cdot,\cdot)-2}\vert \smash{\widehat{\bfS}}\vert^2 \vert \nabla_{\mathrm{x}} p\vert^2 
			\in L^1(Q_T)\,,\\
            \mu_{\mathrm{t}}(\bfv)&\coloneqq \vert \ln(\delta+\vert \bfD_{\mathrm{x}}\bfv\vert)\vert^2(\delta+\vert \bfD_{\mathrm{x}}\bfv\vert)^{p(\cdot,\cdot)-2}\vert \bfD_{\mathrm{x}}\bfv\vert^2  \vert \partial_{\mathrm{t}} p\vert^2 
			\in L^1(Q_T)\,,\\
			\mu^*_{\mathrm{t}}(\smash{\widehat{\bfS}})&\coloneqq \vert \ln(\delta^{p(\cdot,\cdot)-1}+\vert \smash{\widehat{\bfS}}\vert)\vert^2(\delta^{p(\cdot,\cdot)-1}+\vert \smash{\widehat{\bfS}}\vert)^{p'(\cdot,\cdot)-2}\vert \smash{\widehat{\bfS}}\vert^2 \vert \partial_{\mathrm{t}} p\vert^2 
			\in L^1(Q_T)\,.
		\end{align*}  
        Due to\vspace{-1.5mm} 
        $$(\delta^{p(\cdot,\cdot)-1}+\vert \smash{\widehat{\bfS}}\vert)^{p'(\cdot,\cdot)-2}\sim (\delta+\vert \bfD_{\mathrm{x}}\bfv\vert)^{2-p(\cdot,\cdot)}\quad \text{ a.e.\ in }Q_T\,,$$ from \eqref{lem:pres.1} and \eqref{lem:pres.3},~it~follows~that $(\delta+\vert \bfD_{\mathrm{x}}\bfv\vert)^{p(\cdot,\cdot)-2}\vert \nabla_{\mathrm{x}} \smash{\widehat{\bfS}}\vert^2\in L^1(Q_T)$ with
		\begin{align}
				\vert\nabla_{\mathrm{x}}  \bfF(\cdot,\cdot,\smash{\widehat{\bfS}})\vert^2+	\mu^*_{\mathrm{x}}(\smash{\widehat{\bfS}})\sim (\delta+\vert \bfD_{\mathrm{x}}\bfv\vert)^{2-p(\cdot,\cdot)}\vert \nabla_{\mathrm{x}} \smash{\widehat{\bfS}}\vert^2+	\mu^*_{\mathrm{x}}(\smash{\widehat{\bfS}})\quad \text{ a.e.\ in }Q_T\,.\label{lem:pres.4.2}
		\end{align}
		Then, from \eqref{lem:pres.4.2}, using the $\varepsilon$-Young inequality \eqref{ineq:young} with $\psi\hspace{-0.1em}=\hspace{-0.1em}\smash{\vert\cdot\vert^{\smash{\frac{2}{p'(\cdot,\cdot)}}}}$ (as $p^-\hspace{-0.1em}\ge\hspace{-0.1em} 2$, \textit{i.e.}, $p'(\cdot,\cdot)\hspace{-0.1em}\leq\hspace{-0.1em} 2$~in~$Q_T$) and $\varepsilon=1$, we obtain $\nabla_{\mathrm{x}}\smash{\widehat{\bfS}}\in (L^{p'(\cdot,\cdot)}(Q_T))^{d\times d}$ with\vspace{-1mm}
		\begin{align*}
			\left.	\begin{aligned}
				\vert\nabla_{\mathrm{x}} \smash{\widehat{\bfS}}\vert^{p'(\cdot,\cdot)}&=\vert\nabla_{\mathrm{x}} \smash{\widehat{\bfS}}\vert^{p'(\cdot,\cdot)}(\delta+\vert \bfD_{\mathrm{x}}\bfv\vert)^{p'(\cdot,\cdot)\frac{2-p(\cdot,\cdot)}{2}}(\delta+\vert \bfD_{\mathrm{x}}\bfv\vert)^{-p'(\cdot,\cdot)\frac{2-p(\cdot,\cdot)}{2}}
				\\&\lesssim  (\delta+\vert \bfD_{\mathrm{x}}\bfv\vert)^{2-p(\cdot,\cdot)}\vert \nabla_{\mathrm{x}}\smash{\widehat{\bfS}}\vert^2+(\delta+\vert \bfD_{\mathrm{x}}\bfv\vert)^{p(\cdot,\cdot)}
			\end{aligned}\right\}\quad\text{ a.e.\ in }Q_T\,.
		\end{align*}
        From \eqref{lem:pres.5}, using $p^-\hspace{-0.1em}\ge\hspace{-0.1em} 2$,  $\delta\hspace{-0.1em}>\hspace{-0.1em}0$, and   Poincar\'e's and Korn's  inequality, we obtain $\partial_{\mathrm{t}}\bfv\hspace{-0.1em}\in\hspace{-0.1em} L^2(I;(W^{1,2}(\Omega))^d)$.
		Eventually, using Problem (\hyperlink{Q}{Q}) with $\bff\in (L^{p'(\cdot,\cdot)}(Q_T))^d$, we conclude that  $\nabla_{\mathrm{x}} q \in
		(L^{p'(\cdot,\cdot)}(Q_T))^d$ with\vspace{-1mm}
		\begin{align}\label{lem:pres.5.2}
				\vert \nabla_{\mathrm{x}} q\vert \leq \vert \partial_{\mathrm{t}}\bfv\vert+\vert \nabla_{\mathrm{x}} \smash{\widehat{\bfS}}\vert+ \vert\bff\vert\quad\text{ a.e. in }Q_T\,.
		\end{align}

		\textit{ad (ii).} Multiplying \eqref{lem:pres.5.2} with $\smash{(\delta+\vert  \bfD_{\mathrm{x}}\bfv\vert)^{2-p(\cdot,\cdot)}}$ and using that $p^-\ge 2$ and that $\delta>0$,~we~find~that
		\begin{align}\label{lem:pres.6.2} 
				(\delta+\vert  \bfD_{\mathrm{x}}\bfv\vert)^{2-p(\cdot,\cdot)}\vert \nabla_{\mathrm{x}} q\vert^2 &\lesssim \delta^{2-p(\cdot,\cdot)}\vert \partial_{\mathrm{t}}\bfv\vert^2 +(\delta+\vert  \bfD_{\mathrm{x}}\bfv\vert)^{2-p(\cdot,\cdot)}(\vert \nabla_{\mathrm{x}} \smash{\widehat{\bfS}}\vert^2+\vert\bff\vert^2)
			\quad\text{ a.e. in }Q_T \,.
		\end{align}
		Then, using in \eqref{lem:pres.6.2} that $(\delta+\vert \bfD_{\mathrm{x}}\bfv\vert)^{2-p(\cdot,\cdot)}\vert\bff\vert^2\hspace{-0.1em}\in\hspace{-0.1em} L^1(Q_T)$ and \eqref{lem:pres.4} together with \eqref{lem:pres.1},~we~\mbox{conclude}~that $	(\delta+\vert  \bfD_{\mathrm{x}}\bfv\vert)^{2-p(\cdot,\cdot)}\vert \nabla_{\mathrm{x}} q\vert^2\in L^1(Q_T)$.
	\end{proof}
	
	\section{The discrete unsteady $p(\cdot,\cdot)$-Stokes equations}\label{sec:discrete_p-navier-stokes}\vspace{-1mm}
	
	\hspace*{5mm}In this section, we introduce a fully-discrete FE approximation of the unsteady $p(\cdot,\cdot)$-Stokes~\mbox{equations} \eqref{eq:ptxStokes}, employing a backward Euler step in time and conforming, discretely inf-sup stable FEs in space. Moreover, we collect some relevant assumptions and auxiliary results.\enlargethispage{10mm}\vspace{-1mm}

    \subsection{Space discretization}\vspace{-1mm}
	
	\subsubsection{Triangulations}\vspace{-1mm}
	
	\hspace*{5mm}Throughout the entire paper, let us denote by $\{\mathcal{T}_h\}_{h>0}$ a family of shape-regular conforming triangulations of $\Omega\subseteq \mathbb{R}^d$, $d\in\{2,3\}$, which consists of $d$-dimensional~\mbox{simplices}~(\textit{cf}.\ \cite{EG21}), where 
	$h>0$ denotes the \textit{maximal~mesh-size}, \textit{i.e.}, $h=\max_{K\in \mathcal{T}_h}{h_K}$, where $h_K\coloneqq  \textup{diam}(K)$~for~all~${K\in \mathcal{T}_h}$.~More~precisely, for~every~$K\in \mathcal{T}_h$,
	denoting the supremum of diameters of inscribed balls~in~$K\in\mathcal{T}_h$ by $\rho_K>0$, we assume that there exists a constant $\omega_0>0$ independent of $h>0$, such that $\max_{K\in \mathcal{T}_h}{\{{h_K}{\rho_K^{-1}}\}}\le\omega_0$. We call the smallest such constant  the \textit{chunkiness} of $\{\mathcal{T}_h\}_{h>0}$. Moreover, for every  $K\in \mathcal{T}_h$, we   define the corresponding \textit{element patch} by $\omega_K\coloneqq \bigcup\{K'\in\mathcal{T}_h\mid K'\cap K\neq \emptyset\}$.

    For
	$n \hspace{-0.15em}\in\hspace{-0.15em} \mathbb N\cup\{0\}$ and $h\hspace{-0.15em}>\hspace{-0.15em}0$, let us denote by $\mathbb{P}^n(\mathcal{T}_h)$ the family of (possibly discontinuous)~\mbox{scalar-valued} functions that are polynomials of degree at most $n$ on each  $K\in \mathcal{T}_h$,~and~let~${\mathbb{P}^n_c(\mathcal{T}_h)\coloneqq \mathbb{P}^n(\mathcal{T}_h)\cap C^0(\overline{\Omega})}$. Then, for  $n\hspace{-0.15em} \in\hspace{-0.15em} \mathbb N\cup \{0\}$, the \textit{spatial (local) $L^2$-projection operator} ${\Pi_h^{n,\mathrm{x}}\colon \hspace{-0.15em}L^1(\Omega)\hspace{-0.15em}\to \hspace{-0.15em}\mathbb{P}^n(\mathcal{T}_h)}$,~for~\mbox{every}~${\eta\hspace{-0.15em}\in \hspace{-0.15em}L^1(\Omega)}$, is defined by\vspace{-1mm}
    \begin{align*}
        (\Pi_h^{n,\mathrm{x}}\eta,\eta_h)_{\Omega}=(\eta,\eta_h)_{\Omega}\quad\text{ for all }\eta_h\in  \mathbb{P}^n(\mathcal{T}_h)\,.
    \end{align*}
	
	\subsubsection{Discretely inf-sup stable FE spaces}\vspace{-1mm}
	
	\hspace{5mm}For given $k\in\mathbb{N}$ and $\ell \in \mathbb N\cup\{0\}$, we denote by
	\begin{align}
		\begin{aligned}
			V_h&\subseteq {(\mathbb{P}^k_c(\mathcal{T}_h))^d}\,, &&\,\Vo_h\coloneqq V_h\cap (W^{1,1}_0(\Omega))^d\,,\\[-0.5mm]
			Q_h&\subseteq \mathbb{P}^{\ell}(\mathcal{T}_h)\,, &&\Qo_h \coloneqq Q_h\cap L^1_0(\Omega)\,,
		\end{aligned}
	\end{align}
	FE spaces satisfying the following two assumptions on the existence of suitable FE~projection~operators:\vspace{-1mm}

 \begin{assumption}[projection operator $\Pi_h^Q$]
		\label{ass:PiQ}
		We assume that $\setR
		\subseteq Q_h$ and there exists~a~linear~projection operator
		$\Pi_h^Q\colon L^1(\Omega) \to Q_h$, \textit{i.e.}, $\Pi_h^Q \eta_h= \eta_h$ for all $\eta_h\in Q_h$, that is \textup{locally $L^1$-stable}: for every $\eta\in L^1(\Omega)$ and $K\in \mathcal{T}_h$, there holds\vspace{-1mm}
		\begin{align}
			\label{eq:PiQstab}
			\langle \vert\Pi_h^Q \eta\vert\rangle_K  \lesssim  \langle\vert \eta\vert\rangle_{\omega_K}\,.
		\end{align}
	\end{assumption}
 
	\begin{assumption}[projection operator $\Pi_h^V$]\label{ass:PiV}
		We assume that $\mathbb{P}^1_c(\mathcal{T}_h) \subseteq V_h$ and there exists a linear projection operator $\Pi_h^V\colon \hspace{-0.1em} (W^{1,1}(\Omega))^d\hspace{-0.1em} \to\hspace{-0.1em} V_h$, \textit{i.e.}, $\Pi_h^V\boldsymbol{\varphi}_h\hspace{-0.1em}=\hspace{-0.1em}\boldsymbol{\varphi}_h$ for all $\boldsymbol{\varphi}_h\hspace{-0.1em}\in\hspace{-0.1em} V_h$,~with~the~following~properties:
		\begin{itemize}[noitemsep,topsep=2pt,leftmargin=!,labelwidth=\widthof{(iii)}]
			\item[(i)] \textup{Preservation of divergence in the sense of $Q_h^*$:} For every $\boldsymbol{\varphi} \in (W^{1,1}(\Omega))^d$ and  $\eta_h \in Q_h$, there holds\vspace{-0.5mm}
			\begin{align}
				\label{eq:div_preserving}
				(\mathrm{div}_{\mathrm{x}}  \boldsymbol{\varphi},\eta_h)_{\Omega} &= (\mathrm{div}_{\mathrm{x}} \Pi_h^V
				\boldsymbol{\varphi},\eta_h)_{\Omega} \,;\\[-5.5mm]\notag
			\end{align}
			\item[(ii)] \textup{Preservation of zero boundary values:} $\Pi_h^V((W^{1,1}_0(\Omega))^d) \subseteq \Vo_h$;
			\item[(iii)] \textup{Local $W^{1,1}$-stability:} For every $\boldsymbol{\varphi} \in (W^{1,1}(\Omega))^d$ and $K\in \mathcal{T}_h$, there holds\vspace{-0.5mm}
			\begin{align}
				\label{eq:Pidivcont}
				\langle\vert\Pi_h^V\boldsymbol{\varphi}\vert\rangle_K &\lesssim \langle
				\vert\boldsymbol{\varphi}\vert\rangle_{\omega_K} + h_K\, \langle  \vert\nabla_{\mathrm{x}} \boldsymbol{\varphi}\vert\rangle_{\omega_K} \,.
			\end{align}
		\end{itemize}
	\end{assumption}	

    For a detailed list of mixed FE spaces $\{V_h\}_{h>0}$ and $\{Q_h\}_{h>0}$ with projectors $\{\Pi_h^V\}_{h>0}$~and~$\{\Pi_h^Q\}_{h>0}$ 
    satisfying  Assumption~\ref{ass:PiQ}~and~Assumption~\ref{ass:PiV}, 
    we refer the reader to  
    the textbook \cite{BBF13}.
	 
	\subsection{Time discretization}\vspace{-1mm}
	
	\hspace{5mm}In this section, we describe the time discretization and 
    relevant
	definitions.~Let~$X$~be~a~\mbox{Banach}~space, $M\in\mathbb{N}$,  $\tau\coloneqq \frac{T}{M}$, $t_m\coloneqq \tau\,m$, $I_m\coloneqq \left(t_{m-1},t_m\right]$, $m=1,\ldots,M$,~$\mathcal{I}_\tau \coloneqq \{I_m\}_{m=1,\ldots,M}$, and $\mathcal{I}_\tau^0 \coloneqq \mathcal{I}_\tau\cup\{I_0\}$, where $I_0\coloneqq (t_{-1},t_0]\coloneqq (-\tau,0]$. Denote \textit{the spaces of (in-time) piece-wise~constant~\mbox{$X$-valued}~functions}~by 
	\begin{align*}
		\mathbb{P}^0(\mathcal{I}_\tau;X)&\coloneqq \big\{f\colon I\to X\mid f(s)
		=f(t)\text{ in }X\text{ for all }t,s\in I_m\,,\;m=1,\ldots,M\big\}\,,\\
        \mathbb{P}^0(\mathcal{I}_\tau^0;X)&\coloneqq \big\{f\colon I\to X\mid f(s)
		=f(t)\text{ in }X\text{ for all }t,s\in I_m\,,\;m=0,\ldots,M\big\}\,.
	\end{align*}
	For every $f^\tau\in 	\mathbb{P}^0(\mathcal{I}_\tau^0;X)$, 
		 the \textit{backward difference quotient}  $\mathrm{d}_\tau f^\tau\in \mathbb{P}^0(\mathcal{I}_\tau;X)$~is~defined~by\enlargethispage{10mm}
		\begin{align*}
		\mathrm{d}_\tau f^\tau|_{I_m}\coloneqq \tfrac{1}{\tau}(f^\tau(t_m)-f^\tau(t_{m-1}))\quad\text{ in }X \quad\text{ for all }m=1,\ldots,M\,.
		\end{align*}
		
When $X$ is a Hilbert space equipped with inner product $(\cdot,\cdot)_X$, for every $f^\tau\in \mathbb{P}^0(\mathcal{I}_\tau^0;X)$, we have the following version of \textit{discrete integration-by-parts formula}: for every $m,n = 0,\ldots,M$~with~$n\ge m$, there holds
	\begin{align}
		\int_{t_m}^{t_n}{( \mathrm{d}_\tau f^\tau(t),
			f^\tau(t))_X\,\mathrm{d}t}
		= \frac{1}{2}\|f^\tau(t_n)\|_X^2-\frac{1}{2}\|f^\tau(t_m)\|_X^2+\sum_{i=m}^n{\tfrac{\tau^2}{2}\|\mathrm{d}_\tau f^\tau(t_i)\|_X^2}\,,\label{eq:4.2}
	\end{align}
	which follows from the identity
	$(\mathrm{d}_\tau f^\tau(t_i),f^\tau(t_i))_X
	\hspace{-0.1em} =\hspace{-0.1em} \frac{1}{2}\mathrm{d}_\tau\|f^\tau(t_i)\|_X^2+\frac{\tau}{2}\|\mathrm{d}_\tau f^\tau(t_i)\|_X^2$~for~all~${i\hspace{-0.1em} = \hspace{-0.1em} 0,\ldots,M}$.\enlargethispage{5mm} 
	
	The \textit{temporal (local) $L^2$-projection operator} $\Pi^{0,\mathrm{t}}_{\tau}\colon L^1(I;X)\to \mathbb{P}^0(\mathcal{I}_\tau;X)$, for every $f\in L^1(I;X)$, is defined by
	\begin{align}\label{def:Pit} \Pi^{0,\mathrm{t}}_{\tau}f|_{I_m}\coloneqq \langle f\rangle_{I_m}\quad\textup{ in }X\quad \text{ for all }m=1,\ldots,M\,,
	\end{align}
    where $\langle f\rangle_{I_m}\coloneqq  \fint_{I_m}{f(t)\,\mathrm{d}t}\in X$ for all $m=1,\ldots,M$ is a Bochner integral,
	while the \textit{temporal~(lowest order) nodal 
    interpolation operator} $\mathrm{I}^{0,\mathrm{t}}_{\tau}\colon C^0(\overline{I};X)\to \mathbb{P}^0(\mathcal{I}_\tau;X)$, for every $f\in C^0(\overline{I};X)$~is~defined~by
	\begin{align}\label{def:Pit}
		\mathrm{I}^{0,\mathrm{t}}_{\tau}f|_{I_m}\coloneqq  f(t_m)\quad\textup{ in }X\quad \text{ for all }m=1,\ldots,M\,.
	\end{align}

	\subsection{Discrete weak formulations}\vspace*{-0.5mm}
	
	\hspace{5mm}By analogy with \cite{ptx_lap,berselli2024convergence}, we employ a simple one-point quadrature rule to discretize the power-law index and, in this way, all related non-linear mappings. More precisely, if at least $p\in C^0(\overline{Q_T})$~with~${p^->1}$, then  the element-wise constant approximations of power-law index $p_h^\tau\in \mathbb{P}^0(\mathcal{I}_\tau;\mathbb{P}^0(\mathcal{T}_h))$, the generalized $N$-function $\varphi_h^{\tau}\colon\Omega\times\mathbb{R}_{\ge 0}\to \mathbb{R}_{\ge 0}$, and the non-linear mappings $\bfS_h^\tau,\bfF_h^\tau,(\bfF_h^\tau)^*\colon Q_T\times\mathbb{R}^{d\times d}\to  \mathbb{R}^{d\times d}_{\textup{sym}}$, for every $m=1,\ldots,M$, $K\in  \mathcal{T}_h$,  $\bfA\in \mathbb{R}^{d\times d}$, and a.e.\ $(t,x)^\top\in I_m\times K$,  are defined  by
	\begin{align}
		\hspace{-2mm}\begin{aligned}
			p_h^\tau(t,x)&\coloneqq p(t_m,\xi_K)\,,&&\varphi_h^{\tau}(t,x,\vert \bfA\vert )\coloneqq \varphi(t_m,\xi_K,\vert \bfA\vert)\,,\\
			\bfS_h^\tau(t,x,\bfA)&\coloneqq \bfS(t_m,\xi_K,\bfA)\,,
			&&\hspace{1.75mm}\bfF_h^{\tau}(t,x,\bfA)\coloneqq \bfF(t_m,\xi_K,\bfA)\,,
			\quad(\bfF_h^\tau)^*(t,x,\bfA)\coloneqq \bfF^*(t_m,\xi_K,\bfA)\,,
		\end{aligned}\label{def:A_h}
	\end{align}
	where $(t_m,\xi_K)^\top\in I_m\times K$ is an arbitrary quadrature point.
	
\begin{remark}\label{rem:uniform}
		Note that, since all the implicit constants in the equivalences introduced in Section~\ref{sec:basic} depend only on $p^-,p^+>1$ and $\delta\ge 0$ and  $p^-\leq p_h^\tau\leq p^+$ a.e.\ in $Q_T$ for all $\tau,h>0$,     
		the same~equivalences hold for the approximations \eqref{def:A_h} with implicit constants depending~only~on~${p^-,p^+\in (1,+\infty)}$~and~${\delta\ge 0}$. 
	\end{remark}
	
	By means of discretizations defined in \eqref{def:A_h}, we have now everything at our disposal~to~introduce~the discrete counterparts to Problem (\hyperlink{Q}{Q}) and Problem (\hyperlink{P}{P}), respectively:
	
	\textit{Problem (Q$_h^\tau$).}\hypertarget{Qh}{} For given $\bfg\hspace{-0.1em}\in\hspace{-0.1em} (L^{(p^-)'}(Q_T))^d$, $\bfG\hspace{-0.1em}\in\hspace{-0.1em} (L^{p'(\cdot,\cdot)}(Q_T))^{d\times d}_{\textup{sym}}$,  and $\bfv_0\hspace{-0.1em}\in\hspace{-0.1em} H$, find $(\bfv_h^\tau,q_h^\tau)^\top\in \mathbb{P}^0(\mathcal{I}_\tau^0;\Vo_h)\times \mathbb{P}^0(\mathcal{I}_\tau;\Qo_h)$ with $\bfv_h^\tau(0)=\bfv_h^0$ in $\Vo_h$, such that for every $(\boldsymbol{\varphi}_h^{\tau},\eta_h^{\tau})^\top\in \mathbb{P}^0(\mathcal{I}_\tau;\Vo_h)\times \mathbb{P}^0(\mathcal{I}_\tau;Q_h)$, there holds
	\begin{align*}
		(\mathrm{d}_\tau\bfv_h^\tau,\boldsymbol{\varphi}_h^{\tau} )_{Q_T}+
		(\bfS_h^\tau(\cdot,\cdot,\bfD_{\mathrm{x}}\bfv_h^\tau),\bfD_{\mathrm{x}}\boldsymbol{\varphi}_h^{\tau})_{Q_T}-(q_h^\tau,\mathrm{div}_{\mathrm{x}} \boldsymbol{\varphi}_h^{\tau})_{Q_T}&=(\bfg,\boldsymbol{\varphi}_h^{\tau})_{Q_T}+(\bfG,\bfD_{\mathrm{x}}\boldsymbol{\varphi}_h^{\tau})_{Q_T}\,,\\
		(\mathrm{div}_{\mathrm{x}} \bfv_h^\tau,\eta_h^{\tau})_{Q_T}&=0\,,
	\end{align*}
    where $\bfv_0^h\hspace{-0.175em}\coloneqq\hspace{-0.175em} \Pi_h^{V,L^2}\hspace{-0.175em}\bfv\hspace{-0.175em}\in\hspace{-0.175em} \Vo_h$ and $\Pi_h^{V,L^2}\hspace{-0.175em}\colon \hspace{-0.175em}(L^2(\Omega))^d\hspace{-0.175em}\to \hspace{-0.175em}\Vo_h$ is the spatial (global) $L^2$-projection~of~$(L^2(\Omega))^d$~onto~$\Vo_h$.\\
	We can reformulate the Problem (\hyperlink{Qh}{Q$_h^\tau$}) in a hydro-mechanical sense, \textit{i.e.}, hiding the discrete pressure:
	
	\textit{Problem (P$_h^\tau$).}\hypertarget{Ph}{} For given $\bfg\hspace{-0.1em}\in\hspace{-0.1em} (L^{(p^-)'}(Q_T))^d$, $\bfG\hspace{-0.1em}\in\hspace{-0.1em} (L^{p'(\cdot,\cdot)}(Q_T))^{d\times d}_{\textup{sym}}$,  and $\bfv_0\hspace{-0.1em}\in\hspace{-0.1em} H$, find ${\bfv_h^\tau\hspace{-0.1em}\in\hspace{-0.1em} \mathbb{P}^0(\mathcal{I}_\tau^0;\Vo_{h,0})}$\linebreak \hphantom{.}\hspace{5mm} with $\bfv_h^\tau(0)=\bfv_h^0$~in $\Vo_0$, such that for every $\boldsymbol{\varphi}_h^{\tau}\in \mathbb{P}^0(\mathcal{I}_\tau;\Vo_{h,0})$, there holds
	\begin{align*}
		(\mathrm{d}_\tau\bfv_h^\tau,\boldsymbol{\varphi}_h^{\tau} )_{Q_T}+	(\bfS_h^\tau(\cdot,\cdot,\bfD_{\mathrm{x}}\bfv_h^\tau),\bfD_{\mathrm{x}}\boldsymbol{\varphi}_h^{\tau})_{Q_T}=(\bfg,\boldsymbol{\varphi}_h^{\tau})_{Q_T}+(\bfG,\bfD_{\mathrm{x}}\boldsymbol{\varphi}_h^{\tau})_{Q_T}\,,
	\end{align*}
 where ${\Vo_{h,0}}\coloneqq \{\boldsymbol{\varphi}_h\in \Vo_h\mid  (\mathrm{div}_{\mathrm{x}} \boldsymbol{\varphi}_h,\eta_h)_{\Omega}=0\text{ for all }\eta_h\in Q_h\}$.

	The well-posedness of Problem (\hyperlink{Qh}{Q$_h^\tau$}) and Problem (\hyperlink{Ph}{P$_h^\tau$}) is proved as in the continuous case~in~two~steps:
    using pseudo-monotone operator theory, the well-posedness of Problem~(\hyperlink{Ph}{P$_h^\tau$}) is shown (\textit{cf}.~\mbox{\cite[Thm.~8.3]{berselli2024convergence}}); then, given the well-posedness of Problem~(\hyperlink{Ph}{P$_h^\tau$}), 
	the well-posedness of (\hyperlink{Qh}{Q$_h^\tau$}) follows using discrete inf-sup stability of the FE couple $(V_h,Q_h)$ (\textit{cf}.\ \cite{BBD15,berselli2023error}).\newpage
	
	\subsection{Stability estimates for the FE projection operators}
	
	\hspace*{5mm}First, we have the following stability estimate for the projection operator $\Pi_h^V$ (\textit{cf}.\  Assumption \ref{ass:PiV}), in  modular form with respect to  shifted $N$-functions, where the shift is constant.\enlargethispage{10mm}

\begin{lemma}\label{lem:stab_PiV}
Suppose that $p\in \mathcal{P}^{\log}(Q_T)$  and let $c_0>0$. Then, for every $n\in \mathbb{N}$,  $J\in \mathcal{I}_\tau$, $K\in \mathcal{T}_h$, $\boldsymbol{\varphi}\in (L^{p(\cdot,\cdot)}(J\times\omega_K))^d$ with $\boldsymbol{\varphi}\in \smash{(W^{1,p(t,\cdot)}(\omega_K))^d}$ for a.e.\ $t\in J$ and $\nabla_{\mathrm{x}} \boldsymbol{\varphi}\in (L^{p(\cdot,\cdot)}(J\times\omega_K))^{d\times d}$, and $a\ge  0$ with  $a+\langle\vert \nabla_{\mathrm{x}}\boldsymbol{\varphi}(t)\vert\rangle_{\omega_K}\leq  c_0\,\vert K\vert ^{-n}$ for a.e.\ $t\in J$,
there holds
\begin{align}\label{lem:stab_PiV.1}
\rho_{\varphi_a,J\times K}(\nabla_{\mathrm{x}} \Pi_h^V \boldsymbol{\varphi})\lesssim h_K^n+\rho_{\varphi_a,J\times\omega_K}(\nabla_{\mathrm{x}} \boldsymbol{\varphi})\,,
\end{align}
where the implicit constant in $\lesssim$ depends on  $k$, $n$, $p^+$,~$[p]_{\log,Q_T}$,  $\omega_0$, and $c_0$. Moreover, for every $n\in \mathbb{N}$, $\boldsymbol{\varphi}\in (L^{p(\cdot,\cdot)}(Q_T))^d$ with $\boldsymbol{\varphi}\in \smash{(W^{1,p(t,\cdot)}(\Omega))^d}$ for a.e.\ $t\in I$ and $\nabla_{\mathrm{x}} \boldsymbol{\varphi}\in (L^{p(\cdot,\cdot)}(Q_T))^{d\times d}$, and $a\ge 0$ with $a+\|\nabla_{\mathrm{x}} \boldsymbol{\varphi}(t)\|_{1,\Omega}\leq c_0$ for a.e.\ $t\in I$, there holds
\begin{align}\label{lem:stab_PiV.2}
\rho_{\varphi_a,Q_T}(\nabla_{\mathrm{x}} \Pi_h^V \boldsymbol{\varphi})\lesssim h^n+\rho_{\varphi_a,Q_T}(\nabla_{\mathrm{x}} \boldsymbol{\varphi})\,,
\end{align}
where the implicit constant in $\lesssim$ depends  on  $k$, $n$, $p^+$,~$[p]_{\log,Q_T}$,  $\omega_0$, and $c_0$.
\end{lemma}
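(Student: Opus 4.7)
The plan is to combine an inverse estimate, the local $W^{1,1}$-stability of $\Pi_h^V$ (Assumption~\ref{ass:PiV}(iii)), and a Poincar\'e inequality to obtain a slice-wise pointwise bound of $|\nabla_{\mathrm{x}}\Pi_h^V\boldsymbol{\varphi}|$ on $K$ by the spatial mean of $|\nabla_{\mathrm{x}}\boldsymbol{\varphi}|$ over $\omega_K$, and then to lift this pointwise bound to a modular estimate via the key estimate (Lemma~\ref{lem:key-estimate}) applied in a time-slice-wise fashion. The smallness hypothesis is precisely calibrated to meet the prerequisite of the key estimate.

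First, since $\mathbb{P}^1_c(\mathcal{T}_h)\subseteq V_h$, the projection $\Pi_h^V$ reproduces constant vectors, so I may replace $\boldsymbol{\varphi}(t)$ by $\boldsymbol{\varphi}(t)-\langle\boldsymbol{\varphi}(t)\rangle_{\omega_K}$ inside $\nabla_{\mathrm{x}}\Pi_h^V$. Applying a standard inverse estimate on $K$ (whose constant depends on $k$ and $\omega_0$), the local $W^{1,1}$-stability \eqref{eq:Pidivcont}, and Poincar\'e's inequality on the patch $\omega_K$, I would obtain for a.e.\ $(t,x)\in J\times K$ the pointwise bound
\begin{align*}
|\nabla_{\mathrm{x}}\Pi_h^V\boldsymbol{\varphi}(t,x)| \lesssim h_K^{-1}\langle|\boldsymbol{\varphi}(t)-\langle\boldsymbol{\varphi}(t)\rangle_{\omega_K}|\rangle_{\omega_K}+\langle|\nabla_{\mathrm{x}}\boldsymbol{\varphi}(t)|\rangle_{\omega_K} \lesssim \langle|\nabla_{\mathrm{x}}\boldsymbol{\varphi}(t)|\rangle_{\omega_K}\,.
\end{align*}
Using monotonicity of $r\mapsto\varphi_a(t,x,r)$ and the uniform $\Delta_2$-condition \eqref{rem:phi_a.3} (contributing a dependence of the implicit constant on $p^+$), this lifts to $\varphi_a(t,x,|\nabla_{\mathrm{x}}\Pi_h^V\boldsymbol{\varphi}(t,x)|) \lesssim \varphi_a(t,x,\langle|\nabla_{\mathrm{x}}\boldsymbol{\varphi}(t)|\rangle_{\omega_K})$ a.e.\ in $J\times K$.

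The crux is then to apply the $\varphi_a$-variant of Lemma~\ref{lem:key-estimate} (obtained by invoking the cited lemma with the generalized $N$-function $\varphi^*$ in place of $\varphi$ and using the change-of-shift equivalences from Remark~\ref{rem:phi_a} and Lemma~\ref{lem:shift-change}, which is what turns the $p^-$-dependence of the cited constant into a $p^+$-dependence) slice-wise in $t\in J$ on a ball enclosing $\omega_K$. The smallness hypothesis $a+\langle|\nabla_{\mathrm{x}}\boldsymbol{\varphi}(t)|\rangle_{\omega_K}\le c_0|K|^{-n}$ precisely matches the requirement of Lemma~\ref{lem:key-estimate} (after absorbing the dimensional factor $|\omega_K|\sim h_K^d$ and the constant $c_0$ into the exponent by re-parameterizing $n$), yielding, for a.e.\ $(t,x)\in J\times K$,
\begin{align*}
\varphi_a(t,x,\langle|\nabla_{\mathrm{x}}\boldsymbol{\varphi}(t)|\rangle_{\omega_K}) \lesssim \langle\varphi_a(t,\cdot,|\nabla_{\mathrm{x}}\boldsymbol{\varphi}(t)|)\rangle_{\omega_K}+h_K^{n}\,.
\end{align*}
Integrating over $J\times K$ and using shape-regularity to exchange $|K|$ with $|\omega_K|$ in the resulting average yields \eqref{lem:stab_PiV.1}.

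For the global estimate \eqref{lem:stab_PiV.2}, I would first observe that the global smallness assumption $a+\|\nabla_{\mathrm{x}}\boldsymbol{\varphi}(t)\|_{1,\Omega}\le c_0$ implies the local one (with a suitably larger $n$) since $\langle|\nabla_{\mathrm{x}}\boldsymbol{\varphi}(t)|\rangle_{\omega_K}\le c_0|\omega_K|^{-1}\lesssim |K|^{-n'}$ for $h\le 1$ and appropriate $n'$, and then sum the local estimates over $K\in\mathcal{T}_h$ and $J\in\mathcal{I}_\tau$, invoking the finite overlap of the patches $\{\omega_K\}_{K\in\mathcal{T}_h}$ guaranteed by shape-regularity. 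The main obstacle I anticipate is extracting the $\varphi_a$-form of the key estimate with the correct constant dependence on $p^+$ rather than $p^-$, and tracking the dimensional rescaling between $|K|^n$ and $h_K^n$; both are handled by the arbitrariness of $n\in\mathbb{N}$ in the hypothesis.
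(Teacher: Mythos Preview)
Your proposal is correct and follows essentially the same route as the paper: the paper simply cites the time-slice estimate $\rho_{\varphi_a(t,\cdot,\cdot),K}(\nabla_{\mathrm{x}}\Pi_h^V\boldsymbol{\varphi}(t))\lesssim h_K^n+\rho_{\varphi_a(t,\cdot,\cdot),\omega_K}(\nabla_{\mathrm{x}}\boldsymbol{\varphi}(t))$ from \cite[Thm.~4.2~a)]{BBD15} and \cite[Lem.~4.25]{berselli2023error} and then integrates over $t\in J$, whereas you unpack exactly the proof of that cited result (inverse estimate, local $W^{1,1}$-stability, Poincar\'e on the patch, then the key estimate). One small caveat: your derivation of the $\varphi_a$-variant of Lemma~\ref{lem:key-estimate} by ``replacing $\varphi$ with $\varphi^*$'' does not literally work, since $((\varphi^*)_a)^*\not\sim\varphi_a$ in general; the $\varphi_a$-key estimate is instead proved directly in the cited references by the same log-H\"older argument, so you should cite it rather than attempt to reduce it to the $(\varphi_a)^*$-version.
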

	
\begin{proof}
\textit{ad \eqref{lem:stab_PiV.1}.} According to  \cite[Thm.\ 4.2 a)]{BBD15} or \cite[Lem.\ 4.25(4.26)]{berselli2023error}, for a.e.\ $t\in J$, we have that
\begin{align}\label{lem:stab_PiV.3}
\rho_{\varphi_a(t,\cdot,\cdot),K}(\nabla_{\mathrm{x}} \Pi_h^V \boldsymbol{\varphi}(t))\lesssim h_K^n+\rho_{\varphi_a(t,\cdot,\cdot),\omega_K}(\nabla_{\mathrm{x}} \boldsymbol{\varphi}(t))\,,
\end{align}
where the hidden constant in $\lesssim$ depends on  $k$, $n$, $p^+$,~$[p]_{\log,Q_T}$,  $\omega_0$, and $c_0$. Thus, integration~of~\eqref{lem:stab_PiV.3} with respect to $t\in J$ yields the claimed local stability estimate \eqref{lem:stab_PiV.1}.
		
\textit{ad \eqref{lem:stab_PiV.2}.} The estimate follows analogously to \eqref{lem:stab_PiV.1}, but, in this case, by using \cite[Lem.\ 4.25(4.27)]{berselli2023error} instead of \cite[Thm.~4.2~a)]{BBD15} or \cite[Lem.\ 4.25(4.26)]{berselli2023error}, respectively.
\end{proof}

Next, we prove the following stability estimate for the projection operator $\Pi_\tau^{0,\mathrm{t}}\Pi_h^Q$ (\textit{cf}.\  \eqref{def:Pit} and Assumption \ref{ass:PiQ}) in modular form with respect to conjugate shifted $N$-functions,~where~the~shift~is~\mbox{constant}.

\begin{lemma}\label{lem:stab_PiQ}
Suppose that $p\in \mathcal{P}^{\log}(Q_T)$ and let $c_0>0$. Then, for every $n\in \mathbb{N}$, $J\in \mathcal{I}_\tau$, $K\in \mathcal{T}_h$, $\eta\in L^{p'(\cdot,\cdot)}(J\times\omega_K)$, and $a\ge 0$ with $a+\langle\vert \eta\vert\rangle_{J\times \omega_K}\leq c_0\,\vert J\times K\vert ^{-n}$, there holds
\begin{align}\label{lem:stab_PiQ.1}	\rho_{(\varphi_a)^*,J\times K}(\Pi_\tau^{0,\mathrm{t}}\Pi_h^Q \eta)\lesssim (\tau+h_K)^n+\rho_{(\varphi_a)^*,J\times \omega_K}(\eta)\,,
\end{align}
where the implicit constant in $\lesssim$ depends on  $\ell$, $n$, $p^+$, $p^-$, $[p]_{\log,Q_T}$,  $\omega_0$, and $c_0$. Moreover, for every $n\in \mathbb{N}$, $\eta\in L^{p'(\cdot,\cdot)}(Q_T)$, and $a\ge 0$ with $a+\|\eta\|_{1,Q_T}\leq c_0$, there holds
\begin{align}\label{lem:stab_PiQ.2}
\rho_{(\varphi_a)^*,Q_T}(\Pi_\tau^{0,\mathrm{t}}\Pi_h^Q \eta)\lesssim(\tau+ h)^n+\rho_{(\varphi_a)^*,Q_T}(\eta)\,,
\end{align}
where the implicit constant in $\lesssim$ depends on  $\ell$, $n$, $p^+$, $p^-$, $[p]_{\log,Q_T}$,  $\omega_0$, and $c_0$.
\end{lemma}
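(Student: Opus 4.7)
The plan is to follow the template of the proof of Lemma \ref{lem:stab_PiV}, but now for the conjugate shifted $N$-function $(\varphi_a)^*$ and for the projector $\Pi_\tau^{0,\mathrm{t}}\Pi_h^Q$ which combines a temporal average with a spatially $L^1$-stable projection. First I would localize on space-time cells $I_m\times K$ with $I_m\subseteq J$ and $K\in\mathcal T_h$. Since $\Pi_\tau^{0,\mathrm{t}}\Pi_h^Q\eta=\langle\Pi_h^Q\eta\rangle_{I_m}\vert_K$ on each such cell is a polynomial of degree $\le\ell$ in $x$ and constant in $t$, the finite-dimensional norm equivalence on $\mathbb P^\ell(K)$ (the scaling bound $\|p\|_{L^\infty(K)}\lesssim\langle\vert p\vert\rangle_K$) combined with the local $L^1$-stability \eqref{eq:PiQstab} of $\Pi_h^Q$ and Jensen's inequality for the time average gives
\[
\|\Pi_\tau^{0,\mathrm{t}}\Pi_h^Q\eta\|_{L^\infty(I_m\times K)}\lesssim \langle\vert\Pi_h^Q\eta\vert\rangle_{I_m\times K}\lesssim\langle\vert\eta\vert\rangle_{I_m\times\omega_K}.
\]

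Next I would invoke the key estimate (Lemma \ref{lem:key-estimate}) on the space-time patch $I_m\times\omega_K$. The hypothesis $a+\langle\vert\eta\vert\rangle_{J\times\omega_K}\le c_0\vert J\times K\vert^{-n}$ together with the shape-regularity bound $\vert I_m\times\omega_K\vert\sim\vert I_m\times K\vert$ verifies the required volume precondition, and $p\in\mathcal P^{\log}(Q_T)$ furnishes the conclusion (after covering $I_m\times\omega_K$ by boundedly many space-time cubes of side-length $\lesssim\tau+h_K\le 1$, where log-Hölder continuity is controlled by $[p]_{\log,Q_T}$). Monotonicity of $(\varphi_a)^*(t,x,\cdot)$ (to pass through the Step 1 bound) together with the $\Delta_2$-bound \eqref{rem:phi_a.4} (to absorb the multiplicative constant from that bound) then yields, pointwise on $I_m\times K$,
\[
(\varphi_a)^*\bigl(t,x,\vert\Pi_\tau^{0,\mathrm{t}}\Pi_h^Q\eta\vert\bigr)\lesssim\bigl\langle(\varphi_a)^*(\cdot,\cdot,\vert\eta\vert)\bigr\rangle_{I_m\times\omega_K}+\vert I_m\times\omega_K\vert^{n'}
\]
for any $n'>0$, with constants depending only on the listed data.

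Integrating the previous inequality over $I_m\times K$, using $\vert I_m\times K\vert\sim\vert I_m\times\omega_K\vert\sim\tau h_K^d\le(\tau+h_K)^{d+1}$, and then summing over the (finitely many) $I_m\subseteq J$ gives
\[
\rho_{(\varphi_a)^*,J\times K}(\Pi_\tau^{0,\mathrm{t}}\Pi_h^Q\eta)\lesssim\rho_{(\varphi_a)^*,J\times\omega_K}(\eta)+\vert J\times K\vert\cdot\vert J\times\omega_K\vert^{n'},
\]
and choosing $n'$ with $(d+1)n'\ge n$ converts the residual into $(\tau+h_K)^n$, which is \eqref{lem:stab_PiQ.1}. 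For the global estimate \eqref{lem:stab_PiQ.2}, the same local reasoning is run on every cell $(I_m,K)$ and then summed, exploiting the finite-overlap property of $\{\omega_K\}_{K\in\mathcal T_h}$ to recombine the right-hand side into $\rho_{(\varphi_a)^*,Q_T}(\eta)$, and using the global smallness hypothesis $a+\|\eta\|_{1,Q_T}\le c_0$ together with $\tau h^d\le 1$ to verify the per-cell preconditions of Lemma \ref{lem:key-estimate} (the required $\vert I_m\times K\vert^{-n}$-bound follows from $\langle\vert\eta\vert\rangle_{I_m\times\omega_K}\le c_0/\vert I_m\times\omega_K\vert$).

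The main technical obstacle is the compatibility of Lemma \ref{lem:key-estimate} (formulated for cubes/balls) with the anisotropic space-time cells $I_m\times\omega_K$ and the exponent bookkeeping between $\vert I_m\times K\vert\sim\tau h_K^d$ and the target $(\tau+h_K)^n$. Both are handled by the parabolic log-Hölder continuity of $p$ on $Q_T$ together with $\tau,h\le 1$, and by choosing $n'$ sufficiently large from the outset, in the same spirit as the Sobolev-type stability argument underlying Lemma \ref{lem:stab_PiV} and its spatial analogue in \cite[Lem.\ 4.25]{berselli2023error}.
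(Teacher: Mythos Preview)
Your proposal is correct and follows essentially the same approach as the paper: bound $\vert\Pi_\tau^{0,\mathrm{t}}\Pi_h^Q\eta\vert$ on $J\times K$ by the average $\langle\vert\eta\vert\rangle_{J\times\omega_K}$ via a local inverse inequality together with the $L^1$-stability of $\Pi_\tau^{0,\mathrm{t}}$ and $\Pi_h^Q$, then apply the key estimate (Lemma~\ref{lem:key-estimate}) and integrate. One small simplification: since $J\in\mathcal{I}_\tau$ already denotes a single interval $I_m$, there is no need to decompose $J$ into sub-intervals $I_m\subseteq J$ and sum; the argument runs directly on $J\times K$ in one shot, exactly as the paper does.
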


\begin{remark}
    Note that via adding the temporal (local) $L^2$-projection operator $\Pi_\tau^{0,\mathrm{t}}$ in Lemma \ref{lem:stab_PiQ}, we can avoid the conditions $a+\langle\vert \eta(t)\vert\rangle_{\omega_K}\leq c_0\,\vert K\vert ^{-n}$  for a.e.\ $t\in J$ in \eqref{lem:stab_PiQ.1} and $a+\|\eta(t)\|_{\Omega}\leq c_0$ for a.e.\ $t\in I$ in \eqref{lem:stab_PiQ.2}, which are too restrictive due to the missing additional temporal regularity properties of the kinematic pressure.
    Instead, we merely need to impose the conditions $a+\langle\vert \eta\vert\rangle_{J\times\omega_K}\leq c_0\,\vert J\times K\vert ^{-n}$ in \eqref{lem:stab_PiQ.1} and $a+\|\eta\|_{Q_T}\leq c_0$ in \eqref{lem:stab_PiQ.2}, which better fit the regularity assumptions~on~the~kinematic~pressure.
\end{remark}
 
\begin{proof}[Proof (of Lemma \ref{lem:stab_PiQ}).]
\textit{ad \eqref{lem:stab_PiQ.1}.} By using a (local) inverse inequality (\textit{cf}.\ \cite[Lem.\ 12.1]{EG21}), the (local) $L^1$-stability of the projection operators $\Pi_h^{0,\mathrm{t}}$ 
and $\Pi_h^Q$ (\textit{cf}.\ Assumption \ref{ass:PiQ}\eqref{eq:PiQstab}), and the key estimate (\textit{cf}.\ Lemma \ref{lem:key-estimate}), for a.e.\ $(t,x)^\top \in J\times K$, we observe that
\begin{align}\label{lem:stab_PiQ.3}
\begin{aligned} 
(\varphi_a)^*(t,x,\|\Pi_\tau^{0,\mathrm{t}}\Pi_h^Q \eta\|_{\infty,J\times K})&\leq c\,(\varphi_a)^*(t,x,\langle \Pi_\tau^{0,\mathrm{t}}\Pi_h^Q \eta\rangle_{J\times K})\\&\leq c\,(\varphi_a)^*(t,x,\langle \eta\rangle_{J\times \omega_K})\\&\leq c\,(\tau+h_K)^n+c\,\langle (\varphi_a)^*(\cdot,\cdot,\eta)\rangle_{J\times \omega_K}\,.
\end{aligned}
\end{align}
Integration of \eqref{lem:stab_PiQ.3} with respect to $\smash{(t,x)^\top}\in J\times K$ yields the claimed local stability estimate \eqref{lem:stab_PiQ.1}.
		
\textit{ad \eqref{lem:stab_PiQ.2}.} If $a+\|\eta\|_{1,Q_T}\hspace{-0.15em}\leq\hspace{-0.15em} c_0$, from $\tau+h_K\hspace{-0.15em}\leq\hspace{-0.15em} 1$, it follows that $\vert J\times K\vert ^{-1}\hspace{-0.15em}\leq\hspace{-0.15em} \vert J\times K\vert^{-(d+1+n)}$.~Thus,~there exists a constant $c>0$, depending only on  $\omega_0$ and $c_0$, such that for every $J\in \mathcal{I}_\tau$ and $K\in \mathcal{T}_h$,~we~have~that
\begin{align}\label{lem:stab_PiQ.4}
a+\langle\vert \eta\vert\rangle_{J\times \omega_K}\leq c\,\vert J\times K\vert^{-1}\leq c\,\vert J\times K\vert ^{-(d+1+n)}\,.
\end{align}
Due to \eqref{lem:stab_PiQ.4}, resorting to \eqref{lem:stab_PiQ.1}, we find a constant $c>0$, depending only on  $\ell$, $n$, $p^+$, $p^-$, $[p]_{\log,Q_T}$,~$\omega_0$, and $c_0$, such that for every  $J\in \mathcal{I}_\tau$ and $K\in \mathcal{T}_h$, it follows that
\begin{align}\label{lem:stab_PiQ.5}
\rho_{(\varphi_a)^*,J\times K}(\Pi_\tau^{0,\mathrm{t}}\Pi_h^Q \eta)\leq c\,\big((\tau +h_K)^n\,\vert J\times K\vert +\rho_{(\varphi_a)^*,J\times \omega_K}(\eta)\big)\,.
\end{align}
Summation of \eqref{lem:stab_PiQ.5} with respect to $J\hspace{-0.1em}\in\hspace{-0.1em} \mathcal{I}_\tau$ and $K\hspace{-0.1em}\in\hspace{-0.1em} \mathcal{T}_h$ yields 
the claimed global stability~estimate~\eqref{lem:stab_PiQ.2}.
\end{proof}\vspace{-6mm}

	\section{Fractional interpolation error estimates for the FE projection operators}\label{sec:fractional_interpolation_estimates}\vspace{-1.5mm}
	
	\hspace{5mm}In this section, we  derive fractional interpolation error estimates for the projection~operators~$\smash{\Pi_h^V}$ (\textit{cf}.\ Assumption~\ref{ass:PiV})~and $\Pi_\tau^{0,\mathrm{t}}\Pi_h^Q$ (\textit{cf}.\ Assumption \ref{ass:PiQ}), which play a crucial role for error~decays~rates according to fractional regularity assumptions on the velocity vector field and the~kinematic~pressure.\enlargethispage{10mm}
	
For the fractional regularity of the velocity vector field represented in Bochner--Nikolski\u{\i} spaces (\textit{cf}.\ \eqref{eq:natural_regularity.velocity}), we have the following interpolation estimate for the projection operator $\Pi_h^V$  (\textit{cf}.\ Assumption~\ref{ass:PiV}) with respect to the natural distance.
	
\begin{lemma}\label{lem:PiV_F}
Suppose that $p\in C^{0,\alpha_{\mathrm{t}},\alpha_{\mathrm{x}}}(\overline{Q_T})$, $\alpha_{\mathrm{t}},\alpha_{\mathrm{x}}\in (0,1]$, with  $p^->1$ and let $\boldsymbol{\varphi}\in  \mathbfcal{V}$ be such that $\bfF(\cdot,\cdot,\bfD_{\mathrm{x}}\boldsymbol{\varphi})\in \smash{L^2(I;(N^{\beta_{\mathrm{x}},2}(\Omega))^{d\times d})}$,  $\beta_{\mathrm{x}}\in (0,1]$. Then,  there exists a constant $s>1$ with $s\searrow1$ as $h_K\searrow0$, such that
if $\bfD_{\mathrm{x}}\boldsymbol{\varphi}\in (L^{p(\cdot,\cdot)s}(Q_T))^{d\times d}$ and $\textup{ess\,sup}_{t\in I}{\big\{\|\bfD_{\mathrm{x}}\boldsymbol{\varphi}(t)\|_{p(t,\cdot),\Omega}\big\}}<\infty$, then
for every $J\in \mathcal{I}_\tau$ and $K\in \mathcal{T}_h$, there holds
\begin{align}
\begin{aligned} 
\|\bfF(\cdot,\cdot,\bfD_{\mathrm{x}}\boldsymbol{\varphi})-\bfF(\cdot,\cdot,\bfD_{\mathrm{x}}\Pi_h^V\boldsymbol{\varphi})\|_{2,J\times K}^2&\lesssim h_K ^{2\alpha_{\mathrm{x}}}\,\|1+\vert \bfD_{\mathrm{x}}\boldsymbol{\varphi}\vert^{p(\cdot,\cdot)s}\|_{1,J\times \omega_K }\\
&\quad+ h_K ^{2\beta_{\mathrm{x}}}\,[ \bfF(\cdot,\cdot ,\bfD_{\mathrm{x}}\boldsymbol{\varphi})]_{L^2(J;N^{\beta_{\mathrm{x}},2}(\omega_K ))}^2\,,
\end{aligned}\label{lem:PiV_F.1}
\end{align}
where \hspace{-0.1mm}the \hspace{-0.1mm}implicit \hspace{-0.1mm}constant \hspace{-0.1mm}in \hspace{-0.1mm}$\lesssim$ \hspace{-0.1mm}depends \hspace{-0.1mm}on \hspace{-0.1mm}$k$, \hspace{-0.1mm}$p^-$, \hspace{-0.1mm}$p^+$, \hspace{-0.1mm}$[p]_{\alpha_{\mathrm{t}},\alpha_{\mathrm{x}},Q_T}$, \hspace{-0.1mm}$s$, \hspace{-0.1mm}$\omega_0$, \hspace{-0.1mm}and \hspace{-0.1mm}$\textup{ess\,sup}_{t\in I}{\big\{\|\bfD_{\mathrm{x}}\boldsymbol{\varphi}(t)\|_{p(t,\cdot),\Omega}\big\}}$.
In particular, it follows that
\begin{align}
\begin{aligned} 
\|\bfF(\cdot,\cdot,\bfD_{\mathrm{x}}\boldsymbol{\varphi})-\bfF(\cdot,\cdot,\bfD_{\mathrm{x}}\Pi_h^V\boldsymbol{\varphi})\|_{2,Q_T}^2&\lesssim h^{2\alpha_{\mathrm{x}}}\,\big(1+\rho_{p(\cdot,\cdot)s,Q_T}(\bfD_{\mathrm{x}}\boldsymbol{\varphi})\big)\\&\quad+h^{2\beta_{\mathrm{x}}}\,[ \bfF(\cdot,\cdot,\bfD_{\mathrm{x}}\boldsymbol{\varphi})]_{L^2(I;N^{\beta_{\mathrm{x}},2}(\Omega))}^2\,.
\end{aligned}\label{lem:PiV_F.2}
\end{align}
\end{lemma}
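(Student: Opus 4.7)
The plan is to reduce the local interpolation error to an estimate with a ``frozen'' constant power-law index on each space-time patch $J \times \omega_K$, so that the constant-exponent Orlicz-type fractional interpolation theory of \cite{BBD15,berselli2023error} applies directly. For fixed $J \in \mathcal{I}_\tau$ and $K \in \mathcal{T}_h$, let $p_{J,K} \coloneqq p(t_J,\xi_K)$ for some representative point $(t_J,\xi_K)^\top \in J \times K$, and let $\bfF_{J,K}$ be the mapping obtained from $\bfF$ by replacing $p(\cdot,\cdot)$ by the constant $p_{J,K}$. The triangle inequality then splits the local error as
\begin{align*}
\|\bfF(\cdot,\cdot,\bfD_{\mathrm{x}}\boldsymbol{\varphi}) - \bfF(\cdot,\cdot,\bfD_{\mathrm{x}}\Pi_h^V\boldsymbol{\varphi})\|_{2,J\times K}^2 \lesssim \mathrm{I} + \mathrm{II} + \mathrm{III},
\end{align*}
where $\mathrm{I}$ is the defect between $\bfF$ and $\bfF_{J,K}$ evaluated on $\bfD_{\mathrm{x}}\boldsymbol{\varphi}$, the term $\mathrm{II}$ is the analogous defect on $\bfD_{\mathrm{x}}\Pi_h^V\boldsymbol{\varphi}$, and $\mathrm{III}$ is the constant-exponent interpolation error $\|\bfF_{J,K}(\bfD_{\mathrm{x}}\boldsymbol{\varphi}) - \bfF_{J,K}(\bfD_{\mathrm{x}}\Pi_h^V\boldsymbol{\varphi})\|_{2,J\times K}^2$.

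The cornerstone for $\mathrm{I}$ and $\mathrm{II}$ is a pointwise Hölder-in-$p$ estimate obtained from differentiating $q \mapsto (\delta + r)^{(q-2)/2} r$ and combining the resulting logarithmic factor with Young's inequality: for any $s > 1$, provided $h_K$ is small enough (which enforces the stated dependence $s \searrow 1$ as $h_K \searrow 0$), there holds
\begin{align*}
|\bfF(t,x,\bfA) - \bfF_{J,K}(t,x,\bfA)|^2 \lesssim h_K^{2\alpha_{\mathrm{x}}}\,\bigl(1 + |\bfA|^{p(t,x)\,s}\bigr) \quad \text{for a.e.\ } (t,x)^\top \in J\times K,\; \bfA \in \mathbb{R}^{d\times d}.
\end{align*}
Applied with $\bfA = \bfD_{\mathrm{x}}\boldsymbol{\varphi}(t,x)$, this bounds $\mathrm{I}$ by the first term on the right-hand side of \eqref{lem:PiV_F.1}. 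Applied with $\bfA = \bfD_{\mathrm{x}}\Pi_h^V\boldsymbol{\varphi}(t,x)$, the resulting modular in $\bfD_{\mathrm{x}}\Pi_h^V\boldsymbol{\varphi}$ is traded for a modular in $\bfD_{\mathrm{x}}\boldsymbol{\varphi}$ via a modular stability estimate for $\Pi_h^V$ of the same flavor as Lemma \ref{lem:stab_PiV} (adapted to the $\psi(\cdot,\cdot,r) = r^{p(\cdot,\cdot)s}$ modular; the hypothesis $\esssup_{t\in I}\|\bfD_{\mathrm{x}}\boldsymbol{\varphi}(t)\|_{p(t,\cdot),\Omega}<\infty$ supplies the required smallness of the shift).

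For $\mathrm{III}$, since $\bfF_{J,K}$ carries a constant power-law index on $J\times\omega_K$, the $t$-pointwise constant-exponent fractional interpolation estimate of \cite[Thm.\ 4.2]{BBD15} (or \cite[Lem.\ 4.25]{berselli2023error}) applied on each $\omega_K$ yields, for a.e.\ $t \in J$,
\begin{align*}
\|\bfF_{J,K}(\bfD_{\mathrm{x}}\boldsymbol{\varphi}(t)) - \bfF_{J,K}(\bfD_{\mathrm{x}}\Pi_h^V\boldsymbol{\varphi}(t))\|_{2,K}^2 \lesssim h_K^{2\beta_{\mathrm{x}}}\,[\bfF_{J,K}(\bfD_{\mathrm{x}}\boldsymbol{\varphi}(t))]_{N^{\beta_{\mathrm{x}},2}(\omega_K)}^2,
\end{align*}
and the frozen semi-norm is then traded back for $[\bfF(t,\cdot,\bfD_{\mathrm{x}}\boldsymbol{\varphi}(t))]_{N^{\beta_{\mathrm{x}},2}(\omega_K)}^2$ using the pointwise bound from $\mathrm{I}$ (which only adds a lower-order $h_K^{2\alpha_{\mathrm{x}}}$ contribution). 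Integration in $t \in J$ yields \eqref{lem:PiV_F.1}, and summation over $J \in \mathcal{I}_\tau$ and $K \in \mathcal{T}_h$, together with the finite-overlap of the patches $\{\omega_K\}_{K\in\mathcal{T}_h}$, produces the global estimate \eqref{lem:PiV_F.2}.

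The main obstacle is the balancing of the exponent $s > 1$: the logarithmic factor $|\log(\delta + |\bfA|)|$ produced by Hölder differentiation of $\bfF$ in $p$ must be absorbed into a small excess power $|\bfA|^\varepsilon$ via Young's inequality (forcing $s$ large enough relative to $|\log h_K|$), while at the same time $s$ must remain close enough to $1$ so that Lemma \ref{lem:improved_integrability} furnishes the required improved integrability $\bfD_{\mathrm{x}}\boldsymbol{\varphi} \in L^{p(\cdot,\cdot)s}(Q_T)$ under the assumed Bochner--Nikolski\u{\i} regularity and so that the modular stability of $\Pi_h^V$ still applies with an absolute constant.
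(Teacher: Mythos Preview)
Your proposal has a genuine gap in the freezing step. You freeze the exponent at a \emph{space-time} point $(t_J,\xi_K)^\top\in J\times K$ and then claim
\[
|\bfF(t,x,\bfA) - \bfF_{J,K}(\bfA)|^2 \lesssim h_K^{2\alpha_{\mathrm{x}}}\,\bigl(1+|\bfA|^{p(t,x)s}\bigr)\quad\text{for a.e.\ }(t,x)^\top\in J\times K\,.
\]
But by Lemma~\ref{lem:A-Ah}\eqref{eq:Fh-F} the left-hand side scales with $|p(t,x)-p(t_J,\xi_K)|^2$, which on $J\times K$ is only controlled by $(\tau^{\alpha_{\mathrm{t}}}+h_K^{\alpha_{\mathrm{x}}})^2$, not by $h_K^{2\alpha_{\mathrm{x}}}$ alone. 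Your argument therefore manufactures an extra $\tau^{2\alpha_{\mathrm{t}}}$ contribution that is \emph{not} present in the claimed estimate~\eqref{lem:PiV_F.1}; you would end up proving Lemma~\ref{lem:PiV_Fh} rather than Lemma~\ref{lem:PiV_F}. The fix is immediate once you remember that $\Pi_h^V$ acts only in space: work at fixed $t\in J$ and freeze only spatially (take $p(t,\xi_K)$), or---as the paper does---directly invoke the purely spatial variable-exponent estimate \cite[Lem.~5.6(5.7)]{berselli2023error} at each fixed $t$, which already yields the $t$-pointwise bound~\eqref{lem:PiV_F.3}, and then integrate over $J$. The hypothesis $\textup{ess\,sup}_{t\in I}\|\bfD_{\mathrm{x}}\boldsymbol{\varphi}(t)\|_{p(t,\cdot),\Omega}<\infty$ is needed precisely to make the implicit constant in~\eqref{lem:PiV_F.3} uniform in $t$.

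A secondary concern is your handling of $\mathrm{III}$: trading the frozen semi-norm $[\bfF_{J,K}(\bfD_{\mathrm{x}}\boldsymbol{\varphi}(t))]_{N^{\beta_{\mathrm{x}},2}(\omega_K)}$ back for $[\bfF(t,\cdot,\bfD_{\mathrm{x}}\boldsymbol{\varphi}(t))]_{N^{\beta_{\mathrm{x}},2}(\omega_K)}$ is not as innocuous as your sketch suggests. The pointwise bound from $\mathrm{I}$ makes the defect $\bfF_{J,K}(\bfA)-\bfF(t,x,\bfA)$ small only in $L^2$ (of order $h_K^{\alpha_{\mathrm{x}}}$), not in $N^{\beta_{\mathrm{x}},2}$, and dividing by $|h|^{\beta_{\mathrm{x}}}$ in the supremum defining the Nikolski\u{\i} semi-norm is dangerous as $h\to 0$. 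The cited spatial result in \cite{berselli2023error} sidesteps this entirely by never leaving the variable-exponent setting: it works directly with the shifted modulars $\varphi_a(t,\cdot,\cdot)$, the key estimate (Lemma~\ref{lem:key-estimate}), and shift changes (Lemma~\ref{lem:shift-change}), rather than through a freeze/unfreeze cycle on the Nikolski\u{\i} semi-norm.
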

	
	\begin{proof}
		\emph{ad \eqref{lem:PiV_F.1}.} According to \cite[Lem.\  5.6(5.7)]{berselli2023error}, 	there exists a constant $s>1$ with $s\searrow1$ as $h_K\searrow0$, such that for every $J\in \mathcal{I}_\tau$,  $K\in \mathcal{T}_h$, and a.e.\ $t\in J$, there holds 
\begin{align}
\begin{aligned} 
\|\bfF(t,\cdot,\bfD_{\mathrm{x}}\boldsymbol{\varphi})-\bfF(t,\cdot,\bfD_{\mathrm{x}}\Pi_h^V\boldsymbol{\varphi})\|_{2,K}^2&\lesssim 
h_K ^{2\alpha _x}\,\|1+\vert \bfD_{\mathrm{x}}\boldsymbol{\varphi}(t)\vert^{p(t,\cdot)s}\|_{1,\omega_K }   
\\&\quad+ h_K ^{2\beta_{\mathrm{x}}}\,[ \bfF(t,\cdot, \bfD_{\mathrm{x}}\boldsymbol{\varphi}(t))]_{N^{\beta_{\mathrm{x}},2}(\omega_K )}^2\,,
\end{aligned}\label{lem:PiV_F.3}
\end{align}
		where $\lesssim$ depends on  $k$, $p^-$, $p^+$, $[p]_{\alpha_{\mathrm{t}},\alpha_{\mathrm{x}},Q_T}$, $s$, $\omega_0$, and $\|\bfD_{\mathrm{x}}\boldsymbol{\varphi}(t)\|_{p(t,\cdot),\Omega}$. As a result, integrating \eqref{lem:PiV_F.3} with respect to $t\in J$ and using that $\textup{ess\,sup}_{t\in I}{\big\{\|\bfD_{\mathrm{x}}\boldsymbol{\varphi}(t)\|_{p(t,\cdot),\Omega}\big\}}<\infty$,~we~conclude~that~the~claimed local interpolation error estimate \eqref{lem:PiV_F.1} applies.
		
\emph{ad \eqref{lem:PiV_F.2}.} The global interpolation error estimate \eqref{lem:PiV_F.2} is obtained analogously to the proof of the local interpolation error estimate
\eqref{lem:PiV_F.1}, but, in this case, by using \cite[Lem.\  5.6(5.8)]{berselli2023error}.
\end{proof}
	
By  using Lemma \ref{lem:A-Ah}, we can derive an analogue of Lemma \ref{lem:PiV_F} for $\bfF_h^{\tau}\colon \hspace*{-0.1em} Q_T\times \mathbb{R}^{d\times d}\hspace*{-0.1em}\to\hspace*{-0.1em} \smash{\mathbb{R}^{d\times d}_{\textup{sym}}}$,~${\tau, h\hspace*{-0.1em}\in\hspace*{-0.1em} (0,1]}$, instead of  $\bfF\colon Q_T\times \mathbb{R}^{d\times d}\to \smash{\mathbb{R}^{d\times d}_{\textup{sym}}}$.
	
\begin{lemma}\label{lem:PiV_Fh}
Suppose that $p\in C^{0,\alpha_{\mathrm{t}},\alpha_{\mathrm{x}}}(\overline{Q_T})$, $\alpha_{\mathrm{t}},\alpha_{\mathrm{x}}\in (0,1]$,  with  $p^->1$ and let $\boldsymbol{\varphi}\in  \mathbfcal{V}$~be~such~that
$\bfF(\cdot,\cdot,\bfD_{\mathrm{x}}\boldsymbol{\varphi})\in \smash{L^2(I;(N^{\beta_{\mathrm{x}},2}(\Omega))^{d\times d})}$,  $\beta_{\mathrm{x}}\in   (0,1]$. Then,  there exists a constant $s>1$ with ${s\searrow 1}$ as  $\tau+h_K\searrow 0$, such that
if $\bfD_{\mathrm{x}}\boldsymbol{\varphi}\in (L^{p(\cdot,\cdot)s}(Q_T))^{d\times d}$ and $\textup{ess\,sup}_{t\in I}{\big\{\|\bfD_{\mathrm{x}}\boldsymbol{\varphi}(t)\|_{p(t,\cdot),\Omega}\big\}}<\infty$, then
for every $ J\in \mathcal{I}_\tau$ and $K\in  \mathcal{T}_h$, there holds
\begin{align}
\begin{aligned} 
\|\bfF_h^{\tau}(\cdot,\cdot,\bfD_{\mathrm{x}}\boldsymbol{\varphi})-\bfF_h^{\tau}(\cdot,\cdot,\bfD_{\mathrm{x}}\Pi_h^V\boldsymbol{\varphi})\|_{2,J\times K}^2&\lesssim 		(\tau^{2\alpha_{\mathrm{t}}}+h_K ^{2\alpha_{\mathrm{x}}})\,\|1+\vert \bfD_{\mathrm{x}}\boldsymbol{\varphi}\vert^{p(\cdot,\cdot)s}\|_{1,J\times\omega_K }\\&\quad+h_K ^{2\beta_{\mathrm{x}}}\,[ \bfF(\cdot,\cdot,\bfD_{\mathrm{x}}\boldsymbol{\varphi})]_{L^2(J;N^{\beta_{\mathrm{x}},2}(\omega_K ))}^2\,,
\end{aligned}\label{lem:PiV_Fh.1}
\end{align}
where \hspace{-0.1mm}the \hspace{-0.1mm}implicit \hspace{-0.1mm}constant \hspace{-0.1mm}in \hspace{-0.1mm}$\lesssim$ \hspace{-0.1mm}depends \hspace{-0.1mm}on \hspace{-0.1mm}$k$, \hspace{-0.1mm}$p^-$, \hspace{-0.1mm}$p^+$, \hspace{-0.1mm}$[p]_{\alpha_{\mathrm{t}},\alpha_{\mathrm{x}},Q_T}$, \hspace{-0.1mm}$s$, \hspace{-0.1mm}$\omega_0$, \hspace{-0.1mm}and \hspace{-0.1mm}$\textup{ess\,sup}_{t\in I}{\big\{\|\bfD_{\mathrm{x}}\boldsymbol{\varphi}(t)\|_{p(t,\cdot),\Omega}\big\}}$.
In particular, it follows that
\begin{align}
\begin{aligned} 
\|\bfF_h^{\tau}(\cdot,\cdot,\bfD_{\mathrm{x}}\boldsymbol{\varphi})-\bfF_h^{\tau}(\cdot,\cdot,\bfD_{\mathrm{x}}\Pi_h^V\boldsymbol{\varphi})\|_{2,Q_T}^2&\lesssim (\tau^{2\alpha_{\mathrm{t}}}+h^{2\alpha_{\mathrm{x}}})\,\big(1+\rho_{p(\cdot,\cdot)s,Q_T}(\bfD_{\mathrm{x}}\boldsymbol{\varphi})\big)\\&\quad+
			h^{2\beta_{\mathrm{x}}}\,[ \bfF(\cdot,\cdot,\bfD_{\mathrm{x}}\boldsymbol{\varphi})]_{L^2(I;N^{\beta_{\mathrm{x}},2}(\Omega))}^2\,.
		\end{aligned}\label{lem:PiV_Fh.2}
		\end{align}
	\end{lemma}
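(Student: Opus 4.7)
The plan is to reduce Lemma \ref{lem:PiV_Fh} to Lemma \ref{lem:PiV_F} by inserting the continuous mappings $\bfF(\cdot,\cdot,\bfD_{\mathrm{x}}\boldsymbol{\varphi})$ and $\bfF(\cdot,\cdot,\bfD_{\mathrm{x}}\Pi_h^V\boldsymbol{\varphi})$ as intermediate terms and then controlling the discrepancy between $\bfF$ and $\bfF_h^\tau$ via the (yet unseen) Lemma~\ref{lem:A-Ah}. Concretely, on any $J\times K\in\mathcal{I}_\tau\times\mathcal{T}_h$, the triangle inequality gives
\begin{align*}
    \|\bfF_h^{\tau}(\cdot,\cdot,\bfD_{\mathrm{x}}\boldsymbol{\varphi})-\bfF_h^{\tau}(\cdot,\cdot,\bfD_{\mathrm{x}}\Pi_h^V\boldsymbol{\varphi})\|_{2,J\times K}^2
    &\lesssim\|\bfF_h^{\tau}(\cdot,\cdot,\bfD_{\mathrm{x}}\boldsymbol{\varphi})-\bfF(\cdot,\cdot,\bfD_{\mathrm{x}}\boldsymbol{\varphi})\|_{2,J\times K}^2\\
    &\quad+\|\bfF(\cdot,\cdot,\bfD_{\mathrm{x}}\boldsymbol{\varphi})-\bfF(\cdot,\cdot,\bfD_{\mathrm{x}}\Pi_h^V\boldsymbol{\varphi})\|_{2,J\times K}^2\\
    &\quad+\|\bfF(\cdot,\cdot,\bfD_{\mathrm{x}}\Pi_h^V\boldsymbol{\varphi})-\bfF_h^{\tau}(\cdot,\cdot,\bfD_{\mathrm{x}}\Pi_h^V\boldsymbol{\varphi})\|_{2,J\times K}^2\,,
\end{align*}
so it suffices to estimate each of the three summands separately.

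The middle term is handled directly by Lemma \ref{lem:PiV_F}, which already yields exactly the bound $h_K^{2\alpha_{\mathrm{x}}}\|1+\vert \bfD_{\mathrm{x}}\boldsymbol{\varphi}\vert^{p(\cdot,\cdot)s}\|_{1,J\times \omega_K }+ h_K^{2\beta_{\mathrm{x}}}[ \bfF(\cdot,\cdot ,\bfD_{\mathrm{x}}\boldsymbol{\varphi})]_{L^2(J;N^{\beta_{\mathrm{x}},2}(\omega_K ))}^2$, under the hypotheses we have imposed on $\boldsymbol{\varphi}$. For the first summand, I would invoke Lemma \ref{lem:A-Ah}: this lemma (referenced in the preface preceding our statement) should, by exploiting the $(\alpha_{\mathrm{t}},\alpha_{\mathrm{x}})$-parabolic Hölder continuity of $p$ and the definition \eqref{def:A_h} of $\bfF_h^\tau$ via a one-point quadrature, provide a pointwise estimate of the form $|\bfF_h^\tau(t,x,\bfA)-\bfF(t,x,\bfA)|^2\lesssim (\tau^{2\alpha_{\mathrm{t}}}+h_K^{2\alpha_{\mathrm{x}}})(1+|\bfA|^{p(t,x)s})$ on $J\times K$. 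Integrating with $\bfA=\bfD_{\mathrm{x}}\boldsymbol{\varphi}$ over $J\times K$ produces precisely the $(\tau^{2\alpha_{\mathrm{t}}}+h_K^{2\alpha_{\mathrm{x}}})\|1+|\bfD_{\mathrm{x}}\boldsymbol{\varphi}|^{p(\cdot,\cdot)s}\|_{1,J\times K}$ contribution (which is dominated by the patch-norm on $\omega_K$ appearing in~\eqref{lem:PiV_Fh.1}).

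The delicate step, and the main obstacle, is the third summand, since it involves $\Pi_h^V\boldsymbol{\varphi}$ rather than $\boldsymbol{\varphi}$ itself. Applying Lemma \ref{lem:A-Ah} in the same way yields $(\tau^{2\alpha_{\mathrm{t}}}+h_K^{2\alpha_{\mathrm{x}}})\|1+|\bfD_{\mathrm{x}}\Pi_h^V\boldsymbol{\varphi}|^{p(\cdot,\cdot)s}\|_{1,J\times K}$, so we must transfer the $L^{p(\cdot,\cdot)s}$-mass of $\bfD_{\mathrm{x}}\Pi_h^V\boldsymbol{\varphi}$ back to $\bfD_{\mathrm{x}}\boldsymbol{\varphi}$ on a slightly enlarged patch. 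For this, I would use the modular stability of $\Pi_h^V$ supplied by Lemma \ref{lem:stab_PiV}, applied to the generalized $N$-function $r\mapsto r^{p(\cdot,\cdot)s}$ rather than $\varphi$; since the argument of Lemma \ref{lem:stab_PiV} is generic for $\log$-Hölder continuous exponents (and here $p(\cdot,\cdot)s$ remains so for $s$ sufficiently close to $1$, which is guaranteed by $s\searrow 1$ as $\tau+h_K\searrow 0$), one obtains $\|\vert \bfD_{\mathrm{x}}\Pi_h^V\boldsymbol{\varphi}\vert^{p(\cdot,\cdot)s}\|_{1,J\times K}\lesssim h_K^n+\|\vert \bfD_{\mathrm{x}}\boldsymbol{\varphi}\vert^{p(\cdot,\cdot)s}\|_{1,J\times\omega_K}$, and the smallness condition in Lemma~\ref{lem:stab_PiV} is verified using the uniform bound $\operatorname*{ess\,sup}_{t\in I}\|\bfD_{\mathrm{x}}\boldsymbol{\varphi}(t)\|_{p(t,\cdot),\Omega}<\infty$. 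Combining the three bounds yields the local estimate~\eqref{lem:PiV_Fh.1}.

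Finally, the global estimate~\eqref{lem:PiV_Fh.2} is obtained either by summing~\eqref{lem:PiV_Fh.1} over all $J\in\mathcal{I}_\tau$ and $K\in\mathcal{T}_h$ (using the bounded overlap of the patches $\omega_K$) or, more cleanly, by repeating the above three-step decomposition on $Q_T$ and using Lemma \ref{lem:PiV_F} in its global form together with the global stability estimate \eqref{lem:stab_PiV.2}.
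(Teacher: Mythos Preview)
Your proposal is correct and follows essentially the same approach as the paper: the same three-term triangle-inequality decomposition, Lemma~\ref{lem:A-Ah}\eqref{eq:Fh-F} for the two $\bfF_h^\tau$-to-$\bfF$ discrepancies, Lemma~\ref{lem:PiV_F} for the middle term, and the stability estimate of Lemma~\ref{lem:stab_PiV} (applied with $a=\delta=0$ and exponent $p(\cdot,\cdot)s$) to transfer $\bfD_{\mathrm{x}}\Pi_h^V\boldsymbol{\varphi}$ back to $\bfD_{\mathrm{x}}\boldsymbol{\varphi}$ on $\omega_K$. The paper carries this out at fixed $t\in J$ and then integrates, but the content is the same.
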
\newpage
	
\begin{proof}
\emph{ad \eqref{lem:PiV_Fh.1}.} Using Lemma \ref{lem:A-Ah}\eqref{eq:Fh-F}, \eqref{lem:PiV_F.3}, again, Lemma \ref{lem:A-Ah}\eqref{eq:Fh-F}, and \textcolor{black}{\eqref{lem:stab_PiV.3}} (with $a=\delta=0$ and $ps$ instead of $p$),
for every $J\in \mathcal{I}_\tau$, $K\in \mathcal{T}_h$, and a.e.\ $t\in J$, we find that
\begin{align}
\begin{aligned} 
\|\bfF_h^{\tau}(t,\cdot,\bfD_{\mathrm{x}}\boldsymbol{\varphi}(t))-\bfF_h^{\tau}(t,\cdot,\bfD_{\mathrm{x}}\Pi_h^V\boldsymbol{\varphi}(t))\|_{2,K}^2&\lesssim \|\bfF_h^{\tau}(t,\cdot,\bfD_{\mathrm{x}}\boldsymbol{\varphi}(t))-\bfF(t,\cdot,\bfD_{\mathrm{x}}\boldsymbol{\varphi}(t))\|_{2,K}^2\\
&\quad+\|\bfF(t,\cdot,\bfD_{\mathrm{x}}\boldsymbol{\varphi}(t))-\bfF(t,\cdot,\bfD_{\mathrm{x}}\Pi_h^V\boldsymbol{\varphi}(t))\|_{2,K}^2\\
&\quad+\|\bfF(t,\cdot,\bfD_{\mathrm{x}}\Pi_h^V\boldsymbol{\varphi}(t))-\bfF_h^{\tau}(t,\cdot,\bfD_{\mathrm{x}}\Pi_h^V\boldsymbol{\varphi}(t))\|_{2,K}^2\\
&\lesssim (\tau^{2\alpha_{\mathrm{t}}}+h_K ^{2\alpha_{\mathrm{x}}})\,\|1+\vert \bfD_{\mathrm{x}}\boldsymbol{\varphi}(t)\vert^{p(t,\cdot)s}\|_{1,\omega_K }\\
&\quad+ h_K ^{2\beta_{\mathrm{x}}}\,[ \bfF(t,\cdot,\bfD_{\mathrm{x}}\boldsymbol{\varphi}(t))]_{N^{\beta_{\mathrm{x}},2}(\omega_K )}^2\\
&\quad +(\tau^{2\alpha_{\mathrm{t}}}+h_K ^{2\alpha_{\mathrm{x}}})\,\|1+\vert \bfD_{\mathrm{x}}\Pi_h^V\boldsymbol{\varphi}(t)\vert^{p(t,\cdot)s}\|_{1,K}
\\
&\lesssim (\tau^{2\alpha_{\mathrm{t}}}+h_K ^{2\alpha_{\mathrm{x}}})\,\|1+\vert \bfD_{\mathrm{x}}\boldsymbol{\varphi}(t)\vert^{p(t,\cdot)s}\|_{1,\omega_K }\\
&\quad+ h_K ^{2\beta_{\mathrm{x}}}\,[ \bfF(t,\cdot,\bfD_{\mathrm{x}}\boldsymbol{\varphi}(t))]_{N^{\beta_{\mathrm{x}},2}(\omega_K )}^2 \,,
\end{aligned}\label{lem:PiV_Fh.3}
\end{align}
where the implicit constant in $\lesssim$ depends  on  $k$, $p^-$, $p^+$, $[p]_{\alpha_{\mathrm{t}},\alpha_{\mathrm{x}},Q_T}$, $s$, $\omega_0$, and $\|\bfD_{\mathrm{x}}\boldsymbol{\varphi}(t)\|_{p(t,\cdot),\Omega}$.~As~a~\mbox{result}, integration of  \eqref{lem:PiV_Fh.3} with respect to $t\in J$ and using that $\textup{ess\,sup}_{t\in I}{\big\{\|\bfD_{\mathrm{x}}\boldsymbol{\varphi}(t)\|_{p(t,\cdot),\Omega}\big\}}<\infty$, we conclude that the claimed local interpolation error estimate \eqref{lem:PiV_Fh.1} applies.
		
\emph{ad \eqref{lem:PiV_Fh.2}.} The global interpolation error estimate \eqref{lem:PiV_Fh.2} is obtained analogously to the proof of the local interpolation error estimate \eqref{lem:PiV_Fh.1}.
\end{proof}

A similar fractional interpolation error estimate can be proved for the nodal interpolation operator~$\mathrm{I}_\tau^{0,\mathrm{t}}$ (\textit{cf}.\ \eqref{def:Pit}).\enlargethispage{11mm}
	
\begin{lemma}\label{lem:Inod_F}
	Suppose that $p\in C^{0,\alpha_{\mathrm{t}},\alpha_{\mathrm{x}}}(\overline{Q_T})$, $\alpha_{\mathrm{t}},\alpha_{\mathrm{x}}\in (0,1]$,  with $p^->1$ and let  $\boldsymbol{\varphi}\in\mathbfcal{V}$ be such that $\bfF(\cdot,\cdot,\bfD_{\mathrm{x}}\boldsymbol{\varphi}) \in N^{\beta_{\mathrm{t}},2}(I;(L^2(\Omega))^{d\times d})\cap L^2(I;(N^{\beta_{\mathrm{x}},2}(\Omega))^{d\times d})$, $\beta_{\mathrm{t}}\in (\frac{1}{2},1]$, $\beta_{\mathrm{x}}\in (0,1]$. Then,  there exists a~constant~$s\hspace{-0.1mm}>\hspace{-0.1mm}1$ with $s\hspace{-0.1mm}\searrow\hspace{-0.1mm}1$ as $\tau\hspace{-0.1mm}\searrow\hspace{-0.1mm}0$, such that 
    for every $J\in \mathcal{I}_\tau$ and $K\in \mathcal{T}_h$, there holds
\begin{align}
\begin{aligned} 
\|\bfF(\cdot,\cdot,\bfD_{\mathrm{x}}\boldsymbol{\varphi})-\bfF(\cdot,\cdot,\bfD_{\mathrm{x}}\mathrm{I}_\tau^{0,\mathrm{t}}\boldsymbol{\varphi})\|_{2,J\times K}^2
&\lesssim \tau^{2\alpha_{\mathrm{t}}+1}\,{\sup}_{t\in J}{\big\{\|1+\vert \bfD_{\mathrm{x}}\boldsymbol{\varphi}(t)\vert^{p(t,\cdot)s}\|_{1,K}\big\}}\\
&\quad+ \tau^{2\beta_{\mathrm{t}}}\,[ \bfF(\cdot,\cdot,\bfD_{\mathrm{x}}\boldsymbol{\varphi})]_{N^{\beta_{\mathrm{t}},2}(J;L^2(K))}^2\,,
\end{aligned}\label{lem:Inod_F.1}
\end{align}
where \hspace{-0.1mm}the \hspace{-0.1mm}implicit \hspace{-0.1mm}constant \hspace{-0.1mm}in \hspace{-0.1mm}$\lesssim$ \hspace{-0.1mm}depends \hspace{-0.1mm}on \hspace{-0.1mm}$k$, \hspace{-0.1mm}$p^-$, \hspace{-0.1mm}$p^+$, \hspace{-0.1mm}$[p]_{\alpha_{\mathrm{t}},\alpha_{\mathrm{x}},Q_T}$, \hspace{-0.1mm}$s$, \hspace{-0.1mm}$\omega_0$, \hspace{-0.1mm}and \hspace{-0.1mm}$\sup_{t\in I}{\big\{\|\bfD_{\mathrm{x}}\boldsymbol{\varphi}(t)\|_{p(t,\cdot),\Omega}\big\}}$.
In particular, it follows that
\begin{align}
\begin{aligned} 
\|\bfF(\cdot,\cdot,\bfD_{\mathrm{x}}\boldsymbol{\varphi})-\bfF(\cdot,\cdot,\bfD_{\mathrm{x}}\mathrm{I}_\tau^{0,\mathrm{t}}\boldsymbol{\varphi})\|_{2,Q_T}^2
&\lesssim \tau^{2\alpha_{\mathrm{t}}}\,{\sup}_{t\in I}{\big\{\|1+\vert \bfD_{\mathrm{x}}\boldsymbol{\varphi}(t)\vert^{p(t,\cdot)s}\|_{1,\Omega}\big\}}\\
&\quad+ \tau^{2\beta_{\mathrm{t}}}
\,[ \bfF(\cdot,\cdot,\bfD_{\mathrm{x}}\boldsymbol{\varphi})]_{N^{\beta_{\mathrm{t}},2}(I;L^2(\Omega))}^2\,.
\end{aligned}\label{lem:Inod_F.2}
\end{align}
\end{lemma}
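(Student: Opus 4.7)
My approach is a pointwise telescoping decomposition that isolates two manageable pieces: a shift of only the first argument of $\bfF$, controlled by the $\alpha_{\mathrm{t}}$-Hölder continuity of $p$, and a genuine nodal-in-time error of the composite function $\bfF(\cdot,\cdot,\bfD_{\mathrm{x}}\boldsymbol{\varphi})$, controlled by its Bochner--Nikolski\u{\i} regularity. Fixing $J=I_m\in\mathcal{I}_\tau$ and $K\in\mathcal{T}_h$ and recalling that $(\mathrm{I}_\tau^{0,\mathrm{t}}\boldsymbol{\varphi})(t)=\boldsymbol{\varphi}(t_m)$ on $I_m$, I would write for a.e.\ $(t,x)^\top\in I_m\times K$,
\begin{align*}
\bfF(t,x,\bfD_{\mathrm{x}}\boldsymbol{\varphi}(t))-\bfF(t,x,\bfD_{\mathrm{x}}\boldsymbol{\varphi}(t_m))=\mathrm{I}(t,x)+\mathrm{II}(t,x),
\end{align*}
with $\mathrm{I}(t,x):=\bfF(t,x,\bfD_{\mathrm{x}}\boldsymbol{\varphi}(t))-\bfF(t_m,x,\bfD_{\mathrm{x}}\boldsymbol{\varphi}(t_m))$ (full time shift of the composite function) and $\mathrm{II}(t,x):=\bfF(t_m,x,\bfD_{\mathrm{x}}\boldsymbol{\varphi}(t_m))-\bfF(t,x,\bfD_{\mathrm{x}}\boldsymbol{\varphi}(t_m))$ (shift of only the first argument of $\bfF$).

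To bound $\mathrm{II}$, I would invoke the pure time-shift version of the Hölder estimate on $\bfF$ that already underlies \eqref{lem:PiV_Fh.3}: since $p\in C^{0,\alpha_{\mathrm{t}},\alpha_{\mathrm{x}}}(\overline{Q_T})$, there exists $s>1$ with $s\searrow 1$ as $\tau\searrow 0$ such that $|\bfF(t_m,x,\bfA)-\bfF(t,x,\bfA)|^2\lesssim\tau^{2\alpha_{\mathrm{t}}}(1+|\bfA|^{p(t_m,x) s})$ uniformly in $\bfA\in\mathbb{R}^{d\times d}$ and $(t,x)^\top\in I_m\times K$. After substituting $\bfA=\bfD_{\mathrm{x}}\boldsymbol{\varphi}(t_m,x)$ and integrating over $I_m\times K$, the $t$-integral contributes a factor $|I_m|=\tau$, which yields the first term $\tau^{2\alpha_{\mathrm{t}}+1}\sup_{t\in J}\|1+|\bfD_{\mathrm{x}}\boldsymbol{\varphi}(t)|^{p(t,\cdot)s}\|_{1,K}$ on the right-hand side of \eqref{lem:Inod_F.1}.

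To bound $\mathrm{I}$, I would set $G:=\bfF(\cdot,\cdot,\bfD_{\mathrm{x}}\boldsymbol{\varphi})$ so that $\mathrm{I}(t,x)=G(t,x)-G(t_m,x)$. Since $G\in N^{\beta_{\mathrm{t}},2}(J;L^2(K))$ with $\beta_{\mathrm{t}}\in(1/2,1]$, the Bochner--Morrey embedding $N^{\beta,2}(J;X)\hookrightarrow C^{0,\beta-1/2}(\overline{J};X)$ delivers $\|G(t)-G(t_m)\|_{2,K}^{2}\lesssim|t-t_m|^{2\beta_{\mathrm{t}}-1}[G]^{2}_{N^{\beta_{\mathrm{t}},2}(J;L^2(K))}$, and the one-dimensional integral $\int_{I_m}|t-t_m|^{2\beta_{\mathrm{t}}-1}\,\mathrm{d}t=\tau^{2\beta_{\mathrm{t}}}/(2\beta_{\mathrm{t}})$ produces the second term of \eqref{lem:Inod_F.1}. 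Combining the bounds on $\mathrm{I}$ and $\mathrm{II}$ yields the local estimate; the global estimate \eqref{lem:Inod_F.2} then follows by summing over $J\in\mathcal{I}_\tau$ and $K\in\mathcal{T}_h$, using $M\tau=T$ to absorb one factor of $\tau$ in the first contribution and the $K$-additivity of the $L^2$-norms together with the global Bochner--Nikolski\u{\i} semi-norm of $G$ on $I\times\Omega$ in the second. The principal technical difficulty lies precisely in the last step: ensuring that the constant in the Morrey-type embedding is uniform in $J$ so that the local rate $\tau^{2\beta_{\mathrm{t}}}$ survives the summation over the time intervals without any loss in the power of $\tau$.
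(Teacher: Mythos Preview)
Your decomposition into $\mathrm{I}$ (nodal error of the composite $G=\bfF(\cdot,\cdot,\bfD_{\mathrm{x}}\boldsymbol{\varphi})$) and $\mathrm{II}$ (time-shift of only the first argument of $\bfF$) is exactly the paper's: setting $\bfF^\tau(t,x,\bfA):=\bfF(t_m,x,\bfA)$ on $I_m$, the paper splits into $\|\bfF(\cdot,\cdot,\bfD_{\mathrm{x}}\boldsymbol{\varphi})-\bfF^\tau(\cdot,\cdot,\bfD_{\mathrm{x}}\mathrm{I}_\tau^{0,\mathrm{t}}\boldsymbol{\varphi})\|_{2,J\times K}^2$ (your $\mathrm{I}$) and $\|\bfF(\cdot,\cdot,\bfD_{\mathrm{x}}\mathrm{I}_\tau^{0,\mathrm{t}}\boldsymbol{\varphi})-\bfF^\tau(\cdot,\cdot,\bfD_{\mathrm{x}}\mathrm{I}_\tau^{0,\mathrm{t}}\boldsymbol{\varphi})\|_{2,J\times K}^2$ (your $\mathrm{II}$), and treats $\mathrm{II}$ via Lemma~\ref{lem:A-Ah}\eqref{eq:Fh-F} just as you propose. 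For the \emph{local} estimate \eqref{lem:Inod_F.1}, your Morrey route for $\mathrm{I}$ is a valid way to get $\int_{I_m}\|G(t)-G(t_m)\|_{L^2(K)}^2\,\mathrm{d}t\lesssim \tau^{2\beta_{\mathrm{t}}}[G]^2_{N^{\beta_{\mathrm{t}},2}(I_m;L^2(K))}$; the scale invariance you need (the Morrey constant is independent of $|J|$) does indeed hold.

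The gap is in the passage to the \emph{global} estimate \eqref{lem:Inod_F.2}. You correctly sense a difficulty, but you misidentify it: the uniformity of the Morrey constant in $J$ is not the problem. The problem is that summing your local bounds over $J\in\mathcal{I}_\tau$ produces $\tau^{2\beta_{\mathrm{t}}}\sum_{m}[G]^2_{N^{\beta_{\mathrm{t}},2}(I_m;X)}$, and Nikolski\u{\i} seminorms---being suprema over the shift---do \emph{not} satisfy $\sum_m[G]^2_{N^{\beta_{\mathrm{t}},2}(I_m;X)}\lesssim[G]^2_{N^{\beta_{\mathrm{t}},2}(I;X)}$ (one has only $\sum_m\sup_s(\cdot)\geq\sup_s\sum_m(\cdot)$). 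If instead you invoke Morrey on the full interval $I$, then $\|G(t)-G(t_m)\|_X^2\leq C|t-t_m|^{2\beta_{\mathrm{t}}-1}[G]^2_{N^{\beta_{\mathrm{t}},2}(I;X)}$ and integration over $I$ yields $M\cdot\tau^{2\beta_{\mathrm{t}}}/(2\beta_{\mathrm{t}})=T\tau^{2\beta_{\mathrm{t}}-1}/(2\beta_{\mathrm{t}})$, i.e.\ the suboptimal rate $\tau^{2\beta_{\mathrm{t}}-1}$. Either way, your summation strategy loses one power of $\tau$. The paper avoids this by arguing ``analogously'' directly on $Q_T$ (not by summing the local estimates): the first term becomes $\|G-\mathrm{I}_\tau^{0,\mathrm{t}}G\|_{2,Q_T}^2\lesssim\tau^{2\beta_{\mathrm{t}}}[G]^2_{N^{\beta_{\mathrm{t}},2}(I;L^2(\Omega))}$, taken as a black-box nodal-interpolation estimate in Bochner--Nikolski\u{\i} spaces (cf.\ \cite{ptx_lap}). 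That estimate is \emph{not} proved via the pointwise Morrey embedding; one needs an argument that exploits the $L^2$-in-time structure, e.g.\ routing through $\Pi_\tau^{0,\mathrm{t}}G$ and a double-integral Poincar\'e bound $\sum_m\|G-\langle G\rangle_{I_m}\|_{2,I_m}^2\leq\tfrac{1}{\tau}\int_{-\tau}^{\tau}\|\Delta_rG\|_{2,I\cap(I-r)}^2\,\mathrm{d}r\lesssim\tau^{2\beta_{\mathrm{t}}}[G]^2_{N^{\beta_{\mathrm{t}},2}(I;X)}$, which \emph{does} sum correctly because the shift $r$ is common to all subintervals.
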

    
\begin{proof}
\textit{ad \eqref{lem:Inod_F.1}.} First, note that, by Lemma \ref{lem:improved_integrability}, we have that 
$\bfF(\cdot,\cdot,\bfD_{\mathrm{x}}\boldsymbol{\varphi})\in C^0(\overline{I};(L^{2s}(\Omega))^{d\times d})$ for some $s>1$, which implies that $(t\mapsto\vert\bfD_{\mathrm{x}}\bfv(t)\vert^{p(t,\cdot)s})\hspace{-0.175em}\in\hspace{-0.175em} C^0(\overline{I})$, so that applying  $\mathrm{I}_{\tau}^{0,\mathrm{t}}$ (\textit{cf}.\ \eqref{def:Pit})~is~\mbox{well-defined}. Then, 
introducing the non-linear mapping $\bfF^{\tau}\colon Q_T\times \mathbb{R}^{d\times d}\to \mathbb{R}^{d\times d}_{\textup{sym}}$, 
for~every~$t\in I_m$,~$m=1,\ldots,M$, $x\in \Omega$, and $\bfA\in \mathbb{R}^{d\times d}$, 
defined by
\begin{align*}
\bfF^{\tau}(t,x,\bfA)\coloneqq \bfF(t_m,x,\bfA)\,,
\end{align*}
from Lemma \ref{lem:A-Ah}\eqref{eq:Fh-F}, we deduce that
\begin{align*}
\|\bfF(\cdot,\cdot,\bfD_{\mathrm{x}}\boldsymbol{\varphi})-\bfF(\cdot,\cdot,\bfD_{\mathrm{x}}\mathrm{I}_\tau^{0,\mathrm{t}}\boldsymbol{\varphi})\|_{2,J\times K}^2
&\lesssim \|\bfF(\cdot,\cdot,\bfD_{\mathrm{x}}\boldsymbol{\varphi})-\bfF^\tau(\cdot,\cdot,\bfD_{\mathrm{x}}\mathrm{I}_\tau^{0,\mathrm{t}}\boldsymbol{\varphi})\|_{2,J\times K}^2\\
&\quad+\|\bfF(\cdot,\cdot,\bfD_{\mathrm{x}}\mathrm{I}_\tau^{0,\mathrm{t}}\boldsymbol{\varphi})-\bfF^\tau(\cdot,\cdot,\bfD_{\mathrm{x}}\mathrm{I}_\tau^{0,\mathrm{t}}\boldsymbol{\varphi})\|_{2,J\times K}^2\\
&\lesssim\tau^{2\beta_{\mathrm{t}}}\,[ \bfF(\cdot,\cdot,\bfD_{\mathrm{x}}\boldsymbol{\varphi})]^2_{N^{\beta_{\mathrm{t}},2}(J;L^2(K))}\\
&\quad+\tau^{2\alpha_{\mathrm{t}}}\,\|1+\vert\bfD \mathrm{I}_\tau^{0,\mathrm{t}}\boldsymbol{\varphi}\vert^{(\mathrm{I}_\tau^{0,\mathrm{t}}p)(\cdot,\cdot)s}\|_{1,J\times K }\\
&\lesssim\tau^{2\beta_{\mathrm{t}}}\,[ \bfF(\cdot,\cdot,\bfD_{\mathrm{x}}\boldsymbol{\varphi})]_{N^{\beta_{\mathrm{t}},2}(J;L^2(K))}^2\\
&\quad+\tau^{2\alpha_{\mathrm{t}}+1}\,{\sup}_{t\in J}{\big\{\|1+\vert\bfD_{\mathrm{x}}\boldsymbol{\varphi}(t)\vert^{p(t,\cdot)s}\|_{1, K } \big\}}\,,
\end{align*}
where \hspace{-0.15mm}the \hspace{-0.15mm}implicit \hspace{-0.15mm}constant \hspace{-0.15mm}in \hspace{-0.15mm}$\lesssim$ \hspace{-0.15mm}depends \hspace{-0.15mm}only \hspace{-0.15mm}on \hspace{-0.15mm}$k$, \hspace{-0.15mm}$p^-$, \hspace{-0.15mm}$p^+$, \hspace{-0.15mm}$[p]_{\alpha_{\mathrm{t}},\alpha_{\mathrm{x}},Q_T}$, \hspace{-0.15mm}$s$, \hspace{-0.15mm}$\omega_0$, \hspace{-0.15mm}and \hspace{-0.15mm}${\sup}_{t\in I}{\big\{\|\bfD_{\mathrm{x}}\boldsymbol{\varphi}(t)\|_{p(t,\cdot),\Omega}\big\}}$.

\textit{ad \eqref{lem:Inod_F.2}.} The global interpolation error estimate \eqref{lem:Inod_F.2} is obtained analogously to the proof of the local interpolation error estimate \eqref{lem:Inod_F.1}.
\end{proof}

\newpage
By using Lemma \ref{lem:A-Ah}, we can derive an analogue of Lemma \ref{lem:Inod_F} for $\bfF_h^{\tau}\colon\hspace*{-0.1em} Q_T\times \mathbb{R}^{d\times d}\hspace*{-0.1em}\to\hspace*{-0.1em} \smash{\mathbb{R}^{d\times d}_{\textup{sym}}}$,~${\tau,h\in (0,1]}$, instead of  $\bfF\colon Q_T\times \mathbb{R}^{d\times d}\to \smash{\mathbb{R}^{d\times d}_{\textup{sym}}}$.
	
	\begin{lemma}\label{lem:Inod_Fh}
			Suppose that $p\in C^{0,\alpha_{\mathrm{t}},\alpha_{\mathrm{x}}}(\overline{Q_T})$, $\alpha_{\mathrm{t}},\alpha_{\mathrm{x}}\in (0,1]$, with $p^->1$  and let $\boldsymbol{\varphi}\in  \mathbfcal{V}$  be such that
		$\bfF(\cdot,\cdot,\bfD_{\mathrm{x}}\boldsymbol{\varphi}) \in N^{\beta_{\mathrm{t}},2}(I;(L^2(\Omega))^{d\times d})\cap L^2(I;(N^{\beta_{\mathrm{x}},2}(\Omega))^{d\times d})$, $\beta_{\mathrm{t}}\in (\frac{1}{2},1]$, $\beta_{\mathrm{x}}\in (0,1]$. Then,  there exists a constant $s>1$ with $s\searrow1$ as $\tau+h_K\hspace{-0.1em}\searrow\hspace{-0.1em}0$~such~that  
        for every $J\in \mathcal{I}_\tau$~and~${K\in \mathcal{T}_h}$,~there~holds
\begin{align}
\begin{aligned} 
\|\bfF_h^{\tau}(\cdot,\cdot,\bfD_{\mathrm{x}}\boldsymbol{\varphi})-\bfF_h^{\tau}(\cdot,\cdot,\bfD_{\mathrm{x}}\mathrm{I}_\tau^{0,\mathrm{t}}\boldsymbol{\varphi})\|_{2,J\times K}^2&\lesssim (\tau^{2\alpha_{\mathrm{t}}}+h_K ^{2\alpha_{\mathrm{x}}})\,\tau\,{\sup}_{t\in J}{\big\{\|1+\vert \bfD_{\mathrm{x}}\boldsymbol{\varphi}(t)\vert^{p(t,\cdot)s}\|_{1,K}\big\}}\\
&\quad+ \tau^{2\beta_{\mathrm{t}}}\,[ \bfF(\cdot,\cdot,\bfD_{\mathrm{x}}\boldsymbol{\varphi})]_{N^{\beta_{\mathrm{t}},2}(J;L^2(K))}^2\,,
\end{aligned}\label{lem:Inod_Fh.1}
\end{align}
where \hspace{-0.1mm}the \hspace{-0.1mm}implicit \hspace{-0.1mm}constant \hspace{-0.1mm}in \hspace{-0.1mm}$\lesssim$ \hspace{-0.1mm}depends \hspace{-0.1mm}on \hspace{-0.1mm}$k$, \hspace{-0.1mm}$p^-$, \hspace{-0.1mm}$p^+$, \hspace{-0.1mm}$[p]_{\alpha_{\mathrm{t}},\alpha_{\mathrm{x}},Q_T}$, \hspace{-0.1mm}$s$, \hspace{-0.1mm}$\omega_0$, \hspace{-0.1mm}and \hspace{-0.1mm}${\sup}_{t\in I}{\big\{\|\bfD_{\mathrm{x}}\boldsymbol{\varphi}(t)\|_{p(t,\cdot),\Omega}\big\}}$.
In particular, it follows that
\begin{align}
\begin{aligned} 
\|\bfF_h^{\tau}(\cdot,\cdot,\bfD_{\mathrm{x}}\boldsymbol{\varphi})-\bfF_h^{\tau}(\cdot,\cdot,\bfD_{\mathrm{x}}\mathrm{I}_\tau^{0,\mathrm{t}}\boldsymbol{\varphi})\|_{2,Q_T}^2&\lesssim (\tau^{2\alpha_{\mathrm{t}}}+h^{2\alpha_{\mathrm{x}}})\,{\sup}_{t\in I}{\big\{\|1+\vert \bfD_{\mathrm{x}}\boldsymbol{\varphi}(t)\vert^{p(t,\cdot)s}\|_{1,\Omega}\big\}} \\
&\quad+ \tau^{2\beta_{\mathrm{t}}}\,[ \bfF(\cdot,\cdot,\bfD_{\mathrm{x}}\boldsymbol{\varphi})]_{N^{\beta_{\mathrm{t}},2}(I;L^2(\Omega))}^2\,.
\end{aligned}\label{lem:Inod_Fh.2}
\end{align}
\end{lemma}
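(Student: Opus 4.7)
The plan is to reduce to the already-proved Lemma \ref{lem:Inod_F} by inserting and subtracting the (undiscretized) non-linear mapping $\bfF(\cdot,\cdot,\cdot)$ and then absorbing the resulting discrepancies through Lemma \ref{lem:A-Ah}\eqref{eq:Fh-F}, exactly in the spirit in which Lemma \ref{lem:PiV_Fh} was reduced to Lemma \ref{lem:PiV_F}. Concretely, for every $J\in \mathcal{I}_\tau$ and $K\in \mathcal{T}_h$, I would apply the triangle inequality in the form
\begin{align*}
\|\bfF_h^{\tau}(\cdot,\cdot,\bfD_{\mathrm{x}}\boldsymbol{\varphi})-\bfF_h^{\tau}(\cdot,\cdot,\bfD_{\mathrm{x}}\mathrm{I}_\tau^{0,\mathrm{t}}\boldsymbol{\varphi})\|_{2,J\times K}^2
&\lesssim \|\bfF_h^{\tau}(\cdot,\cdot,\bfD_{\mathrm{x}}\boldsymbol{\varphi})-\bfF(\cdot,\cdot,\bfD_{\mathrm{x}}\boldsymbol{\varphi})\|_{2,J\times K}^2\\
&\quad+\|\bfF(\cdot,\cdot,\bfD_{\mathrm{x}}\boldsymbol{\varphi})-\bfF(\cdot,\cdot,\bfD_{\mathrm{x}}\mathrm{I}_\tau^{0,\mathrm{t}}\boldsymbol{\varphi})\|_{2,J\times K}^2\\
&\quad+\|\bfF(\cdot,\cdot,\bfD_{\mathrm{x}}\mathrm{I}_\tau^{0,\mathrm{t}}\boldsymbol{\varphi})-\bfF_h^{\tau}(\cdot,\cdot,\bfD_{\mathrm{x}}\mathrm{I}_\tau^{0,\mathrm{t}}\boldsymbol{\varphi})\|_{2,J\times K}^2\,.
\end{align*}

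For the first and third terms, I would invoke Lemma \ref{lem:A-Ah}\eqref{eq:Fh-F}, which quantifies the replacement of $\bfF$ by the one-point-quadrature version $\bfF_h^\tau$ by the Hölder modulus of $p\in C^{0,\alpha_{\mathrm{t}},\alpha_{\mathrm{x}}}(\overline{Q_T})$; this yields an upper bound of order $(\tau^{2\alpha_{\mathrm{t}}}+h_K^{2\alpha_{\mathrm{x}}})$ times $\|1+|\bfD_{\mathrm{x}}\boldsymbol{\varphi}|^{p(\cdot,\cdot)s}\|_{1,J\times K}$, respectively $\|1+|\bfD_{\mathrm{x}}\mathrm{I}_\tau^{0,\mathrm{t}}\boldsymbol{\varphi}|^{p(\cdot,\cdot)s}\|_{1,J\times K}$, with $s>1$ tending to $1$ as $\tau+h_K\searrow 0$. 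The second term is estimated directly by Lemma \ref{lem:Inod_F}, giving the contributions $\tau^{2\alpha_{\mathrm{t}}+1}\,\sup_{t\in J}\|1+|\bfD_{\mathrm{x}}\boldsymbol{\varphi}(t)|^{p(t,\cdot)s}\|_{1,K}$ and $\tau^{2\beta_{\mathrm{t}}}\,[\bfF(\cdot,\cdot,\bfD_{\mathrm{x}}\boldsymbol{\varphi})]_{N^{\beta_{\mathrm{t}},2}(J;L^2(K))}^2$.

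The main technical subtlety I expect is the handling of the third term, in which $\bfD_{\mathrm{x}}\mathrm{I}_\tau^{0,\mathrm{t}}\boldsymbol{\varphi}$ appears instead of the original $\bfD_{\mathrm{x}}\boldsymbol{\varphi}$. Since $\mathrm{I}_\tau^{0,\mathrm{t}}$ is defined by nodal evaluation at $t_m$ (well-defined because Lemma \ref{lem:improved_integrability} gives $\bfF(\cdot,\cdot,\bfD_{\mathrm{x}}\boldsymbol{\varphi})\in C^0(\overline{I};(L^{2s}(\Omega))^{d\times d})$ and hence $t\mapsto\|\bfD_{\mathrm{x}}\boldsymbol{\varphi}(t)\|_{p(t,\cdot)s,\Omega}$ is essentially bounded on $I$), one has the pointwise bound $|\bfD_{\mathrm{x}}\mathrm{I}_\tau^{0,\mathrm{t}}\boldsymbol{\varphi}(t,x)|=|\bfD_{\mathrm{x}}\boldsymbol{\varphi}(t_m,x)|$ for a.e.\ $(t,x)^\top\in I_m\times\Omega$, and therefore
\begin{align*}
\|1+|\bfD_{\mathrm{x}}\mathrm{I}_\tau^{0,\mathrm{t}}\boldsymbol{\varphi}|^{p(\cdot,\cdot)s}\|_{1,J\times K}\;\lesssim\;\tau\,{\sup}_{t\in J}{\big\{\|1+|\bfD_{\mathrm{x}}\boldsymbol{\varphi}(t)|^{p(t,\cdot)s}\|_{1,K}\big\}}\,,
\end{align*}
where the implicit constant depends on $p^\pm$ and $[p]_{\alpha_{\mathrm{t}},\alpha_{\mathrm{x}},Q_T}$ through a routine change-of-exponent argument (as in \eqref{lem:PiV_Fh.3}). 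Combining the three bounds yields \eqref{lem:Inod_Fh.1}.

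For the global estimate \eqref{lem:Inod_Fh.2}, I would repeat the same triangle inequality globally on $Q_T$, use the global version of Lemma \ref{lem:A-Ah}\eqref{eq:Fh-F} together with the global form of Lemma \ref{lem:Inod_F}, and absorb the factor $\tau$ in $\tau^{2\alpha_{\mathrm{t}}+1}$ into $\sup_{t\in I}$ via $\int_I(\cdot)\,\mathrm{d}t\leq T\,\sup_I(\cdot)$. The only delicate bookkeeping is tracking the dependencies of the implicit constants on $k$, $p^\pm$, $[p]_{\alpha_{\mathrm{t}},\alpha_{\mathrm{x}},Q_T}$, $s$, $\omega_0$, and $\sup_{t\in I}\{\|\bfD_{\mathrm{x}}\boldsymbol{\varphi}(t)\|_{p(t,\cdot),\Omega}\}$, which are all inherited from Lemma \ref{lem:Inod_F} and Lemma \ref{lem:A-Ah}.
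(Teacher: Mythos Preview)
Your proposal is correct and follows essentially the same approach as the paper: a triangle-inequality decomposition combined with Lemma \ref{lem:A-Ah}\eqref{eq:Fh-F} for the $\bfF$-vs-$\bfF_h^\tau$ discrepancies and the temporal Nikolski\u{\i} estimate for the middle term. The only cosmetic difference is that the paper routes the decomposition through the intermediate mapping $\bfF^\tau$ (so that the middle term \emph{is} the Nikolski\u{\i} difference and the third term carries the exponent $(\mathrm{I}_\tau^{0,\mathrm{t}}p)(\cdot,\cdot)s$ directly), whereas you route it through $\bfF$ and invoke Lemma \ref{lem:Inod_F}, which costs you the small change-of-exponent step you correctly anticipate.
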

	
\begin{proof}
\emph{ad \eqref{lem:Inod_Fh.1}.} Using twice Lemma \ref{lem:A-Ah}\eqref{eq:Fh-F},  
for every $J\in \mathcal{I}_\tau$ and $K\in \mathcal{T}_h$, we find that
\begin{align*}
\begin{aligned} 
\|\bfF_h^{\tau}(\cdot,\cdot,\bfD_{\mathrm{x}}\boldsymbol{\varphi})-\bfF_h^{\tau}(\cdot,\cdot,\bfD_{\mathrm{x}}\mathrm{I}_\tau^{0,\mathrm{t}}\boldsymbol{\varphi})\|_{2,J\times K}^2
&\lesssim 	\|\bfF_h^{\tau}(\cdot,\cdot,\bfD_{\mathrm{x}}\boldsymbol{\varphi})-\bfF(\cdot,\cdot,\bfD_{\mathrm{x}}\boldsymbol{\varphi})\|_{2,J\times K}^2\\
&\quad+\|\bfF(\cdot,\cdot,\bfD_{\mathrm{x}}\boldsymbol{\varphi})-\bfF^\tau(\cdot,\cdot,\bfD_{\mathrm{x}}\mathrm{I}_\tau^{0,\mathrm{t}}\boldsymbol{\varphi})\|_{2,J\times K}^2\\
&\quad+\|\bfF^\tau(\cdot,\cdot,\bfD_{\mathrm{x}}\mathrm{I}_\tau^{0,\mathrm{t}}\boldsymbol{\varphi})-\bfF_h^{\tau}(\cdot,\cdot,\bfD_{\mathrm{x}}\mathrm{I}_\tau^{0,\mathrm{t}}\boldsymbol{\varphi})\|_{2,J\times K}^2
\\
&\lesssim (\tau^{2\alpha_{\mathrm{t}}}+h_K ^{2\alpha_{\mathrm{x}}})\,\|1+\vert \bfD_{\mathrm{x}}\boldsymbol{\varphi}\vert^{p(\cdot,\cdot)s}\|_{1,J\times K }\\
&\quad+\tau^{2\beta_{\mathrm{t}}}\,[ \bfF(\cdot,\cdot,\bfD_{\mathrm{x}}\boldsymbol{\varphi})]_{N^{\beta_{\mathrm{t}},2}(J;L^2(K))}^2\\
&\quad +(\tau^{2\alpha_{\mathrm{t}}}+h_K ^{\alpha_{\mathrm{x}}})\,\|1+\vert\bfD \mathrm{I}_\tau^{0,\mathrm{t}}\boldsymbol{\varphi}\vert^{(\mathrm{I}_\tau^{0,\mathrm{t}}p)(\cdot,\cdot)s}\|_{1,J\times K} \\
&\lesssim  (\tau^{2\alpha_{\mathrm{t}}}+h^{2\alpha_{\mathrm{x}}})\,\tau\,{\sup}_{t\in J}{\big\{\|1+\vert \bfD_{\mathrm{x}}\boldsymbol{\varphi}(t)\vert^{p(t,\cdot)s}\|_{1,K}\big\}}
\\
&\quad+ \tau^{2\beta_{\mathrm{t}}}\,[ \bfF(\cdot,\cdot,\bfD_{\mathrm{x}}\boldsymbol{\varphi})]_{N^{\beta_{\mathrm{t}},2}(J;L^2(K))}^2\,,
\end{aligned}
\end{align*}
where the implicit constant in $\lesssim$ depends on  $k$, $p^-$, $p^+$, $[p]_{\alpha_{\mathrm{t}},\alpha_{\mathrm{x}},Q_T}$, $s$, $\omega_0$, and ${\sup}_{t\in I}{\big\{\|\bfD_{\mathrm{x}}\boldsymbol{\varphi}(t)\|_{p(t,\cdot),\Omega}\big\}}$. 

\emph{ad \eqref{lem:Inod_Fh.2}.} The global interpolation error estimate \eqref{lem:Inod_Fh.2} is obtained analogously to the proof of the local interpolation error estimate \eqref{lem:Inod_Fh.1}.
\end{proof}

Combining the previous fractional interpolation error estimates for the projection operator $\Pi_h^V$ (\textit{cf}.\ Assumption \ref{ass:PiV}) and the nodal interpolation operator $\mathrm{I}_\tau^{0,\mathrm{t}}$, we arrive at a fractional interpolation error estimates for the projection operator $\Pi_h^V$ applied to the interpolation operator $\mathrm{I}_\tau^{0,\mathrm{t}}$, which we will frequently use in the derivation of \textit{a priori} error estimates in Section \ref{sec:a_priori}.
	
\begin{lemma}\label{lem:PiV_FI}
Suppose that $p\in C^{0,\alpha_{\mathrm{t}},\alpha_{\mathrm{x}}}(\overline{Q_T})$, $\alpha_{\mathrm{t}},\alpha_{\mathrm{x}}\in (0,1]$, with $p^->1$ and let $\boldsymbol{\varphi}\in  \mathbfcal{V}$ be such that $\bfF(\cdot,\cdot,\bfD_{\mathrm{x}}\boldsymbol{\varphi})\in \smash{N^{\beta_{\mathrm{t}},2}(I;(L^2(\Omega))^{d\times d})}\cap\smash{L^2(I;(N^{\beta_{\mathrm{x}},2}(\Omega))^{d\times d})}$, $\beta_{\mathrm{t}}\in (\frac{1}{2},1]$,  $\beta_{\mathrm{x}}\in (0,1]$. Then,  there exists a constant $s>1$ with $s\searrow1$ as $\tau+h_K\searrow0$, such that
for every $J\in \mathcal{I}_\tau$ and $K\in \mathcal{T}_h$, there holds
\begin{align}
\begin{aligned} 
\|\bfF_h^{\tau}(\cdot,\cdot,\bfD_{\mathrm{x}}\mathrm{I}_\tau^{0,\mathrm{t}}\boldsymbol{\varphi})-\bfF_h^{\tau}(\cdot,\cdot,\bfD_{\mathrm{x}}\Pi_h^V\mathrm{I}_\tau^{0,\mathrm{t}}\boldsymbol{\varphi})\|_{2,J\times K}^2&\lesssim (\tau^{2\alpha_{\mathrm{t}}}+h_K ^{2\alpha_{\mathrm{x}}})\,\tau\,{\sup}_{t\in J}{\big\{\|1+\vert \bfD_{\mathrm{x}}\boldsymbol{\varphi}(t)\vert^{p(t,\cdot)s}\|_{1,\omega_K }\big\}}\\
&\quad+ \tau ^{2\beta_{\mathrm{t}}}\,[ \bfF(\cdot,\cdot,\bfD_{\mathrm{x}}\boldsymbol{\varphi})]_{N^{\beta_{\mathrm{t}},2}(J;L^2(K))}^2\\
&\quad+ h_K ^{2\beta_{\mathrm{x}}}\,[ \bfF(\cdot,\cdot,\bfD_{\mathrm{x}}\boldsymbol{\varphi})]_{L^2(J;N^{\beta_{\mathrm{x}},2}(\omega_K ))}^2\,,
\end{aligned}\hspace{-7.5mm}\label{lem:PiV_FI.1}
\end{align}
where the implicit constant in $\lesssim$ depends on $k$, $p^-$, $p^+$, $[p]_{\alpha_{\mathrm{t}},\alpha_{\mathrm{x}},Q_T}$, $s$, $\omega_0$, and $\sup_{t\in I}{\big\{\|\bfD_{\mathrm{x}}\boldsymbol{\varphi}(t)\|_{p(t,\cdot),\Omega}\big\}}$.
In particular, it follows that
\begin{align}
\begin{aligned} 
\|\bfF_h^{\tau}(\cdot,\cdot,\bfD_{\mathrm{x}}\mathrm{I}_\tau^{0,\mathrm{t}}\boldsymbol{\varphi})-\bfF_h^{\tau}(\cdot,\cdot,\bfD_{\mathrm{x}}\Pi_h^V\mathrm{I}_\tau^{0,\mathrm{t}}\boldsymbol{\varphi})\|_{2,Q_T}^2&\lesssim  	(\tau^{2\alpha_{\mathrm{t}}}+h^{2\alpha_{\mathrm{x}}})\,{\sup}_{t\in I}{\big\{\|1+\vert \bfD_{\mathrm{x}}\boldsymbol{\varphi}(t)\vert^{p(t,\cdot)s}\|_{1,\Omega}\big\}}\\
&\quad+\tau^{2\beta_{\mathrm{t}}}\,[ \bfF(\cdot,\cdot,\bfD_{\mathrm{x}}\boldsymbol{\varphi})]_{N^{\beta_{\mathrm{t}},2}(I;L^2(\Omega))}^2\\
&\quad+h^{2\beta_{\mathrm{x}}}\,[ \bfF(\cdot,\cdot,\bfD_{\mathrm{x}}\boldsymbol{\varphi})]_{L^2(I;N^{\beta_{\mathrm{x}},2}(\Omega))}^2\,.
\end{aligned}\label{lem:PiV_FI.2}
\end{align}
\end{lemma}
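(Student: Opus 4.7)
The plan is to proceed by triangle inequality, inserting the intermediate quantities $\bfF_h^\tau(\cdot,\cdot,\bfD_{\mathrm{x}}\boldsymbol{\varphi})$ and $\bfF_h^\tau(\cdot,\cdot,\bfD_{\mathrm{x}}\Pi_h^V\boldsymbol{\varphi})$, thereby splitting the left-hand side of \eqref{lem:PiV_FI.1} into three contributions: (A) a purely temporal error $\|\bfF_h^\tau(\cdot,\cdot,\bfD_{\mathrm{x}}\mathrm{I}_\tau^{0,\mathrm{t}}\boldsymbol{\varphi}) - \bfF_h^\tau(\cdot,\cdot,\bfD_{\mathrm{x}}\boldsymbol{\varphi})\|_{2,J\times K}^2$, (B) a purely spatial error $\|\bfF_h^\tau(\cdot,\cdot,\bfD_{\mathrm{x}}\boldsymbol{\varphi}) - \bfF_h^\tau(\cdot,\cdot,\bfD_{\mathrm{x}}\Pi_h^V\boldsymbol{\varphi})\|_{2,J\times K}^2$, and (C) a temporal error on the projected field $\|\bfF_h^\tau(\cdot,\cdot,\bfD_{\mathrm{x}}\Pi_h^V\boldsymbol{\varphi}) - \bfF_h^\tau(\cdot,\cdot,\bfD_{\mathrm{x}}\Pi_h^V\mathrm{I}_\tau^{0,\mathrm{t}}\boldsymbol{\varphi})\|_{2,J\times K}^2$. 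In the last term I shall use the commutativity $\Pi_h^V\mathrm{I}_\tau^{0,\mathrm{t}} = \mathrm{I}_\tau^{0,\mathrm{t}}\Pi_h^V$, which follows from the linearity and time-independence of $\Pi_h^V$.

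Terms (A) and (B) are handled directly by Lemma \ref{lem:Inod_Fh} and Lemma \ref{lem:PiV_Fh}, respectively, which together contribute exactly the three summands on the right-hand side of \eqref{lem:PiV_FI.1}. Term (C) is of the same form as Lemma \ref{lem:Inod_Fh} but with $\Pi_h^V\boldsymbol{\varphi}$ in place of $\boldsymbol{\varphi}$; to invoke that lemma I need to verify (i) $\Pi_h^V\boldsymbol{\varphi}\in\mathbfcal{V}$ together with $\sup_{t\in I}\|\bfD_{\mathrm{x}}\Pi_h^V\boldsymbol{\varphi}(t)\|_{p(t,\cdot),\Omega}<\infty$, and (ii) a bound on $[\bfF(\cdot,\cdot,\bfD_{\mathrm{x}}\Pi_h^V\boldsymbol{\varphi})]_{N^{\beta_{\mathrm{t}},2}(J;L^2(K))}$ in terms of $[\bfF(\cdot,\cdot,\bfD_{\mathrm{x}}\boldsymbol{\varphi})]_{N^{\beta_{\mathrm{t}},2}(J;L^2(\omega_K))}$. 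Point (i) is immediate from the local $W^{1,1}$-stability of $\Pi_h^V$ in Assumption \ref{ass:PiV}\,(iii), upgraded to modular form via Lemma \ref{lem:stab_PiV}.

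The main obstacle is point (ii), since the Nikolski\u{\i} semi-norm is nonlinear in its argument while $\Pi_h^V$ is only linearly stable. My strategy is to rewrite the pointwise difference $|\bfF(t+\delta,\cdot,\bfD_{\mathrm{x}}\Pi_h^V\boldsymbol{\varphi}(t+\delta))-\bfF(t,\cdot,\bfD_{\mathrm{x}}\Pi_h^V\boldsymbol{\varphi}(t))|^2$ by first absorbing the $p$-Hölder shift in the first argument of $\bfF$ (which produces a lower-order term controlled by the first summand in \eqref{lem:PiV_FI.1}), and then recasting the remainder as a shifted $\varphi$-modular of the form $\varphi_{|\bfD_{\mathrm{x}}\Pi_h^V\boldsymbol{\varphi}(t)|}(t,\cdot,|\bfD_{\mathrm{x}}\Pi_h^V(\boldsymbol{\varphi}(t+\delta)-\boldsymbol{\varphi}(t))|)$ via the equivalence \eqref{eq:hammera} of Proposition \ref{lem:hammer}. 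A change-of-shift by Lemma \ref{lem:shift-change} (with $\varepsilon$ chosen small enough to absorb its correction back into (B)) then reduces the shift from $|\bfD_{\mathrm{x}}\Pi_h^V\boldsymbol{\varphi}|$ to $|\bfD_{\mathrm{x}}\boldsymbol{\varphi}|$, after which Lemma \ref{lem:stab_PiV} applied to the temporal increment $\boldsymbol{\varphi}(\cdot+\delta)-\boldsymbol{\varphi}(\cdot)$ yields the required bound, at the price of replacing $K$ by the patch $\omega_K$. The global estimate \eqref{lem:PiV_FI.2} will then follow by summing \eqref{lem:PiV_FI.1} over $J\in\mathcal{I}_\tau$ and $K\in\mathcal{T}_h$ and invoking the bounded overlap of $\{\omega_K\}_{K\in\mathcal{T}_h}$ controlled by the chunkiness $\omega_0$.
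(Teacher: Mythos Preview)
Your decomposition into (A), (B), (C) and the overall strategy for (C) --- shift-change followed by the modular stability of $\Pi_h^V$ --- is essentially the same as the paper's. The gap is in the execution of step (C): Lemma~\ref{lem:stab_PiV} is stated and proved only for a \emph{constant} shift $a\ge 0$ (the key-estimate argument behind it, Lemma~\ref{lem:key-estimate}, breaks down for space-dependent shifts). Your plan shifts from $|\bfD_{\mathrm{x}}\Pi_h^V\boldsymbol{\varphi}|$ to the still \emph{variable} quantity $|\bfD_{\mathrm{x}}\boldsymbol{\varphi}|$, so you cannot then invoke Lemma~\ref{lem:stab_PiV} on the temporal increment as written.

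The fix is exactly what the paper does: shift-change instead to the constant $|\langle\bfD_{\mathrm{x}}\Pi_h^V\boldsymbol{\varphi}\rangle_{J\times\omega_K}|$, apply Lemma~\ref{lem:stab_PiV} (now legitimate), and shift-change back. The price is that the two shift-changes produce Poincar\'e-type correction terms of the form $\|\bfF(\cdot,\cdot,\bfD_{\mathrm{x}}\boldsymbol{\varphi})-\bfF(\cdot,\cdot,\langle\bfD_{\mathrm{x}}\boldsymbol{\varphi}\rangle_{J\times\omega_K})\|_{2,J\times\omega_K}^2$ and $\rho_{\varphi_{|\langle\bfD_{\mathrm{x}}\boldsymbol{\varphi}\rangle_{J\times\omega_K}|}}(\langle\bfD_{\mathrm{x}}\boldsymbol{\varphi}-\bfD_{\mathrm{x}}\Pi_h^V\boldsymbol{\varphi}\rangle_{J\times\omega_K})$, which are not of type (B) and require an additional ingredient --- the parabolic Poincar\'e inequality Lemma~\ref{lem:poincare_F} together with the key estimate --- to reduce to the right-hand side of \eqref{lem:PiV_FI.1}. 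This extra layer is absent from your plan. (Minor aside: the ``absorb into (B)'' phrasing is slightly off --- (B) already sits on the right-hand side, so there is nothing to absorb; any $\varepsilon>0$ works there.)
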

\pagebreak
\begin{proof} 
		\textit{ad \eqref{lem:PiV_FI.1}.} Using four times Lemma \ref{lem:A-Ah}\eqref{eq:Fh-F} and once Lemma \ref{lem:stab_PiV}\eqref{lem:stab_PiV.1} (with $a=\delta=0$ and $p|_{J\times \omega_K}$ replaced by $\mathrm{I}_\tau^{0,\mathrm{t}}p|_{J\times \omega_K}\in \mathcal{P}^{\log}(J\times \omega_K)$), we find that\vspace{-0.75mm}
\begin{align*}
	\|\bfF_h^{\tau}(\cdot,\cdot,\bfD_{\mathrm{x}}\mathrm{I}_\tau^{0,\mathrm{t}}\boldsymbol{\varphi})-\bfF_h^{\tau}(\cdot,\cdot,\bfD_{\mathrm{x}}\Pi_h^V\mathrm{I}_\tau^{0,\mathrm{t}}\boldsymbol{\varphi})\|_{2,J\times K}^2&\lesssim \|\bfF_h^{\tau}(\cdot,\cdot,\bfD_{\mathrm{x}}\mathrm{I}_\tau^{0,\mathrm{t}}\boldsymbol{\varphi})-\bfF^\tau(\cdot,\cdot,\bfD_{\mathrm{x}}\mathrm{I}_\tau^{0,\mathrm{t}}\boldsymbol{\varphi})\|_{2,J\times K}^2
	\\&\quad+\|\bfF^{\tau}(\cdot,\cdot,\bfD_{\mathrm{x}}\mathrm{I}_\tau^{0,\mathrm{t}}\boldsymbol{\varphi})-\bfF(\cdot,\cdot,\bfD_{\mathrm{x}}\mathrm{I}_\tau^{0,\mathrm{t}}\boldsymbol{\varphi})\|_{2,J\times K}^2
	\\&\quad+\|\bfF(\cdot,\cdot,\bfD_{\mathrm{x}}\mathrm{I}_\tau^{0,\mathrm{t}}\boldsymbol{\varphi})-\bfF(\cdot,\cdot,\bfD_{\mathrm{x}}\Pi_h^V\mathrm{I}_\tau^{0,\mathrm{t}}\boldsymbol{\varphi})\|_{2,J\times K}^2
	\\&\quad+\|\bfF(\cdot,\cdot,\bfD_{\mathrm{x}}\Pi_h^V\mathrm{I}_\tau^{0,\mathrm{t}}\boldsymbol{\varphi})-\bfF^{\tau}(\cdot,\cdot,\bfD_{\mathrm{x}}\Pi_h^V\mathrm{I}_\tau^{0,\mathrm{t}}\boldsymbol{\varphi})\|_{2,J\times K}^2
	\\&\quad+\|\bfF^\tau(\cdot,\cdot,\bfD_{\mathrm{x}}\Pi_h^V\mathrm{I}_\tau^{0,\mathrm{t}}\boldsymbol{\varphi})-\bfF_h^{\tau}(\cdot,\cdot,\bfD_{\mathrm{x}}\Pi_h^V\mathrm{I}_\tau^{0,\mathrm{t}}\boldsymbol{\varphi})\|_{2,J\times K}^2
    \\&\lesssim (\tau^{2\alpha_{\mathrm{t}}}+h_K ^{2\alpha_{\mathrm{x}}})\,\|1+\vert \bfD_{\mathrm{x}}\mathrm{I}_\tau^{0,\mathrm{t}}\boldsymbol{\varphi}\vert^{(\mathrm{I}_\tau^{0,\mathrm{t}}p)(\cdot,\cdot)s}\|_{1,J\times K} 
    \\&\quad +\|\bfF(\cdot,\cdot,\bfD_{\mathrm{x}}\mathrm{I}_\tau^{0,\mathrm{t}}\boldsymbol{\varphi})-\bfF(\cdot,\cdot,\bfD_{\mathrm{x}}\Pi_h^V\mathrm{I}_\tau^{0,\mathrm{t}}\boldsymbol{\varphi})\|_{2,J\times K}^2
    \\&\quad +(\tau^{2\alpha_{\mathrm{t}}}+h_K ^{2\alpha_{\mathrm{x}}})\,\|1+\vert \bfD_{\mathrm{x}}\Pi_h^V\mathrm{I}_\tau^{0,\mathrm{t}}\boldsymbol{\varphi}\vert^{(\mathrm{I}_\tau^{0,\mathrm{t}}p)(\cdot,\cdot)s}\|_{1,J\times K} 
    \\&\lesssim (\tau^{2\alpha_{\mathrm{t}}}+h_K ^{2\alpha_{\mathrm{x}}})\,\tau\,{\sup}_{t\in J}{\big\{\|1+\vert \bfD_{\mathrm{x}}\boldsymbol{\varphi}(t)\vert^{p(t,\cdot)s}\|_{1,\omega_K }\big\}}
    \\&\quad +\|\bfF(\cdot,\cdot,\bfD_{\mathrm{x}}\mathrm{I}_\tau^{0,\mathrm{t}}\boldsymbol{\varphi})-\bfF(\cdot,\cdot,\bfD_{\mathrm{x}}\Pi_h^V\mathrm{I}_\tau^{0,\mathrm{t}}\boldsymbol{\varphi})\|_{2,J\times K}^2\,.\\[-6mm]
\end{align*} 
Then, using Lemma \ref{lem:Inod_F}\eqref{lem:Inod_F.1} and Lemma \ref{lem:PiV_F}\eqref{lem:PiV_F.1}, we obtain\vspace{-0.75mm}\enlargethispage{12.5mm}
\begin{align*}
	\|\bfF(\cdot,\cdot,\bfD_{\mathrm{x}}\mathrm{I}_\tau^{0,\mathrm{t}}\boldsymbol{\varphi})-\bfF(\cdot,\cdot,\bfD_{\mathrm{x}}\Pi_h^V\mathrm{I}_\tau^{0,\mathrm{t}}\boldsymbol{\varphi})\|_{2,J\times K}^2
	&\lesssim \|\bfF(\cdot,\cdot,\bfD_{\mathrm{x}}\mathrm{I}_\tau^{0,\mathrm{t}}\boldsymbol{\varphi})-\bfF(\cdot,\cdot,\bfD_{\mathrm{x}}\boldsymbol{\varphi})\|_{2,J\times K}^2
	\\&\quad+ \|\bfF(\cdot,\cdot,\bfD_{\mathrm{x}}\boldsymbol{\varphi})-\bfF(\cdot,\cdot,\bfD_{\mathrm{x}}\Pi_h^V\boldsymbol{\varphi})\|_{2,J\times K}^2
	\\&\quad+ \|\bfF(\cdot,\cdot,\bfD_{\mathrm{x}}\Pi_h^V\boldsymbol{\varphi})-\bfF(\cdot,\cdot,\bfD_{\mathrm{x}}\Pi_h^V\mathrm{I}_\tau^{0,\mathrm{t}}\boldsymbol{\varphi})\|_{2,J\times K}^2
	\\&\lesssim (\tau^{2\alpha_{\mathrm{t}}}+h_K ^{2\alpha_{\mathrm{x}}})\,\tau\,{\sup}_{t\in J}{\big\{\|1+\vert \bfD_{\mathrm{x}}\boldsymbol{\varphi}(t)\vert^{p(t,\cdot)s}\|_{1,\omega_K }\big\}}
	\\&\quad+ \tau ^{2\beta_{\mathrm{t}}}\,[ \bfF(\cdot,\cdot,\bfD_{\mathrm{x}}\boldsymbol{\varphi})]_{N^{\beta_{\mathrm{t}},2}(J;L^2(K))}^2
	\\&\quad+ h_K ^{2\beta_{\mathrm{x}}}\,[ \bfF(\cdot,\cdot,\bfD_{\mathrm{x}}\boldsymbol{\varphi})]_{L^2(J;N^{\beta_{\mathrm{x}},2}(\omega_K ))}^2
	\\&\quad+\|\bfF(\cdot,\cdot,\bfD_{\mathrm{x}}\Pi_h^V\boldsymbol{\varphi})-\bfF(\cdot,\cdot,\bfD_{\mathrm{x}}\Pi_h^V\mathrm{I}_\tau^{0,\mathrm{t}}\boldsymbol{\varphi})\|_{2,J\times K}^2\,.\\[-6mm]
\end{align*}
Then, \hspace{-0.1mm}using \hspace{-0.1mm}Lemma \hspace{-0.1mm}\ref{lem:shift-change}\eqref{lem:shift-change.1} \hspace{-0.1mm}together \hspace{-0.1mm}with \hspace{-0.1mm}\eqref{eq:hammera}, \hspace{-0.1mm}\textcolor{black}{Lemma \hspace{-0.1mm}\ref{lem:stab_PiV}\eqref{lem:stab_PiV.1}}, \hspace{-0.1mm}again, \hspace{-0.1mm}Lemma \hspace{-0.1mm}\ref{lem:shift-change}\eqref{lem:shift-change.1}~\hspace{-0.1mm}\mbox{together} with \eqref{eq:hammera}, Lemma \ref{lem:Inod_F}\eqref{lem:Inod_F.1}, Lemma \ref{lem:PiV_F}\eqref{lem:PiV_F.1}, Lemma \ref{lem:poincare_F}\eqref{lem:poincare_F.1}, and the key estimate (\textit{cf}.~Lemma~\ref{lem:key-estimate}), we arrive at\vspace{-0.75mm}
\begin{align*}
	\|\bfF(\cdot,\cdot,\bfD_{\mathrm{x}}\Pi_h^V\boldsymbol{\varphi})-\bfF(\cdot,\cdot,\bfD_{\mathrm{x}}\Pi_h^V\mathrm{I}_\tau^{0,\mathrm{t}}\boldsymbol{\varphi})\|_{2,J\times K}^2
    &\lesssim \rho_{\varphi_{\smash{\vert \langle \bfD_{\mathrm{x}}\Pi_h^V\boldsymbol{\varphi}\rangle_{J\times\omega_K }\vert}},J\times K}(\bfD_{\mathrm{x}}\Pi_h^V\boldsymbol{\varphi}-\bfD_{\mathrm{x}}\Pi_h^V\mathrm{I}_\tau^{0,\mathrm{t}}\boldsymbol{\varphi})
	\\&\quad+\|\bfF(\cdot,\cdot,\bfD_{\mathrm{x}}\Pi_h^V\boldsymbol{\varphi})-\bfF(\cdot,\cdot,\langle \bfD_{\mathrm{x}}\Pi_h^V\boldsymbol{\varphi}\rangle_{J\times\omega_K })\|_{2,J\times K}^2
	\\&\lesssim h_K^n+\rho_{\varphi_{\smash{\vert \langle \bfD_{\mathrm{x}}\Pi_h^V\boldsymbol{\varphi}\rangle_{J\times\omega_K }\vert}},J\times \omega_K }(\bfD_{\mathrm{x}}\boldsymbol{\varphi}-\bfD_{\mathrm{x}}\mathrm{I}_\tau^{0,\mathrm{t}}\boldsymbol{\varphi})
	\\&\quad+\|\bfF(\cdot,\cdot,\bfD_{\mathrm{x}}\Pi_h^V\boldsymbol{\varphi})-\bfF(\cdot,\cdot,\langle \bfD_{\mathrm{x}}\Pi_h^V\boldsymbol{\varphi}\rangle_{J\times\omega_K })\|_{2,J\times K}^2
	\\&\lesssim h_K^n+\|\bfF(\cdot,\cdot,\bfD_{\mathrm{x}}\boldsymbol{\varphi})-\bfF(\cdot,\cdot,\bfD_{\mathrm{x}}\mathrm{I}_\tau^{0,\mathrm{t}}\boldsymbol{\varphi})\|_{2,J\times \omega_K }^2 
	\\&\quad+\|\bfF(\cdot,\cdot,\bfD_{\mathrm{x}}\boldsymbol{\varphi})-\bfF(\cdot,\cdot,\langle \bfD_{\mathrm{x}}\Pi_h^V\boldsymbol{\varphi}\rangle_{J\times\omega_K })\|_{2,J\times K}^2
	\\&\quad+\|\bfF(\cdot,\cdot,\bfD_{\mathrm{x}}\Pi_h^V\boldsymbol{\varphi})-\bfF(\cdot,\cdot,\langle \bfD_{\mathrm{x}}\Pi_h^V\boldsymbol{\varphi}\rangle_{J\times\omega_K })\|_{2,J\times K}^2
	\\&\lesssim h_K^n+\|\bfF(\cdot,\cdot,\bfD_{\mathrm{x}}\boldsymbol{\varphi})-\bfF(\cdot,\cdot,\bfD_{\mathrm{x}}\mathrm{I}_\tau^{0,\mathrm{t}}\boldsymbol{\varphi})\|_{2,J\times \omega_K }^2 
	\\&\quad+\|\bfF(\cdot,\cdot,\bfD_{\mathrm{x}}\boldsymbol{\varphi})-\bfF(\cdot,\cdot, \bfD_{\mathrm{x}}\Pi_h^V\boldsymbol{\varphi})\|_{2,J\times K}^2
	\\&\quad+\|\bfF(\cdot,\cdot,\bfD_{\mathrm{x}}\boldsymbol{\varphi})-\bfF(\cdot,\cdot,\langle \bfD_{\mathrm{x}}\boldsymbol{\varphi}\rangle_{J\times\omega_K })\|_{2,J\times K}^2
	\\&\quad+\rho_{\varphi_{\vert\langle \bfD_{\mathrm{x}}\boldsymbol{\varphi}\rangle_{J\times\omega_K }\vert }}(\langle \bfD_{\mathrm{x}}\boldsymbol{\varphi}-\bfD_{\mathrm{x}}\Pi_h^V\boldsymbol{\varphi}\rangle_{J\times\omega_K })
	\\&\lesssim 	(\tau^{2\alpha_{\mathrm{t}}}+h^{2\alpha_{\mathrm{x}}})\,\tau\,{\sup}_{t\in J}{\big\{\|1+\vert \bfD_{\mathrm{x}}\boldsymbol{\varphi}(t)\vert^{p(t,\cdot)s}\|_{1,\omega_K }\big\}}
	\\&\quad+
	\tau^{2\beta_{\mathrm{t}}}\,[ \bfF(\cdot,\cdot,\bfD_{\mathrm{x}}\boldsymbol{\varphi})]_{N^{\beta_{\mathrm{t}},2}(J;L^2(\omega_K ))}^2
	\\&\quad+
	h^{2\beta_{\mathrm{x}}}\,[ \bfF(\cdot,\cdot,\bfD_{\mathrm{x}}\boldsymbol{\varphi})]_{L^2(J;N^{\beta_{\mathrm{x}},2}(\omega_K ))}^2\,,\\[-6mm]
\end{align*}
	which is the claimed local interpolation error estimate \eqref{lem:PiV_FI.1}.\vspace{-0.5mm}
	
	\emph{ad \eqref{lem:PiV_FI.2}.} The global interpolation error estimate \eqref{lem:PiV_FI.2} is obtained analogously to the proof of the local interpolation error estimate \eqref{lem:PiV_FI.1}.
	\end{proof}

For the fractional regularity of the kinematic pressure represented in Cald\'eron spaces, we have the following interpolation estimate for difference between $\Pi_\tau^{0,\mathrm{t}}\Pi_h^{k-1,\mathrm{x}}$ and $\Pi_\tau^{0,\mathrm{t}}\Pi_h^Q$ (\textit{cf}.\ Assumption~\ref{ass:PiQ}) with respect to the modular of the conjugate of a shifted generalized $N$-function, where~the~shift~is~variable.

\begin{lemma}\label{lem:PiQ}
    Suppose that $p\in C^{0,\alpha_{\mathrm{t}},\alpha_{\mathrm{x}}}(\overline{Q_T})$,  $\alpha_{\mathrm{t}},\alpha_{\mathrm{x}}\in (0,1]$, with $p^->1$,  let $\eta\in L^1(Q_T)$ be such that $\eta(t)\in C^{\gamma_{\mathrm{x}},p'(t,\cdot)}(\Omega)$ for a.e.\ $t\in I$ and $\vert \nabla_{\mathrm{x}}^{\gamma_{\mathrm{x}}} \eta\vert \in L^{p'(\cdot,\cdot)}(Q_T)$, $\gamma_{\mathrm{x}}\in (0,1]$, and let $\bfA\in (L^{p(\cdot,\cdot)}(Q_T))^{d\times d}$. Then, for every $n>0$, $J\in \mathcal{I}_\tau$, and  $K\in \mathcal{T}_h$, there holds
		\begin{align}\label{lem:PiQ.1}
			\begin{aligned}
			     \rho_{(\varphi_{\vert               \bfA\vert})^*,J\times K}\big(\Pi_\tau^{0,\mathrm{t}}(\Pi_h^Q\eta-\Pi_h^{k-1,\mathrm{x}}\eta)\big)&\lesssim h_K^n+\rho_{(\varphi_{\vert \bfA\vert})^*,J\times \omega_K }(h_K ^{\gamma_{\mathrm{x}}}\vert \nabla_{\mathrm{x}}^{\gamma_{\mathrm{x}}} \eta\vert)\\&\quad + \|\bfF(\cdot,\cdot,\bfA)-\bfF(\cdot,\cdot,\langle\bfA\rangle_{J\times\omega_K })\|_{2,J\times \omega_K }^2 \,,
			\end{aligned}
		\end{align}
		where the implicit constant in $\lesssim$ depends on $k$, $\ell$, $n$, $p^-$, $p^+$, $[p]_{\alpha_{\mathrm{t}},\alpha_{\mathrm{x}},Q_T}$, $\omega_0$, $\|\bfA\|_{p(\cdot,\cdot),Q_T}$, and $\|\vert \nabla_{\mathrm{x}}^{\gamma_{\mathrm{x}}}\eta \vert\|_{p'(\cdot,\cdot),Q_T}$. In particular, it follows that\enlargethispage{7.5mm}
		\begin{align}\label{lem:PiQ.2}
			\begin{aligned}
				\rho_{(\varphi_{\vert \bfA\vert})^*,Q_T}\big(\Pi_\tau^{0,\mathrm{t}}(\Pi_h^Q\eta -\Pi_h^{k-1,\mathrm{x}}\eta)\big)&\lesssim h^n+\rho_{(\varphi_{\vert\bfA\vert})^*,Q_T}(h^{\gamma_{\mathrm{x}}}\vert \nabla_{\mathrm{x}}^{\gamma_{\mathrm{x}}} \eta\vert)\\&\quad+
				\sum_{J\in \mathcal{I}_\tau}{\sum_{K\in \mathcal{T}_h}{\|\bfF(\cdot,\cdot,\bfA)-\bfF(\cdot,\cdot,\langle\bfA\rangle_{J\times\omega_K })\|_{2,J\times\omega_K }^2}}\,.  
			\end{aligned}
		\end{align} 
	\end{lemma}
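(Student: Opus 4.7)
The plan is to reduce the estimate to a setting with a \emph{constant} shift, in which Lemma~\ref{lem:stab_PiQ} and the key estimate (Lemma~\ref{lem:key-estimate}) can be invoked. Since $\mathbb{R}\subseteq Q_h$ by Assumption~\ref{ass:PiQ} and $\mathbb{R}\subseteq \mathbb{P}^{k-1}(\mathcal{T}_h)$, both $\Pi_h^Q$ and $\Pi_h^{k-1,\mathrm{x}}$ reproduce spatially constant functions. Setting $\overline{\eta}_{\omega_K}(t)\coloneqq \langle\eta(t,\cdot)\rangle_{\omega_K}$, which is independent of $x$, I obtain the identity
\begin{align*}
\Pi_\tau^{0,\mathrm{t}}(\Pi_h^Q\eta-\Pi_h^{k-1,\mathrm{x}}\eta)=\Pi_\tau^{0,\mathrm{t}}\bigl(\Pi_h^Q(\eta-\overline{\eta}_{\omega_K})-\Pi_h^{k-1,\mathrm{x}}(\eta-\overline{\eta}_{\omega_K})\bigr)\,,
\end{align*}
reducing the analysis to a function whose spatial oscillation over $\omega_K$ is controlled by the minimal upper Calder\'on gradient.

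Next, I would replace the spatially varying shift $\vert\bfA\vert$ by the constant $\vert\langle\bfA\rangle_{J\times\omega_K}\vert$ using Lemma~\ref{lem:shift-change}\eqref{lem:shift-change.3} together with \eqref{eq:hammera}, which immediately produces the error contribution $\|\bfF(\cdot,\cdot,\bfA)-\bfF(\cdot,\cdot,\langle\bfA\rangle_{J\times\omega_K})\|_{2,J\times\omega_K}^2$ of the claimed bound. Using the triangle inequality and the $\Delta_2$-condition \eqref{rem:phi_a.4} of $(\varphi_a)^*$, I split the modular into the two contributions from $\Pi_\tau^{0,\mathrm{t}}\Pi_h^Q(\eta-\overline{\eta}_{\omega_K})$ and $\Pi_\tau^{0,\mathrm{t}}\Pi_h^{k-1,\mathrm{x}}(\eta-\overline{\eta}_{\omega_K})$. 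Lemma~\ref{lem:stab_PiQ}\eqref{lem:stab_PiQ.1}, now applied with the constant shift $a=\vert\langle\bfA\rangle_{J\times\omega_K}\vert$, handles the first term; the second term admits an identical estimate because the proof of Lemma~\ref{lem:stab_PiQ} uses only local $L^1$-stability (\textit{cf}.\ \eqref{eq:PiQstab}), which likewise holds for the local $L^2$-projection $\Pi_h^{k-1,\mathrm{x}}$.

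It then remains to bound $\rho_{(\varphi_{\vert\langle\bfA\rangle_{J\times\omega_K}\vert})^*,J\times\omega_K}(\eta-\overline{\eta}_{\omega_K})$ in terms of $h_K^{\gamma_{\mathrm{x}}}\vert\nabla_{\mathrm{x}}^{\gamma_{\mathrm{x}}}\eta\vert$. By the defining inequality \eqref{eq:hajlasz_gradient} applied to the minimal upper Calder\'on gradient, for a.e.\ $t\in I$ and $x\in\omega_K$ there holds
\begin{align*}
\vert\eta(t,x)-\overline{\eta}_{\omega_K}(t)\vert\leq \fint_{\omega_K}\bigl(\vert\nabla_{\mathrm{x}}^{\gamma_{\mathrm{x}}}\eta\vert(t,x)+\vert\nabla_{\mathrm{x}}^{\gamma_{\mathrm{x}}}\eta\vert(t,y)\bigr)\vert x-y\vert^{\gamma_{\mathrm{x}}}\,\mathrm{d}y\lesssim h_K^{\gamma_{\mathrm{x}}}\bigl(\vert\nabla_{\mathrm{x}}^{\gamma_{\mathrm{x}}}\eta\vert(t,x)+\langle\vert\nabla_{\mathrm{x}}^{\gamma_{\mathrm{x}}}\eta\vert(t,\cdot)\rangle_{\omega_K}\bigr)\,.
\end{align*}
Monotonicity and the $\Delta_2$-condition of $(\varphi_a)^*$ split the resulting modular into two parts: the pointwise one yields precisely $\rho_{(\varphi_{\vert\langle\bfA\rangle\vert})^*,J\times\omega_K}(h_K^{\gamma_{\mathrm{x}}}\vert\nabla_{\mathrm{x}}^{\gamma_{\mathrm{x}}}\eta\vert)$, and the averaged one is controlled by the key estimate (Lemma~\ref{lem:key-estimate}), which absorbs the averaging into a further $h_K^n$-remainder. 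Finally, undoing the shift change via Lemma~\ref{lem:shift-change}\eqref{lem:shift-change.3} restores the original shift $\vert\bfA\vert$ on the right-hand side and concludes the local estimate \eqref{lem:PiQ.1}. The global estimate \eqref{lem:PiQ.2} then follows by summing over $J\in\mathcal{I}_\tau$ and $K\in\mathcal{T}_h$, using the finite overlap of the patches $\{\omega_K\}_{K\in\mathcal{T}_h}$ guaranteed by the chunkiness $\omega_0$.

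The main obstacle is the careful management of the normalization conditions required by Lemma~\ref{lem:stab_PiQ} and Lemma~\ref{lem:key-estimate}: both impose an algebraic smallness of certain averages in terms of $\vert J\times K\vert^{-n}$. One must pick $n$ large enough and exploit the global a priori bounds $\|\bfA\|_{p(\cdot,\cdot),Q_T}$ and $\|\vert\nabla_{\mathrm{x}}^{\gamma_{\mathrm{x}}}\eta\vert\|_{p'(\cdot,\cdot),Q_T}<\infty$ to absorb any excess into $h_K^n$-remainders, mirroring the bookkeeping already employed in the derivation of \eqref{lem:stab_PiQ.2}.
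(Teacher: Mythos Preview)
Your proposal is correct and follows essentially the same line as the paper: shift-change to the constant $|\langle\bfA\rangle_{J\times\omega_K}|$ via Lemma~\ref{lem:shift-change}\eqref{lem:shift-change.3}, exploit that both $\Pi_h^Q$ and $\Pi_h^{k-1,\mathrm{x}}$ reproduce $\langle\eta\rangle_{\omega_K}$, use local $L^1$-stability together with the Calder\'on bound $|\eta-\langle\eta\rangle_{\omega_K}|\lesssim h_K^{\gamma_{\mathrm{x}}}(|\nabla_{\mathrm{x}}^{\gamma_{\mathrm{x}}}\eta|+\langle|\nabla_{\mathrm{x}}^{\gamma_{\mathrm{x}}}\eta|\rangle_{\omega_K})$, invoke the key estimate, and shift back. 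The one organizational difference is that the paper does \emph{not} call Lemma~\ref{lem:stab_PiQ} as a black box: it inlines the inverse inequality and $L^1$-stability to reduce directly to the single constant $\langle|\eta-\langle\eta\rangle_{\omega_K}|\rangle_{J\times\omega_K}\le h_K^{\gamma_{\mathrm{x}}}\langle|\nabla_{\mathrm{x}}^{\gamma_{\mathrm{x}}}\eta|\rangle_{J\times\omega_K}$ and then applies the key estimate exactly once, whereas your route applies the key estimate twice (once inside Lemma~\ref{lem:stab_PiQ}, once afterwards for the averaged Calder\'on part) and inherits the remainder $(\tau+h_K)^n$ from Lemma~\ref{lem:stab_PiQ} rather than the stated $h_K^n$---harmless for the application in Theorem~\ref{thm:main}, but slightly less tight than the paper's ordering.
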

	
	\begin{proof}
		\textit{ad \eqref{lem:PiQ.1}.} Using the shift change Lemma \ref{lem:shift-change}\eqref{lem:shift-change.3}, for every $J\in \mathcal{I}_\tau$ and $K\in \mathcal{T}_h$, we find that 
		\begin{align}\label{eq:PiQ.1}
			\begin{aligned}
				\rho_{(\varphi_{\vert \bfA\vert})^*,J\times K}\big(\Pi_\tau^{0,\mathrm{t}}(\Pi_h^Q \eta-\Pi_h^{k-1,\mathrm{x}}\eta)\big)&\leq c\, \rho_{(\varphi_{\vert \langle\bfA\rangle_{J\times \omega_K }\vert})^*,J\times K}\big(\Pi_\tau^{0,\mathrm{t}}(\Pi_h^Q \eta-\Pi_h^{k-1,\mathrm{x}}\eta)\big)\\&\quad+c\,\|\bfF(\cdot,\cdot,\bfA)-\bfF(\cdot,\cdot,\langle\bfA\rangle_{J\times \omega_K })\|_{2,J\times K}^2
				\,.\end{aligned}
		\end{align}
		Using that $\Pi_h^Q\langle \eta\rangle_{\omega_K }=\Pi_h^{k-1,\mathrm{x}}\langle \eta\rangle_{\omega_K }=\langle \eta\rangle_{\omega_K }$ a.e.\ in $J$, 
        the (local) inverse inequality (\textit{cf}.\ \cite[Lem.\ 12.1]{EG21}), the (local) $L^1$-stability of the projection operators $\Pi_h^{0,\mathrm{t}}$, $\Pi_h^{k-1,\mathrm{x}}$, and $\Pi_h^Q$ (\textit{cf}.\ Assumption \ref{ass:PiQ}\eqref{eq:PiQstab}), and that
        $\vert \eta-\langle \eta\rangle_{\omega_K }\vert\leq h_K ^{\gamma_{\mathrm{x}}}(\vert \nabla_{\mathrm{x}}^{\gamma_{\mathrm{x}}}\eta \vert+\langle \vert \nabla_{\mathrm{x}}^{\gamma_{\mathrm{x}}}\eta \vert\rangle_{\omega_K })$ a.e.\ in $J\times \omega_K $ (\textit{cf}.\ \cite[ineq.\ (5.17)]{berselli2023error}),
		we arrive at
		\begin{align}\label{eq:PiQ.2}
			\begin{aligned}
				\rho_{(\varphi_{\vert \langle\bfA\rangle_{J\times \omega_K }\vert})^*,J\times K}&\big(\Pi_\tau^{0,\mathrm{t}}(\Pi_h^Q \eta-\Pi_h^{k-1,\mathrm{x}}\eta)\big)\\&=	\rho_{(\varphi_{\vert \langle\bfA\rangle_{J\times \omega_K }\vert})^*,J\times K}\big(\Pi_\tau^{0,\mathrm{t}}(\Pi_h^Q \eta-\Pi_h^Q\langle \eta\rangle_{\omega_K }+\Pi_h^{k-1,\mathrm{x}}\langle \eta\rangle_{\omega_K }-\Pi_h^{k-1,\mathrm{x}}\eta)\big)
				\\&\leq c\,
				\rho_{(\varphi_{\vert \langle\bfA\rangle_{J\times \omega_K }\vert})^*,J\times K}\big(\Pi_\tau^{0,\mathrm{t}}\Pi_h^Q(\eta-\langle \eta\rangle_{\omega_K })\big)
				\\&\quad+c\,\rho_{(\varphi_{\vert \langle\bfA\rangle_{J\times \omega_K }\vert})^*,J\times K}\big(\Pi_\tau^{0,\mathrm{t}}\Pi_h^{k-1,\mathrm{x}}(\eta-\langle \eta\rangle_{\omega_K })\big) 
					\\&\leq c\,
				\rho_{(\varphi_{\vert \langle\bfA\rangle_{J\times \omega_K }\vert})^*,J\times K}\big(\|\Pi_\tau^{0,\mathrm{t}}\Pi_h^Q(\eta-\langle \eta\rangle_{\omega_K })\|_{\infty,J\times K}\big)
				\\&\quad+c\,\rho_{(\varphi_{\vert \langle\bfA\rangle_{J\times \omega_K }\vert})^*,J\times K}\big(\|\Pi_\tau^{0,\mathrm{t}}\Pi_h^{k-1,\mathrm{x}}(\eta-\langle \eta\rangle_{\omega_K })\|_{\infty,J\times K}\big) 
					\\&\leq c\,
				\rho_{(\varphi_{\vert \langle\bfA\rangle_{J\times \omega_K }\vert})^*,J\times K}\big(\langle\vert \Pi_\tau^{0,\mathrm{t}}\Pi_h^Q(\eta-\langle \eta\rangle_{\omega_K })\vert\rangle_{J\times K}\big)
				\\&\quad+c\,\rho_{(\varphi_{\vert \langle\bfA\rangle_{J\times \omega_K }\vert})^*,J\times K}\big(\langle\vert \Pi_\tau^{0,\mathrm{t}}\Pi_h^{k-1,\mathrm{x}}(\eta-\langle \eta\rangle_{\omega_K })\vert\rangle_{J\times K}\big) 
					\\&\leq c\,
				\rho_{(\varphi_{\vert \langle\bfA\rangle_{J\times \omega_K }\vert})^*,J\times K}\big(\langle \vert \eta-\langle \eta \rangle_{\omega_K }\vert\rangle_{J\times \omega_K}\big)
\\&\leq
				\rho_{(\varphi_{\vert \langle\bfA\rangle_{J\times \omega_K }\vert})^*,J\times K}\big(h_K ^{\gamma_{\mathrm{x}}}\langle \vert \nabla_{\mathrm{x}}^{\gamma_{\mathrm{x}}}\eta \vert\rangle_{J\times\omega_K }\big)  \,.
			\end{aligned}
		\end{align}
		Since	$\vert \langle\bfA\rangle_{J\times \omega_K }\vert +\langle \vert \nabla_{\mathrm{x}}^{\gamma_{\mathrm{x}}}\eta \vert\rangle_{J\times \omega_K}\leq  c\,\vert J\times K\vert^{-1}\leq c\,\vert J\times K\vert^{-n}$,  where  $c>0$ depends  on $k$, $\ell$, $p^-$, $p^+$, $\omega_0$, $\|\bfA\|_{p(\cdot,\cdot),Q_T}$, and $\| \vert \nabla_{\mathrm{x}}^{\gamma_{\mathrm{x}}}\eta \vert\|_{p'(\cdot,\cdot),Q_T}$, using 
        the key estimate (\textit{cf}.\ Lemma \ref{lem:key-estimate}) in~\eqref{eq:PiQ.2}, and  the shift change Lemma \ref{lem:shift-change}\eqref{lem:shift-change.3}, for every $J\in \mathcal{I}_\tau$ and $ K\in \mathcal{T}_h $, from \eqref{eq:PiQ.1}, we infer that
		\begin{align*}
			\begin{aligned}
			\rho_{(\varphi_{\vert \langle\bfA\rangle_{J\times \omega_K }\vert})^*,J\times K}\big(\Pi_\tau^{0,\mathrm{t}}(\Pi_h^Q \eta-\Pi_h^{k-1,\mathrm{x}}\eta)\big)&\leq 
		c\,\rho_{(\varphi_{\vert \langle\bfA\rangle_{J\times \omega_K }\vert})^*,J\times \omega_K }\big(h^{\gamma_{\mathrm{x}}}\vert \nabla_{\mathrm{x}}^{\gamma_{\mathrm{x}}}\eta \vert\big)+c\,h_K ^n
			\\&\leq c\,\rho_{(\varphi_{\vert\bfA\vert})^*,J\times \omega_K }\big(h^{\gamma_{\mathrm{x}}}\vert \nabla_{\mathrm{x}}^{\gamma_{\mathrm{x}}}\eta \vert\big)\\&\quad+c\,\|\bfF(\cdot,\cdot,\bfA)-\bfF(\cdot,\cdot,\langle\bfA\rangle_{J\times \omega_K })\|_{2,J\times K}^2+c\,h_K ^n\,,
			\end{aligned}
		\end{align*}
		which is the claimed local interpolation error estimate \eqref{lem:PiQ.1}.
		
		\textit{ad \eqref{lem:PiQ.2}.} The claimed global  interpolation error estimate \eqref{lem:PiQ.2} follows from the local interpolation error estimate \eqref{lem:PiQ.1} via summation with respect to $J\in \mathcal{I}_\tau$ and $ K\in \mathcal{T}_h $.
	\end{proof}

	\newpage 
	\section{\emph{A priori} error estimates}\label{sec:a_priori}
	
	\hspace{5mm}In this section, we prove the main results of this paper, \textit{i.e.}, we derive
	 \textit{a priori} error estimates for the approximation of the unsteady $p(\cdot,\cdot)$-Stokes equations  \eqref{eq:ptxStokes} (\textit{i.e.}, Problem (\hyperlink{Q}{Q})~and~Problem~(\hyperlink{P}{P}), respectively)
	via the discrete  unsteady $p(\cdot,\cdot)$-Stokes equations (\textit{i.e.}, Problem (\hyperlink{Qh}{Q$_h^\tau$}) and~Problem~(\hyperlink{Ph}{P$_h^\tau$}), respectively).\enlargethispage{5mm}

\begin{theorem}\label{thm:main}
    Suppose that $p\in C^{0,\alpha_{\mathrm{t}},\alpha_{\mathrm{x}}}(\overline{Q_T})$,  $\alpha_{\mathrm{t}},\alpha_{\mathrm{x}}\in (0,1]$, with $p^->1$, that  $\bfv\in \mathbfcal{V}(0)$ with
    \begin{align*}
        \left.\begin{aligned} 
        \bfF(\cdot,\cdot,\bfD_{\mathrm{x}}\bfv)&\in N^{\beta_{\mathrm{t}},2}(I;(L^2(\Omega))^{d\times d})\cap L^2(I;(N^{\beta_{\mathrm{x}},2}(\Omega))^{d\times d})\,,\\
	\bfv&\in L^\infty(I;(N^{\beta_{\mathrm{x}},2}             (\Omega))^d)
    \end{aligned}\quad\right\}\quad \text{ for some }\beta_{\mathrm{t}}\in (\tfrac{1}{2},1]\,,\;\beta_{\mathrm{x}}\in (0,1]\,,
    \end{align*}
     and that $q\in \mathbfcal{Q}(0)$ with $ q(t)\in C^{\gamma_{\mathrm{x}},p'(t,\cdot)}(\Omega)$~for~a.e.~${t\in I}$~and 
    \begin{align*}
        \vert \nabla_{\mathrm{x}}^{\gamma_{\mathrm{x}}} q\vert \in L^{p'(\cdot,\cdot)}(Q_T)\quad \text{ for some }\gamma_{\mathrm{x}}\in \big(\tfrac{\alpha_{\mathrm{x}}}{\min\{2,(p^+)'\}},1\big]\,.
    \end{align*}
    Moreover, let $h\hspace*{-0.1em}\sim\hspace*{-0.1em} h_K $~for~all~$K\hspace*{-0.1em}\in\hspace*{-0.1em} \mathcal{T}_h$ with $h^2\lesssim \tau$. In the case $p^-\in (1, 2]$, we additionally assume that $\bfv\in L^\infty(I;(N^{1+\beta_{\mathrm{x}},2}             (\Omega))^d)$.
    Then, there exists a~constant~$s\hspace*{-0.1em}>\hspace*{-0.1em}1$~with~$s\hspace*{-0.1em}\searrow\hspace*{-0.1em} 1$~as~$\tau+h\hspace*{-0.1em}\searrow\hspace*{-0.1em}0$~such~that 
    \begin{align*}
        \|\bfv_h^{\tau}-\mathrm{I}_\tau^{0,\mathrm{t}}\bfv\|_{L^\infty(I;L^2(\Omega))}^2
        &+\|\bfF_h^{\tau}(\cdot,\cdot,\bfD_{\mathrm{x}}\bfv_h^{\tau})-\bfF_h^{\tau}(\cdot,\cdot,\bfD_{\mathrm{x}}\mathrm{I}_\tau^{0,\mathrm{t}}\bfv)\|_{2,Q_T}^2\\&\lesssim (\tau^{2\alpha_{\mathrm{t}}}+h^{2\alpha_{\mathrm{x}}})\,\big(1+{\sup}_{t\in I}{\big\{\rho_{p(t,\cdot)s,\Omega}(\bfD_{\mathrm{x}} \mathbf{v}(t))}\big\}\big)
	\\&\quad+\tau^{2\beta_{\mathrm{t}}}\,[\bfF(\cdot,\cdot,\bfD_{\mathrm{x}}\bfv)]_{N^{\beta_{\mathrm{t}},2}(I;L^2(\Omega))}^2
        \\&\quad+h^{2\beta_{\mathrm{x}}}\,\big([\bfF(\cdot,\cdot,\bfD_{\mathrm{x}}\bfv)]_{L^2(I;N^{\beta_{\mathrm{x}},2}(\Omega))}^2+\epsilon^2(\bfv)\big)
        \\&\quad+\rho_{(\varphi_{\vert \bfD_{\mathrm{x}}\bfv\vert})^*,Q_T}\big(h^{\gamma_{\mathrm{x}}}\vert\nabla_{\mathrm{x}}^{\gamma_{\mathrm{x}}}q\vert\big)\,,
    \end{align*}
    where $\epsilon(\bfv)\hspace{-0.1em}\coloneqq\hspace{-0.1em}[ \bfv]_{L^\infty(I;N^{1+\beta_{\mathrm{x}},2}(\Omega))}$ if $p^-\hspace{-0.1em}\in\hspace{-0.1em} (1, 2]$ and $\epsilon(\bfv)\hspace{-0.1em}\coloneqq\hspace{-0.1em}[ \bfv]_{L^\infty(I;N^{\beta_{\mathrm{x}},2}(\Omega))}$ else 
    and the implicit~\mbox{constant} in $\lesssim $ depends on $k$, $\ell$,  $p^- $, $p^+$, $[p]_{\alpha_{\mathrm{t}},\alpha_{\mathrm{x}},Q_T}$, $\delta$, $\omega_0$, $\Omega$,  $s$, $\sup_{t\in I}{\big\{\|\bfD_{\mathrm{x}}\bfv(t)\|_{p(t,\cdot),\Omega}\big\}}$,~and~$\|\vert\nabla_{\mathrm{x}}^{\gamma_{\mathrm{x}}}q\vert\|_{p'(\cdot,\cdot),Q_T}$.
\end{theorem}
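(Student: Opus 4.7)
My plan follows the standard energy-type strategy adapted to the variable-exponent, fully-discrete setting. Set $\mathbf{w}_h^\tau \coloneqq \Pi_h^V\mathrm{I}_\tau^{0,\mathrm{t}}\bfv \in \mathbb{P}^0(\mathcal{I}_\tau^0;\Vo_h)$; since $\mathrm{div}_{\mathrm{x}}\bfv=0$ a.e.\ in $Q_T$ and $\Pi_h^V$ preserves divergence against $Q_h$ (Assumption~\ref{ass:PiV}), the interpolant $\mathbf{w}_h^\tau$ is in fact discretely divergence-free. Hence the error $\mathbf{e}_h^\tau \coloneqq \bfv_h^\tau-\mathbf{w}_h^\tau$ is an admissible test function in Problem~(\hyperlink{Ph}{$P_h^\tau$}). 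Testing Problem~(\hyperlink{Qh}{$Q_h^\tau$}) with $\mathbf{e}_h^\tau$ and subtracting the equivalent continuous formulation of Remark~\ref{rem:equiv_form} (applied on each $I_m$ with $\boldsymbol{\varphi}_J=\mathbf{e}_h^\tau|_{I_m}$ and summed), the continuous pressure can be replaced by $q-\Pi_\tau^{0,\mathrm{t}}\Pi_h^Q q$ (using the discrete divergence-freeness of $\mathbf{e}_h^\tau$), yielding an error identity whose left-hand side contains the time derivative and the monotone nonlinearity, and whose right-hand side collects interpolation, quadrature, and pressure residuals.

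To lower-bound the left-hand side I would apply the discrete integration-by-parts formula~\eqref{eq:4.2} on $(0,t_m)$, $m=1,\ldots,M$, and take the maximum over $m$, producing $\tfrac{1}{2}\|\mathbf{e}_h^\tau\|_{L^\infty(I;L^2(\Omega))}^2$ plus a nonnegative jump remainder plus an initial contribution that is controlled via $\bfv_h^0=\Pi_h^{V,L^2}\bfv_0$ and Remark~\ref{rem:reg_initial}. The monotone nonlinear term is bounded below by $\|\bfF_h^\tau(\cdot,\cdot,\bfD_{\mathrm{x}}\bfv_h^\tau)-\bfF_h^\tau(\cdot,\cdot,\bfD_{\mathrm{x}}\mathbf{w}_h^\tau)\|_{2,Q_T}^2$ via the equivalence~\eqref{eq:hammera} applied to the discrete extra-stress (with uniform constants thanks to Remark~\ref{rem:uniform}); this matches the second target quantity on the LHS.

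The right-hand side splits naturally into four pieces: (i) a consistency mismatch $(\bfS_h^\tau(\cdot,\cdot,\bfD_{\mathrm{x}}\mathbf{w}_h^\tau)-\bfS(\cdot,\cdot,\bfD_{\mathrm{x}}\bfv),\bfD_{\mathrm{x}}\mathbf{e}_h^\tau)_{Q_T}$, (ii) a time-consistency term arising from $\bfv-\mathrm{I}_\tau^{0,\mathrm{t}}\bfv$ in the time derivative, (iii) the pressure contribution $(q-\Pi_\tau^{0,\mathrm{t}}\Pi_h^Q q,\mathrm{div}_{\mathrm{x}}\mathbf{e}_h^\tau)_{Q_T}$, and (iv) a data term from $\bfG$. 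For (i), I would dualise via the conjugate equivalence~\eqref{eq:hammerg}, apply the shift-change Lemma~\ref{lem:shift-change} to convert the shift $|\bfD_{\mathrm{x}}\mathbf{w}_h^\tau|$ into the constant shift needed by the interpolation bounds, use $\varepsilon$-Young~\eqref{ineq:young} to absorb an $\varepsilon$-multiple of the LHS $\bfF_h^\tau$-distance, and control what remains by the joint fractional interpolation Lemma~\ref{lem:PiV_FI}; this produces exactly the $(\tau^{2\alpha_{\mathrm{t}}}+h^{2\alpha_{\mathrm{x}}})$-, $\tau^{2\beta_{\mathrm{t}}}$-, and $h^{2\beta_{\mathrm{x}}}$-contributions. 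For (ii), the temporal Bochner--Nikolski\u{\i} regularity of $\bfv$, sharpened to $L^\infty(I;(N^{1+\beta_{\mathrm{x}},2}(\Omega))^d)$ when $p^-\in(1,2]$ so that an inverse-type passage to the energy norm is possible, furnishes the required bound. For (iii), I would split $q-\Pi_\tau^{0,\mathrm{t}}\Pi_h^Q q=(q-\Pi_\tau^{0,\mathrm{t}}\Pi_h^{k-1,\mathrm{x}}q)+\Pi_\tau^{0,\mathrm{t}}(\Pi_h^{k-1,\mathrm{x}}q-\Pi_h^Q q)$, estimate the first summand by the pointwise Calder\'on-gradient inequality $|q-\Pi_h^{k-1,\mathrm{x}}q|\lesssim h^{\gamma_{\mathrm{x}}}(|\nabla_{\mathrm{x}}^{\gamma_{\mathrm{x}}}q|+\langle|\nabla_{\mathrm{x}}^{\gamma_{\mathrm{x}}}q|\rangle_{\omega_K})$ paired against $\bfD_{\mathrm{x}}\mathbf{e}_h^\tau$ via a conjugate $\varepsilon$-Young and a shift-change, producing precisely $\rho_{(\varphi_{|\bfD_{\mathrm{x}}\bfv|})^*,Q_T}(h^{\gamma_{\mathrm{x}}}|\nabla_{\mathrm{x}}^{\gamma_{\mathrm{x}}}q|)$, and bound the second summand directly by Lemma~\ref{lem:PiQ}.

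The principal difficulty is to orchestrate the shift-changes and the quadrature consistency so that no uncontrolled term remains. Every pairing of $\bfD_{\mathrm{x}}\mathbf{e}_h^\tau$ against a residual must be routed through the conjugate shifted $N$-function with shift $|\bfD_{\mathrm{x}}\bfv|$, switched to a constant shift via Lemma~\ref{lem:shift-change}, passed through the key estimate Lemma~\ref{lem:key-estimate} (whose $a+\langle|\cdot|\rangle\lesssim|\cdot|^{-n}$ size condition is the source of the $h^n$ tails in Lemmas~\ref{lem:stab_PiV},~\ref{lem:stab_PiQ},~\ref{lem:PiQ}), and switched back, while simultaneously tracking the quadrature-induced differences $\bfF-\bfF_h^\tau$ arising from replacing $\bfS$ by $\bfS_h^\tau$. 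The assumption $h^2\lesssim\tau$ is what allows the jump remainder from~\eqref{eq:4.2} to be absorbed, and the lower bound $\gamma_{\mathrm{x}}>\alpha_{\mathrm{x}}/\min\{2,(p^+)'\}$ is precisely what ensures that the pressure term $\rho_{(\varphi_{|\bfD_{\mathrm{x}}\bfv|})^*,Q_T}(h^{\gamma_{\mathrm{x}}}|\nabla_{\mathrm{x}}^{\gamma_{\mathrm{x}}}q|)$ does not degrade the spatial rate $h^{2\alpha_{\mathrm{x}}}$ produced by the velocity quadrature, so that all contributions collapse to the stated combined rate.
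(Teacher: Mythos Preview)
Your overall strategy coincides with the paper's: both test the difference of the discrete and continuous formulations with the discretely divergence-free function $\Pi_h^V(\bfv_h^\tau-\mathrm{I}_\tau^{0,\mathrm{t}}\bfv)$ (which equals your $\mathbf{e}_h^\tau$), lower-bound the monotone part via~\eqref{eq:hammera}, apply the discrete integration-by-parts~\eqref{eq:4.2} for the time derivative, and close with Lemmas~\ref{lem:PiV_FI},~\ref{lem:Inod_Fh},~\ref{lem:PiQ} together with shift-changes and quadrature consistency (Lemma~\ref{lem:A-Ah}). The data terms from $\bfg,\bfG$ cancel exactly, so your item~(iv) is vacuous; and in your pressure split~(iii), the first summand $(q-\Pi_\tau^{0,\mathrm{t}}\Pi_h^{k-1,\mathrm{x}}q,\mathrm{div}_{\mathrm{x}}\mathbf{e}_h^\tau)$ already vanishes by $L^2$-orthogonality, so only the second summand survives and is handled by Lemma~\ref{lem:PiQ}, which is in fact the source of the term $\rho_{(\varphi_{|\bfD_{\mathrm{x}}\bfv|})^*,Q_T}(h^{\gamma_{\mathrm{x}}}|\nabla_{\mathrm{x}}^{\gamma_{\mathrm{x}}}q|)$.

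Two of your attributions are, however, wrong. First, the assumption $h^2\lesssim\tau$ is \emph{not} used to absorb the jump remainder $\sum_m\tfrac{\tau^2}{2}\|\mathrm{d}_\tau\bfe_h^\tau\|_{2,\Omega}^2$ in~\eqref{eq:4.2}; that remainder is cancelled \emph{exactly} by the $\tfrac{\tau}{2}\|\mathrm{d}_\tau\bfe_h^\tau\|_{2}^2$ arising from Young's inequality applied to the cross term $(\mathrm{d}_\tau\bfe_h^\tau,\mathrm{I}_\tau^{0,\mathrm{t}}\bfv-\Pi_h^V\mathrm{I}_\tau^{0,\mathrm{t}}\bfv)$. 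The hypothesis $h^2\lesssim\tau$ is instead needed to bound the complementary Young term $\tfrac{1}{2\tau}\|\mathrm{I}_\tau^{0,\mathrm{t}}\bfv-\Pi_h^V\mathrm{I}_\tau^{0,\mathrm{t}}\bfv\|_{2,Q_T^m}^2$ (see~\eqref{thm:main.21.1}--\eqref{thm:main.21.2}), which is where the case distinction $p^-\ge 2$ vs.\ $p^-\in(1,2]$ and the extra $N^{1+\beta_{\mathrm{x}},2}$ regularity enter. Second, the lower bound $\gamma_{\mathrm{x}}>\widetilde{\gamma}_{\mathrm{x}}\coloneqq\alpha_{\mathrm{x}}/\min\{2,(p^+)'\}$ is not a rate-balancing condition between velocity and pressure; it is a purely technical requirement in the quadrature-consistency step for the pressure modular. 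When one passes from $((\varphi_h^\tau)_{|\bfD_{\mathrm{x}}\bfv|})^*$ to $(\varphi_{|\bfD_{\mathrm{x}}\bfv|})^*$ via~\eqref{eq:phih-phi} with $\lambda=h^{\widetilde{\gamma}_{\mathrm{x}}}$, an auxiliary term $h^{2\alpha_{\mathrm{x}}}\rho_{p'(\cdot,\cdot)s,Q_T}\bigl(h^{-\widetilde{\gamma}_{\mathrm{x}}}\Pi_\tau^{0,\mathrm{t}}(\Pi_h^Qq-\Pi_h^{k-1,\mathrm{x}}q)\bigr)$ appears, and it is bounded (after an inverse inequality that costs a negative power of $\tau+h$) only because $\gamma_{\mathrm{x}}-\widetilde{\gamma}_{\mathrm{x}}>0$ furnishes a compensating positive power; see~\eqref{thm:main.14}--\eqref{thm:main.17}. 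Without identifying this step your argument would not actually close.
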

	
As a direct consequence of Theorem \ref{thm:main}, we obtain two error estimates with explicit~decay~rates.
	
\begin{corollary}\label{cor:main}
    Let the assumptions of Theorem \ref{thm:main} be satisfied. Then, there exists a constant  $s>1$ with $s\searrow1$  as $\tau+h\searrow0$ such that
    \begin{align}\label{cor:main.1}
        \begin{aligned} 
            \|\bfv_h^{\tau}-\mathrm{I}_\tau^{0,\mathrm{t}}\bfv\|_{L^\infty(I;L^2(\Omega))}^2
            &+\|\bfF_h^{\tau}(\cdot,\cdot,\bfD_{\mathrm{x}} \bfv_h^{\tau})-\bfF_h^{\tau}(\cdot,\cdot,\bfD_{\mathrm{x}}\mathrm{I}_\tau^{0,\mathrm{t}}
		\bfv)\|_{2,Q_T}^2
            \\&\lesssim 
            (\tau^{2\alpha_{\mathrm{t}}}+h^{2\alpha_{\mathrm{x}}})\,\big(1+{\sup}_{t\in I}{\big\{\rho_{p(t,\cdot)s,\Omega}(\bfD_{\mathrm{x}} \mathbf{v}(t))\big\}}\big)
		\\&\quad+\tau^{2\beta_{\mathrm{t}}}\,[\bfF(\cdot,\cdot,\bfD_{\mathrm{x}}\bfv)]_{N^{\beta_{\mathrm{t}},2}(I;L^2(\Omega))}^2
            \\&\quad+h^{2\beta_{\mathrm{x}}}\,\big([\bfF(\cdot,\cdot,\bfD_{\mathrm{x}}\bfv)]_{L^2(I;N^{\beta_{\mathrm{x}},2}(\Omega))}^2+\epsilon^2(\bfv)\big)
		\\[0.5mm]&\quad+h^{\smash{\min\{2,  (p^+)'\}\gamma_{\mathrm{x}}}}\,\rho_{(\varphi_{\vert \bfD_{\mathrm{x}}\bfv\vert})^*,Q_T}(\vert \nabla_{\mathrm{x}}^{\gamma_{\mathrm{x}}}q\vert)\,,
	\end{aligned}
    \end{align}
    where the implicit constant in $\lesssim $ depends on $k$, $\ell$,  $p^- $, $p^+$, $[p]_{\alpha_{\mathrm{t}},\alpha_{\mathrm{x}},Q_T}$, $\delta$, $\omega_0$, $\Omega$,  $s$, $\sup_{t\in I}{\big\{\|\bfD_{\mathrm{x}}\bfv(t)\|_{p(t,\cdot),\Omega}\big\}}$, and $\|\vert\nabla_{\mathrm{x}}^{\gamma_{\mathrm{x}}}q\vert\|_{p'(\cdot,\cdot),Q_T}$. If, in addition, $p^-\ge 2$ and $(\delta+\vert\bfD_{\mathrm{x}}\bfv\vert)^{2-p(\cdot,\cdot)}\vert \nabla_{\mathrm{x}}^{\gamma_{\mathrm{x}}} q\vert^2 \in L^1(Q_T)$, then
    \begin{align}\label{cor:main.2}
	\begin{aligned} 
            \|\bfv_h^{\tau}-\mathrm{I}_\tau^{0,\mathrm{t}}\bfv\|_{L^\infty(I;L^2(\Omega))}^2
            &+\|\bfF_h^{\tau}(\cdot,\cdot,\bfD_{\mathrm{x}}\bfv_h^{\tau})-\bfF_h^{\tau}(\cdot,\cdot,\bfD_{\mathrm{x}}\mathrm{I}_\tau^{0,\mathrm{t}}
		\bfv)\|_{2,Q_T}^2
            \\&\lesssim (\tau^{2\alpha_{\mathrm{t}}}+h^{2\alpha_{\mathrm{x}}})\,\big(1+{\sup}_{t\in I}{\big\{\rho_{p(t,\cdot)s,\Omega}(\bfD_{\mathrm{x}} \mathbf{v}(t))\big\}}\big)
		\\&\quad+\tau^{2\beta_{\mathrm{t}}}\,[\bfF(\cdot,\cdot,\bfD_{\mathrm{x}}\bfv)]_{N^{\beta_{\mathrm{t}},2}(I;L^2(\Omega))}^2
            \\&\quad+h^{2\beta_{\mathrm{x}}}\,\big([\bfF(\cdot,\cdot,\bfD_{\mathrm{x}}\bfv)]_{L^2(I;N^{\beta_{\mathrm{x}},2}(\Omega))}^2+\epsilon^2(\bfv)\big)
		\\&\quad+h^{2\gamma_{\mathrm{x}}} \|  (\delta+\vert\bfD_{\mathrm{x}}\bfv\vert)^{\smash{2-p(\cdot,\cdot)}}\vert\nabla_{\mathrm{x}}^{\gamma_{\mathrm{x}}} q\vert^2\|_{1,Q_T}\,,
	\end{aligned}
    \end{align}
    where the hidden constant in $\lesssim $ depends on $k$, $\ell$,  $p^- $, $p^+$, $[p]_{\alpha_{\mathrm{t}},\alpha_{\mathrm{x}},Q_T}$, $\delta$, $\omega_0$, $\Omega$,  $s$, $\sup_{t\in I}{\big\{\|\bfD_{\mathrm{x}}\bfv(t)\|_{p(t,\cdot),\Omega}\big\}}$, and $\|\vert\nabla_{\mathrm{x}}^{\gamma_{\mathrm{x}}}q\vert\|_{p'(\cdot,\cdot),Q_T}$.
\end{corollary}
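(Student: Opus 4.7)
\emph{Proof proposal.} The plan is to derive both \eqref{cor:main.1} and \eqref{cor:main.2} directly from Theorem \ref{thm:main} by estimating the pressure-related modular term $\rho_{(\varphi_{\vert \bfD_{\mathrm{x}}\bfv\vert})^*,Q_T}(h^{\gamma_{\mathrm{x}}}\vert\nabla_{\mathrm{x}}^{\gamma_{\mathrm{x}}}q\vert)$ appearing there by an expression with an explicit power of $h$. In both cases, the starting point will be the pointwise equivalence from Remark \ref{rem:phi_a}, \textit{i.e.}, $(\varphi_a)^*(t,x,r)\sim ((\delta+a)^{p(t,x)-1}+r)^{p'(t,x)-2}r^2$, and the argument reduces to a pointwise manipulation of this expression followed by integration over $Q_T$.

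For \eqref{cor:main.1}, I would first establish the uniform scaling inequality
\[
(\varphi_a)^*(t,x,\lambda r) \leq c\,\lambda^{\min\{2,p'(t,x)\}}\,(\varphi_a)^*(t,x,r)\quad\text{ for all }\lambda\in (0,1]\,,\;a,r\ge 0\,,
\]
valid for a.e.\ $(t,x)^\top\in Q_T$, via a short case distinction comparing $\lambda r$ and $r$ against the threshold $(\delta+a)^{p(t,x)-1}$. The three relevant cases yield the behaviour $\lambda^2$ when both $r,\lambda r$ lie below the threshold, $\lambda^{p'(t,x)}$ when both lie above, and the transitional case $\lambda r\leq (\delta+a)^{p(t,x)-1}\leq r$, which will be controlled by bounding $r\leq \lambda^{-1}(\delta+a)^{p(t,x)-1}$. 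Since $p'(t,x)\geq (p^+)'$ a.e.\ in $Q_T$ and $\lambda\leq 1$, I then use $\lambda^{\min\{2,p'(t,x)\}}\leq \lambda^{\min\{2,(p^+)'\}}$. Applying the resulting inequality pointwise with $\lambda=h^{\gamma_{\mathrm{x}}}\leq 1$, $a=\vert \bfD_{\mathrm{x}}\bfv\vert$, and $r=\vert \nabla_{\mathrm{x}}^{\gamma_{\mathrm{x}}}q\vert$, and integrating over $Q_T$, produces the factor $h^{\min\{2,(p^+)'\}\gamma_{\mathrm{x}}}$ in front of $\rho_{(\varphi_{\vert \bfD_{\mathrm{x}}\bfv\vert})^*,Q_T}(\vert\nabla_{\mathrm{x}}^{\gamma_{\mathrm{x}}}q\vert)$; inserting the resulting bound into Theorem~\ref{thm:main} yields \eqref{cor:main.1}.

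For \eqref{cor:main.2}, the additional assumption $p^-\geq 2$ gives $p'(t,x)-2\leq 0$ a.e.\ in $Q_T$, so that the factor $((\delta+a)^{p(t,x)-1}+r)^{p'(t,x)-2}$ is non-increasing in $r$. Dropping the summand $r$ in the base and using the elementary identity $(p-1)(p'-2)=2-p$ (which follows at once from $(p-1)p'=p$), I obtain the pointwise bound $(\varphi_a)^*(t,x,r)\lesssim (\delta+a)^{2-p(t,x)}\,r^2$. Substituting $a=\vert \bfD_{\mathrm{x}}\bfv\vert$ and $r=h^{\gamma_{\mathrm{x}}}\vert \nabla_{\mathrm{x}}^{\gamma_{\mathrm{x}}}q\vert$ and integrating gives the bound $h^{2\gamma_{\mathrm{x}}}\,\|(\delta+\vert\bfD_{\mathrm{x}}\bfv\vert)^{2-p(\cdot,\cdot)}\vert\nabla_{\mathrm{x}}^{\gamma_{\mathrm{x}}}q\vert^2\|_{1,Q_T}$, which is finite by the additional regularity assumption; inserting it into Theorem~\ref{thm:main} produces~\eqref{cor:main.2}.

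I do not foresee a serious obstacle: the entire argument is an elementary pointwise manipulation of Remark \ref{rem:phi_a}, followed by integration. The mildly delicate point is verifying the transitional case in the scaling inequality for \eqref{cor:main.1}, which, however, is resolved by the bound $r\leq \lambda^{-1}(\delta+a)^{p(t,x)-1}$ described above; once that is in hand, both estimates follow immediately from Theorem~\ref{thm:main}.
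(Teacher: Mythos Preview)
Your proposal is correct and follows essentially the same route as the paper: both parts are obtained from Theorem~\ref{thm:main} by bounding the pressure modular $\rho_{(\varphi_{\vert \bfD_{\mathrm{x}}\bfv\vert})^*,Q_T}(h^{\gamma_{\mathrm{x}}}\vert\nabla_{\mathrm{x}}^{\gamma_{\mathrm{x}}}q\vert)$ via the pointwise equivalence \eqref{rem:phi_a.2}, first through the scaling inequality $(\varphi_a)^*(t,x,\lambda r)\lesssim \lambda^{\min\{2,p'(t,x)\}}(\varphi_a)^*(t,x,r)$ for \eqref{cor:main.1}, and then, when $p^-\ge 2$, by dropping the $r$-contribution in the base to obtain $(\varphi_a)^*(t,x,r)\lesssim (\delta+a)^{2-p(t,x)}r^2$ for \eqref{cor:main.2}. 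The paper simply quotes these two pointwise inequalities, whereas you additionally sketch the case distinction that justifies the first one; otherwise the arguments coincide.
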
\newpage

\begin{proof}[Proof (of Theorem \ref{thm:main}).]
	To start with, we introduce the abbreviation $\bfe_h^\tau  \in \mathbb{P}^0(\mathcal{I}_\tau^0;(W^{1,p^-}_0(\Omega))^d)$,~\mbox{defined} by $\bfe_h^\tau \coloneqq \bfv_h^\tau -\mathrm{I}_\tau^{0,\mathrm{t}}\bfv$ a.e.\ in $I$ and $\bfe_h^\tau \coloneqq \bfv_h^0-\bfv(0)$ a.e.\ in $I_0$. Then,  using \eqref{eq:hammera}, the decomposition
    \begin{align}\label{thm:main.1}
		\bfe_h^\tau  = \Pi_h^V\bfe_h^\tau + \Pi_h^V\mathrm{I}_\tau^{0,\mathrm{t}} \bfv -\mathrm{I}_\tau^{0,\mathrm{t}}\bfv\quad\text{in }\mathbb{P}^0(\mathcal{I}_\tau;(W^{1,p^-}_0(\Omega))^d)\,,
	\end{align}
    for every $m=1,\ldots,M$, denoting by $Q_T^m\coloneqq (0,t_m)\times \Omega$ the temporally truncated cylinder,  we find that\enlargethispage{5mm}
	\begin{align}\label{thm:main.2}
			c\, \|\bfF_h^\tau(\cdot,\cdot,\bfD_{\mathrm{x}} \overline{\bfv}^{\tau}_h) - \bfF_h^\tau(\cdot,\cdot,\bfD_{\mathrm{x}}\mathrm{I}_\tau^{0,\mathrm{t}}\bfv)\|_{2,\smash{Q_T^m}}^2
            &\leq (\bfS_h^\tau(\cdot,\cdot,\bfD_{\mathrm{x}} \bfv_h^\tau) - \bfS_h^\tau(\cdot,\cdot,\bfD_{\mathrm{x}}\mathrm{I}_\tau^{0,\mathrm{t}}	\bfv),\bfD_{\mathrm{x}}\bfe_h^\tau)_{\smash{Q_T^m}}
			\\&= (\bfS_h^\tau(\cdot,\cdot,\bfD_{\mathrm{x}}\bfv_h^\tau) - \bfS(\cdot,\cdot,\bfD_{\mathrm{x}}\bfv),\bfD_{\mathrm{x}}\bfe_h^\tau)_{\smash{Q_T^m}}\notag
			\\&\quad+(\bfS(\cdot,\cdot,\bfD_{\mathrm{x}}\bfv)-\bfS_h^\tau(\cdot,\cdot,\bfD_{\mathrm{x}}\mathrm{I}_\tau^{0,\mathrm{t}}\bfv),\bfD_{\mathrm{x}}\bfe_h^\tau)_{\smash{Q_T^m}}\notag
			\\&= (\bfS_h^\tau(\cdot,\cdot,\bfD_{\mathrm{x}} \bfv_h^\tau) - \bfS(\cdot,\cdot,\bfD_{\mathrm{x}}
			\bfv),\bfD_{\mathrm{x}} \Pi_h^V\bfe_h^\tau)_{\smash{Q_T^m}}\notag
			\\&\quad+ (\bfS_h^\tau(\cdot,\cdot,\bfD_{\mathrm{x}}\bfv_h^\tau) - \bfS(\cdot,\cdot,\bfD_{\mathrm{x}} \bfv),\bfD_{\mathrm{x}}\Pi_h^V\mathrm{I}_\tau^{0,\mathrm{t}}\bfv-\bfD_{\mathrm{x}}\mathrm{I}_\tau^{0,\mathrm{t}}\bfv)_{\smash{Q_T^m}}\notag
			\\&\quad+(\bfS(\cdot,\cdot,\bfD_{\mathrm{x}}\bfv)-\bfS_h^\tau(\cdot,\cdot,\bfD_{\mathrm{x}} \bfv),\bfD \bfe_h^\tau)_{\smash{Q_T^m}}\notag
			\\&\quad+(\bfS_h^\tau(\cdot,\cdot,\bfD_{\mathrm{x}} \bfv)-\bfS_h^\tau(\cdot,\cdot,\bfD_{\mathrm{x}}\mathrm{I}_\tau^{0,\mathrm{t}}\bfv),\bfD_{\mathrm{x}}\bfe_h^\tau)_{\smash{Q_T^m}} \notag
			\\&\eqqcolon I_{m,h}^1+ I_{m,h}^2 +I_{m,h}^3 +I_{m,h}^4\,.\notag
	\end{align}
	Therefore, let us next estimate the terms $I_{m,h}^{i}$, $i=1,\cdots,4$, separately for all $m\in \{1,\ldots,M\}$:  
		
	\textit{ad $I_{m,h}^2$.} For every $m = 1,\ldots,M$, we have that
	\begin{align}\label{thm:main.3}
		\begin{aligned} 
		  I_{m,h}^2
            &=(\bfS_h^\tau(\cdot,\cdot,\bfD_{\mathrm{x}} \bfv_h^\tau) - \bfS_h^\tau(\cdot,\cdot,\bfD_{\mathrm{x}} \mathrm{I}_\tau^{0,\mathrm{t}}\bfv),\bfD_{\mathrm{x}}\Pi_h^V\mathrm{I}_\tau^{0,\mathrm{t}}\bfv-\bfD_{\mathrm{x}}\mathrm{I}_\tau^{0,\mathrm{t}}\bfv)_{\smash{Q_T^m}}
		  \\&\quad+(\bfS_h^\tau(\cdot,\cdot,\bfD_{\mathrm{x}} \mathrm{I}_\tau^{0,\mathrm{t}}\bfv)- \bfS(\cdot,\cdot,\bfD_{\mathrm{x}} \mathrm{I}_\tau^{0,\mathrm{t}}\bfv),\bfD_{\mathrm{x}}\Pi_h^V\mathrm{I}_\tau^{0,\mathrm{t}}\bfv-\bfD_{\mathrm{x}}\mathrm{I}_\tau^{0,\mathrm{t}}\bfv)_{\smash{Q_T^m}}
		 \\&\quad+(\bfS(\cdot,\cdot,\bfD_{\mathrm{x}} \mathrm{I}_\tau^{0,\mathrm{t}}\bfv)- \bfS(\cdot,\cdot,\bfD_{\mathrm{x}}\bfv),\bfD_{\mathrm{x}}\Pi_h^V\mathrm{I}_\tau^{0,\mathrm{t}}\bfv-\bfD_{\mathrm{x}}\mathrm{I}_\tau^{0,\mathrm{t}}\bfv)_{\smash{Q_T^m}}
	   \\&\eqqcolon I_{m,h}^{2,1}+I_{m,h}^{2,2}+I_{m,h}^{2,3}\,,
	   \end{aligned}
	\end{align} 
	and, again, we estimate $I_{m,h}^{2,i}$, $i=1,2,3$, separately for  all $m\in \{1,\ldots,M\}$:  
		
	\textit{ad $I_{m,h}^{2,1}$.} Resorting to the $\varepsilon$-Young inequality \eqref{ineq:young} with $\psi=(\varphi_h^\tau)_{\vert\bfD_{\mathrm{x}}\mathrm{I}_\tau^{0,\mathrm{t}}\bfv\vert}$, \eqref{eq:hammera}, and Lemma~\ref{lem:PiV_FI}\eqref{lem:PiV_FI.2}, for every $m = 1,\ldots,M$, we find that
	\begin{align}\label{thm:main.4}
		\begin{aligned} 
			\vert I_{m,h}^{2,1}\vert 
            &\leq \varepsilon\,\|\bfF_h^{\tau}(\cdot,\cdot,\bfD_{\mathrm{x}} \bfv_h^{\tau})-\bfF_h^{\tau}(\cdot,\cdot,\bfD_{\mathrm{x}}\mathrm{I}_\tau^{0,\mathrm{t}}\bfv)\|_{2,\smash{Q_T^m}}^2
            \\&\quad+c_\varepsilon\, \|\bfF_h^{\tau}(\cdot,\cdot,\bfD_{\mathrm{x}}\mathrm{I}_\tau^{0,\mathrm{t}}\bfv)-\bfF_h^{\tau}(\cdot,\cdot,\bfD_{\mathrm{x}}\Pi_h^V\mathrm{I}_\tau^{0,\mathrm{t}}\bfv)\|_{2,\smash{Q_T^m}}^2
			\\&\leq \varepsilon\,\|\bfF_h^{\tau}(\cdot,\cdot,\bfD_{\mathrm{x}} \bfv_h^{\tau})-\bfF_h^{\tau}(\cdot,\cdot,\bfD_{\mathrm{x}}\mathrm{I}_\tau^{0,\mathrm{t}}	\bfv)\|_{2,\smash{Q_T^m}}^2
			\\&\quad+ c_\varepsilon\,(\tau^{2\alpha_{\mathrm{t}}}+h^{2\alpha_{\mathrm{x}}})\,\big(1+{\sup}_{t\in (0,t_m)}{\big\{\rho_{p(t,\cdot)s,\Omega}(\bfD_{\mathrm{x}} \mathbf{v}(t))\big\}}\big)
            \\&\quad+c_\varepsilon\,\tau^{2\beta_{\mathrm{t}}}\,[\bfF(\cdot,\cdot,\bfD_{\mathrm{x}}\bfv)]_{N^{\beta_{\mathrm{t}},2}((0,t_m);L^2(\Omega))}^2
			\\&\quad+c_\varepsilon\,h^{2\beta_{\mathrm{x}}}\,[\bfF(\cdot,\cdot,\bfD_{\mathrm{x}}\bfv)]_{L^2((0,t_m);N^{\beta_{\mathrm{x}},2}(\Omega))}^2\,. 
		\end{aligned}
	\end{align}

	\textit{ad $I_{m,h}^{2,2}$.} By means of the $\varepsilon$-Young inequality \eqref{ineq:young} with $\psi=(\varphi_h^\tau)_{\vert\bfD_{\mathrm{x}}\mathrm{I}_\tau^{0,\mathrm{t}}\bfv\vert}$, \eqref{eq:hammera}, Lemma \ref{lem:A-Ah}\eqref{eq:Ah-A}, and Lemma \ref{lem:PiV_FI}\eqref{lem:PiV_FI.2}, for every $m = 1,\ldots,M$, we obtain
	\begin{align}\label{thm:main.5}
		\begin{aligned} 
			\vert I_{m,h}^{2,2}\vert
            & \leq c\,\|(\bfF_h^\tau)^*(\cdot,\cdot,\bfS_h^\tau(\cdot,\cdot,\bfD_{\mathrm{x}}\mathrm{I}_\tau^{0,\mathrm{t}} \bfv))-(\bfF_h^\tau)^*(\cdot,\cdot,\bfS(\cdot,\cdot,\bfD_{\mathrm{x}}\mathrm{I}_\tau^{0,\mathrm{t}} \bfv))\|_{2,\smash{Q_T^m}}^2
            \\&\quad+c\,\|\bfF_h^{\tau}(\cdot,\cdot,\bfD_{\mathrm{x}}\mathrm{I}_\tau^{0,\mathrm{t}}\bfv)-\bfF_h^{\tau}(\cdot,\cdot,\bfD_{\mathrm{x}}\Pi_h^V\mathrm{I}_\tau^{0,\mathrm{t}}\bfv)\|_{2,\smash{Q_T^m}}^2
			\\&\leq c\,(\tau^{2\alpha_{\mathrm{t}}}+h^{2\alpha_{\mathrm{x}}})\,\big(1+{\sup}_{t\in (0,t_m)}{\big\{\rho_{p(t,\cdot)s,\Omega}(\bfD_{\mathrm{x}}\bfv(t))\big\}}\big)
            \\&\quad+c\,\tau^{2\beta_{\mathrm{t}}}\,[\bfF(\cdot,\cdot,\bfD_{\mathrm{x}}\bfv)]_{N^{\beta_{\mathrm{t}},2}((0,t_m);L^2(\Omega))}^2
            \\&\quad+
            c\,h^{2\beta_{\mathrm{x}}}\,[\bfF(\cdot,\cdot,\bfD_{\mathrm{x}}\bfv)]_{L^2((0,t_m);N^{\beta_{\mathrm{x}},2}(\Omega))}^2\,,
		\end{aligned}
    \end{align} 
	where, for every $m = 1,\ldots,M$,  we used that
	\begin{align}\label{thm:main.6}
		\begin{aligned} 
			\rho_{p(\cdot,\cdot)s,Q_T^m}(\bfD_{\mathrm{x}}\mathrm{I}_\tau^{0,\mathrm{t}} \mathbf{v})
            &\leq c\,\big(1+\rho_{(\mathrm{I}_\tau^{0,\mathrm{t}} p)(\cdot,\cdot)s,Q_T^m}(\bfD_{\mathrm{x}}\mathrm{I}_\tau^{0,\mathrm{t}} \mathbf{v})\big)
			\\&\leq c\,\big(1+{\sup}_{t\in (0,t_m)}{\big\{\rho_{p(t,\cdot)s,\Omega}(\bfD_{\mathrm{x}}\bfv(t))\big\}}\big)\,.
		\end{aligned}
	\end{align}
		
	\textit{ad $I_{m,h}^{2,3}$.} Using the $\varepsilon$-Young inequality \eqref{ineq:young} with $\psi=(\varphi_h^\tau)_{\vert\bfD_{\mathrm{x}}\mathrm{I}_\tau^{0,\mathrm{t}}\bfv\vert}$, \eqref{eq:hammera}, Lemma \ref{lem:Inod_Fh}\eqref{lem:Inod_Fh.2}, and Lemma \ref{lem:PiV_FI}\eqref{lem:PiV_FI.2}, for every $m = 1,\ldots,M$, we see that 
	\begin{align}\label{thm:main.7}
		\begin{aligned} 
			\vert I_{m,h}^{2,3}\vert
            & \leq c\,\|\bfF_h^\tau(\cdot,\cdot,\bfD_{\mathrm{x}}\mathrm{I}_\tau^{0,\mathrm{t}} \bfv)-\bfF_h^\tau(\cdot,\cdot,\bfD_{\mathrm{x}}\bfv)\|_{2,\smash{Q_T^m}}^2
            \\&\quad+c\,\|\bfF_h^\tau(\cdot,\cdot,\bfD_{\mathrm{x}}\mathrm{I}_\tau^{0,\mathrm{t}}\bfv)-\bfF_h^\tau(\cdot,\cdot,\bfD_{\mathrm{x}}\Pi_h^V\mathrm{I}_\tau^{0,\mathrm{t}}\bfv)\|_{2,\smash{Q_T^m}}^2
			\\&\leq c\,(\tau^{2\alpha_{\mathrm{t}}}+h^{2\alpha_{\mathrm{x}}})\,\big(1+{\sup}_{t\in (0,t_m)}{\big\{\rho_{p(t,\cdot)s,\Omega}(\bfD_{\mathrm{x}} \mathbf{v}(t))\big\}}\big)
            \\&\quad+c\,\tau^{2\beta_{\mathrm{t}}}\,[\bfF(\cdot,\cdot,\bfD_{\mathrm{x}}\bfv)]_{N^{\beta_{\mathrm{t}},2}((0,t_m);L^2(\Omega))}^2
            \\&\quad+c\,h^{2\beta_{\mathrm{x}}}\,[\bfF(\cdot,\cdot,\bfD_{\mathrm{x}}\bfv)]_{L^2((0,t_m);N^{\beta_{\mathrm{x}},2}(\Omega))}^2\,.
		\end{aligned}
	\end{align} 
	In summary, combining \eqref{thm:main.4}, \eqref{thm:main.5}, and \eqref{thm:main.7} in \eqref{thm:main.3}, for every $m = 1,\ldots,M$, we arrive at\enlargethispage{16mm}
	\begin{align}\label{thm:main.9}
		\begin{aligned} 
			\vert I_{m,h}^2\vert 	
            &\leq \varepsilon\,\|\bfF_h^{\tau}(\cdot,\cdot,\bfD_{\mathrm{x}} \bfv_h^{\tau})-\bfF_h^{\tau}(\cdot,\cdot,\bfD_{\mathrm{x}}\mathrm{I}_\tau^{0,\mathrm{t}}	\bfv)\|_{2,\smash{Q_T^m}}^2
			\\&\quad+ c_\varepsilon\,(\tau^{2\alpha_{\mathrm{t}}}+h^{2\alpha_{\mathrm{x}}})\,\big(1+{\sup}_{t\in (0,t_m)}{\big\{\rho_{p(t,\cdot)s,\Omega}(\bfD_{\mathrm{x}} \mathbf{v}(t))\big\}}\big)
            \\&\quad+c_\varepsilon\,\tau^{2\beta_{\mathrm{t}}}\,[\bfF(\cdot,\cdot,\bfD_{\mathrm{x}}\bfv)]_{N^{\beta_{\mathrm{t}},2}((0,t_m);L^2(\Omega))}^2
			\\&\quad+c_\varepsilon\,h^{2\beta_{\mathrm{x}}}\,[\bfF(\cdot,\cdot,\bfD_{\mathrm{x}}\bfv)]_{L^2((0,t_m);N^{\beta_{\mathrm{x}},2}(\Omega))}^2\,. 
		\end{aligned}
	\end{align}
		
	\textit{ad $I_{m,h}^3$.} Using the $\varepsilon$-Young inequality \eqref{ineq:young} with $\psi=(\varphi_h^\tau)_{\vert\bfD_{\mathrm{x}}\mathrm{I}_\tau^{0,\mathrm{t}}\bfv\vert}$, \eqref{eq:hammera}, and Lemma \ref{lem:A-Ah}\eqref{eq:Ah-A}, for every $m=1,\ldots,M$, we observe that
	\begin{align}\label{thm:main.10}
		\begin{aligned} 
			\vert I_{m,h}^3\vert
            & \leq c_\varepsilon\,\|(\bfF_h^\tau)^*(\cdot,\cdot,\bfS_h^\tau(\cdot,\cdot,\bfD_{\mathrm{x}}\bfv))-(\bfF_h^\tau)^*(\cdot,\cdot,\bfS(\cdot,\cdot,\bfD_{\mathrm{x}}\bfv))\|_{2,\smash{Q_T^m}}^2
            \\&\quad+\varepsilon\,\|\bfF_h^{\tau}(\cdot,\cdot,\bfD_{\mathrm{x}}\bfv_h^\tau)-\bfF_h^{\tau}(\cdot,\cdot,\bfD_{\mathrm{x}}\mathrm{I}_\tau^{0,\mathrm{t}}\bfv)\|_{2,\smash{Q_T^m}}^2
            \\& \leq c_\varepsilon\,\smash{(\tau^{2\alpha_{\mathrm{t}}}+h^{2\alpha_{\mathrm{x}}})}\,\big(1+{\sup}_{t\in (0,t_m)}{\big\{\rho_{p(t,\cdot)s,\Omega}(\bfD_{\mathrm{x}}\bfv(t))\big\}}\big)
            \\&\quad+\varepsilon\,\|\bfF_h^{\tau}(\cdot,\cdot,\bfD_{\mathrm{x}}\bfv_h^\tau)-\bfF_h^{\tau}(\cdot,\cdot,\bfD_{\mathrm{x}}\mathrm{I}_\tau^{0,\mathrm{t}}\bfv)\|_{2,\smash{Q_T^m}}^2\,.
		\end{aligned}
	\end{align} 
		
	\textit{ad $I_{m,h}^4$.} Using the $\varepsilon$-Young inequality \eqref{ineq:young} with $\psi=(\varphi_h^\tau)_{\vert\bfD_{\mathrm{x}}\mathrm{I}_\tau^{0,\mathrm{t}}\bfv\vert}$, \eqref{eq:hammera}, and Lemma \ref{lem:Inod_Fh}\eqref{lem:Inod_Fh.2}, for every $m = 1,\ldots,M$, we find that
	\begin{align}\label{thm:main.11}
		\begin{aligned} 
			\vert I_{m,h}^4\vert
            & \leq c_\varepsilon\,\|\bfF_h^{\tau}(\cdot,\cdot,\bfD_{\mathrm{x}}\bfv)-\bfF_h^{\tau}(\cdot,\cdot,\bfD_{\mathrm{x}}\mathrm{I}_\tau^{0,\mathrm{t}}\bfv)\|_{2,\smash{Q_T^m}}^2
            \\&\quad+\varepsilon\,\|\bfF_h^{\tau}(\cdot,\cdot,\bfD_{\mathrm{x}}\bfv_h^\tau)-\bfF_h^{\tau}(\cdot,\cdot,\bfD_{\mathrm{x}}\mathrm{I}_\tau^{0,\mathrm{t}}\bfv)\|_{2,\smash{Q_T^m}}^2
			\\&\leq c_\varepsilon\,  (\tau^{2\alpha_{\mathrm{t}}}+h^{2\alpha_{\mathrm{x}}})\,\big(1+{\sup}_{t\in (0,t_m)}{\big\{\rho_{p(t,\cdot)s,\Omega}(\bfD_{\mathrm{x}}\bfv(t))\big\}}\big)
            \\&\quad+c_\varepsilon\,\tau^{2\beta_{\mathrm{t}}}\,[\bfF(\cdot,\cdot,\bfD_{\mathrm{x}}\bfv)]_{N^{\beta_{\mathrm{t}},2}((0,t_m);L^2(\Omega))}^2
            \\&\quad+ 
		    \varepsilon\,\|\bfF_h^{\tau}(\cdot,\cdot,\bfD_{\mathrm{x}}\bfv_h^\tau)-\bfF_h^{\tau}(\cdot,\cdot,\bfD_{\mathrm{x}}\mathrm{I}_\tau^{0,\mathrm{t}}\bfv)\|_{2,\smash{Q_T^m}}^2\,.
	    \end{aligned}
	\end{align} 
		 
	\textit{ad $I_{m,h}^1$.} \hspace{-0.1mm}Testing \hspace{-0.1mm}the \hspace{-0.1mm}first \hspace{-0.1mm}lines \hspace{-0.1mm}of \hspace{-0.1mm}Problem \hspace{-0.1mm}(\hyperlink{Qh}{Q$_h^\tau$}) \hspace{-0.1mm}and \hspace{-0.1mm}Problem \hspace{-0.1mm}(\hyperlink{Q}{Q}) \hspace{-0.1mm}with \hspace{-0.1mm}${\boldsymbol{\varphi}_h^{\tau}\hspace{-0.175em}=\hspace{-0.175em}\Pi_h^V\bfe_h^\tau\chi_{(0,t_m)}\hspace{-0.175em}\in\hspace{-0.175em} \mathbb{P}^0(\mathcal{I}_\tau;\Vo_{h,0})}$ (more precisely, for Problem (\hyperlink{Q}{Q}), for every $m\hspace*{-0.175em}=\hspace*{-0.175em}1,\ldots,M$,  in Remark \ref{rem:equiv_form}, we choose ${\boldsymbol{\varphi}_{I_m}\hspace*{-0.175em}=\hspace*{-0.175em}\Pi_h^V\bfe_h^\tau\chi_{I_m}\hspace*{-0.175em}\in \hspace*{-0.175em}\Vo_{h,0}}$ and sum with respect to $m=1,\ldots,M$), then, subtracting the resulting equations as well as using that $(q,\mathrm{div}_{\mathrm{x}}  \Pi_h^V\bfe_h^\tau)_{Q_T^m} =(\Pi_\tau^{0,\mathrm{t}}\Pi_h^{k-1,\mathrm{x}}q,\mathrm{div}_{\mathrm{x}} \Pi_h^V\bfe_h^\tau)_{Q_T^m}$  and $(q_h,\mathrm{div}_{\mathrm{x}}  \Pi_h^V\bfe_h)_{Q_T^m} =0 =(\Pi_\tau^{0,\mathrm{t}}\Pi_h^Q q,\mathrm{div}_{\mathrm{x}}  \Pi_h^V\bfe_h)_{Q_T^m}$ (\textit{cf}.\ Assumption \ref{ass:PiV} (ii)), we find that
	\begin{align}\label{thm:main.12}
		\begin{aligned} 
			I_{m,h}^1&=(\Pi_\tau^{0,\mathrm{t}}(\Pi_h^Qq-\Pi_h^{k-1,\mathrm{x}}q),\mathrm{div}_{\mathrm{x}}  \Pi_h^V \bfe_h^\tau)_{\smash{Q_T^m}}+(\mathrm{d}_\tau \bfe_h^\tau, \Pi_h^V \bfe_h^\tau)_{\smash{Q_T^m}}
            \eqqcolon  I_{m,h}^{1,1}+ I_{m,h}^{1,2} \,.
		\end{aligned}
	\end{align} 
	Hence, let us next estimate $I_{m,h}^{1,1}$ and $I_{m,h}^{1,2}$ separately for all $m \in\{ 1,\ldots,M\}$:
		
	\textit{ad $I_{m,h}^{1,1}$.} Using the decomposition \eqref{thm:main.1},  the $\varepsilon$-Young inequality~\eqref{ineq:young} for 
	$\psi =(\varphi_h^\tau)_{\vert\bfD \mathrm{I}_\tau^{0,\mathrm{t}}\bfv\vert}$,~and~\eqref{eq:hammera}, for every $m = 1,\ldots,M$, 
    we find that\footnote{Here, by $\mathbf{I}_d=(\delta_{ij})_{i,j\in \{1,\ldots,d\}}\in \mathbb{R}^{d\times d}$ we denote the identity matrix.}  
    \begin{align}\label{thm:main.13}
		\begin{aligned}
			I_{m,h}^{1,1}
			  &=(\Pi_\tau^{0,\mathrm{t}}(\Pi_h^Qq-\Pi_h^{k-1,\mathrm{x}}q)\mathbf{I}_d,\bfD_{\mathrm{x}}\bfe_h^\tau+(\bfD_{\mathrm{x}}\Pi_h^V \mathrm{I}_\tau^{0,\mathrm{t}}\bfv-\bfD_{\mathrm{x}}\mathrm{I}_\tau^{0,\mathrm{t}}\bfv))_{\smash{Q_T^m}} 
			\\&\leq c_\varepsilon\,\rho_{((\varphi_h^{\tau})_{\vert\bfD_{\mathrm{x}}\mathrm{I}_\tau^{0,\mathrm{t}}\bfv\vert})^*,\smash{Q_T^m}}\big(\Pi_\tau^{0,\mathrm{t}}(\Pi_h^Qq-\Pi_h^{k-1,\mathrm{x}}q)\big)
            \\&\quad+
			\varepsilon\, c\,\|\bfF_h^{\tau}(\cdot,\cdot,\bfD_{\mathrm{x}}\bfv_h^{\tau})-\bfF_h^{\tau}(\cdot,\cdot,\bfD_{\mathrm{x}}\mathrm{I}_\tau^{0,\mathrm{t}}\bfv)\|_{2,\smash{Q_T^m}}^2
            \\&\quad+\varepsilon\,c\,\|\bfF_h^{\tau}(\cdot,\cdot,\bfD_{\mathrm{x}}\mathrm{I}_\tau^{0,\mathrm{t}}\bfv)-\bfF_h^{\tau}(\cdot,\cdot,\bfD_{\mathrm{x}}\Pi_h^V\mathrm{I}_\tau^{0,\mathrm{t}}\bfv)\|_{2,\smash{Q_T^m}}^2\,.
		\end{aligned}
	\end{align}
    Moreover, using the shift change Lemma \ref{lem:shift-change}\eqref{lem:shift-change.3}, for every $m = 1,\ldots,M$, we find that 
	\begin{align}\label{thm:main.13.2}
		\begin{aligned}
			\rho_{((\varphi_h^{\tau})_{\vert\bfD_{\mathrm{x}}\bfv\vert})^*,\smash{Q_T^m}}\big(\Pi_\tau^{0,\mathrm{t}}(\Pi_h^Qq-\Pi_h^{k-1,\mathrm{x}}q)\big)&\leq c\,\rho_{((\varphi_h^{\tau})_{\vert\bfD_{\mathrm{x}}\mathrm{I}_\tau^{0,\mathrm{t}}\bfv\vert})^*,\smash{Q_T^m}}\big(\Pi_\tau^{0,\mathrm{t}}(\Pi_h^Qq-\Pi_h^{k-1,\mathrm{x}}q)\big)
            \\&\quad + c\,\|\bfF_h^\tau(\cdot,\cdot,\bfD_{\mathrm{x}}\mathrm{I}_\tau^{0,\mathrm{t}} \bfv)-\bfF_h^\tau(\cdot,\cdot,\bfD_{\mathrm{x}}\bfv)\|_{2,\smash{Q_T^m}}^2\,.
		\end{aligned}
	\end{align}
	Using Lemma \ref{lem:A-Ah}\eqref{eq:phih-phi} (with $\lambda=h^{\widetilde{\gamma}_{\mathrm{x}}}$, where $\widetilde{\gamma}_{\mathrm{x}}\coloneqq \frac{\alpha_{\mathrm{x}}}{\min\{2,(p^+)'\}}$, $g=h^{-\widetilde{\gamma}_{\mathrm{x}}}(\Pi_h^Q q-\Pi_h^{k-1,\mathrm{x}}q)$,~and~${\bfA=\bfD_{\mathrm{x}}\bfv}$), and Lemma \ref{lem:PiQ}\eqref{lem:PiQ.2} together with Lemma~\ref{lem:poincare_F}\eqref{lem:poincare_F.4}, for every $m = 1,\ldots,M$, we obtain that
	\begin{align}\label{thm:main.14}
		\begin{aligned}
            \rho_{((\varphi_h^{\tau})_{\vert\bfD_{\mathrm{x}}\bfv\vert})^*,\smash{Q_T^m}}\big(\Pi_\tau^{0,\mathrm{t}}(\Pi_h^Qq-\Pi_h^{k-1,\mathrm{x}}q)\big)
			&\leq c\,\rho_{(\varphi_{\vert\bfD_{\mathrm{x}}\bfv\vert})^*,\smash{Q_T^m}}\big(\Pi_\tau^{0,\mathrm{t}}(\Pi_h^Qq-\Pi_h^{k-1,\mathrm{x}}q)\big)
            \\&\quad+ c\,h^{\alpha_{\mathrm{x}}}(\tau^{\alpha_{\mathrm{t}}}+h^{\alpha_{\mathrm{x}}})\,\big(1+{\sup}_{t\in (0,t_m)}{\big\{\rho_{p(t,\cdot)s,\Omega}(\bfD_{\mathrm{x}}\bfv(t))\big\}}\big)
			\\&\quad + c\,h^{\alpha_{\mathrm{x}}}(\tau^{\alpha_{\mathrm{t}}}+h^{\alpha_{\mathrm{x}}})\,\rho_{p'(\cdot,\cdot)s,\smash{Q_T^m}}\big(h^{-\widetilde{\gamma}_{\mathrm{x}}}\Pi_\tau^{0,\mathrm{t}}(\Pi_h^Qq-\Pi_h^{k-1,\mathrm{x}}q)\big)
			\\&\leq c\,\rho_{((\varphi_h^{\tau})_{\vert\bfD_{\mathrm{x}}\bfv\vert})^*,\smash{Q_T^m}}\big(\Pi_\tau^{0,\mathrm{t}}(\Pi_h^Qq-\Pi_h^{k-1,\mathrm{x}}q)\big)
            \\&\quad+ c\,  (\tau^{2\alpha_{\mathrm{t}}}+h^{2\alpha_{\mathrm{x}}})\,\big(1+{\sup}_{t\in (0,t_m)}{\big\{\rho_{p(t,\cdot)s,\Omega}(\bfD_{\mathrm{x}}\bfv(t))\big\}}\big)
			\\&\quad + c\,h^{2\alpha_{\mathrm{x}}}\,\rho_{p'(\cdot,\cdot)s,\smash{Q_T^m}}
			\big(h^{-\widetilde{\gamma}_{\mathrm{x}}}\Pi_\tau^{0,\mathrm{t}}(\Pi_h^Qq-\Pi_h^{k-1,\mathrm{x}}q)\big)
			\\&\leq c\,(\tau^{2\alpha_{\mathrm{t}}}+h^{2\alpha_{\mathrm{x}}})\,\big(1+{\sup}_{t\in (0,t_m)}{\big\{\rho_{p(t,\cdot)s,\Omega}(\bfD_{\mathrm{x}}\bfv(t))\big\}}\big)
            \\&\quad+c\,\tau^{2\beta_{\mathrm{t}}}\,[\bfF(\cdot,\cdot,\bfD_{\mathrm{x}}\bfv)]_{N^{\beta_{\mathrm{t}},2}((0,t_m);L^2(\Omega))}^2
            \\&\quad+c\,h^{2\beta_{\mathrm{x}}}\,[\bfF(\cdot,\cdot,\bfD_{\mathrm{x}}\bfv)]_{L^2((0,t_m);N^{\beta_{\mathrm{x}},2}(\Omega))}^2
            \\&\quad+ c\,\rho_{(\varphi_{\vert \bfD_{\mathrm{x}}\bfv\vert})^*,\smash{Q_T^m}}\big(h^{\gamma_{\mathrm{x}}}\vert\nabla_{\mathrm{x}}^{\gamma_{\mathrm{x}}}q\vert\big)
			\\&\quad + c\,h^{2\alpha_{\mathrm{x}}}\,\rho_{p'(\cdot,\cdot)s,\smash{Q_T^m}}(h^{-\widetilde{\gamma}_{\mathrm{x}}}\Pi_\tau^{0,\mathrm{t}}(\Pi_h^Qq-\Pi_h^{k-1,\mathrm{x}}q))\,.
		\end{aligned}\hspace{-10mm}
	\end{align}
	Therefore, it is left to estimate the last term on the right-hand side of \eqref{thm:main.14}. To this end,~note~first~that   $r^{p'(t,x)s}\leq  
    c\, (1+r^{p'(t_m,\xi_K)\widetilde{s}})$ for all $r\ge 0$, $m=0,\ldots,M$, $K\in \mathcal{T}_h$, and $(t,x)^\top\in I_m\times K$, where $\widetilde{s}>1$ exists a constant with
    $\widetilde{s}\searrow  1$ as $\tau+h\searrow0$. Then, a (local) inverse inequality (\textit{cf}.\ \mbox{\cite[Lem.\ 12.1]{EG21}}),  that $h\sim h_K $ for all $K\in \mathcal{T}_h$, and ${\sum_{i\in \mathbb{N}}{\vert a_i\vert^{\widetilde{s}}}\leq (\sum_{i\in \mathbb{N}}{\vert a_i\vert})^{\widetilde{s}}}$ for all~$(a_i)_{i\in \mathbb{N}}\subseteq\ell^1(\mathbb{N})$,~yield~that
	\begin{align} 
			\rho_{p'(\cdot,\cdot)s,\smash{Q_T}}
            \big(h^{-\widetilde{\gamma}_{\mathrm{x}}}\Pi_\tau^{0,\mathrm{t}}(\Pi_h^Q q\hspace{-0.15em}-\hspace{-0.15em}\Pi_h^{k-1,\mathrm{x}}q)\big)
            &\leq c\,\big(1\hspace{-0.15em}+ \hspace{-0.15em}\rho_{p'_h(\cdot,\cdot)\widetilde{s},\smash{Q_T}}
			\big(h^{-\widetilde{\gamma}_{\mathrm{x}}}\Pi_\tau^{0,\mathrm{t}}(\Pi_h^Q q\hspace{-0.15em}-\hspace{-0.15em}\Pi_h^{k-1,\mathrm{x}}q)\big)\big) \label{thm:main.15}
			\\&\leq  c\,\big(1\hspace{-0.15em}+\hspace{-0.15em} 
			(\tau\hspace{-0.15em}+\hspace{-0.15em}h)^{\smash{(d+1)(1-\widetilde{s})}}\rho_{p'_h(\cdot,\cdot),\smash{Q_T}}
			\big(h^{-\widetilde{\gamma}_{\mathrm{x}}} \Pi_\tau^{0,\mathrm{t}}(\Pi_h^Q q\hspace{-0.15em}-\hspace{-0.15em}\Pi_h^{k-1,\mathrm{x}}q)\big)^{\widetilde{s}}\big)\,.\notag
	\end{align}
    Due to  Lemma \ref{lem:PiQ}\eqref{lem:PiQ.2} (with $\delta=0$, $\eta=h^{-\widetilde{\gamma}_{\mathrm{x}}}q$, $\bfA=\mathbf{0}$, and $n=\min\{2,(p^+)'\}(\gamma_{\mathrm{x}}-\widetilde{\gamma}_{\mathrm{x}})$),~we~have~that\enlargethispage{12.5mm}
    \begin{align*}
        \rho_{p'(\cdot,\cdot),\smash{Q_T}}
			\big(h^{-\widetilde{\gamma}_{\mathrm{x}}} \Pi_\tau^{0,\mathrm{t}}(\Pi_h^Q q-\Pi_h^{k-1,\mathrm{x}}q)\big) &\leq c\,h^{\min\{2,(p^+)'\}(\gamma_{\mathrm{x}}-\widetilde{\gamma}_{\mathrm{x}})} +c\,\rho_{p'(\cdot,\cdot),\smash{Q_T}}
			\big(h^{\gamma_{\mathrm{x}}-\widetilde{\gamma}_{\mathrm{x}}}\vert \nabla^{\gamma_{\mathrm{x}}}_{\mathrm{x}}q\vert\big)
            \\&\leq c\,h^{\min\{2,(p^+)'\}(\gamma_{\mathrm{x}}-\widetilde{\gamma}_{\mathrm{x}})}\big(1+\rho_{p'(\cdot,\cdot),\smash{Q_T^m}}
			(\vert \nabla^{\gamma_{\mathrm{x}}}_{\mathrm{x}}q\vert) \big)\,.
    \end{align*}
	Next, using the norm equivalence $\|\cdot\|_{p'_h(\cdot,\cdot),Q_T}\sim \|\cdot\|_{p'(\cdot,\cdot),Q_T}$ on $\mathbb{P}^{\max\{\ell,k-1\}}(\mathcal{I}_\tau\times\mathcal{T}_h)$ (\textit{cf}.\ \cite[Lem.~4.12]{berselli2023error}), where $\sim$ depends on $k$, $\ell$, $p^-$, $p^+$, $[p]_{\log,Q_T}$, and $\omega_0$, and \cite[Lem.\ A.1]{berselli2023error},~we~find~that 
	\begin{align*}
		\begin{aligned}
			\big\|h^{-\widetilde{\gamma}_{\mathrm{x}}}\Pi_\tau^{0,\mathrm{t}}(\Pi_h^Q q-\Pi_h^{k-1,\mathrm{x}}q)\big\|_{p'_h(\cdot,\cdot),Q_T}
            &\leq  c\, \big\|h^{-\widetilde{\gamma}_{\mathrm{x}}}\Pi_\tau^{0,\mathrm{t}}(\Pi_h^Q q-\Pi_h^{k-1,\mathrm{x}}q)\big\|_{p'(\cdot,\cdot),Q_T} 
			\\[-1.5mm]&\leq c\, h^{\frac{\min\{2,(p^+)'\}}{\max\{2,(p^-)'\}}(\gamma_{\mathrm{x}}-\widetilde{\gamma}_{\mathrm{x}})}\big(1+\rho_{p'(\cdot,\cdot),\smash{Q_T}}
			(\vert \nabla^{\gamma_{\mathrm{x}}}_{\mathrm{x}}q\vert) \big)^{\frac{1}{\max\{2,(p^+)'\}}}
			\,,
		\end{aligned}
	\end{align*}
	which, appealing to \cite[Lem.\ 3.2.5]{dhhr},  implies that 
	\begin{align}\label{thm:main.17}
		\rho_{p'_h(\cdot,\cdot),\smash{Q_T}}
		\big(h^{-\widetilde{\gamma}_{\mathrm{x}}}(\Pi_h^Q q-\Pi_h^{k-1,\mathrm{x}}q)\big) 
		&\leq c\, \smash{h^{\frac{\min\{2,(p^+)'\}}{\max\{2,(p^-)'\}}(\gamma_{\mathrm{x}}-\widetilde{\gamma}_{\mathrm{x}})}}\,. 
	\end{align}
    If we choose $\widetilde{s}\hspace{-0.1em}>\hspace{-0.1em}1$ close to $1$  such that $(d+1)(1-\widetilde{s})+\widetilde{s}\frac{\min\{2,(p^+)'\}}{\max\{2,(p^-)'\}}(\gamma_{\mathrm{x}}-\widetilde{\gamma}_{\mathrm{x}})\hspace{-0.1em}\ge\hspace{-0.1em}  0$,~which~is~possible~as~$\gamma_{\mathrm{x}}\hspace{-0.1em}>\hspace{-0.1em}\widetilde{\gamma}_{\mathrm{x}}$, then from \eqref{thm:main.15} together with \eqref{thm:main.17} in \eqref{thm:main.14}, for every $m=1,\ldots,M$,~we~deduce~that
	\begin{align}\label{thm:main.19}
		\begin{aligned}
            \rho_{((\varphi_h^{\tau})_{\vert\bfD_{\mathrm{x}}\bfv\vert})^*,\smash{Q_T^m}}\big(\Pi_\tau^{0,\mathrm{t}}(\Pi_h^Q q-\Pi_h^{k-1,\mathrm{x}}q)\big)
            &\leq c\,(\tau^{2\alpha_{\mathrm{t}}}+h^{2\alpha_{\mathrm{x}}})\,\big(1+{\sup}_{t\in (0,t_m)}{\big\{\rho_{p(t,\cdot)s,\Omega}(\bfD_{\mathrm{x}}\bfv(t))\big\}}\big)
            \\&\quad+c\,\tau^{2\beta_{\mathrm{t}}}\,[\bfF(\cdot,\cdot,\bfD_{\mathrm{x}}\bfv)]_{N^{\beta_{\mathrm{t}},2}((0,t_m);L^2(\Omega))}^2
            \\&\quad+c\,h^{2\beta_{\mathrm{x}}}\,[\bfF(\cdot,\cdot,\bfD_{\mathrm{x}}\bfv)]_{L^2((0,t_m);N^{\beta_{\mathrm{x}},2}(\Omega))}^2
            \\&\quad+ c\,\rho_{(\varphi_{\vert \bfD_{\mathrm{x}}\bfv\vert})^*,\smash{Q_T^m}}\big(h^{\gamma_{\mathrm{x}}}\vert\nabla_{\mathrm{x}}^{\gamma_{\mathrm{x}}}q\vert\big)\,.
		\end{aligned}
	\end{align}
	Then, using, in turn, \eqref{thm:main.14} together with \eqref{thm:main.19} in  \eqref{thm:main.13}, for every $m=1,\ldots,M$,  we deduce that\vspace{-0.25mm}
	\begin{align}\label{thm:main.20}
		\begin{aligned}
			\vert I_{m,h}^{1,1}\vert 
            &\leq c_\varepsilon\,(\tau^{2\alpha_{\mathrm{t}}}+h^{2\alpha_{\mathrm{x}}})\,\big(1+{\sup}_{t\in (0,t_m)}{\big\{\rho_{p(t,\cdot)s,\Omega}(\bfD_{\mathrm{x}}\bfv(t))\big\}}\big)
            \\[-0.25mm]&\quad+c_\varepsilon\,\tau^{2\beta_{\mathrm{t}}}\,[\bfF(\cdot,\cdot,\bfD_{\mathrm{x}}\bfv)]_{N^{\beta_{\mathrm{t}},2}((0,t_m);L^2(\Omega))}^2
            \\[-0.25mm]&\quad+c_\varepsilon\,h^{2\beta_{\mathrm{x}}}\,[\bfF(\cdot,\cdot,\bfD_{\mathrm{x}}\bfv)]_{L^2((0,t_m);N^{\beta_{\mathrm{x}},2}(\Omega))}^2
            \\[-0.25mm]&\quad+ c_\varepsilon\,\rho_{(\varphi_{\vert \bfD_{\mathrm{x}}\bfv\vert})^*,\smash{Q_T^m}}\big(h^{\gamma_{\mathrm{x}}}\vert\nabla_{\mathrm{x}}^{\gamma_{\mathrm{x}}}q\vert\big)
			\\[-0.25mm]&\quad+\varepsilon\,c\,\|\bfF_h^{\tau}(\cdot,\cdot,\bfD_{\mathrm{x}}\bfv_h^{\tau})-\bfF_h^{\tau}(\cdot,\cdot,\bfD_{\mathrm{x}}\mathrm{I}_\tau^{0,\mathrm{t}}\bfv)\|_{2,\smash{Q_T^m}}^2\,.
		\end{aligned}
	\end{align}
		
	\textit{ad $I_{m,h}^{1,2}$.} Using the discrete integration-by-parts formula \eqref{eq:4.2} and Young's inequality,~we~observe~that\vspace{-0.75mm}
	\begin{align}\label{thm:main.21}
		\begin{aligned} 
			I_{m,h}^{1,2}
            &=(\mathrm{d}_\tau \bfe_h^\tau,  \bfe_h^\tau)_{\smash{Q_T^m}}+(\mathrm{d}_\tau \bfe_h^\tau, \mathrm{I}_\tau^{0,\mathrm{t}}\bfv-
			\Pi_h^V \mathrm{I}_\tau^{0,\mathrm{t}}\bfv)_{\smash{Q_T^m}}
			\\[-0.25mm]&\ge  \tfrac{1}{2}\|\bfv_h^\tau(t_m)-\bfv(t_m)\|_{2,\Omega}^2-\tfrac{1}{2}\|\bfv_h^0-\bfv_0\|_{2,\Omega}^2+ \tfrac{\tau}{2}\|\mathrm{d}_\tau \bfe_h^\tau\|_{2,\smash{Q_T^m}}^2
            \\[-0.25mm]&\quad- \tfrac{\tau}{2}\|\mathrm{d}_\tau \bfe_h^\tau\|_{2,\smash{Q_T^m}}^2- \tfrac{1}{2\tau}\|\mathrm{I}_\tau^{0,\mathrm{t}}\bfv-
			\Pi_h^V \mathrm{I}_\tau^{0,\mathrm{t}}\bfv\|_{2,\smash{Q_T^m}}^2\,.
		\end{aligned}
	\end{align}
    In the case $p^-\ge 2$, using \cite[Lem.\ B.5]{berselli2023error}, for every $m=1,\ldots,M$, we have that\vspace{-0.25mm}
    \begin{align}\label{thm:main.21.1}
        \begin{aligned}
            \tfrac{1}{\tau}\|\mathrm{I}_\tau^{0,\mathrm{t}}\bfv-
			\Pi_h^V \mathrm{I}_\tau^{0,\mathrm{t}}\bfv\|_{2,\smash{Q_T^m}}^2
            &\leq  c\,\smash{\tfrac{h^2}{\tau}}\, \|\bfD_{\mathrm{x}}\mathrm{I}_\tau^{0,\mathrm{t}}\bfv-
			\bfD_{\mathrm{x}}\Pi_h^V \mathrm{I}_\tau^{0,\mathrm{t}}\bfv\|_{2,\smash{Q_T^m}}^2
            \\[-0.25mm]&\leq  c\, \|\bfF_h^\tau(\cdot,\cdot,\bfD_{\mathrm{x}}\mathrm{I}_\tau^{0,\mathrm{t}}\bfv)-\bfF_h^\tau(\cdot,\cdot,\bfD_{\mathrm{x}}\Pi_h^V \mathrm{I}_\tau^{0,\mathrm{t}}\bfv)\|_{2,\smash{Q_T^m}}^2\,.
        \end{aligned}
    \end{align}
    In the case $p^-\le 2$, using the approximation properties of $\Pi_h^V$, for every $m=1,\ldots,M$,  we have that\vspace{-0.25mm}
    \begin{align}\label{thm:main.21.2}
        \begin{aligned}
            \tfrac{1}{\tau}\|\mathrm{I}_\tau^{0,\mathrm{t}}\bfv-
			\Pi_h^V \mathrm{I}_\tau^{0,\mathrm{t}}\bfv\|_{2,\smash{Q_T^m}}^2&
            \leq  c\,\smash{\tfrac{h^{2+2\beta_{\mathrm{x}}}}{\tau}}\, [\mathrm{I}_\tau^{0,\mathrm{t}}\bfv]_{L^2((0,t_m);N^{1+\beta_{\mathrm{x}},2}(\Omega))}^2
            \\[-0.25mm]&\leq c\,\smash{h^{2\beta_{\mathrm{x}}}}\, [ \bfv]_{L^\infty((0,t_m);N^{1+\beta_{\mathrm{x}},2}(\Omega))}^2\,.
        \end{aligned}
    \end{align}
	In summary, combining \eqref{thm:main.20} and \eqref{thm:main.21} together with \eqref{thm:main.21.1} or \eqref{thm:main.21.2} in \eqref{thm:main.12} and~using~the~approxi-mation properties of $\smash{\Pi_h^{V,L^2}}\!\!$ 
    together with $\bfv_0\hspace{-0.15em}\in\hspace{-0.15em} \smash{(N^{\beta_{\mathrm{x}},2}(\Omega))^d}$ (\textit{cf}.\ Remark \ref{rem:reg_initial}), for every $m\hspace{-0.15em}=\hspace{-0.15em}1,\ldots,M$,~we~get\vspace{-0.25mm}\enlargethispage{12mm}
	\begin{align}\label{thm:main.22}
		\begin{aligned} 
			I_{m,h}^1
            &\ge  \tfrac{1}{2}\|\bfv_h^\tau(t_m)-\bfv(t_m)\|_{2,\Omega}^2-\tfrac{1}{2}\|\bfv_h^0-\bfv_0\|_{2,\Omega}^2
			\\[-0.25mm]&\quad-c_\varepsilon\,(\tau^{2\alpha_{\mathrm{t}}}+h^{2\alpha_{\mathrm{x}}})\,\big(1+{\sup}_{t\in (0,t_m)}{\big\{\rho_{p(t,\cdot)s,\Omega}(\bfD_{\mathrm{x}}\bfv(t))\big\}}\big)
			\\[-0.25mm]&\quad-c_\varepsilon\,\tau^{2\beta_{\mathrm{t}}}\,[\bfF(\cdot,\cdot,\bfD_{\mathrm{x}}\bfv)]_{N^{\beta_{\mathrm{t}},2}((0,t_m);L^2(\Omega))}^2
            \\[-0.25mm]&\quad-c_\varepsilon\,h^{2\beta_{\mathrm{x}}}\,\big([\bfF(\cdot,\cdot,\bfD_{\mathrm{x}}\bfv)]_{L^2((0,t_m);N^{\beta_{\mathrm{x}},2}(\Omega))}^2+\epsilon^2(\bfv)\big)
			\\[-0.25mm]&\quad-c_\varepsilon\,\rho_{(\varphi_{\vert \bfD_{\mathrm{x}}\bfv\vert})^*,Q_T^m}\big(h^{\gamma_{\mathrm{x}}}\vert\nabla_{\mathrm{x}}^{\gamma_{\mathrm{x}}}q\vert\big)
			\\[-0.25mm]&\quad-\varepsilon\,c\,\|\bfF_h^{\tau}(\cdot,\cdot,\bfD_{\mathrm{x}}\bfv_h^{\tau})-\bfF_h^{\tau}(\cdot,\cdot,\bfD_{\mathrm{x}}\mathrm{I}_\tau^{0,\mathrm{t}}\bfv)\|_{2,\smash{Q_T^m}}^2\,.
		\end{aligned}
	\end{align}
	Using \eqref{thm:main.9}--\eqref{thm:main.11} and \eqref{thm:main.22} in \eqref{thm:main.2}, for $\varepsilon>0$~sufficiently~small, for every $m=1,\ldots,M$,~we~arrive~at\vspace{-0.25mm}
	\begin{align}\label{thm:main.23}
		\begin{aligned} 
		  \|\bfv_h^\tau(t_m)-\bfv(t_m)\|_{2,\Omega}^2
            &+\|\bfF_h^{\tau}(\cdot,\cdot,\bfD_{\mathrm{x}}\bfv_h^{\tau})-\bfF_h^{\tau}(\cdot,\cdot,\bfD_{\mathrm{x}}\mathrm{I}_\tau^{0,\mathrm{t}}\bfv)\|_{2,\smash{Q_T^m}}^2
			\\[-0.25mm]&\leq 
            c\,(\tau^{2\alpha_{\mathrm{t}}}+h^{2\alpha_{\mathrm{x}}})\,\big(1+{\sup}_{t\in (0,t_m)}{\big\{\rho_{p(t,\cdot)s,\Omega}(\bfD_{\mathrm{x}}\bfv(t))\big\}}\big)
			\\[-0.25mm]&\quad+c\,\tau^{2\beta_{\mathrm{t}}}\,[\bfF(\cdot,\cdot,\bfD_{\mathrm{x}}\bfv)]_{N^{\beta_{\mathrm{t}},2}((0,t_m);L^2(\Omega))}^2
            \\[-0.25mm]&\quad+c\,h^{2\beta_{\mathrm{x}}}\,\big([\bfF(\cdot,\cdot,\bfD_{\mathrm{x}}\bfv)]_{L^2((0,t_m);N^{\beta_{\mathrm{x}},2}(\Omega))}^2+\epsilon^2(\bfv)\big)
			\\[-0.25mm]&\quad+c\,\rho_{(\varphi_{\vert \bfD_{\mathrm{x}}\bfv\vert})^*,Q_T^m}\big(h^{\gamma_{\mathrm{x}}}\vert\nabla_{\mathrm{x}}^{\gamma_{\mathrm{x}}}q\vert\big)\,.
		\end{aligned}
	\end{align}
    Eventually, from  \eqref{thm:main.23}, 
    we conclude that the claimed  
	\textit{a priori} error estimate applies.
\end{proof}
	
\begin{proof}[Proof (of Corollary \ref{cor:main}).]
		
	\textit{ad \eqref{cor:main.1}.}
	Using $(\varphi_a)^*(t,x,hr)\lesssim
	h^{\smash{\min\{2,p'(t,x)\}}} (\varphi_a)^*(t,x,r)$~for~all~${a,r\ge 0}$, $h\in (0,1]$, and $(t,x)^\top\in Q_T$,  we deduce that\vspace{-0.75mm}
	\begin{align*}
		\smash{\rho_{(\varphi_{\vert \bfD_{\mathrm{x}}\bfv\vert})^*,Q_T}\big(h^{\gamma_{\mathrm{x}}}\vert\nabla_{\mathrm{x}}^{\gamma_{\mathrm{x}}}q\vert\big) \lesssim h^{\smash{\min\{2,(p^+)'\}\gamma_{\mathrm{x}}}}\rho_{(\varphi_{\vert \bfD_{\mathrm{x}}\bfv\vert})^*,Q_T}(\vert\nabla_{\mathrm{x}}^{\gamma_{\mathrm{x}}}q\vert) \,,}
	\end{align*}
	so that from Theorem \ref{thm:main}, it follows the claimed \textit{a priori} error estimate \eqref{cor:main.1}.
		
	\textit{ad \eqref{cor:main.2}.} 
	Using $(\varphi_a)^*(t,x,hr)\sim ( (\delta+a)^{p(t,x)-1}+hr\big )^{p'(t,x)-2}h^2 r^2	\le (\delta+a)^{2-p(t,x)} h^2 r^2 $~for~all~${a,r\ge 0}$, $(t,x)^\top\in Q_T$, and $h\in (0,1]$ (\textit{cf}.\ \eqref{rem:phi_a.2}), due to $p^-\ge 2$, we deduce  that\vspace{-0.25mm}
	\begin{align*}
		\smash{\rho_{(\varphi_{\vert \bfD_{\mathrm{x}}\bfv\vert})^*,Q_T}\big(h^{\gamma_{\mathrm{x}}}\vert\nabla_{\mathrm{x}}^{\gamma_{\mathrm{x}}}q\vert\big) \lesssim h^{\smash{2\gamma_{\mathrm{x}}}}\,
		\|(\delta+\vert \bfD_{\mathrm{x}}\bfv\vert )^{2-p(\cdot,\cdot)}\vert \nabla_{\mathrm{x}}^{\gamma_{\mathrm{x}}} q\vert^2\|_{1,Q_T}\,,}
	\end{align*}
	so that from Theorem \ref{thm:main}, it follows the claimed \textit{a priori} error estimate \eqref{cor:main.2}.
\end{proof} 
 
\section{Numerical Experiments}\label{sec:exp} 
 
 \hspace*{5mm}In this section, we review the error decay rates derived in Corollary \ref{cor:main} for optimality.\vspace{-2.5mm}\enlargethispage{11mm} 

 \subsection{Implementation details}\vspace{-0.75mm}

\hspace{5mm}All numerical experiments were carried out using the finite element software \texttt{FEniCS} (version~2019.1.0, \textit{cf}.~\cite{LW10}). To keep the computational costs moderate, we restrict to the case $d=2$.  
	As quadrature points of the one-point~quadrature~rule~used~to discretize the power-law index
	we employ barycenters~of~elements, \textit{i.e.}, we employ $\xi_K \coloneqq  \frac{1}{3}\sum_{\nu \in \mathcal{N}_h\cap K}{\nu}\in K$ for all $K\in  \mathcal{T}_h$, where $\mathcal{N}_h$ denotes the set of vertices~of~$\mathcal{T}_h$. As a conforming, discretely inf-sup stable FE couple, we consider the Taylor--Hood element (\textit{cf}.\ \cite{TH73}), \textit{i.e.}, we choose $V_h\coloneqq (\mathbb{P}^2_c(\mathcal{T}_h))^2$ and $Q_h\coloneqq \mathbb{P}^1_c(\mathcal{T}_h)$.
	
	We approximate the discrete solution $(\bfv_h^\tau,q_h ^\tau)^{\top}\in  \mathbb{P}^0(\mathcal{I}_h^0;\Vo_{h,0})\times \mathbb{P}^0(\mathcal{I}_h;\Qo_h)$ of Problem~(\hyperlink{Qh}{Q$_h^\tau$}) iteratively employing the Newton solver from \mbox{\texttt{PETSc}} (version~3.17.3, \textit{cf}.~\cite{LW10}), with an absolute tolerance of $\tau_{abs}= 1.0\times10^{-8}$ and a relative tolerance of $\tau_{rel}=1.0\times10^{-8}$.~The~linear~system emerging~in~each~Newton iteration is solved using a sparse direct solver from \texttt{MUMPS} (version~5.5.0,~\textit{cf}.~\cite{mumps}).~In~the~\mbox{implementation}, the zero mean condition included in the discrete pressure space $\Qo_h$
is enforced via adding one dimension to the linear system emerging~in~each~Newton iteration.

\subsection{Experimental set-up}\vspace{-0.75mm}

\hspace{5mm}We consider an analogous experimental set-up to \cite[Sec.\ 9.3]{berselli2024convergence}:
\begin{itemize}[noitemsep,topsep=2pt]
    \item[$\bullet$] \textit{Power-law index}. For $p^-\hspace{-0.1em}\in\hspace{-0.1em}  \{1.5,1.75,2.0,2.25,2.5\}$, $p^+\hspace{-0.1em}\coloneqq\hspace{-0.1em} p^-+1$, and ${\alpha\hspace{-0.1em}\coloneqq\hspace{-0.1em} \alpha_{\mathrm{t}}\hspace{-0.1em}=\hspace{-0.1em}\alpha_{\mathrm{x}}\hspace{-0.1em}\in\hspace{-0.1em}  \{1.0,0.75,0.5\}}$, 
    the power-law index $ p\in C^{0,\alpha,\alpha}(\overline{Q_T})$, where $T=0.1$ and $\Omega =(0,1)^2$,  for every $(t,x)^\top\in \overline{Q_T}$, is defined by 
\begin{align*}
	p(t,x)\coloneqq \big(1-\tfrac{\vert x\vert^{\alpha}}{2^{\alpha/2}}\big)\, p^++\tfrac{\vert x\vert^{\alpha}}{2^{\alpha/2}}\,(p^-+t)\,.
\end{align*} 

    \item[$\bullet$] \textit{Manufactured velocity vector field}. For $\beta=\beta_{\mathrm{t}}=\beta_{\mathrm{x}}\in \{1.0,0.75,0.5\}$, the  velocity vector field $\mathbf{v}\colon Q_T\to \mathbb{R}^2$, 
for every $(t,x)^{\top}=(t,x_1,x_2)^\top\in Q_T$, is defined by
\begin{align}\label{solutions.v}
\begin{aligned}
\mathbf{v}(t,x)&\coloneqq0.1\, t\,\vert x\vert^{\rho_{\mathbf{v}}(t,x)} (x_2,-x_1)^\top\,,\\
\text{ where }\quad\rho_{\mathbf{v}}(t,x)&\coloneqq 2\tfrac{\beta-1}{p(t,x)}+\delta\,.
\end{aligned}
\end{align} 
Then, we have that
\begin{align}\label{regurarity_v}
	\begin{aligned}
	\mathbf{F}(\cdot,\cdot,\mathbf{D}_{\mathrm{x}}\mathbf{v})&\in L^2(I;(N^{\beta,2}(\Omega))^{2\times 2})\cap N^{\beta,2}(I;(L^2(\Omega))^{2\times 2})\,,\\ 
	\mathbf{v}&\in L^\infty(I;(N^{\beta,2}(\Omega))^2)\,.
\end{aligned}
\end{align}
Note \hspace{-0.1mm}that \hspace{-0.1mm}for \hspace{-0.1mm}$p^-\hspace{-0.175em}\in\hspace{-0.175em} \{1.5,1.75\}$, \hspace{-0.1mm}we \hspace{-0.1mm}have \hspace{-0.1mm}that \hspace{-0.1mm}$\bfv\hspace{-0.175em}\notin\hspace{-0.175em} L^\infty(I;(N^{1+\beta,2}(\Omega))^2)$, \hspace{-0.1mm}so \hspace{-0.1mm}that~\hspace{-0.1mm}with~\hspace{-0.1mm}the~\hspace{-0.1mm}\mbox{manufactured} velocity vector field defined by \eqref{solutions.v} we are only in the position to review the optimality of the error decay rates derived in Corollary \ref{cor:main} in the case $p^-\ge 2$.

    \item[$\bullet$] \textit{Manufactured kinematic pressure}. For $\gamma=\gamma_{\mathrm{x}}\in \{1.0,0.75,0.5\}$, the kinematic pressure $q \colon Q_T\to \mathbb{R}$, 
for every $(t,x)^{\top}\in Q_T$, is defined by
\begin{align}\label{solutions.q}
\begin{aligned} 
q(t,x)&\coloneqq 100\,t\,(\vert x\vert^{\rho_{q}(t,x)}-\langle\,\vert \cdot\vert^{\rho_{q}(t,\cdot)}\,\rangle_\Omega)\,, \\
\text{ where }\quad\rho_{q}(t,x)&\coloneqq
\begin{cases}
    \gamma-\frac{2}{p'(t,x)} +\delta &\text{(Case \hypertarget{C1}{1})}\,,\\
    \rho_{\mathbf{v}}(t,x)\frac{p(t,x)-2}{2} +\gamma +0.01&\text{(Case \hypertarget{C2}{2})}\,.
\end{cases} 
\end{aligned}
\end{align}
Then, we have that $q\in L^{p'(\cdot,\cdot)}(Q_T)$ and $q\in C^{\gamma,p'(t,\cdot)}(\Omega)$ for a.e.\ $t\in I$ with 
\begin{align}\label{regurarity_q}
    \begin{cases}
        \vert \nabla_{\mathrm{x}}^{\gamma} q\vert  \in L^{p'(\cdot,\cdot)}(Q_T)&\text{(Case \hyperlink{C1}{1})}\,,\\
        (\delta+\vert\bfD_{\mathrm{x}}\bfv\vert)^{2-p(t,x)}\vert \nabla_{\mathrm{x}}^{\gamma} q\vert^2\in L^1(Q_T)&\text{(Case \hyperlink{C2}{2})}\,.
    \end{cases}
\end{align}

\item[$\bullet$] \textit{Discretization of time-space cylinder}.
We construct triangulations $\mathcal T_{h_n}$, $n=0,\ldots,7$, where $h_{n+1}=\frac{h_n}{2}$ for all $n=0,\ldots,7$, using uniform mesh-refinement starting from 
 an initial triangulation~$\mathcal
T_{h_0}$, where $h_0=1$, obtained 
 by subdividing the domain $\Omega=(0,1)^2$ along its diagonals into four triangles with different orientations.
 Moreover, we employ the~time~step~sizes~$\tau_n\coloneqq 2^{-n-2}$,~$n=0,\ldots,7$. 

 \item[$\bullet$] \textit{Discretization \hspace{-0.1mm}of \hspace{-0.1mm}right-hand \hspace{-0.1mm}side}.
\hspace{-0.1mm}As \hspace{-0.1mm}the \hspace{-0.1mm}manufactured \hspace{-0.1mm}solutions \hspace{-0.1mm}\eqref{solutions.v} \hspace{-0.1mm}and \hspace{-0.1mm}\eqref{solutions.q}~\hspace{-0.1mm}are~\hspace{-0.1mm}smooth~\hspace{-0.1mm}in~\hspace{-0.1mm}time and  \hspace{-0.1mm}as 
\hspace{-0.1mm}\eqref{regurarity_v} \hspace{-0.1mm}and 
\hspace{-0.1mm}\eqref{regurarity_q} \hspace{-0.1mm}imply \hspace{-0.1mm}higher \hspace{-0.1mm}integrability \hspace{-0.1mm}of \hspace{-0.1mm}the \hspace{-0.1mm}right-hand \hspace{-0.1mm}side~\hspace{-0.1mm}in~\hspace{-0.1mm}Problem~\hspace{-0.1mm}(\hyperlink{Qh}{Q$_{h_n}^{\tau_n}$}),~\hspace{-0.1mm}we~\hspace{-0.1mm}have~\hspace{-0.1mm}that
 $\partial_{\mathrm{t}}\mathbf{g}\in  (L^{(p^-)'}(Q_T))^2$ and $\mathbf{G},\partial_{\mathrm{t}}\mathbf{G}\in  (L^{p'(\cdot,\cdot)+\eta}(Q_T))^{2\times 2}$ for a certain $\eta>0$. Therefore, for a simple implementation, \hspace{-0.1mm}in \hspace{-0.1mm}Problem~\hspace{-0.1mm}(\hyperlink{Qh}{Q$_{h_n}^{\tau_n}$}), \hspace{-0.1mm}we \hspace{-0.1mm}replace \hspace{-0.1mm}$\mathbf{g}$, \hspace{-0.1mm}$\mathbf{G}$ \hspace{-0.1mm}by \hspace{-0.1mm}$\mathrm{I}_{\tau_n}^{0,\mathrm{t}}\mathbf{g}$, \hspace{-0.1mm}$\mathrm{I}_{\tau_n}^{0,\mathrm{t}}\mathbf{G}$, \hspace{-0.1mm}respectively.~\hspace{-0.1mm}Then,~\hspace{-0.1mm}we~\hspace{-0.1mm}have~\hspace{-0.1mm}that\vspace{-0.5mm}
\begin{align*}
	\rho_{(p^-)',Q_T}(\bfg-\mathrm{I}_{\tau_n}^{0,\mathrm{t}}\bfg) 
		&	\leq  \smash{\tau_n^{(p^-)'}} \, 	\rho_{(p^-)',Q_T}(\partial_{\mathrm{t}}  \boldsymbol{f})\,,\\ 
		\rho_{p'_h(\cdot,\cdot),Q_T}
		(\bfG-\mathrm{I}_{\tau_n}^{0,\mathrm{t}}\bfG) 
		&\leq 
	\smash{\tau_n^{(p^+)'}}\,\big(1 +\rho_{p'(\cdot,\cdot)+\eta,Q_T}
		(\partial_{\mathrm{t}} \bfG)\big) \,. 
\end{align*} 
so that the error decay rates derived in Corollary \ref{cor:main}~are~not~affected.\enlargethispage{10mm}
\end{itemize}

Then, for $\alpha=\beta=\gamma\in \{1.0,0.75,0.5\}$,  we compute $(\bfv_{h_n}^{\tau_n},q_{h_n}^{\tau_n})^\top\in \mathbb{P}^0(\mathcal{I}_{\tau_n}^0;\Vo_{h_n,0})\times \mathbb{P}^0(\mathcal{I}_{\tau_n};\Qo_{h_n,0})$ solving Problem (\hyperlink{Qh}{Q$_{h_n}^{\tau_n}$}) all $n=0,\ldots,7$,
and, for every $n=0,\ldots,7$, the  error quantities
\begin{align}\label{eq:errors}
	\begin{aligned}
		e_{\mathbf{F},n}&\coloneqq \|\mathbf{F}_{h_n}^{\tau_n}(\cdot,\cdot,\mathbf{D}_{\mathrm{x}}\bfv_{h_n}^{\tau_n})-\mathbf{F}_{h_n}^{\tau_n}(\cdot,\cdot,\mathbf{D}_{\mathrm{x}}\mathrm{I}_{\tau_n}^{0,\mathrm{t}}\mathbf{v})\|_{2,Q_T}\,,\\ 
			e_{\mathbf{F}^*,n}&\coloneqq \|(\mathbf{F}_{h_n}^{\tau_n})^*(\cdot,\cdot,\mathbf{S}_{h_n}^{\tau_n}(\cdot,\cdot,\mathbf{D}_{\mathrm{x}}\bfv_{h_n}^{\tau_n}))-(\mathbf{F}_{h_n}^{\tau_n})^*(\cdot,\cdot,\mathbf{S}_{h_n}^{\tau_n}(\cdot,\cdot,\mathbf{D}_{\mathrm{x}}\mathrm{I}_{\tau_n}^{0,\mathrm{t}}\mathbf{v}))\|_{2,Q_T}\,,\\ 
		e_{\varphi^*,n}&\coloneqq  \rho_{((\varphi_{h_n}^{\tau_n})_{\smash{\vert \mathbf{D}_{\mathrm{x}}\mathrm{I}_{\tau_n}^{0,\mathrm{t}}\mathbf{v}\vert}})^*}(q_{h_n}^{\tau_n}-\mathrm{I}_{\tau_n}^{0,\mathrm{t}}q)\,,\\[-0.5mm]
        e_{L^2,n}&\coloneqq \|\bfv_{h_n}^{\tau_n}-\mathrm{I}_{\tau_n}^{0,\mathrm{t}}\mathbf{v}\|_{L^\infty(I;L^2(\Omega))} \,.
	\end{aligned}
\end{align} 

In order to measure convergence rates,  we compute experimental order of convergence~(EOC),~\textit{i.e.}, 
\begin{align} \label{eq:eoc}
	\texttt{EOC}_n(e_n)\coloneqq\frac{\log\big(\frac{e_n}{e_{n-1}}\big)}{\log\big(\frac{h_n+\tau_n}{h_{n-1}+\tau_{n-1}}\big)}\,, \quad n=1,\ldots,7\,,
\end{align}
where, for every $n= 0,\ldots,7$, we denote by $e_n$
either 
$e_{\mathbf{F},n}$, $e_{\mathbf{F}^*,n}$,  $e_{\varphi^*,n}$,  or $e_{L^2,n}$, respectively. 

In view of Corollary \ref{cor:main} and the manufactured regularity properties \eqref{regurarity_v} and \eqref{regurarity_q}, in~the~case~${p^-\hspace{-0.1em}\ge\hspace{-0.1em} 2}$,
we expect the error decay rate $	\texttt{EOC}_n(e_n)=\min\big\{1,\smash{\frac{(p^+)'}{2}}\big\}\alpha$, $n= 1,\ldots,7$, in Case \hyperlink{C1}{1} and $	\texttt{EOC}_n(e_n)=\alpha$, $n= 1,\ldots,7$, in Case \hyperlink{C2}{2} for the error quantities $e_n\in \{e_{\mathbf{F},n},e_{L^2,n}\}$,~${n=0,\ldots,7}$,~(\textit{cf}.~\eqref{eq:errors}).~Even~if~not~covered in Corollary \ref{cor:main}, for the error quantities $e_n\in \{e_{\mathbf{F}^*,n},e_{\varphi^*,n}\}$,~${n=0,\ldots,7}$,~(\textit{cf}.~\eqref{eq:errors}), we expect the same error decay rates.

For different values of $p^- \in \{1.5, 1.75,2.0,2.25, 2.5\}$, fractional exponents $\alpha =\beta=\gamma\in \{1.0,0.75,0.5\}$, and   triangulations $\mathcal{T}_{h_n}$,
$n = 0, \ldots , 7$, obtained as described above, the EOCs (\textit{cf}.\ \eqref{eq:eoc}) with respect to the error quantities \eqref{eq:errors} are
computed and presented in the~Tables~\mbox{\ref{tab:1}--\ref{tab:4}}: in the Tables~\mbox{\ref{tab:1}--\ref{tab:3}},~for~$e_n\in \{e_{\mathbf{F},n}, e_{\mathbf{F}^*,n},e_{\varphi^*,n}\}$, $n=0,\ldots,7$,   we report
the expected error decay rate of $\texttt{EOC}_n(e_n) \approx \min\big\{1,\smash{\frac{(p^+)'}{2}}\big\}\alpha $, ${n=1,\ldots,7}$, in Case \hyperlink{C1}{1} and $\texttt{EOC}_n(e_n) \approx \alpha $, ${n=1,\ldots,7}$, in Case \hyperlink{C2}{2};  in Table \ref{tab:4}, for   $e_{L^2,n}$, $n=0,\ldots,7$, we report the increased error decay~rate~${\texttt{EOC}_n(e_{L^2,n}) \gtrapprox  1}$,~${n=1,\ldots,7}$. In~summary, in the case $p^-\ge 2$, this confirms the optimality of the derived error decay rates in Corollary \ref{cor:main}.\vspace{-0.5mm}

\begin{table}[H]
	\setlength\tabcolsep{1pt}
	\centering
	\begin{tabular}{c|c|c|c|c|c|c|c|c|c|c|c|c|c|c|c|} \toprule 
		\multicolumn{1}{|c||}{\cellcolor{lightgray}$\alpha$}	
		& \multicolumn{5}{c||}{\cellcolor{lightgray}$1.0$}   & \multicolumn{5}{c||}{\cellcolor{lightgray}$0.75$} & \multicolumn{5}{c|}{\cellcolor{lightgray}$0.50$} \\ 
		\toprule\toprule
		
		\multicolumn{1}{|c||}{\cellcolor{lightgray}\diagbox[height=1.1\line,width=0.15\dimexpr\linewidth]{\vspace{-0.65mm}$n$}{\\[-4.5mm] $p^-$}}
		& \cellcolor{lightgray}1.5 & \cellcolor{lightgray}1.75  & \cellcolor{lightgray}2.0 & \cellcolor{lightgray}2.25 & \multicolumn{1}{c||}{\cellcolor{lightgray}2.5}
		& \cellcolor{lightgray}1.5 & \cellcolor{lightgray}1.75  & \cellcolor{lightgray}2.0 & \cellcolor{lightgray}2.25 & \multicolumn{1}{c||}{\cellcolor{lightgray}2.5}
		& \cellcolor{lightgray}1.5 & \cellcolor{lightgray}1.75  & \cellcolor{lightgray}2.0 & \cellcolor{lightgray}2.25 & \multicolumn{1}{c|}{\cellcolor{lightgray}2.5} 
        \\ \toprule\toprule 
        \multicolumn{16}{|c|}{\cellcolor{lightgray}Case \hyperlink{C1}{1}}\\ \toprule\toprule 
		\multicolumn{1}{|c||}{\cellcolor{lightgray}$4$} & 0.807 & 0.765 & 0.734 & 0.709 & \multicolumn{1}{c||}{0.689} & 0.614 & 0.587 & 0.565 & 0.547 & \multicolumn{1}{c||}{0.532} & 0.410 & 0.397 & 0.384 & 0.373 & 0.363 \\ \hline
		\multicolumn{1}{|c||}{\cellcolor{lightgray}$5$} & 0.830 & 0.782 & 0.747 & 0.720 & \multicolumn{1}{c||}{0.699} & 0.627 & 0.594 & 0.568 & 0.548 & \multicolumn{1}{c||}{0.532} & 0.422 & 0.401 & 0.385 & 0.372 & 0.361 \\ \hline
		\multicolumn{1}{|c||}{\cellcolor{lightgray}$6$} & 0.836 & 0.787 & 0.751 & 0.723 & \multicolumn{1}{c||}{0.701} & 0.629 & 0.594 & 0.568 & 0.547 & \multicolumn{1}{c||}{0.530} & 0.424 & 0.401 & 0.384 & 0.370 & 0.358 \\ \hline
		\multicolumn{1}{|c||}{\cellcolor{lightgray}$7$} & 0.837 & 0.788 & 0.751 & 0.723 & \multicolumn{1}{c||}{0.701} & 0.629 & 0.594 & 0.566 & 0.545 & \multicolumn{1}{c||}{0.528} & 0.424 & 0.400 & 0.382 & 0.368 & 0.356 \\ \hline\hline  
		\multicolumn{1}{|c||}{\cellcolor{lightgray}\small $\min\{1,\hspace{-1mm}\frac{(p^+)'}{2}\}\alpha$} & \cellcolor{green!25!white}0.833 & \cellcolor{green!25!white}0.786 & \cellcolor{green!25!white}0.750 & \cellcolor{green!25!white}0.722 & \multicolumn{1}{c||}{\cellcolor{green!25!white}0.700} & \cellcolor{green!25!white}0.625 &\cellcolor{green!25!white} 0.589 & \cellcolor{green!25!white}0.563 & \cellcolor{green!25!white}0.541 & \multicolumn{1}{c||}{\cellcolor{green!25!white}0.525} & \cellcolor{green!25!white}0.417 & \cellcolor{green!25!white}0.393 & \cellcolor{green!25!white}0.375 & \cellcolor{green!25!white}0.361 & \cellcolor{green!25!white}0.350 \\ \toprule\toprule
		\multicolumn{16}{|c|}{\cellcolor{lightgray}Case \hyperlink{C2}{2}}\\ \toprule\toprule 
		\multicolumn{1}{|c||}{\cellcolor{lightgray}$4$} &  ---  &  ---  & 0.784 & 0.725 & \multicolumn{1}{c||}{0.659} &  ---  &  ---  & 0.731 & 0.732 & \multicolumn{1}{c||}{0.732} &  ---  &  ---  & 0.511 & 0.513 & 0.513 \\ \hline
		\multicolumn{1}{|c||}{\cellcolor{lightgray}$5$} &  ---  &  ---  & 0.912 & 0.892 & \multicolumn{1}{c||}{0.864} &  ---  &  ---  & 0.747 & 0.748 & \multicolumn{1}{c||}{0.747} &  ---  &  ---  & 0.515 & 0.515 & 0.514 \\ \hline
		\multicolumn{1}{|c||}{\cellcolor{lightgray}$6$} &  ---  &  ---  & 0.961 & 0.954 & \multicolumn{1}{c||}{0.944} &  ---  &  ---  & 0.753 & 0.753 & \multicolumn{1}{c||}{0.753} &  ---  &  ---  & 0.515 & 0.514 & 0.513 \\ \hline
		\multicolumn{1}{|c||}{\cellcolor{lightgray}$7$} &  ---  &  ---  & 0.983 & 0.980 & \multicolumn{1}{c||}{0.975} &  ---  &  ---  & 0.755 & 0.755 & \multicolumn{1}{c||}{0.755} &  ---  &  ---  & 0.514 & 0.513 & 0.512 \\ \toprule\toprule
		\multicolumn{1}{|c||}{\cellcolor{lightgray}\small $\alpha$} &  ---  &  ---  & \cellcolor{green!25!white}1.000 & \cellcolor{green!25!white}1.000 & \multicolumn{1}{c||}{\cellcolor{green!25!white}1.000} &  ---  &  ---  & \cellcolor{green!25!white}0.750 & \cellcolor{green!25!white}0.750 & \multicolumn{1}{c||}{\cellcolor{green!25!white}0.750} &  ---  &  ---  & \cellcolor{green!25!white}0.500 & \cellcolor{green!25!white}0.500 & \cellcolor{green!25!white}0.500 \\ \toprule
	\end{tabular}\vspace{-2mm}
	\caption{Experimental order of convergence: $\texttt{EOC}_n(e_{\mathbf{F},n})$,~${n=4,\dots,7}$.}
	\label{tab:1} 
	\end{table}

	\begin{table}[H]
		\setlength\tabcolsep{1pt}
		\centering
		\begin{tabular}{c|c|c|c|c|c|c|c|c|c|c|c|c|c|c|c|} \toprule 
			\multicolumn{1}{|c||}{\cellcolor{lightgray}$\alpha$}	
			& \multicolumn{5}{c||}{\cellcolor{lightgray}$1.0$}   & \multicolumn{5}{c||}{\cellcolor{lightgray}$0.75$} & \multicolumn{5}{c|}{\cellcolor{lightgray}$0.50$} \\ 
			\toprule\toprule
			
			\multicolumn{1}{|c||}{\cellcolor{lightgray}\diagbox[height=1.1\line,width=0.15\dimexpr\linewidth]{\vspace{-0.65mm}$n$}{\\[-4.5mm] $p^-$}}
			& \cellcolor{lightgray}1.5 & \cellcolor{lightgray}1.75  & \cellcolor{lightgray}2.0 & \cellcolor{lightgray}2.25 & \multicolumn{1}{c||}{\cellcolor{lightgray}2.5}
			& \cellcolor{lightgray}1.5 & \cellcolor{lightgray}1.75  & \cellcolor{lightgray}2.0 & \cellcolor{lightgray}2.25 & \multicolumn{1}{c||}{\cellcolor{lightgray}2.5}
			& \cellcolor{lightgray}1.5 & \cellcolor{lightgray}1.75  & \cellcolor{lightgray}2.0 & \cellcolor{lightgray}2.25 & \multicolumn{1}{c|}{\cellcolor{lightgray}2.5} 
			\\ \toprule\toprule 
			\multicolumn{16}{|c|}{\cellcolor{lightgray}Case \hyperlink{C1}{1}}\\ \toprule\toprule 
			\multicolumn{1}{|c||}{\cellcolor{lightgray}$4$} & 0.795 & 0.757 & 0.728 & 0.704 & \multicolumn{1}{c||}{0.685} & 0.597 & 0.575 & 0.556 & 0.540 & \multicolumn{1}{c||}{0.526} & 0.385 & 0.380 & 0.372 & 0.363 & 0.356 \\ \hline
			\multicolumn{1}{|c||}{\cellcolor{lightgray}$5$} & 0.824 & 0.778 & 0.744 & 0.718 & \multicolumn{1}{c||}{0.697} & 0.617 & 0.587 & 0.563 & 0.544 & \multicolumn{1}{c||}{0.529} & 0.405 & 0.390 & 0.377 & 0.365 & 0.355 \\ \hline
			\multicolumn{1}{|c||}{\cellcolor{lightgray}$6$} & 0.833 & 0.784 & 0.749 & 0.722 & \multicolumn{1}{c||}{0.700} & 0.624 & 0.590 & 0.565 & 0.544 & \multicolumn{1}{c||}{0.528} & 0.413 & 0.394 & 0.378 & 0.365 & 0.355 \\ \hline
			\multicolumn{1}{|c||}{\cellcolor{lightgray}$7$} & 0.836 & 0.786 & 0.750 & 0.723 & \multicolumn{1}{c||}{0.700} & 0.626 & 0.591 & 0.565 & 0.544 & \multicolumn{1}{c||}{0.527} & 0.417 & 0.395 & 0.378 & 0.365 & 0.354 \\ \hline\hline  
			\multicolumn{1}{|c||}{\cellcolor{lightgray}\small $\min\{1,\hspace{-1mm}\frac{(p^+)'}{2}\}\alpha$} & \cellcolor{green!25!white}0.833 & \cellcolor{green!25!white}0.786 & \cellcolor{green!25!white}0.750 & \cellcolor{green!25!white}0.722 & \multicolumn{1}{c||}{\cellcolor{green!25!white}0.700} & \cellcolor{green!25!white}0.625 &\cellcolor{green!25!white} 0.589 & \cellcolor{green!25!white}0.563 & \cellcolor{green!25!white}0.541 & \multicolumn{1}{c||}{\cellcolor{green!25!white}0.525} & \cellcolor{green!25!white}0.417 & \cellcolor{green!25!white}0.393 & \cellcolor{green!25!white}0.375 & \cellcolor{green!25!white}0.361 & \cellcolor{green!25!white}0.350 \\ \toprule\toprule
			\multicolumn{1}{|c||}{\cellcolor{lightgray}$4$} &  ---  &  ---  & 0.777 & 0.721 & \multicolumn{1}{c||}{0.656} &  ---  &  ---  & 0.722 & 0.725 & \multicolumn{1}{c||}{0.726} &  ---  &  ---  & 0.499 & 0.503 & 0.506 \\ \hline
			\multicolumn{1}{|c||}{\cellcolor{lightgray}$5$} &  ---  &  ---  & 0.907 & 0.889 & \multicolumn{1}{c||}{0.862} &  ---  &  ---  & 0.742 & 0.743 & \multicolumn{1}{c||}{0.744} &  ---  &  ---  & 0.506 & 0.508 & 0.509 \\ \hline
			\multicolumn{1}{|c||}{\cellcolor{lightgray}$6$} &  ---  &  ---  & 0.959 & 0.952 & \multicolumn{1}{c||}{0.942} &  ---  &  ---  & 0.750 & 0.750 & \multicolumn{1}{c||}{0.750} &  ---  &  ---  & 0.509 & 0.509 & 0.510 \\ \hline
			\multicolumn{1}{|c||}{\cellcolor{lightgray}$7$} &  ---  &  ---  & 0.981 & 0.979 & \multicolumn{1}{c||}{0.974} &  ---  &  ---  & 0.753 & 0.753 & \multicolumn{1}{c||}{0.753} &  ---  &  ---  & 0.510 & 0.510 & 0.510 \\ \toprule\toprule
			\multicolumn{1}{|c||}{\cellcolor{lightgray}\small $\alpha$} &  ---  &  ---  & \cellcolor{green!25!white}1.000 & \cellcolor{green!25!white}1.000 & \multicolumn{1}{c||}{\cellcolor{green!25!white}1.000} &  ---  &  ---  & \cellcolor{green!25!white}0.750 & \cellcolor{green!25!white}0.750 & \multicolumn{1}{c||}{\cellcolor{green!25!white}0.750} &  ---  &  ---  & \cellcolor{green!25!white}0.500 & \cellcolor{green!25!white}0.500 & \cellcolor{green!25!white}0.500 \\ \toprule
		\end{tabular}\vspace{-2mm}
		\caption{Experimental order of convergence: $\texttt{EOC}_n(e_{\mathbf{F}^*,n})$,~${n=4,\dots,7}$.}\vspace{2mm}
		\label{tab:2} 

			\setlength\tabcolsep{1pt}
			\centering
			\begin{tabular}{c|c|c|c|c|c|c|c|c|c|c|c|c|c|c|c|} \toprule 
				\multicolumn{1}{|c||}{\cellcolor{lightgray}$\alpha$}	
				& \multicolumn{5}{c||}{\cellcolor{lightgray}$1.0$}   & \multicolumn{5}{c||}{\cellcolor{lightgray}$0.75$} & \multicolumn{5}{c|}{\cellcolor{lightgray}$0.50$} \\ 
				\toprule\toprule
				
				\multicolumn{1}{|c||}{\cellcolor{lightgray}\diagbox[height=1.1\line,width=0.15\dimexpr\linewidth]{\vspace{-0.65mm}$n$}{\\[-4.5mm] $p^-$}}
				& \cellcolor{lightgray}1.5 & \cellcolor{lightgray}1.75  & \cellcolor{lightgray}2.0 & \cellcolor{lightgray}2.25 & \multicolumn{1}{c||}{\cellcolor{lightgray}2.5}
				& \cellcolor{lightgray}1.5 & \cellcolor{lightgray}1.75  & \cellcolor{lightgray}2.0 & \cellcolor{lightgray}2.25 & \multicolumn{1}{c||}{\cellcolor{lightgray}2.5}
				& \cellcolor{lightgray}1.5 & \cellcolor{lightgray}1.75  & \cellcolor{lightgray}2.0 & \cellcolor{lightgray}2.25 & \multicolumn{1}{c|}{\cellcolor{lightgray}2.5} 
				\\ \toprule\toprule 
				\multicolumn{16}{|c|}{\cellcolor{lightgray}Case \hyperlink{C1}{1}}\\ \toprule\toprule 
				\multicolumn{1}{|c||}{\cellcolor{lightgray}$4$} & 0.858 & 0.803 & 0.764 & 0.735 & \multicolumn{1}{c||}{0.711} & 0.656 & 0.615 & 0.585 & 0.562 & \multicolumn{1}{c||}{0.543} & 0.462 & 0.429 & 0.405 & 0.387 & 0.373 \\ \hline
				\multicolumn{1}{|c||}{\cellcolor{lightgray}$5$} & 0.849 & 0.796 & 0.758 & 0.729 & \multicolumn{1}{c||}{0.706} & 0.645 & 0.606 & 0.577 & 0.554 & \multicolumn{1}{c||}{0.536} & 0.448 & 0.418 & 0.395 & 0.379 & 0.365 \\ \hline
				\multicolumn{1}{|c||}{\cellcolor{lightgray}$6$} & 0.844 & 0.792 & 0.755 & 0.726 & \multicolumn{1}{c||}{0.704} & 0.639 & 0.600 & 0.572 & 0.549 & \multicolumn{1}{c||}{0.532} & 0.439 & 0.411 & 0.390 & 0.373 & 0.360 \\ \hline
				\multicolumn{1}{|c||}{\cellcolor{lightgray}$7$} & 0.840 & 0.790 & 0.753 & 0.725 & \multicolumn{1}{c||}{0.702} & 0.634 & 0.597 & 0.568 & 0.547 & \multicolumn{1}{c||}{0.529} & 0.434 & 0.406 & 0.386 & 0.370 & 0.358 \\ \hline\hline  
				\multicolumn{1}{|c||}{\cellcolor{lightgray}\small $\min\{1,\hspace{-1mm}\frac{(p^+)'}{2}\}\alpha$} & \cellcolor{green!25!white}0.833 & \cellcolor{green!25!white}0.786 & \cellcolor{green!25!white}0.750 & \cellcolor{green!25!white}0.722 & \multicolumn{1}{c||}{\cellcolor{green!25!white}0.700} & \cellcolor{green!25!white}0.625 &\cellcolor{green!25!white} 0.589 & \cellcolor{green!25!white}0.563 & \cellcolor{green!25!white}0.541 & \multicolumn{1}{c||}{\cellcolor{green!25!white}0.525} & \cellcolor{green!25!white}0.417 & \cellcolor{green!25!white}0.393 & \cellcolor{green!25!white}0.375 & \cellcolor{green!25!white}0.361 & \cellcolor{green!25!white}0.350 \\ \toprule\toprule
				\multicolumn{1}{|c||}{\cellcolor{lightgray}$4$} &  ---  &  ---  & 0.961 & 0.969 & \multicolumn{1}{c||}{0.978} &  ---  &  ---  & 0.766 & 0.763 & \multicolumn{1}{c||}{0.762} &  ---  &  ---  & 0.537 & 0.532 & 0.528 \\ \hline
				\multicolumn{1}{|c||}{\cellcolor{lightgray}$5$} &  ---  &  ---  & 0.976 & 0.977 & \multicolumn{1}{c||}{0.979} &  ---  &  ---  & 0.762 & 0.760 & \multicolumn{1}{c||}{0.759} &  ---  &  ---  & 0.527 & 0.523 & 0.521 \\ \hline
				\multicolumn{1}{|c||}{\cellcolor{lightgray}$6$} &  ---  &  ---  & 0.987 & 0.987 & \multicolumn{1}{c||}{0.986} &  ---  &  ---  & 0.760 & 0.759 & \multicolumn{1}{c||}{0.758} &  ---  &  ---  & 0.522 & 0.519 & 0.517 \\ \hline
				\multicolumn{1}{|c||}{\cellcolor{lightgray}$7$} &  ---  &  ---  & 0.995 & 0.994 & \multicolumn{1}{c||}{0.993} &  ---  &  ---  & 0.759 & 0.758 & \multicolumn{1}{c||}{0.758} &  ---  &  ---  & 0.518 & 0.516 & 0.514 \\ \toprule\toprule
				\multicolumn{1}{|c||}{\cellcolor{lightgray}\small $\alpha$} &  ---  &  ---  & \cellcolor{green!25!white}1.000 & \cellcolor{green!25!white}1.000 & \multicolumn{1}{c||}{\cellcolor{green!25!white}1.000} &  ---  &  ---  & \cellcolor{green!25!white}0.750 & \cellcolor{green!25!white}0.750 & \multicolumn{1}{c||}{\cellcolor{green!25!white}0.750} &  ---  &  ---  & \cellcolor{green!25!white}0.500 & \cellcolor{green!25!white}0.500 & \cellcolor{green!25!white}0.500 \\ \toprule
			\end{tabular}\vspace{-2mm}
			\caption{Experimental order of convergence: $\texttt{EOC}_n(e_{\varphi^*,n})$,~${n=4,\dots,7}$.}\vspace{2mm}
			\label{tab:3} 

				\setlength\tabcolsep{1pt}
				\centering
				\begin{tabular}{c|c|c|c|c|c|c|c|c|c|c|c|c|c|c|c|} \toprule 
					\multicolumn{1}{|c||}{\cellcolor{lightgray}$\alpha$}	
					& \multicolumn{5}{c||}{\cellcolor{lightgray}$1.0$}   & \multicolumn{5}{c||}{\cellcolor{lightgray}$0.75$} & \multicolumn{5}{c|}{\cellcolor{lightgray}$0.50$} \\ 
					\toprule\toprule
					
					\multicolumn{1}{|c||}{\cellcolor{lightgray}\diagbox[height=1.1\line,width=0.15\dimexpr\linewidth]{\vspace{-0.65mm}$n$}{\\[-4.5mm] $p^-$}}
					& \cellcolor{lightgray}1.5 & \cellcolor{lightgray}1.75  & \cellcolor{lightgray}2.0 & \cellcolor{lightgray}2.25 & \multicolumn{1}{c||}{\cellcolor{lightgray}2.5}
					& \cellcolor{lightgray}1.5 & \cellcolor{lightgray}1.75  & \cellcolor{lightgray}2.0 & \cellcolor{lightgray}2.25 & \multicolumn{1}{c||}{\cellcolor{lightgray}2.5}
					& \cellcolor{lightgray}1.5 & \cellcolor{lightgray}1.75  & \cellcolor{lightgray}2.0 & \cellcolor{lightgray}2.25 & \multicolumn{1}{c|}{\cellcolor{lightgray}2.5} 
					\\ \toprule\toprule 
					\multicolumn{16}{|c|}{\cellcolor{lightgray}Case \hyperlink{C1}{1}}\\ \toprule\toprule 
					\multicolumn{1}{|c||}{\cellcolor{lightgray}$4$} & 1.754 & 1.733 & 1.713 & 1.691 & \multicolumn{1}{c||}{1.666} & 1.660 & 1.662 & 1.659 & 1.653 & \multicolumn{1}{c||}{1.643} & 1.548 & 1.571 & 1.586 & 1.596 & 1.601 \\ \hline
					\multicolumn{1}{|c||}{\cellcolor{lightgray}$5$} & 1.805 & 1.775 & 1.750 & 1.725 & \multicolumn{1}{c||}{1.699} & 1.683 & 1.679 & 1.676 & 1.670 & \multicolumn{1}{c||}{1.660} & 1.562 & 1.580 & 1.594 & 1.604 & 1.610 \\ \hline
					\multicolumn{1}{|c||}{\cellcolor{lightgray}$6$} & 1.831 & 1.797 & 1.769 & 1.742 & \multicolumn{1}{c||}{1.714} & 1.692 & 1.687 & 1.684 & 1.678 & \multicolumn{1}{c||}{1.668} & 1.563 & 1.580 & 1.595 & 1.606 & 1.613 \\ \hline
					\multicolumn{1}{|c||}{\cellcolor{lightgray}$7$} & 1.846 & 1.811 & 1.782 & 1.754 & \multicolumn{1}{c||}{1.723} & 1.697 & 1.692 & 1.689 & 1.683 & \multicolumn{1}{c||}{1.673} & 1.561 & 1.579 & 1.594 & 1.606 & 1.614 \\ \hline\hline  
					\multicolumn{1}{|c||}{\cellcolor{lightgray}\small $\min\{1,\hspace{-1mm}\frac{(p^+)'}{2}\}\alpha$} & \cellcolor{red!25!white}0.833 & \cellcolor{red!25!white}0.786 & \cellcolor{red!25!white}0.750 & \cellcolor{red!25!white}0.722 & \multicolumn{1}{c||}{\cellcolor{red!25!white}0.700} & \cellcolor{red!25!white}0.625 & \cellcolor{red!25!white}0.589 & \cellcolor{red!25!white}0.563 & \cellcolor{red!25!white}0.541 & \multicolumn{1}{c||}{\cellcolor{red!25!white}0.525} & \cellcolor{red!25!white}0.417 & \cellcolor{red!25!white}0.393 & \cellcolor{red!25!white}0.375 & \cellcolor{red!25!white}0.361 & \cellcolor{red!25!white}0.350 \\ \toprule\toprule
					\multicolumn{16}{|c|}{\cellcolor{lightgray}Case \hyperlink{C2}{2}}\\ \toprule\toprule 
					\multicolumn{1}{|c||}{\cellcolor{lightgray}$4$} &  ---  &  ---  & 1.642 & 1.578 & \multicolumn{1}{c||}{1.484} &  ---  &  ---  & 1.715 & 1.702 & \multicolumn{1}{c||}{1.680} &  ---  &  ---  & 1.645 & 1.652 & 1.650 \\ \hline
					\multicolumn{1}{|c||}{\cellcolor{lightgray}$5$} &  ---  &  ---  & 1.763 & 1.721 & \multicolumn{1}{c||}{1.664} &  ---  &  ---  & 1.750 & 1.737 & \multicolumn{1}{c||}{1.716} &  ---  &  ---  & 1.658 & 1.666 & 1.665 \\ \hline
					\multicolumn{1}{|c||}{\cellcolor{lightgray}$6$} &  ---  &  ---  & 1.828 & 1.787 & \multicolumn{1}{c||}{1.740} &  ---  &  ---  & 1.769 & 1.755 & \multicolumn{1}{c||}{1.732} &  ---  &  ---  & 1.664 & 1.672 & 1.672 \\ \hline
					\multicolumn{1}{|c||}{\cellcolor{lightgray}$7$} &  ---  &  ---  & 1.863 & 1.819 & \multicolumn{1}{c||}{1.770} &  ---  &  ---  & 1.781 & 1.766 & \multicolumn{1}{c||}{1.742} &  ---  &  ---  & 1.666 & 1.674 & 1.674 \\ \toprule\toprule
					\multicolumn{1}{|c||}{\cellcolor{lightgray}\small $\alpha$} &  ---  &  ---  & \cellcolor{red!25!white}1.000 & \cellcolor{red!25!white}1.000 & \multicolumn{1}{c||}{\cellcolor{red!25!white}1.000} &  ---  &  ---  & \cellcolor{red!25!white}0.750 & \cellcolor{red!25!white}0.750 & \multicolumn{1}{c||}{\cellcolor{red!25!white}0.750} &  ---  &  ---  & \cellcolor{red!25!white}0.500 & \cellcolor{red!25!white}0.500 & \cellcolor{red!25!white}0.500 \\ \toprule
				\end{tabular}\vspace{-2mm}
				\caption{Experimental order of convergence: $\texttt{EOC}_n(e_{L^2,n})$,~${n=4,\dots,7}$.}
				\label{tab:4} 
				\end{table}

 \appendix

 \section{Outlook}

 \hspace{5mm}In \hspace{-0.1mm}the \hspace{-0.1mm}present \hspace{-0.1mm}paper, \hspace{-0.1mm}we \hspace{-0.1mm}examined \hspace{-0.1mm}a \hspace{-0.1mm}fully-discrete \hspace{-0.1mm}FE \hspace{-0.1mm}approximation \hspace{-0.1mm}of \hspace{-0.1mm}the \hspace{-0.1mm}unsteady~\hspace{-0.1mm}\mbox{$p(\cdot,\cdot)$-Stokes}~\hspace{-0.1mm}equa-tions~\hspace{-0.1mm}\eqref{eq:ptxStokes}, \hspace{-0.1mm}employing \hspace{-0.1mm}a \hspace{-0.1mm}backward \hspace{-0.1mm}Euler \hspace{-0.1mm}step \hspace{-0.1mm}in \hspace{-0.1mm}time \hspace{-0.1mm}and \hspace{-0.1mm}conforming, \hspace{-0.1mm}discretely \hspace{-0.1mm}inf-sup~\hspace{-0.1mm}\mbox{stable}~\hspace{-0.1mm}FEs~\hspace{-0.1mm}in~\hspace{-0.1mm}space, for \textit{a priori} error estimates. More precisely, we derived error decay rates for the velocity vector field imposing only fractional regularity assumptions on the velocity vector field and the kinematic pressure.\linebreak In the case $p^-\hspace{-0.15em}\ge\hspace{-0.15em} 2$, we confirmed the optimality of the derived error~decay~rates~via~numerical~\mbox{experiments}. In the case $p^-\hspace{-0.15em}\leq\hspace{-0.15em} 2$, however, the fractional regularity assumption $\mathbf{v}\in L^\infty(I;(N^{1+\beta_{\mathrm{x}},2}(\Omega))^d)$, $\beta_{\mathrm{x}}\in (0,1]$, turns out to be too restrictive. In fact, compared to \cite{ptx_lap}, this additional fractional regularity assumption is merely necessary because, owing to the discrete incompressibility constraint in Problem (\hyperlink{Qh}{Q$_h^\tau$}), in the \textit{a priori} error analysis  (more precisely, in the estimation of the term $\smash{I^{1,2}_{m,h}}$ in~the~proof~of~Theorem~\ref{thm:main}), we need to work with the only locally $W^{1,1}$-stable FE projection operator $\smash{\Pi_h^V}$, while  a locally $L^1$-stable projection operator could be employed in \cite{ptx_lap}. If the  FE projection~operator~$\Pi_h^V$~would~be~locally~\mbox{$L^1$-stable}, then in \eqref{thm:main.21.2} (\textit{i.e.}, the sole point where this additional fractional regularity assumption~comes~into~play),  we could instead estimate as follows:
\begin{align*}
        \begin{aligned}
            \tfrac{1}{\tau}\|\mathrm{I}_\tau^{0,\mathrm{t}}\bfv-
			\Pi_h^V \mathrm{I}_\tau^{0,\mathrm{t}}\bfv\|_{2,\smash{Q_T^m}}^2
            &
            \leq  c\,\smash{\tfrac{h^{2\beta_{\mathrm{x}}}}{\tau}}\, [\mathrm{I}_\tau^{0,\mathrm{t}}\bfv]_{L^2((0,t_m);N^{\beta_{\mathrm{x}},2}(\Omega))}^2
            \\&
            \leq c\,\smash{\tfrac{h^{2\beta_{\mathrm{x}}}}{\tau}}\, [ \bfv]_{L^\infty((0,t_m);N^{\beta_{\mathrm{x}},2}(\Omega))}^2\,,
        \end{aligned}
    \end{align*}
    so that, similar to \cite{ptx_lap}, imposing the condition $h^{2\beta_{\mathrm{x}}}\lesssim \tau^{2\min\{\alpha_{\mathrm{t}},\beta_{\mathrm{t}}\}+1}$ instead of $h^2\lesssim \tau$, we could prove the assertion of Theorem \ref{thm:main}, in the case $p^-\leq 2$, without this additional fractional~regularity~assumption. However, the FE projection operator $\Pi_h^V$ cannot be expected to be locally $L^1$-stable. Instead, we expect that one can generalize the procedure of the recent contribution 
    \cite{br-pnse}
    to the framework of~the~present~paper, in order to remove the additional fractional regularity assumption. This, however, would certainly~be~out of the scope of the present paper and, therefore, will be  content~of~future~research.
 
\section{Appendix}
\subsection{Discrete-to-continuous-and-vice-versa inequalities}
	\hspace*{5mm}The following result estimates the error caused by switching from  $\bfF_h^\tau\colon Q_T\times\mathbb{R}^{d\times d}\to \mathbb{R}^{d\times d}_{\textup{sym}}$,~${\tau,h\in (0,1]}$, to $\bfF\colon Q_T\times\mathbb{R}^{d\times d}\to \mathbb{R}^{d\times d}_{\textup{sym}}$, from $\bfS_h^\tau\colon Q_T\times\mathbb{R}^{d\times d}\to \mathbb{R}^{d\times d}_{\textup{sym}}$,~${\tau,h\in (0,1]}$, to $\bfS\colon Q_T\times\mathbb{R}^{d\times d}\to \mathbb{R}^{d\times d}_{\textup{sym}}$,~or~from
	$(\varphi_h^{\tau})^*\colon Q_T\times\mathbb{R}_{\ge 0}\to \mathbb{R}_{\ge 0}$, $\tau, h\in (0,1]$, to ${\varphi^*\colon Q_T\times\mathbb{R}_{\ge 0}\to \mathbb{R}_{\ge 0}}$ and vice versa, respectively. 
	
\begin{lemma}\label{lem:A-Ah}
	Assume that $p\in C^0(\overline{Q_T})$ with $p^->1$. Then, there exists a constant $s>1$ with $s\searrow1$
    as $\vert t-s\vert+\vert x-y\vert\searrow0$, such that for every $(t,x)^\top,(s,y)^\top\hspace*{-0.1em}\in\hspace*{-0.1em} \overline{Q_T}$, $r\hspace*{-0.1em}\ge\hspace*{-0.1em} 0$, $\bfA\hspace*{-0.1em}\in\hspace*{-0.1em} \mathbb{R}^{d\times d}$,~and~${\lambda\hspace*{-0.1em}\in\hspace*{-0.1em} [0,1]}$,~there~holds
	\begin{align}
		\vert \bfF(t,x,\bfA)-\bfF(s,y,\bfA)\vert^2
        &\lesssim \vert p(t,x)-p(s,y)\vert^2\,(1 +\vert \bfA\vert^{p(t,x)s})\,,\label{eq:Fh-F}\\
		\vert \bfF^*(t,x,\bfS(t,x,\bfA))-\bfF^*(t,x,\bfS(s,y,\bfA))\vert^2&\lesssim\vert p(t,x)-p(s,y)\vert^2\,(1 +\vert \bfA\vert^{p(t,x)s}) \,,\label{eq:Ah-A}\\ 
		(\varphi_{\vert \bfA\vert})^*(t,x,\lambda\,r) &\lesssim (\varphi_{\vert  \bfA\vert})^*(s,y,\lambda\,r)\label{eq:phih-phi}\\&\quad+
		\smash{\lambda^{\smash{\max\{2,(p^+)'\}}}}\,\vert p(t,x)-p(s,y)\vert \,(1+\vert \bfA\vert^{p(s,y)s}+r^{p'(s,y)s})\,,\notag
	\end{align}
		where the implicit constant in $\lesssim$ depends  on $p^-$, $p^+$, and $s$.
\end{lemma}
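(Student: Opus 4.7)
The computations for the three inequalities rely on a single elementary estimate together with the explicit algebraic form of the mappings involved. For $a>0$ and $\alpha,\beta\in \mathbb{R}$, writing $a^\alpha-a^\beta=a^\beta(e^{(\alpha-\beta)\log a}-1)$ and applying $|e^x-1|\leq |x|e^{|x|}$ yields
\begin{align*}
|a^\alpha-a^\beta|\leq |\alpha-\beta|\,|\log a|\,a^{\max(\alpha,\beta)+|\alpha-\beta|}.
\end{align*}
Combined with the standard absorption $|\log a|^2\leq c_\epsilon (a^\epsilon+a^{-\epsilon})$ valid for every $\epsilon>0$ and $a>0$, this will be the workhorse of the argument. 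By uniform continuity of $p\in C^0(\overline{Q_T})$ on the compact set $\overline{Q_T}$, the quantity $|p(t,x)-p(s,y)|$ can be made arbitrarily small by shrinking $|t-s|+|x-y|$, which is precisely what allows us to choose $s>1$ close to $1$ and have it tend to $1$ as $|t-s|+|x-y|\searrow 0$.

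For \eqref{eq:Fh-F}, I would factor
\begin{align*}
\bfF(t,x,\bfA)-\bfF(s,y,\bfA)=\bigl[(\delta+|\bfA^{\textup{sym}}|)^{(p(t,x)-2)/2}-(\delta+|\bfA^{\textup{sym}}|)^{(p(s,y)-2)/2}\bigr]\bfA^{\textup{sym}},
\end{align*}
apply the elementary estimate with $\alpha=(p(t,x)-2)/2$, $\beta=(p(s,y)-2)/2$, and square. Using $|\bfA^{\textup{sym}}|\leq\delta+|\bfA^{\textup{sym}}|$ and splitting into the cases $\delta+|\bfA^{\textup{sym}}|\geq 1$ and $<1$, the $|\log|^2$ factor is absorbed by a multiplicative $(\delta+|\bfA^{\textup{sym}}|)^{\eta}$ with $\eta>0$ arbitrarily small; choosing $s=1+c\eta$ with $c=c(p^-)$ and demanding $|p(t,x)-p(s,y)|+\eta$ small enough so that $p(t,x)+2|p(t,x)-p(s,y)|+\eta\leq p(t,x)s$ then yields \eqref{eq:Fh-F}.

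For \eqref{eq:Ah-A}, Proposition \ref{lem:hammer}\eqref{eq:hammerf} applied to the pair $(\bfS(t,x,\bfA),\bfS(s,y,\bfA))$ (both matrices already symmetric), followed by the shift equivalence \eqref{eq:hammerg}, gives
\begin{align*}
|\bfF^*(t,x,\bfS(t,x,\bfA))-\bfF^*(t,x,\bfS(s,y,\bfA))|^2\sim (\varphi_{|\bfA^{\textup{sym}}|})^*(t,x,|\bfS(t,x,\bfA)-\bfS(s,y,\bfA)|).
\end{align*}
The inner argument is handled exactly as the difference in \eqref{eq:Fh-F}, now with exponent $p(\cdot)-2$ in place of $(p(\cdot)-2)/2$. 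Substituting the resulting bound into the explicit form \eqref{rem:phi_a.2} of $(\varphi_a)^*$ and collecting powers of $(\delta+|\bfA^{\textup{sym}}|)$ reduces the estimate to the desired right-hand side, after one more application of the logarithmic absorption to trade a slightly larger exponent for a constant.

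For \eqref{eq:phih-phi}, the equivalence \eqref{rem:phi_a.2} yields $(\varphi_{|\bfA|})^*(t,x,\lambda r)\sim ((\delta+|\bfA|)^{p(t,x)-1}+\lambda r)^{p'(t,x)-2}(\lambda r)^2$. The factor $\lambda^2$ is already explicit, and when $p'(t,x)\geq 2$ the base yields an additional $\lambda^{p'(t,x)-2}$ upon expansion, collectively producing $\lambda^{\max\{2,(p^+)'\}}$. The difference between the value at $(t,x)$ and at $(s,y)$ then decomposes into variations of the base $(\delta+|\bfA|)^{p(\cdot)-1}$ and of the outer exponent $p'(\cdot)-2$; each is treated via the same elementary estimate and produces a term of the form $|p(t,x)-p(s,y)|$ multiplied by a mixed power of $|\bfA|$ and $r$, which is then bounded by $1+|\bfA|^{p(s,y)s}+r^{p'(s,y)s}$ after another logarithmic absorption.

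The main obstacle, and the reason the three estimates must be proved together, is the consistent choice of $s>1$: a single $s$ must absorb the logarithmic factors arising from each variation (in both the base and the exponent of every power-law expression), uniformly in $\bfA$, $r$, and $\lambda$. This is guaranteed by first fixing $\eta>0$ in terms of $p^-$, $p^+$, choosing $s=1+c(p^-,p^+)\eta$, and then exploiting the modulus of continuity of $p$ on $\overline{Q_T}$ to impose the required smallness of $|p(t,x)-p(s,y)|$ through that of $|t-s|+|x-y|$; since all other dependencies are made explicit, the constants depend only on $p^-$, $p^+$, and $s$, as claimed.
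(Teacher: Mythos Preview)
The paper does not actually prove this lemma; its entire proof reads ``See \cite[Prop.\ 2.10]{BK23_pxDirichlet}.'' Your direct argument via the elementary power-difference bound and logarithmic absorption is precisely the standard route that the cited reference follows, so there is nothing substantive to compare.

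Two small technical remarks. First, your elementary estimate $|a^\alpha-a^\beta|\le |\alpha-\beta|\,|\log a|\,a^{\max(\alpha,\beta)+|\alpha-\beta|}$ is only correct for $a\ge 1$; for $a<1$ the exponential factor $e^{|\alpha-\beta||\log a|}=a^{-|\alpha-\beta|}$ produces instead $a^{\min(\alpha,\beta)-|\alpha-\beta|}$. Since you already split into the two regimes $\delta+|\bfA^{\textup{sym}}|\gtrless 1$ and absorb the logarithm via $|\log a|^2\le c_\epsilon(a^\epsilon+a^{-\epsilon})$, this causes no real difficulty, but the displayed inequality should be stated case-wise. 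Second, in \eqref{eq:phih-phi} the inequality is one-sided rather than a bound on the absolute difference, so the argument is really ``replace the exponent $p(t,x)$ by $p(s,y)$ step by step and control each replacement''; your decomposition into variation of the inner base $(\delta+|\bfA|)^{p(\cdot)-1}$ and of the outer exponent $p'(\cdot)-2$ does exactly this. The extraction of the common factor $\lambda^{\min\{2,(p^+)'\}}$ in the error uses that $p'(t,x)\ge (p^+)'$ pointwise (so $\lambda^{p'(t,x)}\le \lambda^{(p^+)'}$ for $\lambda\in[0,1]$), which is worth making explicit. With these bookkeeping points settled, your sketch is complete.
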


\begin{proof}
	See \cite[Prop.\  2.10]{BK23_pxDirichlet}.
\end{proof} 

\subsection{Local and global Poincar\'e inequalities in terms of the natural distance}

\hspace*{5mm}The following result contains local and global Poincar\'e inequalities in terms of the natural distance with respect to the discretization of the time-space cylinder given only fractional parabolic~regularity assumptions.\pagebreak
	
\begin{lemma}\label{lem:poincare_F}
 Suppose that $p\in C^{0,\alpha_{\mathrm{t}},\alpha_{\mathrm{x}}}(\overline{Q_T})$, $\alpha_{\mathrm{t}},\alpha_{\mathrm{x}}\in (0,1]$, with $p^->1$,  and let $\bfA\in (L^{p(\cdot,\cdot)}(Q_T))^{d\times d}$ be such that
	$\bfF(\cdot,\cdot,\bfA)\in N^{\beta_{\mathrm{t}},2}(I;(L^2(\Omega))^{d\times d})\cap L^2(I;(N^{\beta_{\mathrm{x}},2}(\Omega))^{d\times d})$, $\beta_{\mathrm{t}}\in (\frac{1}{2},1]$, $\beta_{\mathrm{x}}\in (0,1]$. Then, there exists a constant $s>1$ with $s\searrow1$ as $\tau+ h_K\searrow0$  such that for every $J\in \mathcal{I}_\tau$ and $K\in \mathcal{T}_h$,~there~holds 
	\begin{align}
		\|\bfF(\cdot,\cdot,\bfA)-\bfF(\cdot,\cdot,\langle\bfA\rangle_{J\times K})\|_{2,J\times K}^2
        &\lesssim (\tau^{2\alpha_{\mathrm{t}}}+h_K ^{2\alpha_{\mathrm{x}}})\,\|1+\vert \bfA\vert^{p(\cdot,\cdot)s}\|_{1,J\times K}\notag
        \\&\quad+\tau^{2\beta_{\mathrm{t}}}\,[ \bfF(\cdot,\cdot,\bfA)]_{N^{\beta_{\mathrm{t}},2}(J;L^2(K))}^2\label{lem:poincare_F.1}
        \\&\quad+h_K ^{2\beta_{\mathrm{x}}}\,[ \bfF(\cdot,\cdot,\bfA)]_{L^2(J;N^{\beta_{\mathrm{x}},2}(\omega_K ))}^2\,,\notag\\
		\|\bfF(\cdot,\cdot,\bfA)-\bfF(\cdot,\cdot,\langle\bfA\rangle_{J\times\omega_K })\|_{2,J\times\omega_K }^2&\lesssim (\tau^{2\alpha_{\mathrm{t}}}+h_K ^{2\alpha_{\mathrm{x}}})\,\|1+\vert \bfA\vert^{p(\cdot,\cdot)s}\|_{1,J\times\omega_K }\notag
        \\&\quad+\tau^{2\beta_{\mathrm{t}}}\,[ \bfF(\cdot,\cdot,\bfA)]_{N^{\beta_{\mathrm{t}},2}(J;L^2(\omega_K ))}^2\label{lem:poincare_F.2}
        \\&\quad+h_K ^{2\beta_{\mathrm{x}}}\,[ \bfF(\cdot,\cdot,\bfA)]_{L^2(J;N^{\beta_{\mathrm{x}},2}(\omega_K ^{2\times}))}^2 \,,\notag
	\end{align}
	where $\omega_K ^{2\times}\coloneqq \bigcup_{K'\in \omega_K }{\omega_{K'}}$ and the implicit constant in $\lesssim$ depends on $p^-$, $p^+$, $[p]_{\alpha_{\mathrm{t}},\alpha_{\mathrm{x}},Q_T}$, $s$, and $\omega_0$.
	In particular, it follows that
	\begin{align}
		\|\bfF(\cdot,\cdot,\bfA)-\bfF(\cdot,\cdot,\Pi_h^{0,\mathrm{t}}\Pi_h^{0,\mathrm{x}}\bfA)\|_{2,Q_T}^2
        &\lesssim (\tau^{2\alpha_{\mathrm{t}}}+h^{2\alpha_{\mathrm{x}}})\,\|1+\vert \bfA\vert^{p(\cdot,\cdot)s}\|_{1,Q_T}\notag
        \\&\quad+\tau^{2\beta_{\mathrm{t}}}\,[ \bfF(\cdot,\cdot,\bfA)]_{N^{\beta_{\mathrm{t}},2}(I;L^2(\Omega))}^2\label{lem:poincare_F.3}\\&\quad+h^{2\beta_{\mathrm{x}}}\,[ \bfF(\cdot,\cdot,\bfA)]_{L^2(I;N^{\beta_{\mathrm{x}},2}(\Omega))}^2 \,,\notag\\
		\sum_{K\in \mathcal{T}_h}{	\|\bfF(\cdot,\cdot,\bfA)-\bfF(\cdot,\cdot,\langle\bfA\rangle_{J\times\omega_K})\|_{2,\Omega}^2}&\lesssim 
        (\tau^{2\alpha_{\mathrm{t}}}+h^{2\alpha_{\mathrm{x}}})\,\|1+\vert \bfA\vert^{p(\cdot,\cdot)s}\|_{1,Q_T}\notag
        \\[-3mm]&\quad+\tau^{2\beta_{\mathrm{t}}}\,[ \bfF(\cdot,\cdot,\bfA)]_{N^{\beta_{\mathrm{t}},2}(I;L^2(\Omega))}^2\label{lem:poincare_F.4}\\&\quad+h^{2\beta_{\mathrm{x}}}\,[ \bfF(\cdot,\cdot,\bfA)]_{L^2(I;N^{\beta_{\mathrm{x}},2}(\Omega))}^2 \,.\notag
	\end{align} 
\end{lemma}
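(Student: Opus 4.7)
The plan is to reduce the variable-exponent statement to a constant-exponent one via Lemma \ref{lem:A-Ah}\eqref{eq:Fh-F}, and then treat the resulting Poincar\'e estimate through a Jensen/shift-change argument using Proposition \ref{lem:hammer} and Lemma \ref{lem:shift-change}. First, I would pick a quadrature point $(t_m,\xi_K)\in J\times K$ and set $\widetilde{\bfF}(\bfB)\coloneqq \bfF(t_m,\xi_K,\bfB)$, a constant-exponent replacement of $\bfF$. A triangle inequality splits the left-hand side of \eqref{lem:poincare_F.1} into the middle term $T_2\coloneqq \|\widetilde{\bfF}(\bfA)-\widetilde{\bfF}(\langle\bfA\rangle_{J\times K})\|_{2,J\times K}^2$ and two outer variable-exponent-defect terms comparing $\bfF$ with $\widetilde{\bfF}$ on $\bfA$ and on $\langle\bfA\rangle_{J\times K}$. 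The latter two would be bounded directly by Lemma \ref{lem:A-Ah}\eqref{eq:Fh-F} together with the parabolic H\"older regularity of $p$, which converts $\vert p(t,x)-p(t_m,\xi_K)\vert^2$ into $\tau^{2\alpha_{\mathrm{t}}}+h_K^{2\alpha_{\mathrm{x}}}$; Jensen applied to the modular handles the $\vert\langle\bfA\rangle_{J\times K}\vert^{p(\cdot,\cdot)s}$ factor.

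The core of the argument is to treat $T_2$. Using Proposition \ref{lem:hammer}\eqref{eq:hammera} with Lemma \ref{lem:shift-change}\eqref{lem:shift-change.1} (taking $\varepsilon$ small enough to absorb back into the left-hand side), I would change the shift from $\vert\bfA^{\textup{sym}}(t,x)\vert$ to the constant $\vert\langle\bfA^{\textup{sym}}\rangle_{J\times K}\vert$, reducing $T_2$ to a modular Poincar\'e-type bound for the constant-shift, constant-exponent $N$-function $\widetilde{\varphi}_{\vert\langle\bfA^{\textup{sym}}\rangle_{J\times K}\vert}$. Applying Jensen's inequality to this convex $N$-function against a second integration variable, then a second shift-change back to the local shift $\vert\bfA^{\textup{sym}}(t,x)\vert$, and a final Proposition \ref{lem:hammer}\eqref{eq:hammera} would yield the classical double-integral bound
\begin{align*}
T_2 \lesssim \frac{1}{\vert J\times K\vert}\int_{J\times K}\int_{J\times K}\vert\widetilde{\bfF}(\bfA(z_1))-\widetilde{\bfF}(\bfA(z_2))\vert^2\,\mathrm{d}z_1\,\mathrm{d}z_2\,.
\end{align*}
Inserting the intermediate point $(t_2,x_1)$ by the triangle inequality splits this into a pure time-translation part and a pure space-translation part; changing variables to $\sigma=t_2-t_1\in(-\tau,\tau)$ and $\xi=x_2-x_1$ and invoking the definition \eqref{eq:nikolski_semi-norm} of the Nikolski\u{\i} semi-norm then produces, after normalization by $\vert J\times K\vert$, the two contributions $\tau^{2\beta_{\mathrm{t}}}[\widetilde{\bfF}(\bfA)]_{N^{\beta_{\mathrm{t}},2}(J;L^2(K))}^2$ and $h_K^{2\beta_{\mathrm{x}}}[\widetilde{\bfF}(\bfA)]_{L^2(J;N^{\beta_{\mathrm{x}},2}(\omega_K))}^2$.

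It remains to convert the Nikolski\u{\i} semi-norms of $\widetilde{\bfF}(\bfA)$ back into those of $\bfF(\cdot,\cdot,\bfA)$. I would achieve this by one more triangle inequality combined with Lemma \ref{lem:A-Ah}\eqref{eq:Fh-F}, the incurred error being of the same form $(\tau^{2\alpha_{\mathrm{t}}}+h_K^{2\alpha_{\mathrm{x}}})\|1+\vert\bfA\vert^{p(\cdot,\cdot)s}\|_{1,J\times K}$ and hence absorbable. Estimate \eqref{lem:poincare_F.2} would then follow by running the same argument with averaging domain $J\times \omega_K$ in place of $J\times K$: the spatial translations then range over $\omega_K - \omega_K$, which forces the enlargement from $\omega_K$ to $\omega_K^{2\times}$ on the right-hand side. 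The global estimates \eqref{lem:poincare_F.3} and \eqref{lem:poincare_F.4} would follow by summation over $J\in \mathcal{I}_\tau$ and $K\in \mathcal{T}_h$, using shape-regularity to handle the finite overlap of element patches.

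The main obstacle is to keep every implicit constant uniform while invoking the shift-change Lemma \ref{lem:shift-change} twice and sandwiching a Jensen application in between. This works out because $\widetilde{\varphi}$ is a \emph{constant-exponent} shifted $N$-function whose $\Delta_2$-constants depend only on $p^-,p^+$ (and $\delta$), so Lemma \ref{lem:shift-change} applies with constants uniform in $(t_m,\xi_K)$ and in $\bfA$. A secondary annoyance is the appearance of $\omega_K$ (resp.\ $\omega_K^{2\times}$) on the right-hand side, which is inevitable at the local level because spatial translations by $\xi\in K-K$ (resp.\ $\xi\in\omega_K-\omega_K$) require data on the enlarged patch.
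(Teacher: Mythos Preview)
Your overall strategy is close to workable, but there is a genuine gap in the step where you ``convert the Nikolski\u{\i} semi-norms of $\widetilde{\bfF}(\bfA)$ back into those of $\bfF(\cdot,\cdot,\bfA)$''. The Nikolski\u{\i} semi-norm on $J$ is a supremum over translations $\sigma$ of $\vert\sigma\vert^{-2\beta_{\mathrm{t}}}\|\widetilde{\bfF}(\bfA(\cdot+\sigma,\cdot))-\widetilde{\bfF}(\bfA)\|_{2}^2$. When you triangle this against $\bfF(\cdot+\sigma,\cdot,\bfA(\cdot+\sigma,\cdot))$ and $\bfF(\cdot,\cdot,\bfA)$, the two defect terms coming from Lemma~\ref{lem:A-Ah}\eqref{eq:Fh-F} are of size $(\tau^{2\alpha_{\mathrm{t}}}+h_K^{2\alpha_{\mathrm{x}}})\|1+\vert\bfA\vert^{p(\cdot,\cdot)s}\|_{1}$ \emph{independently of $\sigma$}; dividing by $\vert\sigma\vert^{2\beta_{\mathrm{t}}}$ and taking the supremum as $\sigma\to 0$ gives $+\infty$. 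The fix is simple: do the conversion from $\widetilde{\bfF}$ back to $\bfF$ \emph{at the double-integral stage}, before ever bounding by the Nikolski\u{\i} semi-norm. Then the defect terms are integrated in $\sigma$ over $\vert\sigma\vert\le\tau$ and normalised by $\vert J\vert=\tau$, which yields exactly the absorbable $(\tau^{2\alpha_{\mathrm{t}}}+h_K^{2\alpha_{\mathrm{x}}})\|1+\vert\bfA\vert^{p(\cdot,\cdot)s}\|_{1,J\times K}$ you claim.

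For comparison, the paper's route is more direct and avoids both the freezing to $\widetilde{\bfF}$ and the double shift-change. It applies \eqref{eq:hammera} to get $\vert\bfF(t,x,\bfA(t,x))-\bfF(t,x,\langle\bfA\rangle_{J\times K})\vert^2\sim\varphi_{\vert\bfA(t,x)\vert}(t,x,\vert\bfA(t,x)-\langle\bfA\rangle_{J\times K}\vert)$, then uses Jensen's inequality \emph{directly} on the convex function $\varphi_{\vert\bfA(t,x)\vert}(t,x,\cdot)$ for fixed $(t,x)$---no shift-change is needed because the shift and the exponent are both frozen by fixing $(t,x)$ while averaging only over the second variable $(s,y)$. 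One more application of \eqref{eq:hammera} returns $\vert\bfF(t,x,\bfA(t,x))-\bfF(t,x,\bfA(s,y))\vert^2$, and a triangle inequality through the intermediate points $\bfF(s,x,\bfA(s,x))$ and $\bfF(s,y,\bfA(s,y))$ already produces the translation terms for the \emph{full} variable-exponent $\bfF$ together with a single defect $\vert\bfF(s,y,\bfA(s,y))-\bfF(t,x,\bfA(s,y))\vert^2$ handled by Lemma~\ref{lem:A-Ah}\eqref{eq:Fh-F}. This avoids the back-conversion problem altogether.
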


\begin{proof} 
    The claimed local and global Poincar\'e inequalities \eqref{lem:poincare_F.1}--\eqref{lem:poincare_F.4} are obtained analogously to \cite[Lem.\ B.10]{berselli2023error} up to minor adjustments.
\end{proof}

\if0
\begin{proof} 
	\textit{ad \eqref{lem:poincare_F.1}.}
	Using \eqref{eq:hammera}, $\vert \bfA(t,x)-\langle\bfA\rangle_{J\times K}\vert =\vert \langle\bfA(t,x)-\bfA\rangle_{J\times K}\vert\leq \langle\vert  \bfA(t,x)-\bfA\vert\rangle_{J\times K}$ for a.e.\ $(t,x)^\top \in J\times K$, Jensen's inequality for a.e.\ fixed $(t,x)^\top \in J\times K$, \eqref{eq:nikolski_semi-norm}, that there exists some $s>1$ that can chosen to be close to $1$ provided that $\tau, h_K >0$ are close to $0$, such that for any $(t,x)^\top,(s,y)^\top\in J\times K$, it holds that
	\begin{align}
		\vert \bfF(s,y,\bfA(s,y))-\bfF(t,x,\bfA(s,y))\vert^2\lesssim \vert p(t,x)-p(s,y)\vert^2\,(1 +\vert \bfA(s,y)\vert^{p(s,y)s})\,,\label{eq:Fxh-Fx}
	\end{align}
	where the implicit constants depend only on $s>1$ and the chunkiness parameter $\omega_0>0$, we find that
	\begin{align}\label{lem:poincare_F.5}
		\begin{aligned}
		  \|\bfF(\cdot,&\cdot,\bfA)-  
            \bfF(\cdot,\cdot,\langle\bfA\rangle_{J\times K})\|_{2,J\times K}^2
            \\&\leq c\,\int_{J\times K}{\varphi_{\vert \bfA(t,x)\vert }(x,\langle\vert  \bfA(t,x)-\bfA\vert\rangle_{J\times K})\,\mathrm{d}t\mathrm{d}x}
			\\&\lesssim \int_{J\times K}{\fint_{J\times K}{\varphi_{\vert \bfA(t,x)\vert }(t,x,\vert  \bfA(t,x)-\bfA(s,y)\vert)\,\mathrm{d}s\mathrm{d}y}\,\mathrm{d}t\mathrm{d}x}
			\\&\lesssim \int_{J\times K}{\fint_{J\times K}{\vert  \bfF(t,x,\bfA(t,x))-\bfF(s,x,\bfA(s,x))\vert^2\,\mathrm{d}s\mathrm{d}y}\,\mathrm{d}t\mathrm{d}x}
			\\&\quad+\int_{J\times K}{\fint_{J\times K}{\vert  \bfF(s,x,\bfA(s,x))-\bfF(s,y,\bfA(s,y))\vert^2\,\mathrm{d}s\mathrm{d}y}\,\mathrm{d}t\mathrm{d}x}\\&\quad+\int_{J\times K}{\fint_{J\times K}{\vert  \bfF(s,y,\bfA(s,y))-\bfF(t,x,\bfA(s,y))\vert^2\,\mathrm{d}s\mathrm{d}y}\,\mathrm{d}t\mathrm{d}x}
			\\&\lesssim \frac{1}{\vert J\vert }\int_{\vert\cdot \vert \leq \tau}{\int_{J\times K\cap ((J-s)\times K)}{\vert  \bfF(t+s,x,\bfA(t+s,x)-\bfF(t,x,\bfA(t,x))\vert^2\,\mathrm{d}t\mathrm{d}x}\,\mathrm{d}s}
			\\&\quad+ \frac{1}{\vert K\vert }\int_{\vert\cdot \vert \leq h_K }{\int_{J\times K\cap (J\times (K-y))}{\vert  \bfF(s,x+y,\bfA(s,x+y)-\bfF(s,y,\bfA(s,y))\vert^2\,\mathrm{d}t\mathrm{d}x}\,\mathrm{d}y}
			\\&\quad + (\tau^{2\alpha_{\mathrm{t}}}+h_K ^{2\alpha_{\mathrm{x}}})\,\|1+\vert \bfA\vert^{p(\cdot,\cdot)s}\|_{1,J\times K}
			\\&\lesssim \tau^{2\beta_{\mathrm{t}}}\, [\bfF(\cdot,\cdot,\bfA)]_{N^{\beta_{\mathrm{t}},2}(J;L^2(K))}^2 
			+h_K ^{2\beta_{\mathrm{x}}}\, [\bfF(\cdot,\cdot,\bfA)]_{L^2(J;N^{\beta_{\mathrm{x}},2}(E))}^2 
			+(\tau^{2\alpha_{\mathrm{t}}}+h_K ^{2\alpha_{\mathrm{x}}})\,\|1+\vert \bfA\vert^{p(\cdot,\cdot)s}\|_{1,J\times K}\,,
		\end{aligned}
	\end{align}
	which is the desired estimate \eqref{lem:poincare_F.1}.
		
	\textit{ad \eqref{lem:poincare_F.2}.} Using \eqref{eq:hammera}, the fact that $\vert \bfA(t,x)-\langle\bfA\rangle_{J\times\omega_K }\vert =\vert \langle\bfA(t,x)-\bfA\rangle_{J\times\omega_K }\vert\leq \langle\vert  \bfA(t,x)-\bfA\vert\rangle_{J\times\omega_K }$ for a.e.\ $(t,x)^\top\in J\times\omega_K $ and $\textup{diam}(J\times\omega_K )\sim \tau +h_K $, Jensen's inequality for a.e.\ fixed $(t,x)^\top\in J\times K$,  \eqref{eq:Fxh-Fx} and \eqref{eq:nikolski_semi-norm}, we find that
	\begin{align*}
		\|\bfF(\cdot,&\cdot,\bfA)-\bfF(\cdot,\cdot,\langle\bfA\rangle_{J\times\omega_K })\|_{2,J\times\omega_K }^2
        \\&\lesssim \int_{J\times\omega_K }{\fint_{J\times\omega_K }{\vert  \bfF(t,x,\bfA(t,x))-\bfF(s,y,\bfA(s,y))\vert^2\,\mathrm{d}s\mathrm{d}y}\,\mathrm{d}t\mathrm{d}x}
        \\&\quad+\int_{J\times\omega_K }{\fint_{J\times\omega_K }{\vert  \bfF(s,y,\bfA(s,y))-\bfF(t,x,\bfA(s,y))\vert^2\,\mathrm{d}s\mathrm{d}y}\,\mathrm{d}t\mathrm{d}x}
		\\&\lesssim \int_{J\times \omega_K }{\fint_{J\times K}{\vert  \bfF(t,x,\bfA(t,x))-\bfF(s,x,\bfA(s,x))\vert^2\,\mathrm{d}s\mathrm{d}y}\,\mathrm{d}t\mathrm{d}x}
		\\&\quad+\int_{J\times K}{\fint_{J\times \omega_K }{\vert  \bfF(s,x,\bfA(s,x))-\bfF(s,y,\bfA(s,y))\vert^2\,\mathrm{d}s\mathrm{d}y}\,\mathrm{d}t\mathrm{d}x}
        \\&\quad+\int_{J\times \omega_K }{\fint_{J\times \omega_K }{\vert  \bfF(s,y,\bfA(s,y))-\bfF(t,x,\bfA(s,y))\vert^2\,\mathrm{d}s\mathrm{d}y}\,\mathrm{d}t\mathrm{d}x}
		\\&\lesssim \frac{1}{\vert J\vert }\int_{\vert\cdot \vert \leq \tau}{\int_{J\times \omega_K \cap ((J-s)\times \omega_K )}{\vert  \bfF(t+s,x,\bfA(t+s,x)-\bfF(t,x,\bfA(t,x))\vert^2\,\mathrm{d}t\mathrm{d}x}\,\mathrm{d}s}
		\\&\quad+ \frac{1}{\vert \omega_K \vert }\int_{\vert\cdot \vert \leq \textup{diam}(\omega_K )}{\int_{J\times \omega_K \cap (J\times (\omega_K -y))}{\vert  \bfF(s,x+y,\bfA(s,x+y)-\bfF(s,y,\bfA(s,y))\vert^2\,\mathrm{d}t\mathrm{d}x}\,\mathrm{d}y}
		\\&\quad + (\tau^{2\alpha_{\mathrm{t}}}+h_K ^{2\alpha_{\mathrm{x}}})\,\|1+\vert \bfA\vert^{p(\cdot,\cdot)s}\|_{1,J\times \omega_K }
		\\&\lesssim \tau^{2\beta_{\mathrm{t}}}\, [\bfF(\cdot,\cdot,\bfA)]_{N^{\beta_{\mathrm{t}},2}(J;L^2(\omega_K ))}^2 
		+h_K ^{2\beta_{\mathrm{x}}}\, [\bfF(\cdot,\cdot,\bfA)]_{L^2(J;N^{\beta_{\mathrm{x}},2}(\omega_K ))}^2 
		+(\tau^{2\alpha_{\mathrm{t}}}+h_K ^{2\alpha_{\mathrm{x}}})\,\|1+\vert \bfA\vert^{p(\cdot,\cdot)s}\|_{1,J\times \omega_K }\,,
	\end{align*}
	which is the desired estimate \eqref{lem:poincare_F.2}.
		
    \textit{ad \eqref{lem:poincare_F.3}.} By using \eqref{lem:poincare_F.5}, summing with respect to $K\in \mathcal{T}_h$, and using that $h_K \sim h$~for~all~$K\in \mathcal{T}_h$,
    we find that
	\begin{align*}
		\|\bfF(\cdot,&\cdot,\bfA)-\bfF(\cdot,\cdot,\Pi_\tau^{0,\mathrm{t}}\Pi_h^{0,\mathrm{x}}\bfA)\|_{2,\Omega}^2
        \\&=\sum_{J\in \mathcal{I}_\tau}{ \sum_{K\in \mathcal{T}_h}{\|\bfF(\cdot,\cdot,\bfA)-\bfF(\cdot,\cdot,\langle\bfA\rangle_{J\times K})\|_{2,J\times K}^2}}
		\\&\lesssim\sum_{J\in \mathcal{I}_\tau}{ \sum_{K\in \mathcal{T}_h}{ \frac{1}{\vert J\vert }\int_{\vert\cdot \vert \leq \tau}{\int_{J\times K\cap ((J-s)\times K)}{\vert  \bfF(t+s,x,\bfA(t+s,x)-\bfF(t,x,\bfA(t,x))\vert^2\,\mathrm{d}t\mathrm{d}x}\,\mathrm{d}s}}}
		\\&\quad+\sum_{J\in \mathcal{I}_\tau}{ \sum_{K\in \mathcal{T}_h}{ \frac{1}{\vert K\vert }\int_{\vert\cdot \vert \leq h_K }{\int_{J\times K\cap (J\times (K-y))}{\vert  \bfF(s,x+y,\bfA(s,x+y)-\bfF(s,y,\bfA(s,y))\vert^2\,\mathrm{d}t\mathrm{d}x}\,\mathrm{d}y}}}
		\\&\quad + \sum_{J\in \mathcal{I}_\tau}{ \sum_{K\in \mathcal{T}_h}{(\tau^{2\alpha_{\mathrm{t}}}+h_K ^{2\alpha_{\mathrm{x}}})\,\|1+\vert \bfA\vert^{p(\cdot,\cdot)s}\|_{1,J\times K}}}
		\\&\lesssim  \frac{1}{\tau }\int_{\vert\cdot \vert \leq \tau}{\int_{I\times \Omega\cap ((I-s)\times \Omega)}{\vert  \bfF(t+s,x,\bfA(t+s,x)-\bfF(t,x,\bfA(t,x))\vert^2\,\mathrm{d}t\mathrm{d}x}\,\mathrm{d}s}
		\\&\lesssim  \frac{1}{h^d }\int_{\vert\cdot \vert \leq h_K }{\int_{I\times \Omega\cap (I\times (\Omega-y))}{\vert  \bfF(s,x+y,\bfA(s,x+y)-\bfF(s,y,\bfA(s,y))\vert^2\,\mathrm{d}t\mathrm{d}x}\,\mathrm{d}y}
		\\&\quad + (\tau^{2\alpha_{\mathrm{t}}}+h^{2\alpha_{\mathrm{x}}})\,\|1+\vert \bfA\vert^{p(\cdot,\cdot)s}\|_{1,Q_T}
		\\&\lesssim \tau^{2\beta_{\mathrm{t}}}\, [\bfF(\cdot,\cdot,\bfA)]_{N^{\beta_{\mathrm{t}},2}(I;L^2(\Omega))}^2 
		+h^{2\beta_{\mathrm{x}}}\, [\bfF(\cdot,\cdot,\bfA)]_{L^2(J;N^{\beta_{\mathrm{x}},2}(\Omega))}^2 
		+(\tau^{2\alpha_{\mathrm{t}}}+h^{2\alpha_{\mathrm{x}}})\,\|1+\vert \bfA\vert^{p(\cdot,\cdot)s}\|_{1,Q_T}\,,
	\end{align*}
	which is the desired estimate \eqref{lem:poincare_F.3}.
		
	\textit{ad \eqref{lem:poincare_F.4}.} We proceed in the same way as we did in the proof of \eqref{lem:poincare_F.3} but now using \eqref{lem:poincare_F.2}.
	\end{proof}
    \fi
 
	{\setlength{\bibsep}{0pt plus 0.0ex}\small 

\begin{thebibliography}{10}

\bibitem{mumps}
\bgroup\scshape{}P.~R. Amestoy\egroup{}, \bgroup\scshape{}I.~S. Duff\egroup{}, \bgroup\scshape{}J.-Y. L'Excellent\egroup{}, and \bgroup\scshape{}J.~Koster\egroup{}, A fully asynchronous multifrontal solver using distributed dynamic scheduling,  \emph{SIAM J. Matrix Anal. Appl.} \textbf{23} no.~1 (2001), 15--41. \doi{10.1137/S0895479899358194}.

\bibitem{AR06}
\bgroup\scshape{}S.~N. Antontsev\egroup{} and \bgroup\scshape{}J.~F. Rodrigues\egroup{}, On stationary thermo-rheological viscous flows,  \emph{Ann. Univ. Ferrara Sez. VII Sci. Mat.} \textbf{52} no.~1 (2006), 19--36. \doi{10.1007/s11565-006-0002-9}.

\bibitem{BK23_pxDirichlet}
\bgroup\scshape{}A.~K. Balci\egroup{} and \bgroup\scshape{}A.~Kaltenbach\egroup{}, {Error analysis for a Crouzeix–Raviart approximation of the variable exponent Dirichlet problem},  \emph{IMA J. Numer. Anal.} (2024), drae025. \doi{10.1093/imanum/drae025}.

\bibitem{BalciOrtnerStorn2022}
\bgroup\scshape{}A.~K. Balci\egroup{}, \bgroup\scshape{}C.~Ortner\egroup{}, and \bgroup\scshape{}J.~Storn\egroup{}, Crouzeix-{Raviart} finite element method for non-autonomous variational problems with {Lavrentiev} gap,  \emph{Numer. Math.} \textbf{151} no.~4 (2022), 779--805 (English). \doi{10.1007/s00211-022-01303-1}.

\bibitem{smart_fluids}
\bgroup\scshape{}A.~Behera\egroup{}, \emph{Advanced Materials}, Springer International Publishing, 2021. \doi{10.1007/978-3-030-80359-9}.

\bibitem{bdr-phi-stokes-preprint}
\bgroup\scshape{}L.~Belenki\egroup{}, \bgroup\scshape{}L.~C. Berselli\egroup{}, \bgroup\scshape{}L.~Diening\egroup{}, and \bgroup\scshape{}M.~R\r{u}\v{z}i\v{c}ka\egroup{}, On the finite element approximation of {$p$}-{S}tokes systems,  \emph{SIAM J. Numer. Anal.} \textbf{50} no.~2 (2012), 373--397. \doi{10.1137/10080436X}.

\bibitem{berselli2024convergence}
\bgroup\scshape{}L.~C. Berselli\egroup{} and \bgroup\scshape{}A.~Kaltenbach\egroup{}, Convergence analysis for a fully-discrete finite element approximation of the unsteady $p(\cdot,\cdot)$-{N}avier--{S}tokes equations, 2024. \doi{10.48550/arXiv.2402.16606}.

\bibitem{berselli2023error}
\bgroup\scshape{}L.~C. Berselli\egroup{} and \bgroup\scshape{}A.~Kaltenbach\egroup{}, {Error analysis for a finite element approximation of the steady $p(\cdot)$-{N}avier--{S}tokes equations},  \emph{IMA J. Numer. Anal.} (2024). \doi{10.1093/imanum/drae082}.

\bibitem{br-pnse}
\bgroup\scshape{}L.~C. Berselli\egroup{} and \bgroup\scshape{}M.~R\r{u}\v{z}i\v{c}ka\egroup{}, Optimal error estimate for a space-time discretization for incompressible generalized {N}ewtonian fluids: the {D}irichlet problem,  \emph{Partial Differ. Equ. Appl.} \textbf{2} no.~4 (2021), Paper No. 59. \doi{10.1007/s42985-021-00082-y}.

\bibitem{BBD15}
\bgroup\scshape{}L.~C. Berselli\egroup{}, \bgroup\scshape{}D.~Breit\egroup{}, and \bgroup\scshape{}L.~Diening\egroup{}, Convergence analysis for a finite element approximation of a steady model for electrorheological fluids,  \emph{Numer. Math.} \textbf{132} no.~4 (2016), 657--689. \doi{10.1007/s00211-015-0735-4}.

\bibitem{BBF13}
\bgroup\scshape{}D.~Boffi\egroup{}, \bgroup\scshape{}F.~Brezzi\egroup{}, and \bgroup\scshape{}M.~Fortin\egroup{}, \emph{Mixed finite element methods and applications}, \emph{Springer Series in Computational Mathematics} \textbf{44}, Springer, Heidelberg, 2013.  \doi{10.1007/978-3-642-36519-5}.

\bibitem{BDS15}
\bgroup\scshape{}D.~Breit\egroup{}, \bgroup\scshape{}L.~Diening\egroup{}, and \bgroup\scshape{}S.~Schwarzacher\egroup{}, Finite element approximation of the {$p(\cdot)$}-{L}aplacian,  \emph{SIAM J. Numer. Anal.} \textbf{53} no.~1 (2015), 551--572. \doi{10.1137/130946046}.

\bibitem{ptx_lap}
\bgroup\scshape{}D.~Breit\egroup{} and \bgroup\scshape{}P.~R. Mensah\egroup{}, Space-time approximation of parabolic systems with variable growth,  \emph{IMA J. Numer. Anal.} \textbf{40} no.~4 (2020), 2505--2552. \doi{10.1093/imanum/drz039}.

\bibitem{HMPR10}
\bgroup\scshape{}C.~Bridges\egroup{}, \bgroup\scshape{}S.~Karra\egroup{}, and \bgroup\scshape{}K.~R. Rajagopal\egroup{}, On modeling the response of the synovial fluid: unsteady flow of a shear-thinning, chemically-reacting fluid mixture,  \emph{Comput. Math. Appl.} \textbf{60} no.~8 (2010), 2333--2349. \doi{10.1016/j.camwa.2010.08.027}.

\bibitem{bia_2005}
\bgroup\scshape{}I.~A. Brigadnov\egroup{} and \bgroup\scshape{}A.~Dorfmann\egroup{}, Mathematical modeling of magnetorheological fluids,  \emph{Contin. Mech. Thermodyn.} \textbf{17} no.~1 (2005), 29--42. \doi{10.1007/s00161-004-0185-1}.

\bibitem{CHP10}
\bgroup\scshape{}E.~Carelli\egroup{}, \bgroup\scshape{}J.~Haehnle\egroup{}, and \bgroup\scshape{}A.~Prohl\egroup{}, Convergence analysis for incompressible generalized {N}ewtonian fluid flows with nonstandard anisotropic growth conditions,  \emph{SIAM J. Numer. Anal.} \textbf{48} no.~1 (2010), 164--190. \doi{10.1137/080718978}.

\bibitem{DPLM12}
\bgroup\scshape{}L.~M. Del~Pezzo\egroup{}, \bgroup\scshape{}A.~L. Lombardi\egroup{}, and \bgroup\scshape{}S.~Mart\'{\i}nez\egroup{}, Interior penalty discontinuous {G}alerkin {FEM} for the {$p(x)$}-{L}aplacian,  \emph{SIAM J. Numer. Anal.} \textbf{50} no.~5 (2012), 2497--2521. \doi{10.1137/110820324}.

\bibitem{DeVoreSharpley1984}
\bgroup\scshape{}R.~A. DeVore\egroup{} and \bgroup\scshape{}R.~C. Sharpley\egroup{}, \emph{Maximal functions measuring smoothness}, \emph{Mem. Am. Math. Soc.} \textbf{293}, Providence, RI: American Mathematical Society (AMS), 1984 (English). \doi{10.1090/memo/0293}.

\bibitem{dhhr}
\bgroup\scshape{}L.~Diening\egroup{}, \bgroup\scshape{}P.~Harjulehto\egroup{}, \bgroup\scshape{}P.~H\"{a}st\"{o}\egroup{}, and \bgroup\scshape{}M.~R\r{u}\v{z}i\v{c}ka\egroup{}, \emph{Lebesgue and {S}obolev spaces with variable exponents}, \emph{Lect. Notes Math.} \textbf{2017}, Springer, Heidelberg, 2011.   \doi{10.1007/978-3-642-18363-8}.

\bibitem{din_diss}
\bgroup\scshape{}L.~Diening\egroup{}, Theoretical and numerical results for electrorheological fluids,  \emph{PhD thesis, University Freiburg} (2002).

\bibitem{win-r}
\bgroup\scshape{}W.~Eckart\egroup{} and \bgroup\scshape{}M.~R{\r {u}}{\v {z}}i{\v {c}}ka\egroup{}, Modeling micropolar electrorheological fluids,  \emph{Int. J. Appl. Mech. Eng.} \textbf{11} (2006), 813--844.

\bibitem{sarah}
\bgroup\scshape{}S.~Eckstein\egroup{} and \bgroup\scshape{}M.~R\r{u}\v{z}i\v{c}ka\egroup{}, On the full space-time discretization of the generalized {S}tokes equations: the {D}irichlet case,  \emph{SIAM J. Numer. Anal.} \textbf{56} no.~4 (2018), 2234--2261. \doi{10.1137/16M1099741}.

\bibitem{EG21}
\bgroup\scshape{}A.~Ern\egroup{} and \bgroup\scshape{}J.~L. Guermond\egroup{}, \emph{Finite Elements I: Approximation and Interpolation}, \emph{Texts in Applied Mathematics} no.~1, Springer International Publishing, 2021. \doi{10.1007/978-3-030-56341-7}.

\bibitem{alex-book}
\bgroup\scshape{}A.~Kaltenbach\egroup{}, \emph{Pseudo-{M}onotone {O}perator {T}heory for {U}nsteady {P}roblems with {V}ariable {E}xponents}, \emph{Lect. Notes Math.} \textbf{2329}, Springer, Cham, 2023. \doi{10.1007/978-3-031-29670-3}.

\bibitem{seungchan-thesis-article}
\bgroup\scshape{}S.~Ko\egroup{}, Existence of global weak solutions for unsteady motions of incompressible chemically reacting generalized {Newtonian} fluids,  \emph{J. Math. Anal. Appl.} \textbf{513} no.~1 (2022), 24 (English), Id/No 126206. \doi{10.1016/j.jmaa.2022.126206}.

\bibitem{ko_1}
\bgroup\scshape{}S.~Ko\egroup{}, \bgroup\scshape{}P.~Pust\v{e}jovsk\'a\egroup{}, and \bgroup\scshape{}E.~S\"uli\egroup{}, Finite element approximation of an incompressible chemically reacting non-{N}ewtonian fluid,  \emph{ESAIM Math. Model. Numer. Anal.} \textbf{52} no.~2 (2018), 509--541. \doi{10.1051/m2an/2017043}.

\bibitem{ko_2}
\bgroup\scshape{}S.~Ko\egroup{} and \bgroup\scshape{}E.~S\"uli\egroup{}, Finite element approximation of steady flows of generalized {N}ewtonian fluids with concentration-dependent power-law index,  \emph{Math. Comp.} \textbf{88} no.~317 (2019), 1061--1090. \doi{10.1090/mcom/3379}.

\bibitem{LW10}
\bgroup\scshape{}A.~Logg\egroup{} and \bgroup\scshape{}G.~N. Wells\egroup{}, Dolfin: Automated finite element computing,  \emph{ACM Transactions on Mathematical Software} \textbf{37} no.~2 (2010), 1--28. \doi{10.1145/1731022.1731030}.

\bibitem{rubo}
\bgroup\scshape{}M.~R{\r u}{\v z}i{\v c}ka\egroup{}, \emph{Electrorheological Fluids: Modeling and Mathematical Theory}, \emph{Lect. Notes Math.} \textbf{1748}, Springer, Berlin, 2000. \doi{https://doi.org/10.1007/BFb0104029}.

\bibitem{sandri}
\bgroup\scshape{}{Sandri, D.}\egroup{}, Sur l'approximation numérique des écoulements quasi-newtoniens dont la viscosité suit la loi puissance ou la loi de carreau,  \emph{ESAIM: M2AN} \textbf{27} no.~2 (1993), 131--155. \doi{10.1051/m2an/1993270201311}.

\bibitem{TH73}
\bgroup\scshape{}C.~Taylor\egroup{} and \bgroup\scshape{}P.~Hood\egroup{}, A numerical solution of the {N}avier-{S}tokes equations using the finite element technique,  \emph{Internat. J. Comput. \& Fluids} \textbf{1} no.~1 (1973), 73--100. \doi{10.1016/0045-7930(73)90027-3}.


\end{thebibliography}
    \def\cprime{$'$} \def\cprime{$'$} \def\cprime{$'$}
\providecommand{\bysame}{\leavevmode\hbox to3em{\hrulefill}\thinspace}
\providecommand{\noopsort}[1]{}
\providecommand{\mr}[1]{\href{http://www.ams.org/mathscinet-getitem?mr=#1}{MR~#1}}
\providecommand{\zbl}[1]{\href{http://www.zentralblatt-math.org/zmath/en/search/?q=an:#1}{Zbl~#1}}
\providecommand{\jfm}[1]{\href{http://www.emis.de/cgi-bin/JFM-item?#1}{JFM~#1}}
\providecommand{\arxiv}[1]{\href{http://www.arxiv.org/abs/#1}{arXiv~#1}}
\providecommand{\doi}[1]{\url{https://doi.org/#1}}
\providecommand{\MR}{\relax\ifhmode\unskip\space\fi MR }
\providecommand{\MRhref}[2]{%
  \href{http://www.ams.org/mathscinet-getitem?mr=#1}{#2}
}
\providecommand{\href}[2]{#2}

	}
	
\end{document}